\newcommand\myuline{\bgroup\markoverwith
	{\textcolor{gray}{\rule[0.4ex]{2pt}{3pt}}}\ULon}
\newcommand{\bea}{\begin{eqnarray}}
\newcommand{\eea}{\end{eqnarray}}
\def\beaa{\begin{eqnarray*}}
\def\eeaa{\end{eqnarray*}}
\def\ba{\begin{array}}
\def\ea{\end{array}}
\def\be#1{\begin{equation} \label{#1}}
\def\eeq{\end{equation}}
\def\lV{\lVert}
\def\rV{\rVert}
\def\div{\mathrm{div}}
\def\d{\partial}
\def\b{{\beta}}
\def\be{{\beta}}
\def\ga{\gamma}
\def\ep{\epsilon}
\def\Om{\Omega}
\def\nab{\nabla}
\def\FF{{\mathcal F}}
\def\EE{{\mathcal E}}
\def\bfR{{\mathbf R}}
\def\R{{\mathbb{R}}}
\def\C{{\mathbb{C}}}
\def\N{{\mathbb N}}
\def\Z{{\mathbb{Z}}}
\def\rr{{\varrho}}
\def\P{{\mathbb P}}
\newcommand{\<}{  \langle   }
\renewcommand{\>}{  \rangle   }
\newtheorem{theorem}{Theorem}[section]
\newtheorem{lemma}[theorem]{Lemma}
\newtheorem{proposition}[theorem]{Proposition}
\newtheorem{corollary}[theorem]{Corollary}
\numberwithin{equation}{section}
\numberwithin{equation}{section}
\begin{document}


\title[Global regulairity for hyperbolic liquid crystal]
{Small data global regularity for simplified 3-D Ericksen-Leslie's compressible hyperbolic liquid crystal model}

\author[J. Huang]{Jiaxi Huang$^1$}
\email{\href{mailto:jiaxih@mail.ustc.edu.cn}{jiaxih@mail.ustc.edu.cn}}

\author[N. Jiang]{Ning Jiang$^{2}$}
\email{\href{mailto:njiang@whu.edu.cn}{njiang@whu.edu.cn}}

\author[Y. Luo]{Yi-Long Luo$^3$}
\email{\href{mailto:luo-yl@whu.edu.cn}{luo-yl@whu.edu.cn}}

\author[L. Zhao]{Lifeng Zhao$^1$}
\email{\href{mailto:zhaolf@ustc.edu.cn}{zhaolf@ustc.edu.cn}}

\address{$^1$School of Mathemtical Sciences, University of Science and Technology of China, Hefei 230026, Anhui, P. R. China}

\address{$^2$School of Mathemtical and Statistics, Wuhan University, Wuhan 430072, P. R. China}

\address{$^3$Department of Mathematics, City University of Hong Kong, Kowloon Tong, Hong Kong, People's Republic of China}

\subjclass[2010]{}

\keywords{Global regularity, hyperbolic, compressible liquid crystal}

\begin{abstract}
In this article, we consider the Ericksen-Leslie's hyperbolic system for compressible liquid crystal model in three spatial dimensions.  Global regularity for small and smooth initial data near equilibrium is proved for the case that the system is a nonlinear coupling of compressible Navier-Stokes equations with wave map to $\mathbb{S}^2$.  Our argument is a combination of vector field method and Fourier analysis. The main strategy to prove global regularity relies on an interplay between the control of high order energies and decay estimates, which is based on the idea inspired by the method of space-time resonances. In particular the different behaviors of the decay properties of the density and velocity field for compressible fluids at different frequencies play a key role.
\end{abstract}
\maketitle


\setcounter{tocdepth}{2}

\section{Introduction}
The hydrodynamic theory of liquid crystals was established by Ericksen \cite{Ericksen-1961-TSR, Ericksen-1987-RM, Ericksen-1990-ARMA} and Leslie \cite{Leslie-1968-ARMA, Leslie-1979} in the 1960's (see also Section 5.1 of \cite{Lin-Liu-2001} ). The compressible hyperbolic Ericksen-Leslie's system was derived in  Leslie's paper \cite{Leslie-1968-ARMA} which  consists of the following equations of the velocity field $u(x,t)\in \mathbb{R}^3$ and the orientation field $d(x,t)\in \mathbb{S}^2$, and $(x,t)\in \R^3\times \R^+$:
\begin{equation}\label{PHLC}
\begin{aligned}
\left\{ \begin{array}{l}
\d_t \rho+\div (\rho u)=0,\\
\partial_t (\rho u) +\div(\rho u \otimes u)  + \nabla P = \div(\Sigma_1+\Sigma_2+\tilde{\sigma}), \\
\rho \ddot{d} = \Delta d+\Gamma d +\lambda_1(\dot{d}+Bd)+\lambda_2Ad.
\end{array}\right.
\end{aligned}
\end{equation}

In the above system, $P=P(\rho)$ is the pressure, the superposed dot denotes the material derivative $\partial_t + u \cdot\nabla$, the symbol $\otimes$ denotes the tensor product for two vectors with entries $(a\otimes b)_{ij}=a_ib_j$ for $1\leq i,j\leq 3$, and
\begin{equation*}
\begin{aligned}
A = \tfrac{1}{2}(\nabla u + \nabla^\top u)\,,\quad B= \tfrac{1}{2}(\nabla u - \nabla^\top u)\,,\end{aligned}
\end{equation*}
represent the rate of strain tensor and skew-symmetric part of the strain rate, respectively. We also define $N = \dot d + B d$ as the rigid rotation part of director changing rate by fluid vorticity. Here $A_{ij} = \tfrac{1}{2} (\partial_j u_i + \partial_i u_j)$, $ B_{ij} = \tfrac{1}{2} (\partial_j u_i - \partial_i u_j) $, $(B d)_i =B_{ki} d_k$, and $(\nabla d \odot \nabla d)_{ij} = \partial_i d_k \partial_j d_k$.
The notations $\Sigma_1,\Sigma_2$, and $\tilde{\sigma}$ are as follows:
\begin{equation}\label{Extra-Sress-sigma}
\begin{aligned}
&\Sigma_1:=\tfrac{1}{2}\mu_1(\nab u+\nab^{\top}u)+\mu_2\div u\ I,\\
&\Sigma_2:=\tfrac{1}{2}|\nab d|^2I-\nab d\odot \nab d,\\
&\tilde\sigma_{ji}=  \nu_1 d_k d_p A_{kp}  d_i d_j + \nu_2  d_j N_i  + \nu_3 d_i N_j  + \nu_5 A_{ik}d_k d_j   + \nu_6 d_i A_{jk}d_k \,.
\end{aligned}
\end{equation}
These coefficients $\nu_i (1 \leq i \leq 6, i \neq 4)$ which may depend on material and temperature, are usually called Leslie coefficients, and are related to certain local correlations in the fluid. Usually, the following relations are frequently used, see \cite{Ericksen-1961-TSR, Leslie-1968-ARMA, Wu-Xu-Liu-ARMA2013}.
\begin{equation}\label{Coefficients-Relations}
\lambda_1=\nu_2-\nu_3\,, \quad\lambda_2 = \nu_5-\nu_6\,,\quad \nu_2+\nu_3 = \nu_6-\nu_5\,.
\end{equation}
The first two relations are necessary conditions in order to satisfy the equation of motion identically, while the third relation is called {\em Parodi's relation}, which is derived from Onsager reciprocal relations expressing the equality of certain relations between flows and forces in thermodynamic systems out of equilibrium. Under Parodi's relation, we see that the dynamics of a compressible nematic liquid crystal flow involve five independent Leslie coefficients in \eqref{Extra-Sress-sigma}. Furthermore, in \eqref{PHLC}, the Lagrangian multiplier $\Gamma$ is (which ensures the geometric constraint $|d|=1$):
\begin{equation}\label{Lagrange-Multiplier}
\Gamma = - \rho |\dot{d}|^2 + |\nabla d|^2 - \lambda_2 d^\top A d\, .
\end{equation}
We remark that the last two terms in the third equation of \eqref{PHLC} is the so-called kinematic transport, i.e.
\begin{equation}\label{kinematic transport}
  g= \lambda_1 (\dot{d} + B d) + \lambda_2 A d\,,
\end{equation}
which represents the effect of the macroscopic flow field on the microscopic structure. The material coefficients $\lambda_1$ and $\lambda_2$ reflect the molecular shape and the slippery part between the fluid and particles. The first term represents the rigid rotation of the molecule , while the second term stands for the stretching of the molecule by the flow. We remark that in \cite{Leslie-1968-ARMA}, Leslie also derived the incompressible model which can be written as:
\begin{equation}\label{IncompressibleLC}
\begin{aligned}
\left\{ \begin{array}{l}
\partial_t u + u\cdot\nabla u -\tfrac{1}{2}\mu_1 \Delta u + \nabla P = -\div(\nab d\odot \nab d )+\tilde{\sigma}, \\
\div u=0\,,\\
\rho_1 \ddot{d} = \Delta d+\Gamma d +\lambda_1(\dot{d}+Bd)+\lambda_2Ad\,,
\end{array}\right.
\end{aligned}
\end{equation}
where the Lagrangian multiplier $\Gamma = - \rho_1 |\dot{d}|^2 + |\nabla d|^2 - \lambda_2 d^\top A d\,.$ In the last equation of \eqref{IncompressibleLC}, the constant $\rho_1 > 0$ is called the inertial constant. To distinguish them with their parabolic analogues, which have no $\ddot{d}$ terms in \eqref{PHLC} and \eqref{IncompressibleLC}, we call \eqref{PHLC} and \eqref{IncompressibleLC} Ericksen-Leslie's hyperbolic system for compressible and incompressible liquid crystal, respectively.

As one of the important special cases, parabolic Ericksen-Leslie system ($\rho_1=0$, $\lambda_1=-1$)  has been extensively studied since the mid 80's. The static analogue of the parabolic Ericksen-Leslie's system is the so-called Oseen-Frank model, whose mathematical study was initiated from Hardt-Kinderlehrer-Lin \cite{Hardt-Kinderlehrer-Lin-CMP1986}. Since then there have been many works in this direction. In particular, the existence and regularity or partial regularity of the approximation (usually Ginzburg-Landau approximation as in \cite{Lin-Liu-CPAM1995}) dynamical Ericksen-Leslie's system was started by the work of Lin and Liu in \cite{Lin-Liu-CPAM1995}, \cite{Lin-Liu-DCDS1996} and \cite{Lin-Liu-ARMA2000}. The simplest system preserving the basic energy law can be obtained by neglecting the Leslie stress and by specifying some elastic constants. In 2-D case, the existence of global weak solutions with at most a finite number of singular times were proved by Lin-Lin-Wang \cite{Lin-Lin-Wang-ARMA2010}. Recently, Lin and Wang proved global existence of weak solution for 3-D case with the initial director field lying in the hemisphere in \cite{Lin-Wang-CPAM2016}. For the general parabolic Ericksen-Leslie's system,  local well-posedness was proved by Wang-Zhang-Zhang in \cite{Wang-Zhang-Zhang-ARMA2013}, and in \cite{Huang-Lin-Wang-CMP2014} existence of global solutions and regularity in $\mathbb{R}^2$ was established by Huang-Lin-Wang. We refer to the references therein for more results for the parabolic Ericksen-Leslie's system.

The study of hyperbolic Ericksen-Leslie's system is at the infancy compared with its parabolic analogue. In hyperbolic Ericksen-Leslie's system, $\rho_1>0$ and \eqref{PHLC} is a compressible Navier-Stokes equation coupled with a wave map with target $\mathbb{S}^2$. In one dimensional case, suppose $\tilde{\sigma}=0$ and $u=0$, the system \eqref{PHLC} can be reduced to a so-called nonlinear variational wave equation. Zhang and Zheng  studied systematically the dissipative and energy conservative solutions \cite{Zhang-Zheng-CPDE2001,Zhang-Zheng-ARMA2010}. For the multidimensional case, there has been some progress on the hyperbolic system of liquid crystal. Very recently, De Anna and Zarnescu \cite{DeAnna-Zarnescu-2016} considered the inertial Qian-Sheng model of liquid crystals. They derived the energy law and proved the local well-posdedness for bounded initial data and global well-posedness under the assumptions that the initial data is small in suitable norm and the coefficients satisfy some further damping property. Furthermore, for the inviscid version of the Qian-Sheng model, Feireisl-Rocca-Schimperna-Zarnescu proved the global existence of the {\em dissipative solution} in \cite{FRSZ-2016} which is inspired from that of incompressible Euler equation defined by P-L. Lions \cite{Lions-1996}.

There are growing interest in the study of hyperbolic Ericksen-Leslie's system. Recently, the second and third named authors of the current paper started the research of the well-posedness of Ericksen-Leslie's hyperbolic system in the context of classical solutions: first for the incompressible model \eqref{IncompressibleLC}  in \cite{Jiang-Luo-2018} and then with Tang for the compressible model \eqref{PHLC} in \cite{Jiang-Luo-Tang-2019}. More precisely, in \cite{Jiang-Luo-2018} and \cite{Jiang-Luo-Tang-2019}, under some natural constraints on the Leslie coefficients which ensure the basic energy law is dissipative, they showed the local-in-time existence and uniqueness of the classical solution to the system \eqref{PHLC} with finite initial energy. Furthermore, with an additional assumption on the coefficients which provides a damping effect, i.e. $\lambda_1 < 0$, and the smallness of the initial energy, the unique global classical solution was established. Here we remark that both in \cite{Jiang-Luo-2018} and \cite{Jiang-Luo-Tang-2019} the assumption $\lambda_1 < 0$ plays a crucial role in the global well-posedness. The limiting behaviors when the inertia constant $\rho_1 \rightarrow 0$  were considered in \cite{Jiang-Luo-Tang-NA, Jiang-Luo-Tang-Zarnescu, Jiang-Luo-2019}

When $\lambda_1=0$, which corresponds to the model without kinematic transport, the wellposedness is much more subtle analytically because there are no obvious damping effect. For incompressible model \eqref{IncompressibleLC}, Cai-Wang \cite{CW} recently made progress for the simplified Ericksen-Leslie system, namely, the case with  $\nu_i = 0, i=1\,,\cdots\,,6$, $i\neq 4$. They proved the global regularity of \eqref{IncompressibleLC} near the constant equilibrium by employing the vector field method. In \cite{HJLZ-incomp}, the authors of the current paper considered the more general incompressible model \eqref{IncompressibleLC}: still $\nu_2=\nu_3=0$, but $0 \neq \nu_5=\nu_6 >-\nu_4$, and $0 \neq \nu_1 > -2(\nu_4+\nu_5)$. Of course, from \eqref{Coefficients-Relations}, we still have $\lambda_1=\lambda_2=0$, i.e. the kinematic transport vanishes. They proved the global regularity for the more general case (but still special, compared with the most general case, say, \eqref{Extra-Sress-sigma} and \eqref{Coefficients-Relations}) near the constant equilibrium $(u,d)= (\vec{0},\vec{i})$.  The argument of \cite{HJLZ-incomp} is a combination of vector field method and Fourier analysis inspired by the so-called space-time resonance method.

In the current paper, we address the small data global regularity of the Ericksen-Leslie's hyperbolic system for compressible model, i.e. \eqref{PHLC}. We consider the simplest case: $\nu_1=\nu_2=\nu_3=\nu_5=\nu_6=0$, hence $\lambda_1=\lambda_2=0$, and the pressure $P(\rho)$ obeys the $\gamma$-law, i.e. $P(\rho)=a\rho^{\gamma}$ with $\gamma\geq 1,a>1$. The aim of this paper is to prove the global regularity of this simplest case near the constant equilibrium $(\rho,u,d)= (1,\vec{0},\vec{i})$. More precisely, we study the Ericksen-Leslie's hyperbolic liquid crystal model in the following form: $\mu_1,\mu_2>0$,
\begin{equation}        \label{ori_sys}
\left\{
\begin{aligned}
\d_t \rho+\nab \cdot(\rho u)&=0,\\
\partial_t u+u\cdot\nabla u+\tfrac{\nabla P(\rho)}{\rho} & =\tfrac{1}{\rho}\big[\tfrac{\mu_1}{2}\Delta u+(\tfrac{\mu_1}{2}+\mu_2)\nab\div u+\tfrac{1}{2}\nab|\nab d|^2-\sum_{j=1}^3\d_j(\nabla d \cdot \d_j d)\big], \\
\rho\ddot{d}-\Delta d & =(-\rho|\dot{d}|^2+|\nabla d|^2)d.
\end{aligned}
\right.
\end{equation}
on $\R^3\times \R^+$ with the constraint $|d|=1$. Compared with the incompressible model, the decay of the density $\rho(x,t)$ and velocity field $u(x,t)$ in compressible model is much more subtle.

\subsection{The main theorem}
To state our main theorem, we need some notations. Define the perturbed angular momentum operators by
\begin{equation*}
\tilde{\Om}_i u=\Om_i u+A_i u,\ \tilde{\Om}_i d=\Om_i d,
\end{equation*}
where $\Om=(\Om_1,\Om_2,\Om_3)$ is the rotation vector-field $\Om=x\wedge\nab$ and $A_i$ is defined by
\begin{equation}
A_1=e_2\otimes e_3-e_3\otimes e_2,\ \ A_2=e_3\otimes e_1-e_1\otimes e_3,\ \ A_3=e_1\otimes e_2-e_2\otimes e_1.
\end{equation}
We define the scaling vector-field $S$ by
\begin{equation*}
S=t\d_t+x_i\d_{x_i}.
\end{equation*}

Let
\begin{equation*}
\Gamma\in \{\d_t,\d_1,\d_2,\d_3,\tilde{\Om}_1,\tilde{\Om}_2,\tilde{\Om}_3\}
\end{equation*}
and $Z^a=S^{a_1}\Gamma^{a'}$, where $a=(a_1,a'):=(a_1,a_2,\cdots,a_8)\in\Z_+^8$, $\Gamma^{a'}=\Gamma^{a_2}\Gamma^{a_3}\cdots\Gamma^{a_8}$, we define
\begin{equation*}
\rr=\rho-1,\ \rr^{(a)}=Z^{a}\rr,\ u^{(a)}:=Z^a u,\ \ d^{(a)}:=Z^a d.
\end{equation*}

The main result of this paper is as follows:
\begin{theorem}           \label{Ori_thm}
	Assume that $N_0:=60,N_1:=6,h:=6$, there is $\epsilon_0 > 0$, such that $(\rho_0,u_0,d_0,d_1)$ are initial data near equilibrium $(1,\vec{0},\vec{i},\vec{0})$ satisfying the smallness assumptions
	\begin{equation}            \label{MainAss_dini}
	\sup_{|a|\leq N_1} \{\lV  \rr_0^{(a)}\rV_{H^{N(a)}}+\lV  u_0^{(a)}\rV_{H^{N(a)}}+\lV \nab d_0^{(a)}\rV_{H^{N(a)}}+\lV d_1^{(a)}\rV_{H^{N(a)}}\}\leq \epsilon_0,
	\end{equation}
	where $N(a)=N_0-|a|h$ for $0\leq|a|\leq N_1$. Then there exists a unique global solution $(\rr,u,d)$ of the system (\ref{ori_sys}) with initial data
	\begin{equation*}
	\rr(0)=\rr_0,\ \ u(0)=u_0,\ \ d(0)=d_0,\ \d_t d(0)=d_1,
	\end{equation*}
	satisfies the energy bounds
	\begin{align*}
	\sup_{0\leq t\leq T}\sup_{|a|\leq N_1} \{\lV(\rr^{(a)},  u^{(a)})(t)\rV_{H^{N(a)}}+ &\lV(\nab\rr^{(a)},\nab u^{(a)})\rV_{L^2([0,t]:H^{N(a)-1}\times H^{N(a)})}\}\\
	&+\sup_{0\leq t\leq T}\{\lV \nab d(t)\rV_{H^{N(0)}}+\lV \d_t d(t) \rV_{H^{N(0)}}\}\lesssim \ep_0,\\
	\sup_{0\leq t\leq T}\sup_{1\leq|a|\leq N_1}\{\lV \nab d^{(a)}(t)\rV_{H^{N(a)}}+&\lV \d_t d^{(a)}(t)\rV_{H^{N(a)}}\}\lesssim \ep_0(1+T)^{\bar{\delta}}.
	\end{align*}
	for any $T\in[0,\infty)$, where $\bar{\delta}<10^{-15}$ depends on $\ep_0$, $N_0$ and $N_1$.
\end{theorem}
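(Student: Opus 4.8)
\medskip

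The plan is to run a bootstrap argument on a well-chosen weighted norm combining high-order generalized energies (built from the commuting vector fields $Z^a$) with frequency-localized decay estimates for the compressible-fluid and wave-map variables. First I would set up the commutator structure: apply $Z^a$ to the system \eqref{ori_sys}, check that the modified rotation fields $\tilde\Om_i$ and the scaling field $S$ act on the linear parts (the damped wave operator $\d_t^2 - \frac{\mu_1}{2}\Delta - (\frac{\mu_1}{2}+\mu_2)\nabla\div$ for $u$, the acoustic part for $\rho$, the wave operator $\d_t^2 - \Delta$ for $d$) with only bounded lower-order corrections, and record the quadratic/higher nonlinearities $\mathcal N^{(a)}$ as sums of products $Z^{b} (\cdot)\, Z^{c}(\cdot)$ with $|b|+|c|\le|a|$. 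The loss of $h=6$ derivatives per vector field in the hierarchy $N(a)=N_0-|a|h$ is precisely what buys room to distribute derivatives in these product terms while keeping at least $N_1=6$ fields and $N(N_1)=24$ plain derivatives available everywhere. I would then define the bootstrap quantity $\mathcal E(T)$ to be the supremum over $t\in[0,T]$ and $|a|\le N_1$ of the energy norms appearing in the statement, divided by the allowed growth factor $(1+t)^{\bar\delta}$ for $|a|\ge1$, and assume $\mathcal E(T)\le\varepsilon_1$ with $\varepsilon_0\ll\varepsilon_1\ll1$.

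\medskip

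The energy estimate is the first half. For each $|a|\le N_1$ I would perform the standard energy identity: pair the $\rho^{(a)}$ and $u^{(a)}$ equations against $(\rho^{(a)}, u^{(a)})$ to get $\frac{d}{dt}\|(\rho^{(a)},u^{(a)})\|_{H^{N(a)}}^2 + c\|(\nabla\rho^{(a)},\nabla u^{(a)})\|_{H^{N(a)-1}\times H^{N(a)}}^2 \lesssim |(\text{nonlinearity},\text{energy})|$, using $\mu_1,\mu_2>0$ for the dissipation in $u$ and the acoustic coupling to extract the $\nabla\rho$ dissipation (effective-viscous-flux / Matsumura–Nishida type manipulation), and pair the $d^{(a)}$ equation against $\dot d^{(a)}$ for the wave-map energy, absorbing the $\rho\ddot d$ modification and the constraint term $(-\rho|\dot d|^2+|\nabla d|^2)d$ into controllable remainders near $d=\vec i$. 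The nonlinear right-hand side is then estimated by splitting each product so that the high-derivative factor is measured in $L^2$ against the energy and the low-derivative factor in $L^\infty$ against the decay bounds; the time integral $\int_0^t (1+s)^{-1-\delta}\,ds$ type bounds close the energy with at most the stated $(1+T)^{\bar\delta}$ growth for $|a|\ge1$ and no growth for $a=0$.

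\medskip

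The decay estimate is the harder half and the real heart of the argument. Here I would Duhamel-expand $u$, $\rho$, $d$ using the linear propagators: the wave-map variable $d$ satisfies $\Box d = \mathcal N_d$ with quadratic null-type and semilinear source, and I would obtain $\|\nabla d(t)\|_{L^\infty}\lesssim\varepsilon_0(1+t)^{-1}$ via the Klainerman–Sobolev inequality together with the vector-field energy bounds — this is where the $|a|\le N_1$ fields feed in. For $u$ and $\rho$ the decay is more delicate because the compressible linearized operator has a parabolic (low-frequency, heat-like, $t^{-3/4}$) regime and a damped-dispersive (high-frequency) regime: I would Littlewood–Paley decompose, use the explicit symbol of the semigroup $e^{tL}$ to get $\|P_k u(t)\|_{L^\infty}\lesssim \min(2^{3k/2}, \text{dispersive})\,e^{-ct 2^{2k}\wedge\cdots}$ type bounds at low frequency and dispersive $t^{-3/2}$-type bounds at high frequency, then sum over $k$ against the weighted energies, and crucially use the space-time-resonance-type observation that the output frequency in the nonlinear terms avoids the resonant set so the bilinear interactions do not destroy the decay. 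The main obstacle I expect is exactly managing these quadratic interactions between the slowly-decaying, frequency-dependent fluid variables and the wave-map variable — in particular the low-frequency density behavior — so that the closed system of inequalities for $\{\mathcal E(T)\}$ is self-improving; once that interplay is set up correctly, the bootstrap closes for $\varepsilon_0$ small, and local existence (from \cite{Jiang-Luo-Tang-2019}) plus continuity of $\mathcal E$ upgrades to the global solution and the stated bounds.
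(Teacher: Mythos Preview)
Your overall architecture (bootstrap combining vector-field energies with decay) matches the paper, but there are two concrete gaps where your sketch would not close.

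\textbf{First gap: the decay rate you claim for the fluid is too slow.} You aim for heat-type $t^{-3/4}$ at low frequency and dispersive $t^{-3/2}$ at high frequency for $(\rr,u)$, and you expect this to feed into the energy estimate for $d^{(a)}$ via a factor $(1+s)^{-1-\delta}$ in the time integral. But the quantity that actually appears in the $\phi^{(a)}$-energy (after commuting $\partial^n$ through $(1+\rr)^{-1}\Delta$) is $\|\nabla\rr\|_{L^\infty}$, and a naive combination of the two regimes only gives $\langle t\rangle^{-1+}$, which is not integrable. The paper's fix is to choose a \emph{time-dependent} frequency threshold $2^{k_0}\sim\langle t\rangle^{-c}$: below $k_0$ one uses the bound $\|P_{\le k_0}e^{tB}\nabla(f,g)\|_{L^\infty}\lesssim\langle t\rangle^{-5/4}2^{k_0}\||\nabla|^{-1}(f,g)\|_{L^2}$, and above $k_0$ one uses an $L^p\!\to\! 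L^p$ semigroup bound for $e^{tB}$ that costs $2^{-13k_0^-}$ but gains $\langle t\rangle^{-3/2}$. Balancing these produces $\|\nabla(\rr,u)\|_{L^\infty}\lesssim\varepsilon_1\langle t\rangle^{-1-\delta}$, which is exactly what makes the highest-order $\phi$-energy close without growth. Your proposal does not contain this mechanism.

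\textbf{Second gap: the resonance analysis is in the wrong place and mischaracterized.} You invoke space-time resonances in the \emph{decay} step for $(\rr,u)$, saying ``the output frequency avoids the resonant set.'' In the paper the resonance machinery is used in the \emph{energy} estimate for $\phi^{(a)}$, $|a|\ge1$, specifically for the quadratic terms $\rr^{(b)}\Delta\phi^{(c)}$ and $u^{(b)}\cdot\nabla\partial_t\phi^{(c)}$: one writes $\Phi^{(a)}=e^{it|\nabla|}\Psi^{(a)}$, obtains a phase $e^{-is(|\xi|-|\eta|)}$, and observes that the time-resonant set $\{|\xi|=|\eta|\}$ is \emph{not} avoided --- rather, the space-resonant set $\{\nabla_\eta(|\xi|-|\eta|)=0\}$ is empty. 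One then integrates by parts in $\eta$ for low-high interactions and in $s$ for high-low and high-high interactions. To run this, the bootstrap must include an extra weighted quantity, namely $\|\mathcal F^{-1}(|\xi|\partial_\xi\widehat{\Psi^{(a)}})\|_{H^{N(|a|+1)}}$, controlled via the identity linking it to $S\Phi^{(a)}$, $\Omega\Phi^{(a)}$ and $t\partial_t\Psi^{(a)}$. Your bootstrap norm does not carry this profile derivative, so the integration-by-parts-in-$\eta$ step has nothing to land on, and the high-low normal form has no control on $\partial_s\Psi^{(c)}$.
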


In order to analyze the system on the Fourier side, we introduce the angles $\phi_1$ and $\phi_2$. Precisely, assume that $\phi_1$ represents the angle between $x$-axis and projection of $d$ onto $x-y$ plane and $\phi_2$ represents the angle between $d$ and $x-y$ plane, i.e
\begin{equation*}
d=(\cos\phi_1\cos\phi_2,\sin\phi_1\cos\phi_2,\sin\phi_2).
\end{equation*}
Then (\ref{ori_sys}) can be rewritten as the $(\rr,u,\phi)$-system:
\begin{equation}                   \label{Main_Sys}
\left\{\begin{aligned}
&\d_t \rr+\nab\cdot u=F_1,\\
&\partial_t u-\tfrac{\mu_1}{2}\Delta u-(\tfrac{\mu_1}{2}+\mu_2)\nab\div u+a\gamma\nab\rr  =F_2,\\
&\d_t^2 \phi-\tfrac{1}{1+\rr}\Delta\phi= F_3,
\end{aligned}
\right.
\end{equation}
where $\phi:=(\phi_1,\phi_2)^{\top}$, and the nonlinear terms $F_1,F_2,F_3$ are
\begin{align*}
F_1:=&-u\cdot\nab\rr-\rr\nab\cdot u,\\
F_2:=&-u\cdot\nab u+(a\gamma-\tfrac{P'(1+\rr)}{1+\rr})\nab\rr+\tfrac{-\rr}{1+\rr}\big[\tfrac{\mu_1}{2}\Delta u+(\tfrac{\mu_1}{2}+\mu_2)\nab\div u\big]\\
&+\tfrac{1}{1+\rr}\big[\tfrac{1}{2}\nab(|\nab\phi|^2)-\sum_{j=1}^3\d_j(\nab\phi \d_j \phi) \big]+Err2,\\
F_3:=&-\d_t u\cdot\nab\phi-2u\cdot\nab\d_t\phi-u\cdot\nab(u\cdot\nab\phi)+Err3,
\end{align*}
and the error terms are
\begin{align*}
Err2:=-\tfrac{1}{1+\rr}\big[\tfrac{1}{2}\nab(\sin^2 \phi_2|\nab\phi_1|^2)-\sum_{j=1}^3\d_j(\sin^2\phi_2 \nab\phi_1 \d_j \phi_1) \big],\\
Err3:=(2\tan \phi_2(\dot{\phi_1}\dot{\phi_2}-\tfrac{\nab\phi_1\nab\phi_2}{1+\rr},
\tfrac{1}{2}\sin 2\phi_2(-\dot{\phi_1}^2+\tfrac{|\nab\phi_1|^2}{1+\rr}))^{\top}.
\end{align*}

Now it suffices to consider the system (\ref{Main_Sys}). For simplicity, we define
\begin{equation*}
\rr^{(a)}=Z^a \rr,\ u^{(a)}=Z^a u,\ \phi^{(a)}:=Z^a \phi,
\end{equation*}
Then we establish the following result:
\begin{theorem}\label{Main_thm}
	With the notation and hypothesis in Theorem \ref{Ori_thm}, the initial data $(\rr_0,u_0,\phi_{1,0},\phi_{1,1},\phi_{2,0},\phi_{2,1})$ satisfies
	\begin{equation}            \label{MainAss_ini}
	    \sup_{|a|\leq N_1} \{\lV (\rr_0^{(a)},  u_0^{(a)})\rV_{H^{N(a)}}+\sum_{i=1}^2(\lV \nab \phi_{i,0}^{(a)}\rV_{H^{N(a)}}+\lV \phi_{i,1}^{(a)}\rV_{H^{N(a)}})\}\leq \epsilon_0.
	\end{equation}
	Then there exists a unique global solution $(\rr,u,\phi)$ of the system (\ref{Main_Sys}) with initial data
	\begin{equation*}
	\rr(0)=\rr_0,\ u(0)=u_0,\ \ \phi_i(0)=\phi_{i,0},\ \d_t \phi_i(0)=\phi_{i,1},
	\end{equation*}
	satisfies the energy bounds
	\begin{align*}
	\sup_{0\leq t\leq T}\sup_{|a|\leq N_1} \{\lV  (\rr^{(a)},u^{(a)})(t)\rV_{H^{N(a)}}+ &\lV(\nab\rr^{(a)},\nab u^{(a)})\rV_{L^2([0,t]:H^{N(a)-1}\times H^{N(a)})}\}\\
	&+\sup_{0\leq t\leq T}\{\lV \d_t \phi(t)\rV_{H^{N(0)}}+\lV \nab \phi(t)\rV_{H^{N(0)}}\} \lesssim \ep_0,\\\label{Bdphia}
	\sup_{0\leq t\leq T}\sup_{1\leq|a|\leq N_1}\{\lV \d_t \phi^{(a)}(t)\rV_{H^{N(a)}}+&\lV \nab \phi^{(a)}(t)\rV_{H^{N(a)}}\}\lesssim \ep_0(1+T)^{\bar{\delta}}.
	\end{align*}
	for any $T\in[0,\infty)$, where $\bar{\delta}<10^{-15}$ depends on $\ep_0$, $N_0$ and $N_1$.
\end{theorem}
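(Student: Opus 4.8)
Near the equilibrium the map $(\rho,u,d)\mapsto(\rr,u,\phi)$ is a smooth diffeomorphism on the region where $\phi_2$ stays away from $\pm\pi/2$ (equivalently, where $d$ stays away from the poles $\pm e_3$), and this confinement holds a priori since we work near $d\equiv\vec{i}$; hence Theorems~\ref{Ori_thm} and \ref{Main_thm} are equivalent and it suffices to prove the latter. The plan is a continuity (bootstrap) argument on
\[
\mathcal{X}(T):=\mathcal{X}_{\mathrm{en}}(T)+\mathcal{X}_{\mathrm{dec}}(T),
\]
where $\mathcal{X}_{\mathrm{en}}(T)$ records the slowly growing high-order energies $\sup_{t\le T}(1+t)^{-\bar\delta}\sum_{|a|\le N_1}\big(\|(\rr^{(a)},u^{(a)})(t)\|_{H^{N(a)}}+\|(\d_t\phi^{(a)},\nab\phi^{(a)})(t)\|_{H^{N(a)}}\big)$ together with the associated space-time dissipation $\|(\nab\rr^{(a)},\nab u^{(a)})\|_{L^2([0,T]:H^{N(a)-1}\times H^{N(a)})}$, and $\mathcal{X}_{\mathrm{dec}}(T)$ is a family of uniformly-in-time decay norms for $(\rr,u,\d_t\phi,\nab\phi)$ and for their images under a bounded number of the vector fields $Z$. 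Assuming $\mathcal{X}(T)\le 2C\ep_0$ on $[0,T]$, the aim is to recover $\mathcal{X}(T)\le C\ep_0$; combined with the local well-posedness transplanted from \cite{Jiang-Luo-Tang-2019} this closes the argument.

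First I would fix the commutator algebra of $Z\in\{S,\d_t,\d_1,\d_2,\d_3,\tilde\Om_1,\tilde\Om_2,\tilde\Om_3\}$ with the constant-coefficient part of \eqref{Main_Sys}. The modified rotations $\tilde\Om_i$ are, by construction, the exact rotational symmetries of the vector $(\rr,u)$-subsystem (the correction $A_i$ being the infinitesimal rotation acting on the components of $u$) and commute with the wave operator $\d_t^2-\Delta$, so they pass through the linear part up to lower-order terms; the hyperbolic scaling $S$ likewise commutes with $\d_t^2-\Delta$ up to a constant multiple of itself, and with the viscous-acoustic part up to a constant multiple plus an extra viscous term of the form $c\,\Delta u$, which is harmless because it is absorbed by the accumulated parabolic dissipation in an induction on $|a|$. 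Applying $Z^a$ and running the $L^2$ energy method then yields, for each $a$,
\[
\tfrac{d}{dt}\mathcal{E}_{N(a)}(t)^2+c\,\|\nab u^{(a)}(t)\|_{H^{N(a)}}^2\lesssim \big|\langle Z^aF_1,\rr^{(a)}\rangle_{H^{N(a)}}\big|+\big|\langle Z^aF_2,u^{(a)}\rangle_{H^{N(a)}}\big|+\big|\langle Z^aF_3,\d_t\phi^{(a)}\rangle_{H^{N(a)}}\big|+(\mathrm{comm.}),
\]
and the density dissipation $\|\nab\rr^{(a)}\|_{H^{N(a)-1}}^2$, missing from the parabolic structure, is recovered at the cost of one derivative by the familiar compressible-Navier--Stokes trick of adding to $\mathcal{E}_{N(a)}^2$ an effective-flux/Hoff-type correction that pairs the momentum equation with $\nab\rr^{(a)}$. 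On the right-hand side, each nonlinearity is split so that the low-regularity factor sits in a decay norm from $\mathcal{X}_{\mathrm{dec}}$ and the high-regularity factor in an energy from $\mathcal{X}_{\mathrm{en}}$; the borderline contributions are bounded by $\tfrac{C\ep_0}{1+t}\mathcal{E}_{N(a)}^2$ and the remainder by integrable-in-time quantities, so a Gronwall argument produces the $(1+t)^{\bar\delta}$ growth.

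For the decay estimates I would handle the wave block and the fluid block separately. For $\phi$, the Klainerman--Sobolev inequality in the $(S,\tilde\Om,\d)$-algebra together with the slowly growing high-order energies gives $\|(\d_t\phi,\nab\phi)(t)\|_{L^\infty}\lesssim\ep_0(1+t)^{-1+\bar\delta}$; where a sharper rate is required, the relevant Duhamel terms in $F_3$ are treated by a Fourier-side (space-time resonance) analysis — quadratic interactions of two wave factors, or of a wave and a fluid factor, are either time-non-resonant, in which case integration by parts in $t$ gains $(1+t)^{-1}$, or space-non-resonant, in which case integration by parts in $\xi$ trades the power of $t$ for spatial decay, so the nonlinear wave decay matches the linear one. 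For $(\rr,u)$ I would use the Duhamel formula with the viscous-acoustic semigroup $e^{t\mathcal{L}}$ and localise in frequency: at low frequencies $\mathcal{L}$ has eigenvalues $-c|\xi|^2\pm i\sqrt{a\gamma}\,|\xi|$, yielding heat-type dispersive decay of both $\rr$ and $u$; at high frequencies the solenoidal part of $u$ and one acoustic mode decay like the heat semigroup, but the remaining acoustic mode — which dominates the density — decays only like the bounded exponential $e^{-ct}$, with no gain of regularity. This is exactly why $\nab\rr$ can only be placed in $L^2_tL^2$ rather than controlled pointwise, and why the pointwise decay proved for $\rr$ is weaker, at high frequency, than that for $u$; carrying this asymmetry through the transport terms $u\cdot\nab\rr$, $\rr\,\nab\cdot u$ in $F_1$ and the $\rr$-weighted viscous and elastic terms in $F_2$ is the core technical issue.

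The step I expect to be the main obstacle is closing precisely these density-driven nonlinearities. Since the high-frequency part of $\rr$ enjoys no parabolic smoothing and only a fixed exponential decay, any quadratic term in which a top-regularity factor $Z^a\rr$ meets a slowly decaying factor — a first derivative of $\phi$, or $\rr$ itself — is borderline, and a crude bound loses a logarithm in time that would wreck the $(1+t)^{\bar\delta}$ budget. Avoiding this requires using simultaneously the $L^2_t$ dissipation of $\nab\rr^{(a)}$, the improved low-frequency decay of $\rr$ coming from the semigroup analysis, and — for the mixed wave/fluid interactions — the null/resonance structure of $F_2$ and $F_3$; balancing all of this against the regularity ladder $N(a)=N_0-|a|h$, so that each of the at most $N_1=6$ vector fields and each application of Klainerman--Sobolev (costing about two derivatives) stays within $N_0=60$, is the bookkeeping behind the specific choice $N_0=60$, $N_1=6$, $h=6$.
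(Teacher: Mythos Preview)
Your overall bootstrap structure is sound, and several ingredients --- the modified rotations, the Hoff-type pairing to recover $\|\nab\rr^{(a)}\|_{L^2_tH^{N(a)-1}}$, the acoustic semigroup with low/high-frequency splitting, and the space-time resonance idea --- are indeed what the paper uses. There are, however, two genuine gaps.

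First, your bootstrap norm $\mathcal{X}$ omits the profile weighted quantity $\lV\FF^{-1}(|\xi|\d_\xi\widehat{\Psi^{(a)}})\rV_{H^{N(|a|+1)}}$, where $\Psi^{(a)}=e^{-it|\nab|}(\d_t+i|\nab|)\phi^{(a)}$. This is essential precisely for the step you invoke: when you integrate by parts in $\eta$ to exploit space non-resonance in the low--high interaction of $\int\d_s\phi^{(a)}\cdot\rr^{(b)}\Delta\phi^{(c)}$, one of the resulting terms carries $\nab_\eta\widehat{\Psi^{(c)}}(\eta)$, and this is controlled neither by Klainerman--Sobolev output nor by the energies in $\mathcal{X}_{\mathrm{en}}$. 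The paper carries this quantity as a separate bootstrap assumption and closes it via the identity $\lV\FF^{-1}(|\xi|\d_\xi\widehat{\Psi^{(a)}})\rV\lesssim\lV S\Phi^{(a)}\rV+\lV\Om\Phi^{(a)}\rV+\lV\Phi^{(a)}\rV+\lV t\d_t\Psi^{(a)}\rV$, together with a proof that $\lV\d_t\Psi^{(a)}\rV\lesssim\ep_0\<t\>^{-1+H(|a|+1)\delta}$; the latter estimate itself requires another round of integration by parts in $\eta$, so the profile bound genuinely has to live inside the bootstrap.

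Second, you locate the main obstacle in the wrong place. The density-driven terms you single out (top-order $Z^a\rr$ against a slowly decaying factor) are handled routinely by the $L^2_t$ dissipation. The dangerous terms sit in the \emph{wave} energy for $\phi^{(a)}$, $1\le|a|\le N_1$: namely $\rr^{(b)}\Delta\phi^{(c)}$ and $u^{(b)}\cdot\nab\d_t\phi^{(c)}$, whose phase $|\xi|-|\eta|$ is time-resonant with no null structure. Since $\lV\rr^{(b)}\rV_{L^\infty}$ decays only like $\<t\>^{-3/4}$, a direct $L^2\times L^\infty\times L^2$ bound diverges; it is here that the normal-form argument (integration by parts in $s$ for high--low and high--high, in $\eta$ for low--high) is forced, and for this one needs both the profile bound above and the \emph{strictly integrable} rate $\lV\nab(\rr,u)\rV_{L^\infty}\lesssim\ep_1\<t\>^{-1-l(0)+2\delta}$. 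The paper devotes a separate argument, combining the low-frequency wave structure and the high-frequency $L^p\!\to\!L^p$ bound for the acoustic semigroup, to obtain this rate; merely accepting a uniform Gronwall loss of $(1+t)^{\bar\delta}$, as you propose, would neither close these terms nor recover the \emph{bounded} energy for $\phi$ at $|a|=0$ asserted in the theorem.

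A smaller methodological difference: you use Klainerman--Sobolev for the pointwise decay of $\phi$, whereas the paper uses a frequency-localized dispersive estimate for $e^{it|\nab|}$ fed by the profile bound. The paper's route gives the sharper, $k$-dependent $L^\infty$ control $\lV P_k\Phi^{(a)}\rV_{L^\infty}\lesssim\ep_1\<t\>^{-1+C\delta}2^{k^-/2}2^{-Mk^+}$ that is needed to sum over dyadic shells in the nonlinear estimates; the plain Klainerman--Sobolev bound would not obviously suffice once the resonance analysis is carried out.
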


\subsection{Main ideas}
The main strategy to prove global regularity relies on an interplay between the control of high order energies and decay estimates, which is based on the method of space-time resonances developed by Germain, Masmoudi and Shatah \cite{GMS-09,GMS-12-Water,GMS-12-Sch}.
\\

\noindent{\em Dispersive estimates and decay estimates.} In the proof the Theorem \ref{Main_thm}, we need various decay estimates of $\rr,u$, and $\phi$. The decay of $\phi$ is easily obtained by dispersive estimates of $e^{it|\nab|}$. However, for the decay of $\rr$ and $u$, the linear coupling between $\rr$ and $u$ in the first two equations in (\ref{Main_Sys_rr,v,w}) brings the first difficulty: If we only use the dissipation, the decay of $\nab\rr$ and $\nab u$ may be $\<t\>^{-1+2\delta}$, which does not allow for the high order energy estimates of $\phi$ with slow growth. Therefore, we have to investigate the wave propagation of $\rr$ and $u$ to obtain the following faster decay rates:
\begin{equation}       \label{GoodDec-rho,u}
\lV (\nab\rr,\nab u)\rV_{L^{\infty}}\lesssim \ep_1\<t\>^{-1-\delta}.
\end{equation}

In this paper, in order to get around the first difficulty, and obtain the decay estimate (\ref{GoodDec-rho,u}), we introduce the ``compressible part" $v$ and divergence free part $w$ of $u$. Precisely, let
\begin{equation}          \label{U}
v:=|\nab|^{-1}\nab\cdot u,\ \ w:=\P u=(I+\nab(-\Delta)^{-1}\nab\cdot)u.
\end{equation}
Conversely, $u$ can be expressed in terms of $v$ and $w$,
\begin{equation*}
u=-|\nab|^{-1}\nab v+w.
\end{equation*}
Hence, we can also rewrite (\ref{Main_Sys}) as
\begin{equation}                   \label{Main_Sys_rr,v,w}
\left\{\begin{aligned}
&\d_t \rr+|\nab|v=F_1,\\
&\d_t v-(\mu_1+\mu_2)\Delta v-a\gamma|\nab|\rr=|\nab|^{-1}\nab\cdot F_2,\\
&\partial_t w-\tfrac{\mu_1}{2}\Delta w=\P F_2,\\
&\d_t^2 \phi-\tfrac{1}{1+\rr}\Delta\phi= F_3.
\end{aligned}
\right.
\end{equation}
From this, we observe that $w$ satisfies a parabolic equation, the decay estimates of $\nab w$ are easily obtained by Duhamel's formula and heat semi-group. Hence, it suffices to consider the decay of $\nab\rr$ and $\nab v$. For this we define the solution of linear system
\begin{equation*}
\left\{ \begin{aligned}
&\d_t \tilde{\rr}+|\nab|\tilde{v}=0,\\
&\d_t \tilde{v}-(\mu_1+\mu_2)\Delta \tilde{v}-a\gamma|\nab|\tilde{\rr}=0,\\
&(\tilde{\rr}(0),\tilde{v}(0))=(f,g),
\end{aligned}\right.
\end{equation*}
by
\begin{equation}          \label{def-B}
(\tilde{\rr},\tilde{v})^{\top}=e^{tB}(f,g)^{\top},
\end{equation}
where the linear operator $e^{tB}$ defined as
\begin{equation}          \label{e-tB-Sym}
\begin{aligned}
\FF (e^{tB}(f,g)^{\top})(\xi)=&e^{tB(\xi)} (\widehat{f},\widehat{g})^{\top}(\xi) \\
:=&
\left( \begin{aligned}
&\tfrac{\Lambda_+(\xi) e^{\Lambda_-(\xi)t}-\Lambda_- e^{\Lambda_+(\xi) t}}{\Lambda_+(\xi)-\Lambda_-(\xi)} & |\xi|\tfrac{ e^{\Lambda_-(\xi) t}- e^{\Lambda_+(\xi) t}}{\Lambda_+(\xi)-\Lambda_-(\xi)}\\
&a\gamma|\xi|\tfrac{ e^{\Lambda_-(\xi) t}- e^{\Lambda_+(\xi) t}}{\Lambda_+(\xi)-\Lambda_-(\xi)}& \tfrac{\Lambda_+(\xi) e^{\Lambda_+(\xi)t}-\Lambda_-(\xi) e^{\Lambda_-(\xi) t}}{\Lambda_+(\xi)-\Lambda_-(\xi)}
\end{aligned}  \right)
\left( \begin{aligned}
\widehat{f}(\xi)\\
\widehat{g}(\xi)
\end{aligned}  \right),
\end{aligned}
\end{equation}
and
\begin{equation}            \label{Lamda+-_def}
\Lambda_{\pm}(\xi):=\left\{\begin{aligned}
&-\tfrac{\mu}{2}|\xi|^2\pm i\sqrt{a\gamma|\xi|^2-\tfrac{\mu^2}{4}|\xi|^4},\ \ \ \ \ \ |\xi|\leq \tfrac{2\sqrt{a\gamma}}{\mu},\ \mu=\mu_1+\mu_2,\\
&-\tfrac{\mu}{2}|\xi|^2\pm \sqrt{\tfrac{\mu^2}{4}|\xi|^4-a\gamma|\xi|^2},\ \ \ \  \ \ \ |\xi|> \tfrac{2\sqrt{a\gamma}}{\mu}.
\end{aligned}\right.
\end{equation}
We divide the above linear solution into the low-frequency case and high-frequency case, i.e
\begin{equation}          \label{decomposition-e-tB}
e^{tB}(f,g)^{\top}=P_{\leq k}e^{tB}(f,g)^{\top}+P_{>k}e^{tB}(f,g)^{\top},
\end{equation}
Then by Fourier analysis, we may have the following linear estimates:
\begin{equation}          \label{Lin-1}
\lV e^{tB}\nab^l(f,g)^{\top}\rV_{L^{\infty}} \lesssim t^{-\frac{3}{2p}-\frac{l}{2}}\lV(f,g)\rV_{L^p}+e^{-ct}\lV(f,g)\rV_{H^{l+2}},\ 1\leq p\leq 2.
\end{equation}
For low-frequency term,
\begin{equation}          \label{Lin-2}
\lV P_{\leq k}e^{tB}\nab(f,g)^{\top}\rV_{L^{\infty}}\lesssim \<t\>^{-5/4} 2^k\lV |\nab|^{-1}(f,g)\rV_{L^2},
\end{equation}
and for high-frequency term,
\begin{equation}           \label{Lin-3}
\lV P_{> k}e^{tB}\nab(f,g)^{\top}\rV_{L^{\infty}}\lesssim \<t\>^{-3/2}2^{-13k^-}\lV (f,g)\rV_{L^{\infty}}.
\end{equation}
The decomposition (\ref{decomposition-e-tB}) and a similar linear estimates (\ref{Lin-3}) when $k=0$ were investigated by Li and Zhang \cite{LiZh}. Here when we choose $k\in \Z$ satisfying $2^k\approx \<t\>^{-c}$ for sufficient small $c>0$, the decay estimate (\ref{GoodDec-rho,u}) can be obtained by Duhamel's formula, (\ref{Lin-1}), (\ref{Lin-2}) and (\ref{Lin-3}).
\\

\noindent{\em Energy estimates.} We would like to control the increment of both high order Sobolev norms and weighted norms for solutions of (\ref{Main_Sys}). In fact, by the dissipation of $\rr$ and $u$ and the decay estimates, we can obtain that the energies of $\rr^{(a)},u^{(a)}$ for $0\leq |a|\leq N_1$, and $\phi$ are bounded. However, for the energy estimates of $\phi^{(a)}$, $1\leq |a|\leq N_1$, the presence of time resonances without matching null structure brings a second difficulty. More precisely, in proving the energy estimates, the main difficulty are caused by the quadratic terms $\rr^{(b)}\Delta \phi^{(c)}$ and $u^{(b)}\cdot\nab\d_t\phi^{(c)}$.

To get around the main difficulty, we discuss the term $\rr^{(b)}\Delta\phi^{(c)}$ in detail. Let
\begin{equation*}
\Phi:=\d_t\phi+i|\nab|\phi,\ \ \Phi^{(a)}:=\d_t\phi^{(a)}+i|\nab|\phi^{(a)}.
\end{equation*}
In order to control the increment of $\phi^{(a)}$ for $1\leq |a|\leq N_1$, by $\phi$-equation one needs to consider the space-time integrals
\begin{equation}          \label{HardTerm0}
\int_0^t\int_{\R^6}m(\xi,\eta) \overline{ \widehat{P_{k}\Phi^{(a)}}}(\xi) \widehat{P_{k_1}\rr^{(b)}}(\xi-\eta)\widehat{P_{k_2}\Phi^{(c)}}(\eta)d\xi d\eta ds,
\end{equation}
for any $t\in[0,T]$, $k,k_1,k_2\in\Z$, and $m(\xi,\eta):=(1+\xi^{2N(a)}) |\eta|$. Notice that it is not possible to estimate (\ref{HardTerm0}) by moving the absolute value inside the time integral, due to the slow decay of $u^{(b)}$ in $L^{\infty}$. So we need to integrate by parts in time or $\eta$, for this define the profiles
\begin{equation*}
\Psi:=e^{-it|\nab|}\Phi,\ \ \Psi^{(a)}:=e^{-it|\nab|}\Phi^{(a)}.
\end{equation*}
Then after rewriting the functions $ \Phi^{(a)}$, $\Phi^{(c)}$ in terms of the variables $\Psi^{(a)}$, $\Psi^{(c)}$, we need to consider the space-time integral
\begin{equation}          \label{HardTerm}
\int_0^t\int_{\R^6}e^{-is(|\xi|-|\eta|)}m(\xi,\eta) \overline{ \widehat{P_{k}\Psi^{(a)}}}(\xi) \widehat{P_{k_1}\rr^{(b)}}(\xi-\eta)\widehat{P_{k_2}\Psi^{(c)}}(\eta)d\xi d\eta ds,
\end{equation}
We observe that the above integral admits the time resonant frequencies
\begin{equation*}
\mathscr{T}=\{(\xi,\eta):|\xi|-|\eta|=0,m(\xi,\eta)\neq 0\},
\end{equation*}
but the set of space resonances is null, i.e
\begin{equation*}
\mathcal{S}=\{(\xi,\eta):\nab_{\eta}(|\xi|-|\eta|)=0,m(\xi,\eta)\neq 0\}=\emptyset.
\end{equation*}
Hence, for the contribution of high-low interaction, i.e $k_1\sim k,k_2<k-5$ and high-high interaction, i.e $k_1\sim k_2>k+5$, we may integrate by parts in time and then use the decay of $\d_s \rr^{(b)}$ and $\d_s\Psi^{(c)}$ to control (\ref{HardTerm}). For the contribution of low-high interaction, i.e $k\sim k_2,k_1\leq k+6$, we may use integration by parts in $\eta$ and the decay of $\lV\nab \rr^{(b)}\rV_{L^2}$ and $\Phi^{(c)}$ to control the increment of (\ref{HardTerm}).
\\

\noindent{\em $L^2$-bounds on the derivatives of $\widehat{\Psi^{(a)}}$.} In the above arguments, we also need the following bootstrap assumptions
\begin{equation*}
\lV \FF^{-1}(|\xi|\d_{\xi}\widehat{\Psi^{(a)}})\rV_{H^{N(a)}}\lesssim \ep_1\<t\>^{H(a+1)\delta},\ \ |a|\leq N_1-1,
\end{equation*}
which corresponds to weighted bounds in the physical space and can be linked to the energy estimates by
\begin{equation*}
\lV \FF^{-1}(|\xi|\d_{\xi}\widehat{\Psi^{(a)}})\rV_{H^{N(a)}}\lesssim \lV S\Psi^{(a)}\rV_{H^{N(a)}}+\lV \Om\Psi^{(a)}\rV_{H^{N(a)}}+\lV \Psi^{(a)}\rV_{H^{N(a)}}+\lV t\d_t\Psi^{(a)}\rV_{H^{N(a)}}.
\end{equation*}

\subsection{Notations} For $N\geq 0$, $1\leq p\leq\infty$, let $L^p,H^N,\dot{H}^N,W^{N,p}$ and $\dot{W}^{N,p}$ denote the standard Sobolev spaces on $\R^3$, i.e
\begin{equation*}
L^p:=L^p(\R^3),\ H^N:=H^N(\R^3),\ \dot{H}^N:=\dot{H}^N(\R^3),\ W^{N,p}:=W^{N,p}(\R^3),\ \dot{W}^{N,p}:=\dot{W}^{N,p}(\R^3).
\end{equation*}
For any $k\in\Z$,  let $k^+:=\max(k,0)$ and $k^-:=\min(k,0)$. For any $x\in\R$, denote $\<x\>:=\sqrt{1+x^2}$. For any two numbers $M_1$ and $M_2$ and a absolute constant $C$, we denote
\begin{equation*}
M_1\lesssim M_2,\ M_2\gtrsim M_1,\ \mathrm{if}\ M_1\leq CM_2.
\end{equation*}

Here we will use the following multi-index
\begin{equation*}
n:=(\alpha,\beta,\gamma)\in\N^3.
\end{equation*}
Denote
$$|n|:=\alpha+\beta+\gamma,\ \ \d^n:=\d^{\alpha}_{x_1}\d^{\beta}_{x_2}\d^{\gamma}_{x_3}.$$

\subsection{Outline}
In section 2, we fix notations and state the main bootstrap proposition \ref{Main_Prop}. We also state several lemmas, such as dispersive linear bounds, Hardy-type estimate and weighted $L^{\infty}-L^2$ estimate. In section 3, we use the bootstrap assumptions to prove various decay estimates of $\rr,u$ and $\phi$. Then we start the proof of the main bootstrap proposition \ref{Main_Prop} in section 4, 5, and 6, where we obtain improved energy estimates (\ref{Main_Prop_result1}) and the $L^2$ bounds (\ref{Main_Prop_result2}) on the derivation of $\widehat{\Psi}$.

\section{Preliminaries and the main proposition}

In this section, we start by summarizing our main definitions and notations.

\subsection{Some analysis tools}
The Fourier transform of $f$ is defined as follows:
\begin{equation*}
\mathcal{F}(f)(\xi):=\widehat{f}(\xi)=\int_{\R^3}e^{-ix\cdot\xi}f(x)dx.
\end{equation*}
We use $\mathcal{F}^{-1}(f)$ to denote the inverse Fourier transform of $f$. Fix an even smooth function $\varphi:\R \rightarrow [0,1] $ supported in $[-2,2]$ and equal to 1 in $[-1,1]$. For simplicity of notation, we also let $\varphi:\R^3 \rightarrow [0,1]$ denote the corresponding radial function on $\R^3$. For any $k\in \Z$, $I\subset \R$, let
\begin{gather*}
\varphi_k(x):=\varphi(x/2^k)-\varphi(x/2^{k-1}),\ \ \varphi_I(x):=\sum_{m\in I\cap \Z}\varphi_m(x),\\
\varphi_{\leq k}(x):=\sum_{l\leq k}\varphi_{l}(x),\ \varphi_{> k}(x):=\sum_{l> k}\varphi_{l}(x).
\end{gather*}
The frequency projection operator $P_k$, $P_I$, $P_{\leq k}$ and $P_{>k}$ is defined by the Fourier multiplier $\varphi_k(\xi)$, $\varphi_I(\xi)$, $\varphi_{\leq k}(\xi)$ and $\varphi_{>k}(\xi)$, i.e.
\begin{gather*}
\widehat{P_k f}(\xi)=\varphi_k(\xi)\widehat{f}(\xi),\ \ \ \widehat{P_I f}(\xi)=\varphi_I(\xi)\widehat{f}(\xi),\\
\widehat{P_{\leq k} f}(\xi)=\varphi_{\leq k}(\xi)\widehat{f}(\xi)\ \mathrm{and} \ \widehat{P_{> k} f}(\xi)=\varphi_{> k}(\xi)\widehat{f}(\xi).
\end{gather*}
Moreover, we have the following Bernstein inequality: For any $k\in \Z$,
\begin{equation}        \label{BernIneq}
	\lV P_k f\rV_{L^q}\lesssim 2^{3k(\frac{1}{p}-\frac{1}{q})}\lV P_k f\rV_{L^p},\ \ \ 1\leq p\leq q\leq \infty.
\end{equation}

Let
\begin{equation*}
\mathcal{J}:=\{(k,j)\in\Z\times\Z_+:k+j\geq 0\}.
\end{equation*}
For any $(k,j)\in\mathcal{J}$, let
\begin{equation*}
\tilde{\varphi}_j^{(k)}(x):=\left\{
\begin{aligned}
&\varphi_{\leq -k}(x),\ \mathrm{if}\ k+j=0\ \mathrm{and}\ k\leq 0,\\
&\varphi_{\leq 0}(x),\ \mathrm{if}\ j=0\ \mathrm{and}\ k\geq 0,\\
&\varphi_{j}(x),\ \mathrm{if}\ k+j\geq 1\ \mathrm{and}\ j\geq 1,
\end{aligned}
\right.
\end{equation*}
then for any $k\in\Z$ fixed, $\sum_{j\geq \max(-k,0)}\tilde{\varphi}_j^{(k)}=1$. For any $(k,j)\in\mathcal{J}$, we define the operator $Q_{jk}$ by
\begin{equation*}
(Q_{jk}f)(x):=\tilde{\varphi}_j^{(k)}(x)\cdot P_kf(x),
\end{equation*}
and denote
\begin{equation*}
f_{j,k}(x):=P_{[k-2,k+2]}Q_{jk}f(x).
\end{equation*}
Then $P_k f$ for fix $k\in\Z$ can be decomposed as
\begin{equation}      \label{Decom_Pkf}
P_k f(x)=P_{[k-2,k+2]}P_kf(x)=P_{[k-2,k+2]}\sum_{j\geq \max(-k,0)}\tilde{\varphi}_j^{(k)}(x)P_kf(x)=\sum_{j\geq \max(-k,0)}f_{j,k}(x).
\end{equation}
Moreover, for any $f\in L^2(\R^3)$ and $(k,j)\in \mathcal{J}$, let $\beta:=1/1000$, we have
\begin{equation}    \label{Dec_fjk}
\lV \widehat{f_{j,k}}\rV_{L^{\infty}}\lesssim \min\{2^{3j/2}\lV Q_{jk}f\rV_{L^2},\ 2^{j/2-k}2^{\beta(j+k)}\sum_{|\alpha|\leq 1}\lV Q_{jk}\Om^{\alpha}f\rV_{L^2}\},
\end{equation}
see \cite[Lemma 3.4.]{IoPa}.

We state several decay estimates for some linear operators.
\begin{lemma}[Linear decay estimates]
	(\romannumeral1) For any $f\in \mathcal{S}$ and $l\in \N$, we have
	\begin{equation}        \label{Dec_heat}
	\lV e^{t\Delta}|\nab|^l f\rV_{L^{\infty}}\lesssim t^{-\frac{3}{2p}-\frac{l}{2}} \lV f\rV_{L^p},\ \ \ \  1\leq p\leq \infty.
	\end{equation}
	(\romannumeral2) For any $f,g\in \mathcal{S}$ and $l\in \N$, for the operator (\ref{def-B}) we have
	\begin{gather}          \label{e-tB_Lin}
	\lV e^{tB} |\nab|^l(f,g)^{\top}\rV_{L^{\infty}}\lesssim t^{-\frac{3}{2p}-\frac{l}{2}}\lV (f,g)\rV_{L^p}+e^{-ct} \lV |\nab|^l(f,g)\rV_{H^{2}},\ \ 1\leq p\leq 2,\\ \label{etB_L2-L2}
	\lV e^{tB} |\nab|^l(f,g)^{\top}\rV_{L^2}\lesssim t^{-\frac{l}{2}}\lV (f,g)\rV_{L^2}+e^{-ct} \lV |\nab|^l(f,g)\rV_{L^{2}},\\         \label{e tB_Lp-Lp}
	\lV P_{>k_0}e^{tB}(f,g)^{\top}\rV_{L^p}\lesssim \<t\>^{-3/2}2^{-13k_0^-}\lV P_{>k_0}(f,g)\rV_{L^p},\ \ {\rm for }\ 2\leq p\leq \infty, k_0\in\Z.
	\end{gather}
\end{lemma}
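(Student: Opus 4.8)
**The plan is to prove the linear decay estimates of the lemma by explicit Fourier-side analysis of the multipliers $e^{t\Delta}|\nabla|^l$ and $e^{tB(\xi)}$, splitting into low and high frequencies according to the threshold $|\xi| = 2\sqrt{a\gamma}/\mu$ that separates the oscillatory and purely dissipative regimes of $\Lambda_\pm$.**

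For part (i), estimate \eqref{Dec_heat} is the classical heat-kernel bound: write $e^{t\Delta}|\nabla|^l f = K_{t,l}*f$ with $\widehat{K_{t,l}}(\xi) = |\xi|^l e^{-t|\xi|^2}$, so that by scaling $\|K_{t,l}\|_{L^{p'}} \lesssim t^{-l/2 - \frac{3}{2}(1-1/p')} = t^{-l/2 - \frac{3}{2p}}$, and Young's inequality gives the claim. For part (ii), I would first diagonalize $B(\xi)$: the two eigenvalues are exactly $\Lambda_\pm(\xi)$ in \eqref{Lamda+-_def}, and each entry of $e^{tB(\xi)}$ in \eqref{e-tB-Sym} is a linear combination of $e^{\Lambda_\pm(\xi) t}$ with coefficients that are smooth and bounded (uniformly away from the threshold, with the apparent singularity at $\Lambda_+ = \Lambda_-$ removable since the numerators vanish there too). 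The key structural facts are: for $|\xi| \lesssim 1$, $\mathrm{Re}\,\Lambda_\pm(\xi) = -\tfrac{\mu}{2}|\xi|^2$ and $\mathrm{Im}\,\Lambda_\pm(\xi) = \pm\sqrt{a\gamma}\,|\xi|\,(1+O(|\xi|^2))$, so the phase behaves like a wave $e^{\pm it\sqrt{a\gamma}|\xi|}$ modulated by the Gaussian damping $e^{-\frac{\mu}{2}t|\xi|^2}$; while for $|\xi| \gtrsim 1$ both $\Lambda_\pm$ are real and negative, $\Lambda_+ \sim -a\gamma/\mu$ bounded away from $0$ giving an $e^{-ct}$ gain on that branch, and $\Lambda_- \sim -\mu|\xi|^2$ giving heat-type decay. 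So for \eqref{e-tB_Lin} I would write $e^{tB}|\nabla|^l = P_{\le 0}e^{tB}|\nabla|^l + P_{>0}e^{tB}|\nabla|^l$; on low frequencies the dispersive $L^p\to L^\infty$ decay $t^{-3/2p}$ for the half-wave propagator combines with $|\xi|^l e^{-ct|\xi|^2}$ to produce the extra $t^{-l/2}$ (a stationary-phase / Littlewood–Paley summation over dyadic shells $2^k \lesssim t^{-1/2}$ plus the oscillatory tail), and on high frequencies the $e^{-ct}$ factor absorbs everything into the $H^2$ term. Estimate \eqref{etB_L2-L2} is Plancherel: $\|e^{tB(\xi)}\|_{\mathrm{op}}$ is bounded uniformly, and with the extra $|\xi|^l$ one splits $|\xi|^l e^{-ct|\xi|^2} \lesssim t^{-l/2}$ on low frequencies and uses $e^{-ct}$ on high ones. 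Estimate \eqref{e tB_Lp-Lp} is the high-frequency refinement: on $|\xi| > 2^{k_0}$ we need the multiplier norm (an $L^p$-multiplier bound via a Mikhlin/Bernstein argument summed dyadically) of $\varphi_{>k_0}(\xi)e^{tB(\xi)}$, where the $2^{-13k_0^-}$ is a harmless power loss absorbed when $k_0 < 0$ and the $\langle t\rangle^{-3/2}$ comes from the oscillation plus dissipation on the shells $2^{k_0} < 2^k \lesssim 1$; this is the estimate attributed to Li–Zhang in the introduction.

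The key intermediate step I would isolate as a sublemma is a single-shell (Littlewood–Paley piece) dispersive bound: for $2^k \le 1$,
\begin{equation*}
\|P_k e^{tB}(f,g)^\top\|_{L^\infty} \lesssim 2^{(3-\frac{3}{p'})k}\,(1+2^k t)^{-1}\, e^{-ct2^{2k}}\,\|P_k(f,g)\|_{L^p},
\end{equation*}
obtained by bounding the kernel $\int e^{ix\cdot\xi \pm it\sqrt{a\gamma}|\xi|}\,\varphi_k(\xi)\,e^{-\frac{\mu}{2}t|\xi|^2}\,d\xi$ via nonstationary phase (when $|x|$ is far from $t\sqrt{a\gamma}$) and stationary phase near the light cone, picking up the $(2^k t)^{-1}$ from the one-dimensional stationary phase in the radial variable together with the two angular integrations; the Gaussian factor is just carried along. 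Summing this over $k$ with the weight $2^{lk}$ for \eqref{e-tB_Lin}, and over $k > k_0$ for \eqref{e tB_Lp-Lp}, yields the stated bounds after optimizing in the dyadic parameter.

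I expect the main obstacle to be the low-frequency oscillatory analysis near the threshold $|\xi| = 2\sqrt{a\gamma}/\mu$, where the eigenvalues $\Lambda_\pm$ collide and the denominator $\Lambda_+ - \Lambda_-$ in \eqref{e-tB-Sym} degenerates: one must check that all matrix entries extend smoothly across this point (using that $\Lambda_+ - \Lambda_- = 2\sqrt{a\gamma|\xi|^2 - \frac{\mu^2}{4}|\xi|^4}$ vanishes to order $1/2$ but so do the relevant differences of exponentials), and that the phase $\sqrt{a\gamma|\xi|^2 - \frac{\mu^2}{4}|\xi|^4}$ has nondegenerate Hessian away from the origin so stationary phase applies with the correct $t^{-1}$-per-radial-dimension loss; since the oscillatory region contributes only a compact frequency annulus, and one has strong exponential damping $e^{-ct}$ there, this can in fact be handled crudely without sharp stationary phase — the genuinely delicate region is $|\xi| \to 0$, where $\mathrm{Re}\,\Lambda_\pm \to 0$ and one must exploit both the $t^{-3/2p}$ half-wave decay and the Gaussian $e^{-\frac{\mu}{2}t|\xi|^2}$ simultaneously to get the full $t^{-3/2p - l/2}$ rate without losing at small frequencies.
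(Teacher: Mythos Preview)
Your plan is correct in outline and would work, but it diverges from the paper's proof in two places, and in both you are working harder than necessary.

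For \eqref{e-tB_Lin} the paper does \emph{not} use the oscillatory phase $e^{\pm it\sqrt{a\gamma}|\xi|}$ at all. It simply records the pointwise matrix-norm bound
\[
|e^{tB(\xi)}|\ \lesssim\ \begin{cases} e^{-c|\xi|^2 t}, & |\xi|\le 2\sqrt{a\gamma}/\mu,\\ e^{-ct}, & |\xi|>2\sqrt{a\gamma}/\mu,\end{cases}
\]
and then, on the low-frequency piece, applies Young plus Hausdorff--Young: $\|\FF^{-1}(\varphi_{\le j_0}e^{tB(\xi)}|\xi|^l)\|_{L^q}\lesssim \||\xi|^l e^{-c|\xi|^2 t}\|_{L^p}\lesssim t^{-3/(2p)-l/2}$. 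The $t^{-3/(2p)}$ is purely a Gaussian volume effect, not wave dispersion. Your stationary-phase sublemma with the extra $(1+2^kt)^{-1}$ factor is correct but redundant: once you sum $\sum_k 2^{(l+3)k}e^{-ct2^{2k}}$ the Gaussian alone already produces $t^{-(l+3)/2}$, and the half-wave gain contributes nothing further. Consequently the ``main obstacle'' you flag --- the degeneracy of $\Lambda_+-\Lambda_-$ and the low-frequency oscillatory analysis --- simply does not arise in the paper's argument, because the oscillation is discarded from the start. Your approach buys a sharper single-shell estimate (useful elsewhere, e.g.\ for the genuinely dispersive bounds in Section~3), but for this lemma the paper's route is shorter.

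For \eqref{e tB_Lp-Lp} the paper's method is again different from your sketch. After citing Li--Zhang for frequencies above the threshold $j_0$, on the intermediate range $k_0<k\le j_0$ it proves an $L^1$ kernel bound directly: split $\R^3_x$ into $\{|x|<1\}$ and $\{|x|\ge 1\}$, trade $|x|^{-2}$ and $|x|^{-4}$ for $\nabla_\xi^2$ and $\nabla_\xi^4$ of the symbol, and then compute those derivatives explicitly by writing each entry of $e^{tB(\xi)}$ in the form $(\frac{\mu}{2}|\xi|^2 t\,\frac{\sin bt}{bt}+\cos bt)e^{-\frac{\mu}{2}|\xi|^2 t}$ with $b=\sqrt{a\gamma|\xi|^2-\frac{\mu^2}{4}|\xi|^4}$. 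One checks $|\nabla_\xi^m(\sin bt/(bt))|\lesssim (1+|\xi|^2 t)^{2m}2^{-3mk_0}$ and similarly for $\cos bt$, so that after integrating $(1+|\xi|^2 t)^9 e^{-\frac{\mu}{2}|\xi|^2 t}$ over $\xi$ one picks up $\langle t\rangle^{-3/2}$, with the accumulated $2^{-13k_0}$ loss. Your $L^p\to L^\infty$ dispersive sublemma does not directly yield this $L^p\to L^p$ bound (interpolating with the trivial $L^2\to L^2$ loses the decay), and your Mikhlin remark, while pointing in the right direction, would still need exactly this symbol-derivative computation to be made precise. The paper's explicit calculation also handles your threshold worry automatically, since $\sin(bt)/(bt)$ is smooth in $b^2$ and hence across $b=0$.
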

\begin{proof}
	The proof of (\ref{Dec_heat}) is standard. Here we prove the bound (\ref{e-tB_Lin})-(\ref{e tB_Lp-Lp}) in detail.
	
	\textbf{Step 1. We prove the bound (\ref{e-tB_Lin}).}	
	From the definition of $\Lambda_{\pm}$ (\ref{Lamda+-_def}), we easily check that
	\begin{equation}       \label{e tB}
	|e^{tB(\xi)}|\lesssim \left\{ \begin{aligned}
	&e^{-c|\xi|^2t},\ \ \ \text{for }|\xi|\leq \tfrac{2\sqrt{a\gamma}}{\mu},\\
	&e^{-ct},\ \ \ \text{for }|\xi|> \tfrac{2\sqrt{a\gamma}}{\mu},
	\end{aligned}\right.
	\end{equation}
	where $c>0$ small. Then for $t\leq 1$, using Sobolev embedding it follows that
	\begin{equation*}
	\lV e^{tB}|\nab|^l (f,g)\rV_{L^{\infty}}\lesssim \lV e^{tB} |\nab|^l(f,g)\rV_{H^2}\lesssim \lV  |\nab|^l(f,g)\rV_{H^{2}}.
	\end{equation*}
	For $t>1$, let $j_0\in\Z$ satisfy $2^{j_0}\approx\frac{2\sqrt{a\gamma}}{\mu}$, from Young's inequality, Sobolev embedding, Hausdorff-Young inequality and (\ref{e tB}) we obtain for $1\leq p\leq 2$ and $q^{-1}+p^{-1}=1$
	\begin{align*}
	\lV e^{tB} |\nab|^l(f,g)\rV_{L^{\infty}}\lesssim & \lV \varphi_{\leq j_0}(|\nab|)e^{tB}|\nab|^l  (f,g)\rV_{L^{\infty}}+\lV \varphi_{> j_0}(|\nab|)e^{tB} |\nab|^l (f,g)\rV_{L^{\infty}}\\
	\lesssim & \lV \FF^{-1} (\varphi_{\leq j_0}(|\xi|)e^{tB(\xi)}|\xi|^l)\rV_{L^q}\lV(f,g)\rV_{L^p}+\lV \varphi_{> j_0}(|\nab|)e^{tB} \<\nab\>^2 |\nab|^l (f,g)\rV_{L^2}\\
	\lesssim & \lV e^{-c|\xi|^2 t}|\xi|^l\rV_{L^p}\lV(f,g)\rV_{L^p}+ \lV \varphi_{> j_0}(|\xi|)e^{tB(\xi)} \rV_{L^{\infty}} \lV\<\nab\>^2  |\nab|^l(f,g)\rV_{L^2}\\
	\lesssim & t^{-\frac{3}{2p}-\frac{l}{2}}\lV  (f,g)\rV_{L^p}+e^{-ct}\lV\<\nab\>^2 |\nab|^l (f,g)\rV_{L^2}.
	\end{align*}
	This concludes the proof of (\ref{e-tB_Lin}).
	
    \textbf{Step 2. We prove the bound (\ref{etB_L2-L2}).}

    By (\ref{e tB}) and Plancherel's Theorem we get this bound
    \begin{align*}
    \lV e^{tB}|\nab|^l (f,g)^{\top}\rV_{L^2}=&\lV e^{tB(\xi)}|\xi|^l (\widehat{f}(\xi),\widehat{g}(\xi)^{\top}\rV_{L^2}\\
    \lesssim &\lV \varphi_{\leq j_0}(\xi) e^{-c|\xi|^2t}|\xi|^l\rV_{L^{\infty}}\lV(f,g)\rV_{L^2}+\lV \varphi_{>j_0}(\xi) e^{-ct}\rV_{L^{\infty}}\lV|\nab|^l(f,g)\rV_{L^2}\\
    \lesssim &t^{-l/2}\lV(f,g)\rV_{L^2}+e^{-ct}\lV|\nab|^l(f,g)\rV_{L^2}.
    \end{align*}
	
	\textbf{Step 3. We prove the bound (\ref{e tB_Lp-Lp}).}
	
	The following bound
	\begin{equation*}
	\lV P_{>j_0} e^{tB}(f,g)^{\top}\rV_{L^p}\lesssim e^{-ct} \lV (f,g)\rV_{L^p},
	\end{equation*}
	for some $c>0$ small is given in \cite[Lemma 4.3]{LiZh}. Then it suffices to show that
	\begin{equation*}
	\lV P_{[k_0,j_0]} e^{tB}(f,g)^{\top}\rV_{L^p}\lesssim \<t\>^{-3/2}2^{-13k_0} \lV (f,g)\rV_{L^p}.
	\end{equation*}
	By Young's inequality we have
	\begin{align*}
	\lV P_{[k_0,j_0]}e^{tB}(f,g)^{\top}\rV_{L^p}=&\lV \mathcal{F}^{-1}(\varphi_{[k_0,j_0]}e^{tB(\xi)})\ast (f,g)^{\top}\rV_{L^p}\\
	\lesssim & \lV \mathcal{F}^{-1}(\varphi_{[k_0,j_0]}e^{tB(\xi)})\rV_{L^1}\lV(f,g)\rV_{L^p}.
	\end{align*}
	Now it suffices to show the bound
	\begin{equation*}
	\lV \mathcal{F}^{-1}(\varphi_{[k_0,j_0]}e^{tB(\xi)})\rV_{L^1}\lesssim \<t\>^{-3/2}2^{-13k_0}.
	\end{equation*}
	By integration by parts in frequency, we have
	\begin{align*}
	\lV \mathcal{F}^{-1}(\varphi_{[k_0,j_0]}e^{tB(\xi)})\rV_{L^1}=&\lV(\varphi_{<0}(x)+\varphi_{\geq 0}(x)) \mathcal{F}^{-1}(\varphi_{[k_0,j_0]}e^{tB(\xi)})\rV_{L^1}\\
	\lesssim & \lV |x|^{-2}\varphi_{<0}(x)\rV_{L^1(\R^3)}\lV\nab_{\xi}^2(\varphi_{[k_0,j_0]}(\xi)e^{tB(\xi)})\rV_{L^1}\\
	&+\lV |x|^{-4}\varphi_{\geq 0}(x)\rV_{L^1(\R^3)}\lV\nab_{\xi}^4(\varphi_{[k_0,j_0]}(\xi)e^{tB(\xi)})\rV_{L^1}\\
	\lesssim & \lV\nab_{\xi}^2(\varphi_{[k_0,j_0]}(\xi)e^{tB(\xi)})\rV_{L^1}+\lV\nab_{\xi}^4(\varphi_{[k_0,j_0]}(\xi)e^{tB(\xi)})\rV_{L^1}.
	\end{align*}
	From the definition of $e^{tB(\xi)}$ (\ref{e-tB-Sym}), we have
	\begin{align*}
	m(\xi):=\frac{\Lambda_+(\xi) e^{\Lambda_-(\xi)t}-\Lambda_-(\xi) e^{\Lambda_+(\xi) t}}{\Lambda_+(\xi)-\Lambda_-(\xi)}
	= \big( \frac{\mu}{2}|\xi|^2 t\frac{\sin bt}{bt}+\cos bt \big)e^{-\frac{\mu}{2}|\xi|^2 t},
	\end{align*}
	where
	\begin{equation*}
	b:=\sqrt{a\gamma|\xi|^2-\frac{\mu^2}{4}|\xi|^4}.
	\end{equation*}
	With the restriction $\varphi_{[k_0,j_0]}(\xi)$, by direct computations we have
	\begin{gather*}
	\nab_{\xi}\big(\frac{\sin bt}{bt}\big)\lesssim |\xi|t^2\lesssim (|\xi|^2t)^2 2^{-3k_0},\\
	\nab_{\xi}^2\big(\frac{\sin bt}{bt}\big)\lesssim |\xi|^2t^4+t^2\lesssim (1+|\xi|^2t)^4 2^{-6k_0},\\
	\nab_{\xi}^3\big(\frac{\sin bt}{bt}\big)\lesssim |\xi|^3t^6+|\xi|t^4+t^2\lesssim (1+|\xi|^2t)^6 2^{-9k_0},\\
	\nab_{\xi}^4\big(\frac{\sin bt}{bt}\big)\lesssim |\xi|^4t^8+|\xi|^2t^6+t^4+t^2\lesssim (1+|\xi|^2t)^8 2^{-12k_0}.
	\end{gather*}
	Similarly, we have
	\begin{gather*}
	\nab_{\xi}^l(\cos bt)\lesssim (1+|\xi|^2t)^{2l}2^{-3lk_0},\\
	\nab_{\xi}^l e^{-\frac{\mu}{2}|\xi|^2 t}\lesssim (1+|\xi|^2 t)^l 2^{-lk_0} e^{-\frac{\mu}{2}|\xi|^2 t}.
	\end{gather*}
	From these estimates, we
	\begin{align*}
	&\lV\nab_{\xi}^2(\varphi_{[k_0,j_0]}(\xi)m(\xi))\rV_{L^1}+\lV\nab_{\xi}^4(\varphi_{[k_0,j_0]}(\xi)m(\xi))\rV_{L^1}\\
	\lesssim & \int_{\R^3} \varphi_{[k_0,j_0]}(\xi)(1+|\xi|^2t)^9 2^{-13k_0}e^{-\frac{\mu}{2}|\xi|^2 t}d\xi \lesssim \<t\>^{-3/2}2^{-13k_0}.
	\end{align*}
	For other symbols in $e^{tB(\xi)}$, we have the same estimates. This completes the proof of (\ref{e tB_Lp-Lp}).

\end{proof}

We also need the following Hardy-type estimate involving localization in frequency and space.
\begin{lemma}[Lemma 3.5, \cite{IoPa}]       \label{AkBk_lem}
	For $f\in L^2(\R^3)$ and $k\in\Z$ let
	\begin{equation*}
	F_k(f):=\lV P_k f\rV_{L^2}+\sum\limits_{l=1}^3 \lV \varphi_k(\xi)\d_{\xi_l}\widehat{f}(\xi)\rV_{L^2},\ \ B_k(f):=\big[\sum\limits_{j\geq \max(-k,0)}2^{2j}\lV Q_{jk}f\rV_{L^2}^2\big]^{1/2}.
	\end{equation*}
	Then, for any $k\in\Z$,
	\begin{equation*}
	B_k\lesssim \left\{
	\begin{aligned}
	&\sum_{|k'-k|\leq 4}F_{k'},\ \mathrm{if}\ k\geq 0,\\
	&\sum_{k'\in\Z}F_{k'}2^{-|k-k'|/2}\min\{1,2^{k'-k}\},\ \mathrm{if}\ k\leq 0.
	\end{aligned}
	\right.
	\end{equation*}
\end{lemma}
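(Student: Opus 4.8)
The plan is to sandwich $B_k(f)$ between a couple of weighted $L^2$-norms of $P_kf$ in physical space and then translate those back into the Fourier-side quantities $F_{k'}(f)$. By a routine density argument (the inequality being trivial when its right-hand side is infinite) we may assume $f\in\mathcal{S}(\R^3)$. First, inspecting the definition of $\tilde{\varphi}_j^{(k)}$ one checks that $\sum_{j\ge\max(-k,0)}2^{2j}\big(\tilde{\varphi}_j^{(k)}(x)\big)^2\lesssim\<x\>^2$ when $k\ge0$ and $\lesssim(2^{-k}+|x|)^2$ when $k\le0$, the isolated index $j=\max(-k,0)$ being responsible for the flat constant $1$, resp.\ $2^{-2k}$. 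Multiplying by $|P_kf(x)|^2$ and integrating yields
\[
B_k(f)\lesssim\lV P_kf\rV_{L^2}+\lV\,|x|\,P_kf\rV_{L^2}\ \ (k\ge0),\qquad B_k(f)\lesssim 2^{-k}\lV P_kf\rV_{L^2}+\lV\,|x|\,P_kf\rV_{L^2}\ \ (k\le0).
\]
For the gradient-weight term, Plancherel gives $\lV\,|x|\,P_kf\rV_{L^2}=\lV\nab_\xi(\varphi_k\widehat f)\rV_{L^2}$; splitting the derivative and using that $\nab_\xi\varphi_k$ is $O(2^{-k})$ and supported in $\{|\xi|\sim2^k\}$ gives $\lV\,|x|\,P_kf\rV_{L^2}\lesssim F_k+2^{-k}\sum_{|k'-k|\le2}\lV P_{k'}f\rV_{L^2}$. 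For $k\ge0$ we have $2^{-k}\le1$ and $\lV P_{k'}f\rV_{L^2}\le F_{k'}$, so this already gives the desired $B_k(f)\lesssim\sum_{|k'-k|\le4}F_{k'}$.

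For $k\le0$ everything reduces to the single estimate $2^{-k}\lV P_\ell f\rV_{L^2}\lesssim\sum_{k'\in\Z}2^{-|k-k'|/2}\min\{1,2^{k'-k}\}F_{k'}$ for $|\ell-k|\le2$, since this controls $2^{-k}\lV P_kf\rV_{L^2}$ together with the commutator sum above ($F_k$ being the $k'=k$ contribution). Because $2^{-k}\approx2^{-\ell}$ it is enough to estimate $2^{-\ell}\lV P_\ell f\rV_{L^2}$, and the device I would use is the radial identity $\widehat f(\xi)=-\int_1^\infty\xi\cdot(\nab\widehat f)(s\xi)\,ds$ — valid since the Schwartz function $\widehat f$ vanishes at infinity — which on the shell $\{|\xi|\sim2^\ell\}$ gives the pointwise bound $|\widehat f(\xi)|\lesssim2^\ell\int_1^\infty|(\nab\widehat f)(s\xi)|\,ds$. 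Taking $L^2(d\xi)$ over the shell, applying Minkowski's integral inequality in $s$, and rescaling $\zeta=s\xi$ produces
\[
\lV P_\ell f\rV_{L^2}\lesssim2^\ell\int_1^\infty s^{-3/2}\lV\nab\widehat f\rV_{L^2(|\zeta|\sim s2^\ell)}\,ds\lesssim2^\ell\sum_{k'\ge\ell-3}2^{-(k'-\ell)/2}\lV\varphi_{k'}\nab\widehat f\rV_{L^2}\lesssim2^\ell\sum_{k'\ge\ell-3}2^{-(k'-\ell)/2}F_{k'},
\]
the middle step being a dyadic splitting of the $s$-integral ($s\sim2^{k'-\ell}\ge1$, forcing $k'\ge\ell-3$). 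Dividing by $2^\ell$ and noting that the weight $2^{-(k'-\ell)/2}\mathbf{1}_{\{k'\ge\ell-3\}}$ is dominated, up to a bounded factor, by $2^{-|k-k'|/2}\min\{1,2^{k'-k}\}$ once $|\ell-k|\le2$, finishes the proof.

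I expect the last step for $k\le0$ to be the main obstacle: one has to extract a gain of $2^{-k}$ on $\lV P_kf\rV_{L^2}$ — a quantity sensitive only to $\widehat f$ near $|\xi|\sim2^k$ — out of nothing more than the global integrability of $xf$. The radial fundamental-theorem-of-calculus argument above is precisely what converts ``frequency localization at $2^k$ plus $xf\in L^2$'' into that gain; as a byproduct it shows that only the higher frequencies $k'\gtrsim k$ of $F$ genuinely enter, the lower-frequency terms on the right of the lemma being slack. Everything else is bookkeeping with the Littlewood--Paley cutoffs and the harmless $O(2^{-k})$ commutators.
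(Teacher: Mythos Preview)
Your proof is correct. The paper does not supply its own argument for this lemma---it is quoted verbatim from Ionescu--Pausader \cite{IoPa} with only a citation---so there is nothing in the paper itself to compare against. Your strategy (reduce $B_k$ to $\lV\,|x|\,P_kf\rV_{L^2}$ plus a lower-order piece, pass to frequency side, and for $k\le0$ recover the missing factor $2^{-k}$ via the radial fundamental-theorem-of-calculus identity $\widehat f(\xi)=-\int_1^\infty \xi\cdot(\nabla\widehat f)(s\xi)\,ds$) is essentially the same device used in the original reference, and every step checks out: the weight computation for $\sum_j 2^{2j}(\tilde\varphi_j^{(k)})^2$, the commutator estimate $\lV\,|x|\,P_kf\rV_{L^2}\lesssim F_k+2^{-k}\sum_{|k'-k|\le2}\lV P_{k'}f\rV_{L^2}$, the Minkowski-plus-rescaling step giving the $s^{-3/2}$ Jacobian, and the dyadic bookkeeping matching $2^{-(k'-\ell)/2}\mathbf 1_{\{k'\ge\ell-3\}}$ against the stated weight. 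Your closing observation that only frequencies $k'\gtrsim k$ genuinely contribute (the $k'<k$ part of the weight being slack) is also correct and worth noting.
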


\begin{lemma}[Lemma 2.2, \cite{Wxc}]    \label{ContSymb}
	If $f:\R^{6}\rightarrow\C$ and $k_1,k_2\in\Z$, then the following estimate holds:
	\begin{equation*}
	\lV \int_{\R^{6}} f(\xi_1,\xi_2) e^{i(x_1\cdot\xi_1+x_2\cdot\xi_2)}\varphi_{k_1}(\xi_1)\varphi_{k_2}(\xi_2)d\xi_1\xi_2\rV_{L^1_{x_1, x_2}}\lesssim \sum\limits_{m=0}^{7}\sum\limits_{j=1}^2 2^{mk_j}\lV\partial_{\xi_j}^m f\rV_{L^{\infty}}.
	\end{equation*}
\end{lemma}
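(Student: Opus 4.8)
The plan is to reduce, by rescaling the frequency variables, to the model case $k_1=k_2=0$, and then to bound the resulting kernel in $L^1(\R^6)$ by a weighted $L^\infty$ estimate whose weight is chosen so that only \emph{pure} derivatives in $\xi_1$ or in $\xi_2$ enter. Write the left-hand side as $\lV K_{k_1,k_2}\rV_{L^1_{x_1,x_2}}$ with $K_{k_1,k_2}(x_1,x_2)=\int_{\R^6}f(\xi_1,\xi_2)e^{i(x_1\cdot\xi_1+x_2\cdot\xi_2)}\varphi_{k_1}(\xi_1)\varphi_{k_2}(\xi_2)\,d\xi$. Substituting $\xi_j=2^{k_j}\eta_j$ and using $\varphi_{k_j}(2^{k_j}\eta_j)=\varphi_0(\eta_j)=:\chi(\eta_j)$, which is supported in the annulus $\{1/2\le|\eta_j|\le2\}$, one finds $K_{k_1,k_2}(x_1,x_2)=(2\pi)^6\,2^{3k_1+3k_2}\,[\FF^{-1}(g\,\Theta)](2^{k_1}x_1,2^{k_2}x_2)$, where $g(\eta):=f(2^{k_1}\eta_1,2^{k_2}\eta_2)$ and $\Theta(\eta):=\chi(\eta_1)\chi(\eta_2)$. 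Changing variables $y_j=2^{k_j}x_j$ then gives $\lV K_{k_1,k_2}\rV_{L^1_{x}}=(2\pi)^6\lV \FF^{-1}(g\,\Theta)\rV_{L^1(\R^6)}$, so the $k$-dependence is absorbed entirely into $g$. Since $\lV\partial_{\eta_j}^m g\rV_{L^\infty}=2^{mk_j}\lV\partial_{\xi_j}^m f\rV_{L^\infty}$ by the chain rule, it suffices to prove the scale-free bound $\lV \FF^{-1}(g\,\Theta)\rV_{L^1(\R^6)}\lesssim\sum_{m=0}^{7}\sum_{j=1}^{2}\lV\partial_{\eta_j}^m g\rV_{L^\infty}$ for smooth $g$ (if $f$ is not $C^7$ the right-hand side of the lemma is infinite and there is nothing to prove).

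Write $K:=\FF^{-1}(g\,\Theta)$ and introduce the weight $w(y_1,y_2):=1+\sum_{j=1}^{2}\sum_{i=1}^{3}|y_j^{(i)}|^{7}$, where $y_j=(y_j^{(1)},y_j^{(2)},y_j^{(3)})\in\R^3$. Two observations drive the proof. First, $w^{-1}\in L^1(\R^6)$ with a purely dimensional bound: since $w(y)\ge1+\max_{j,i}|y_j^{(i)}|^{7}\gtrsim(1+|y|)^{7}$ and $7>6$, we have $\lV w^{-1}\rV_{L^1(\R^6)}\lesssim1$. Second, and crucially, $w$ is a \emph{sum} of monomials each depending on a single coordinate, so Hölder's inequality gives $\lV K\rV_{L^1(\R^6)}\le\lV w^{-1}\rV_{L^1}\,\lV wK\rV_{L^\infty}$ and then $\lV wK\rV_{L^\infty}\le\lV K\rV_{L^\infty}+\sum_{j=1}^{2}\sum_{i=1}^{3}\lV (y_j^{(i)})^{7}K\rV_{L^\infty}$, with \emph{no} cross terms $y_1^{(i)}y_2^{(i')}$. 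For each of these one integrates by parts in the single frequency variable $\eta_j^{(i)}$: since $g\,\Theta$ is smooth with compact support, $(y_j^{(i)})^{7}K=i^{7}\FF^{-1}(\partial_{\eta_j^{(i)}}^{7}(g\,\Theta))$, hence $\lV (y_j^{(i)})^{7}K\rV_{L^\infty}\lesssim\lV\partial_{\eta_j^{(i)}}^{7}(g\,\Theta)\rV_{L^1}$; likewise $\lV K\rV_{L^\infty}\lesssim\lV g\,\Theta\rV_{L^1}$. Finally, Leibniz's rule, together with the fact that $\Theta$ and all its derivatives are bounded and supported on a set of measure $\lesssim1$, yields $\lV\partial_{\eta_j^{(i)}}^{7}(g\,\Theta)\rV_{L^1}\lesssim\sum_{m=0}^{7}\lV\partial_{\eta_j^{(i)}}^{m}g\rV_{L^\infty}\le\sum_{m=0}^{7}\lV\partial_{\eta_j}^{m}g\rV_{L^\infty}$ and $\lV g\,\Theta\rV_{L^1}\lesssim\lV g\rV_{L^\infty}$. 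Summing over $i,j$ and undoing the rescaling gives the claim.

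The one genuinely delicate point, and the main obstacle, is precisely the choice of weight. The naive candidates -- a product weight $(1+|y_1|^2)^{2}(1+|y_2|^2)^{2}$ paired with $\FF^{-1}$ of $(1-\Delta_{\eta_1})^{2}(1-\Delta_{\eta_2})^{2}(g\,\Theta)$, or a Plancherel plus Cauchy--Schwarz estimate -- all inevitably produce \emph{mixed} frequency derivatives $\partial_{\eta_1}^{a}\partial_{\eta_2}^{b}g$, which in general cannot be dominated by the pure derivatives $\partial_{\eta_1}^{m}g$, $\partial_{\eta_2}^{m}g$ appearing on the right-hand side. The additive weight $1+\sum_{j,i}|y_j^{(i)}|^{7}$ avoids this because raising $|y_j|$ to a power produces only monomials in one block of variables (by monotonicity of $t\mapsto t^{7}$ on $[0,\infty)$), so each integration by parts involves a single coordinate. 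The exponent $7$ is then forced: it is the least integer strictly larger than the ambient dimension $6$, which is exactly what is needed for $w^{-1}$ to be integrable on $\R^6$. Everything else is routine bookkeeping with Leibniz's rule and elementary inequalities.
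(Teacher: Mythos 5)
Your proof is correct. Note first that the paper does not give its own proof of this lemma: it states the result with the attribution to [Wxc, Lemma 2.2] and moves on, so there is no argument in the paper to compare against; you have supplied a complete, self-contained proof of the cited fact, and your argument is essentially the standard one used for $\mathcal{S}^\infty$-type symbol bounds. The two points where the proof could easily go wrong are correctly handled: (i) you rightly observe that the \emph{additive} weight $w(y)=1+\sum_{j,i}|y_j^{(i)}|^{7}$ generates, after Hölder and integration by parts, only \emph{pure} single-coordinate derivatives $\partial_{\eta_j^{(i)}}^{7}$ of $g\,\Theta$, with no $\eta_1$--$\eta_2$ cross-terms; a multiplicative weight such as $(1+|y_1|^2)^2(1+|y_2|^2)^2$ would inevitably produce mixed derivatives $\partial_{\eta_1}^a\partial_{\eta_2}^b g$ that the right-hand side cannot control. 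And (ii) $7$ is indeed the least integer strictly exceeding the ambient dimension $6$, which is what makes $w^{-1}$ integrable on $\R^6$. The rescaling $\xi_j=2^{k_j}\eta_j$, the identity $\lV K_{k_1,k_2}\rV_{L^1_x}\simeq\lV\FF^{-1}(g\,\Theta)\rV_{L^1}$, and the relation $\lV\partial^m_{\eta_j}g\rV_{L^\infty}=2^{mk_j}\lV\partial^m_{\xi_j}f\rV_{L^\infty}$ all check out, as does the Leibniz step (using that $\Theta\in C_c^\infty$ with $|\operatorname{supp}\Theta|\lesssim 1$). One mild alternative worth noting: using the block-wise weight $1+|y_1|^{8}+|y_2|^{8}$ with $|y_j|^{8}=\langle y_j,y_j\rangle^4$ avoids the odd-power subtlety, since one can then pair $|y_j|^{8}K$ directly against $\Delta_{\eta_j}^4(g\,\Theta)$; but that costs one extra order of derivative ($m$ up to $8$), so your coordinate-wise choice is the sharp one matching the stated exponent $7$. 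Treating the statement as vacuous when $f\notin C^7$ is the usual convention and is fine.
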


To obtain the energy estimates, one often needs to analyze the symbols. Define a class of symbol as follows
\begin{equation*}
\mathcal{S}^{\infty}:=\{m:\R^6\rightarrow\C,\ m\ \text{is continous and } \lV\mathcal{F}^{-1}m\rV_{L^1}<\infty\},
\end{equation*}
whose associated norms are defined as
\begin{equation*}
\lV m\rV_{\mathcal{S}^{\infty}}:=\lV \mathcal{F}^{-1}m\rV_{L^1},
\end{equation*}
and
\begin{equation*}
\lV m\rV_{\mathcal{S}^{\infty}_{k,k_1,k_2}}:=\lV m(\xi,\eta)\varphi_k(\xi)\varphi_{k_1}(\xi-\eta)\varphi_{k_2}(\eta)\rV_{\mathcal{S^{\infty}}}.
\end{equation*}
Then we have

\begin{lemma}[Bilinear estimate, \cite{IoPu}]
	Given $m\in\mathcal{S}^{\infty}$ and two well-defined functions $f_1,\ f_2$, then the following estimate holds:
	\begin{gather}   \label{Symbol_Pre}
	\lV \mathcal{F}^{-1}\big(\int_{\R^3}m(\xi,\eta)\widehat{f}_1(\xi-\eta)\widehat{f}_2(\eta)d\eta\big)(x)\rV_{L^r}\lesssim\lV m\rV_{\mathcal{S}^{\infty}} \lV f_1\rV_{L^p} \lV f_2\rV_{L^q},\ \ \frac{1}{r}=\frac{1}{p}+\frac{1}{q}.
	\end{gather}
\end{lemma}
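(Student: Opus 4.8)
The plan is to reduce the bilinear multiplier operator to a superposition of elementary modulation–translation operators, exploiting the only hypothesis available, namely that $\mathcal{F}^{-1}m\in L^1(\R^6)$. Set $K:=\mathcal{F}^{-1}m$, so that $m(\xi,\eta)=\int_{\R^6}K(y,z)\,e^{-i(y\cdot\xi+z\cdot\eta)}\,dy\,dz$ and, by definition, $\lV K\rV_{L^1(\R^6)}=\lV m\rV_{\mathcal{S}^{\infty}}$. It is enough to prove the estimate for $f_1,f_2\in\mathcal{S}(\R^3)$ and then pass to general $f_1\in L^p$, $f_2\in L^q$ by density, since the right-hand side depends continuously on $(f_1,f_2)$ in the $L^p\times L^q$ topology. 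This is the standard Coifman--Meyer-type argument; no new idea beyond the excerpt is needed.

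First I would substitute the representation of $m$ into the operator and apply Fubini's theorem — legitimate for Schwartz $f_1,f_2$, where all the integrands are absolutely integrable — to pull the $(y,z)$-integration of $K$ to the outside. After the change of variables $\sigma=\xi-\eta$ and recognizing the two remaining inner integrals in $\sigma$ and $\eta$ as inverse Fourier transforms of $\widehat f_1$ and $\widehat f_2$, one obtains the pointwise identity
\begin{equation*}
\mathcal{F}^{-1}\Big(\int_{\R^3}m(\xi,\eta)\widehat{f}_1(\xi-\eta)\widehat{f}_2(\eta)\,d\eta\Big)(x)=c\int_{\R^6}K(y,z)\,f_1(x-y)\,f_2(x-y-z)\,dy\,dz,
\end{equation*}
where $c$ is a fixed normalization constant (a power of $2\pi$) that is irrelevant in view of the $\lesssim$ convention fixed in the Notations subsection.

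Then I would take the $L^r_x$ norm of both sides, use Minkowski's integral inequality to move the norm inside the $(y,z)$-integral, and apply Hölder's inequality in $x$ with the relation $\tfrac1r=\tfrac1p+\tfrac1q$ together with translation invariance of Lebesgue norms:
\begin{equation*}
\big\lV f_1(\cdot-y)\,f_2(\cdot-y-z)\big\rV_{L^r}\le\lV f_1(\cdot-y)\rV_{L^p}\,\lV f_2(\cdot-y-z)\rV_{L^q}=\lV f_1\rV_{L^p}\,\lV f_2\rV_{L^q}.
\end{equation*}
The leftover $(y,z)$-integral of $|K(y,z)|$ equals $\lV K\rV_{L^1(\R^6)}=\lV m\rV_{\mathcal{S}^{\infty}}$, which closes the estimate.

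Since every step is elementary, there is no genuine obstacle. The only two points that deserve a line of care are the justification of the interchange of integrals (handled by first restricting to Schwartz $f_1,f_2$, after which the density argument gives the general case) and the tracking of the Fourier-normalization constant in the kernel identity, which, as noted, does not affect the conclusion because the statement is phrased with $\lesssim$.
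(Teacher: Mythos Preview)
Your argument is correct and is exactly the standard proof of this bilinear multiplier estimate. The paper does not supply its own proof of this lemma; it simply cites \cite{IoPu}, so there is nothing further to compare.
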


\begin{lemma}[Hardy-type inequality and interpolation inequality]    \label{Hardy}
	Given $1\leq p\leq \infty$, $N\in\N$, and two well-defined functions $f$, $g$, then the following estimates holds:
	\begin{gather}    \label{Hardy_1}
	\lV fg\rV_{\dot{H}^N}\lesssim \sum_k 2^{Nk^+}\lV P_k f\rV_{\infty} \lV g\rV_{\dot{H}^N},\\   \label{Hardy_2}
	\lV fg\rV_{\dot{H}^{N}}\lesssim \lV f\rV_{\dot{H}^N} \lV g\rV_{\infty}+\lV f\rV_{\infty} \lV g\rV_{\dot{H}^{N}},\\\label{chazhi}
	\lV f\rV_{L^p}\lesssim \lV f\rV_{L^{\infty}}^{1-2/p}\lV f\rV_{L^2}^{2/p},\ {\rm for}\ 2\leq p\leq \infty.
	\end{gather}
\end{lemma}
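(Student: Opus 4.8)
The statement bundles three standard estimates, so the plan splits accordingly: prove \eqref{chazhi} by elementary interpolation, and prove \eqref{Hardy_1} and \eqref{Hardy_2} by a Littlewood--Paley/Bony paraproduct argument (these are Kato--Ponce type product rules). For \eqref{chazhi}, when $2\le p<\infty$ one writes $\int_{\R^3}|f|^p\,dx=\int_{\R^3}|f|^{p-2}|f|^2\,dx\le\|f\|_{L^\infty}^{p-2}\|f\|_{L^2}^2$ and takes $p$-th roots, noting $(p-2)/p=1-2/p$; the endpoints $p=2$ and $p=\infty$ are trivial. Nothing else is needed there.

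For \eqref{Hardy_1} and \eqref{Hardy_2} I would use the square-function characterization $\|h\|_{\dot H^N}\approx\big\|(2^{Nk}\|P_kh\|_{L^2})_{k\in\Z}\big\|_{\ell^2}$ together with Bony's decomposition $fg=T_fg+T_gf+R(f,g)$, where $T_fg=\sum_j P_{\le j-2}f\cdot P_jg$ (low--high), $T_gf=\sum_j P_jf\cdot P_{\le j-2}g$ (high--low), and $R(f,g)=\sum_{|j-j'|\le 1}P_jf\cdot P_{j'}g$ (high--high). Fourier support forces $P_k(T_fg)$ and $P_k(T_gf)$ to see only frequencies $|j-k|\le O(1)$, and $P_k(R(f,g))$ to see only $j,j'\ge k-O(1)$. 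The routine is then to bound $\|P_k(\cdot)\|_{L^2}$ for each piece by Bernstein's inequality \eqref{BernIneq} and H\"older, and to reassemble the weighted $\ell^2$ sum in $k$.

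For \eqref{Hardy_2}: the low--high term gives $\|P_k(T_fg)\|_{L^2}\lesssim\|P_{\le k-2}f\|_{L^\infty}\|P_kg\|_{L^2}\lesssim\|f\|_{L^\infty}\|P_kg\|_{L^2}$, hence $\|T_fg\|_{\dot H^N}\lesssim\|f\|_{L^\infty}\|g\|_{\dot H^N}$; the high--low term is symmetric and gives $\|f\|_{\dot H^N}\|g\|_{L^\infty}$; and for the high--high term $2^{Nk}\|P_k(R(f,g))\|_{L^2}\lesssim\sum_{\ell\ge k}2^{N(k-\ell)}\|P_\ell f\|_{L^\infty}\big(2^{N\ell}\|P_\ell g\|_{L^2}\big)$, where (crucially) $N>0$ makes $\ell\mapsto 2^{N(k-\ell)}\mathbf 1_{\ell\ge k}$ summable, so Young's convolution inequality on $\ell^2(\Z)$ closes it with $\|f\|_{L^\infty}\|g\|_{\dot H^N}$ (or, reversing roles, $\|f\|_{\dot H^N}\|g\|_{L^\infty}$); adding the three bounds gives \eqref{Hardy_2}, and $N=0$ is just H\"older. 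For \eqref{Hardy_1} I would run the same decomposition but keep the individual blocks $P_mf$ on the $f$-factor. In $T_fg$ and $R(f,g)$ the derivatives fall on $g$, and $\|P_{\le k-2}f\|_{L^\infty}\le\sum_m\|P_mf\|_{L^\infty}\le\sum_m 2^{Nm^+}\|P_mf\|_{L^\infty}$ (since $2^{Nm^+}\ge 1$), so these pieces are $\lesssim\big(\sum_m 2^{Nm^+}\|P_mf\|_{L^\infty}\big)\|g\|_{\dot H^N}$; in $T_gf$ the derivatives fall on $f$, so $2^{Nk}\|P_k(T_gf)\|_{L^2}\lesssim 2^{Nk^+}\|P_kf\|_{L^\infty}\|P_{\le k-2}g\|_{L^2}$, and taking the $\ell^2_k$ norm (using that the $\ell^2$ norm of a sequence is at most its $\ell^1$ norm, together with $\|P_{\le k-2}g\|_{L^2}\lesssim\|g\|_{L^2}$) yields the remaining contribution $\lesssim\big(\sum_m 2^{Nm^+}\|P_mf\|_{L^\infty}\big)\|g\|_{L^2}$. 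Adding proves \eqref{Hardy_1}.

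The arguments are routine; the one place deserving care is the reassembly of the frequency sums in the high--high interaction of \eqref{Hardy_2} and in the high--low interaction of \eqref{Hardy_1}: one must exploit $N>0$ so that the geometric weight $2^{N(k-\ell)}$ is $\ell^1$-summable over $\ell\ge k$ and invoke Young's inequality rather than a lossy Cauchy--Schwarz, and in \eqref{Hardy_1} one must organize the paraproduct so that the weight $2^{Nk^+}$ is attached precisely to the Littlewood--Paley block of $f$ that carries the derivatives (the high--low block $T_gf$), while the low--high and high--high blocks only require the unweighted sum $\sum_m\|P_mf\|_{L^\infty}$, which is indeed dominated by $\sum_m 2^{Nm^+}\|P_mf\|_{L^\infty}$.
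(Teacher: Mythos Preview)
Your proof is correct and follows exactly the standard Littlewood--Paley paraproduct route the paper invokes (the paper simply cites \cite{HJLZ-incomp} for \eqref{Hardy_1}--\eqref{Hardy_2} and H\"older for \eqref{chazhi}). One clarification worth noting: your high--low estimate for $T_gf$ in \eqref{Hardy_1} correctly lands on $\big(\sum_m 2^{Nm^+}\|P_mf\|_{L^\infty}\big)\|g\|_{L^2}$ rather than $\|g\|_{\dot H^N}$, and in fact \eqref{Hardy_1} as literally stated with the homogeneous norm on the right fails (pair a high-frequency $f$ with a low-frequency $g$); the paper's applications always place the inhomogeneous $H^N$ norm on $g$, which is precisely what your argument establishes.
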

\begin{proof}
	The proof of the first two bounds is standard by Littlewood-Paley decomposition, see Lemma 2.5 in \cite{HJLZ-incomp} for detail. The last bound is obtained by H\"{o}lder immediately.
\end{proof}

Finally, we need the following weighted $L^{\infty}-L^2$ estimate and weighted $H^N$ estiamtes.
\begin{lemma}[Lemma 3.3, \cite{Si}]Let $f\in H^2(\R^3)$, $r=|x|$, then there holds
	\begin{equation}   \label{rv}
	\lV \<r\>f\rV_{L^{\infty}}\lesssim \sum_{|\alpha|\leq 1}\lV\d_r\tilde{\Om}^{\alpha}f\rV_{L^2}^{1/2}\sum_{|\alpha|\leq 2}\lV\tilde{\Om}^{\alpha}f\rV_{L^2}^{1/2},
	\end{equation}
	provided the right hand side is finite.
\end{lemma}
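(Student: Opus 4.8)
The plan is to treat \eqref{rv} as a weighted Sobolev inequality of Klainerman type, following the argument of \cite[Lemma 3.3]{Si}: I would pass to polar coordinates, control angular regularity by Sobolev embedding on the unit sphere, and control the radial weight by a one-dimensional weighted Agmon-type inequality. First, write $x=r\om$ with $r=|x|>0$, $\om\in\S^2$, so that it suffices to bound $\langle r\rangle\,|f(r\om)|$ uniformly in $(r,\om)$. The point is that the rotation fields $\Om=x\wedge\nab$ are tangent to every sphere $\{|x|=r\}$, commute with $\d_r$, involve no negative power of $r$, and $\sum_i\Om_i^2$ is a fixed multiple of the Laplace--Beltrami operator on $\{|x|=r\}$; hence $\sum_{|\alpha|\leq k}\lV\Om^\alpha f(r,\cdot)\rV_{L^2(\S^2)}$ is comparable, uniformly in $r$, to $\lV f(r,\cdot)\rV_{H^k(\S^2)}$ (here the angular $L^2$-norm is taken over the unit sphere). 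Since $\dim\S^2=2$, the Agmon/Gagliardo--Nirenberg inequality on $\S^2$ gives, for each fixed $r$,
\[
|f(r\om)|^2\ \lesssim\ \lV f(r,\cdot)\rV_{L^2(\S^2)}\,\lV f(r,\cdot)\rV_{H^2(\S^2)}\ \lesssim\ \lV f(r,\cdot)\rV_{L^2(\S^2)}\,\sum_{|\alpha|\leq 2}\lV\Om^\alpha f(r,\cdot)\rV_{L^2(\S^2)} .
\]

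Next I would insert the weight $\langle r\rangle^2$ and recognize $L^2(\R^3)$-norms. For nonnegative $g\in C^1(0,\infty)$ with $\langle s\rangle^2 g(s)\to 0$ along a sequence $s\to\infty$ (which holds for the relevant choices since $f\in L^2$),
\[
\langle r\rangle^2 g(r)\ =\ -\int_r^\infty\big(2s\,g(s)+\langle s\rangle^2 g'(s)\big)\,ds\ \leq\ \int_0^\infty \langle s\rangle^2\,|g'(s)|\,ds ,
\]
the term $2s\,g(s)\geq0$ being discarded. Applying this with $g(s)=\lV f(s,\cdot)\rV_{L^2(\S^2)}^2$, so that $|g'(s)|\leq 2\int_{\S^2}|f|\,|\d_s f|\,d\om$, and then using Cauchy--Schwarz in $(s,\om)$ — the weight $\langle s\rangle^2\approx s^2$ for $s\gtrsim1$ restoring the measure $s^2\,ds\,d\om$ of $L^2(\R^3)$ — controls $\langle r\rangle^2\lV f(r,\cdot)\rV_{L^2(\S^2)}^2$ by $\lV f\rV_{L^2}\,\lV\d_r f\rV_{L^2}$. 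The angular factor $\lV f(r,\cdot)\rV_{H^2(\S^2)}$ from Step~1 carries its own $\langle r\rangle$-weight and must be handled in the same spirit, using $[\d_r,\Om]=0$; since this factor already carries two rotation fields, one cannot afford to spend a further radial derivative on it, so distributing the weight $\langle r\rangle^2$ and the admissible derivatives so that the outcome is exactly the right-hand side of \eqref{rv} (whose every term has total order at most two per factor with at most one radial derivative) is the delicate bookkeeping, carried out in \cite[Lemma 3.3]{Si}. For $r\lesssim1$ one has $\langle r\rangle\approx1$, and the contribution of the lower-order part of $\langle r\rangle^2=1+r^2$ is absorbed via the Hardy inequality $\lV |x|^{-1}h\rV_{L^2(\R^3)}\lesssim\lV\nab h\rV_{L^2(\R^3)}$ (exploiting that $\Om$ vanishes like $|x|$ at the origin) together with $f\in H^2(\R^3)\hookrightarrow L^\infty$; this is where the hypothesis $f\in H^2$ and the finiteness of the right-hand side enter. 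Finally, $\tilde\Om_i=\Om_i+A_i$ with $A_i$ bounded constant matrices (and $\tilde\Om_i d=\Om_i d$), so $\Om^\alpha$ and $\tilde\Om^\alpha$ differ only by combinations of lower-order terms $\tilde\Om^\beta$, $|\beta|<|\alpha|$, and replacing $\Om$ by $\tilde\Om$ throughout produces only terms already present on the right-hand side of \eqref{rv}.

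The step I expect to cause the most trouble is the second one. The angular estimate forces the asymmetric splitting $|f(r\om)|^2\lesssim\lV f(r,\cdot)\rV_{L^2(\S^2)}\lV f(r,\cdot)\rV_{H^2(\S^2)}$, and one then has to supply the weight $\langle r\rangle^2$ and convert to $L^2(\R^3)$-norms by radial integration by parts without ever exceeding, on any factor, the derivative budget of \eqref{rv} — in particular without landing an additional $\d_r$ on the $H^2(\S^2)$-factor — while still capturing the interpolation (half-power) structure and the correct behavior near the origin. The favorable sign of $\d_s(\langle s\rangle^2)=2s$ is what makes the radial integration by parts afford this; the full accounting is in \cite[Lemma 3.3]{Si}.
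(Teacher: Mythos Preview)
The paper does not give its own proof of this lemma: it is quoted directly as \cite[Lemma~3.3]{Si} and immediately followed by the next lemma, with no proof environment in between. There is therefore nothing in the paper to compare your attempt against.

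That said, your sketch is exactly the Sideris--Klainerman strategy that underlies \cite[Lemma~3.3]{Si}: pass to polar coordinates $x=r\omega$, use the Agmon inequality on $\S^2$ to control the angular $L^\infty$ by $\lV f(r,\cdot)\rV_{L^2(\S^2)}^{1/2}\lV f(r,\cdot)\rV_{H^2(\S^2)}^{1/2}$ (with $H^2(\S^2)$ expressed via $\Om$), and handle the radial weight by the one-dimensional identity $r^2 g(r)=-\int_r^\infty \d_s(s^2 g(s))\,ds$, discarding the term $2s\,g(s)\geq 0$ so that only $s^2 g'(s)$ survives and Cauchy--Schwarz rebuilds the $L^2(\R^3)$ measure $s^2\,ds\,d\omega$. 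Your observations that $[\d_r,\Om]=0$ and that $\tilde\Om_i=\Om_i+A_i$ differs from $\Om_i$ only by a bounded zero-order matrix are also the correct ingredients. The point you flag as delicate --- distributing the weight $\langle r\rangle^2$ and the single allowed $\d_r$ so that no $\d_r$ ever lands on the $H^2(\S^2)$ factor (which already carries two rotations) --- is precisely the bookkeeping carried out in \cite{Si}, and your identification of it is accurate. Since the present paper defers the argument entirely to \cite{Si}, your outline is consistent with what the paper invokes.
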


\begin{lemma}
	For any $N\geq 0$, we have
	\begin{equation}           \label{xidxif}
	\lV\FF^{-1}(|\xi|\nab_{\xi}\widehat{f})\rV_{H^N}\lesssim \lV x\cdot \nab_x f\rV_{H^N}+\lV \Om f\rV_{H^N}+\lV f\rV_{H^N}.
	\end{equation}
\end{lemma}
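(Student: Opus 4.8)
The plan is to reduce the bound to the trivial $L^2$-boundedness of zeroth-order Riesz-type Fourier multipliers, by means of an explicit algebraic identity on the Fourier side. With the convention $\widehat f(\xi)=\int_{\R^3}e^{-ix\cdot\xi}f(x)\,dx$ used in the paper, I would first record the elementary relations $\FF(\partial_{x_k}f)=i\xi_k\widehat f$ and $\FF(x_jf)=i\partial_{\xi_j}\widehat f$, which give
\[
\FF(x_j\partial_{x_k}f)=-\delta_{jk}\widehat f-\xi_k\partial_{\xi_j}\widehat f .
\]
Summing over $j=k$ yields $\FF(x\cdot\nabla_xf)=-3\widehat f-\xi\cdot\nabla_\xi\widehat f$, hence $\xi\cdot\nabla_\xi\widehat f=-\FF(x\cdot\nabla_xf+3f)$; antisymmetrizing in $(j,k)$ yields $(\xi_k\partial_{\xi_j}-\xi_j\partial_{\xi_k})\widehat f=-\FF\big((x_j\partial_{x_k}-x_k\partial_{x_j})f\big)$, and $(x_j\partial_{x_k}-x_k\partial_{x_j})f$ is, up to sign, a component of $\Omega f=(x\wedge\nabla)f$.

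Next I would exploit the pointwise identity, valid for $\xi\neq0$,
\[
|\xi|^2\partial_{\xi_j}\widehat f=\sum_{k=1}^3\xi_k^2\,\partial_{\xi_j}\widehat f=\sum_{k=1}^3\xi_k\Big[\xi_j\partial_{\xi_k}\widehat f+(\xi_k\partial_{\xi_j}-\xi_j\partial_{\xi_k})\widehat f\Big]=\xi_j\,(\xi\cdot\nabla_\xi\widehat f)+\sum_{k=1}^3\xi_k(\xi_k\partial_{\xi_j}-\xi_j\partial_{\xi_k})\widehat f,
\]
and substitute the identities above. Dividing by $|\xi|^2$ and multiplying by $|\xi|$ gives
\[
|\xi|\,\partial_{\xi_j}\widehat f=-\frac{\xi_j}{|\xi|}\,\FF\big(x\cdot\nabla_xf+3f\big)-\sum_{k=1}^3\frac{\xi_k}{|\xi|}\,\FF\big((x_j\partial_{x_k}-x_k\partial_{x_j})f\big).
\]
After applying $\FF^{-1}$, every summand on the right is a Fourier multiplier with symbol $\xi_\ell/|\xi|$, which satisfies $|\xi_\ell|/|\xi|\le1$ and commutes with $\langle\nabla\rangle$, hence acts as a contraction on $H^N$ by Plancherel. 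Therefore
\[
\lV\FF^{-1}(|\xi|\partial_{\xi_j}\widehat f)\rV_{H^N}\lesssim\lV x\cdot\nabla_xf\rV_{H^N}+\lV f\rV_{H^N}+\lV\Omega f\rV_{H^N},
\]
and summing over $j=1,2,3$ yields \eqref{xidxif}.

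The computation is elementary; the only point worth a remark is the degree-zero multiplier $\xi_\ell/|\xi|$, which is not smooth at the origin --- but for $L^2$-based spaces this is harmless, since its modulus is bounded by $1$, so no Mikhlin--H\"ormander machinery is needed. To be fully rigorous I would first run the argument for $f\in\mathcal S$, where $\widehat f$ is smooth and $|\xi|\nabla_\xi\widehat f$ is continuous with rapid decay so that all the identities hold pointwise, and then pass to general $f$ with finite right-hand side by a density or mollification argument. There is no genuine obstacle here: the lemma is a repackaging of the commutation relations between $|\nabla|$, the scaling generator $x\cdot\nabla_x$ (built into $S$) and the rotation generators $\Omega$, which is precisely why the weighted $L^2$ bounds on $\nabla_\xi\widehat{\Psi^{(a)}}$ can be controlled by the energy functionals involving $S$ and $\tilde\Omega$.
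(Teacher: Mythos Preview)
Your proof is correct and follows essentially the same approach as the paper. The paper's proof is a one-line sketch citing the vector identity $|\xi|\nabla_\xi=\dfrac{\xi}{|\xi|}(\xi\cdot\nabla_\xi)-\dfrac{\xi}{|\xi|}\wedge\Omega(\xi)$ with $\Omega(\xi)=\xi\wedge\nabla_\xi$; your coordinate computation $|\xi|^2\partial_{\xi_j}=\xi_j(\xi\cdot\nabla_\xi)+\sum_k\xi_k(\xi_k\partial_{\xi_j}-\xi_j\partial_{\xi_k})$ is exactly this identity written componentwise, and you have simply supplied the details (the Fourier-side dictionary relating $\xi\cdot\nabla_\xi$ to $x\cdot\nabla_x$ and $\xi_k\partial_{\xi_j}-\xi_j\partial_{\xi_k}$ to $\Omega$, and the $H^N$-boundedness of the Riesz multipliers $\xi_\ell/|\xi|$) that the paper leaves implicit.
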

\begin{proof}
	This bound is obtained by the relation
	\begin{equation*}
	|\xi|\nab_{\xi}=\frac{\xi}{|\xi|}(\xi\cdot\nab_{\xi})-\frac{\xi}{|\xi|}\wedge \Om(\xi),
	\end{equation*}
	where $\Om(\xi)=\xi\wedge\nab_{\xi}$.
\end{proof}

\subsection{The main bootstrap proposition}
By the similar argument to \cite[Section 2.1]{CW}, applying the vector fields $Z$ to the system (\ref{Main_Sys}), we obtain
\begin{equation}         \label{Main_Sys_VecFie}
\left\{\begin{aligned}
&\d_t Z^a\rr+\nab\cdot Z^a u= F_1^{(a)},\\
&\d_t Z^a u -\tfrac{\mu_1}{2}\Delta (S-1)^{a_1}\Gamma^{a'} u-(\tfrac{\mu_1}{2}+\mu_2)\nab\div(S-1)^{a_1}\Gamma^{a'}u+\tfrac{P'(1+\rr)}{1+\rr} \nab Z^a \rr=F_2^{(a)},\\
&\d_t^2Z^a\phi-\tfrac{1}{1+\rr}\Delta Z^a\phi=F_3^{(a)},
\end{aligned}
\right.
\end{equation}
where the nonlinear terms are
\begin{equation}           \label{F1a}
F_1^{(a)}:=-\sum_{b+c=a}C^b_a (Z^b u\cdot\nab Z^c\rr+Z^b\rr \cdot\nab Z^c u),
\end{equation}
\begin{equation}        \label{F2a}
\begin{aligned}
F_2^{(a)}:=&-\sum_{b+c=a}C^b_a Z^b u\cdot\nab Z^c u-\sum_{b+c=a,|b|\geq 1}C^b_a Z^b\Big(\tfrac{P'(1+\rr)}{1+\rr}\Big)\nab Z^c\rr\\
&-\sum_{b+c=a}C^b_aZ^b \Big(\tfrac{\rr}{1+\rr}\Big)\Big[ \tfrac{\mu_1}{2}\Delta (S-1)^{(c_1)}\Gamma^{(c')} u+(\tfrac{\mu_1}{2}+\mu_2)\nab\div(S-1)^{c_1}\Gamma^{c'}u \Big]\\
&+\sum_{b+c+e=a}C^{b,c}_aZ^b \Big(\tfrac{1}{1+\rr}\Big)\Big[\tfrac{1}{2} \nab\big( \nab(S-1)^{c_1}\Gamma^{c'}\phi\cdot\nab(S-1)^{e_1}\Gamma^{e'}\phi \big)\\
&\ \ \ \ \ \ -\d_j\big(  \nab(S-1)^{c_1}\Gamma^{c'}\phi\cdot\d_j(S-1)^{e_1}\Gamma^{e'}\phi \big) \Big]+(S+1)^{a_1}\Gamma^{a'}Err2,\\
\end{aligned}
\end{equation}
\begin{equation}      \label{F3a}
\begin{aligned}
F^{(a)}_3:=&-\sum_{b+c=a,|b|\geq 1}C^b_a Z^b\Big(\tfrac{1}{1+\rr}\Big)\Delta Z^c \phi -\sum_{b+c=a}C_a^b(\d_t Z^b u\cdot\nab Z^c\phi+2Z^b u\cdot\nab\d_t Z^c\phi)\\
&-\sum_{b+c+e=a}C_a^{b,c}Z^bu\cdot\nab(Z^cu\cdot\nab Z^e\phi)+(S+2)^{a_1}\Gamma^{a'}Err3.
\end{aligned}
\end{equation}
and the constants are
\begin{equation*}
C_a^b:=\tfrac{a!}{b!(a-b)!},\ C_a^{b,c}:=\tfrac{a!}{b!c!(a-b-c)!}.
\end{equation*}
In order to prove the decay estimates of $\nab\rr^{(a)}$ and $\nab u^{(a)}$, we define
\begin{equation*}
v^{(a)}:=|\nab|^{-1}\nab\cdot u^{(a)}=|\nab|^{-1}\nab\cdot Z^a u,\ w^{(a)}:=\P u^{(a)}=\P Z^a u.
\end{equation*}
Then the system (\ref{Main_Sys_VecFie}) can be rewritten as
\begin{equation}         \label{Main_Sys_rr,v,wVecFie}
\left\{\begin{aligned}
&\d_t \rr^{(a)}+|\nab|v^{(a)}= F_1^{(a)},\\
&\d_t v^{(a)}-(\mu_1+\mu_2)\Delta v^{(a)}-a\gamma|\nab| \rr^{(a)}=|\nab|^{-1}\nab\cdot \tilde{F}^{(a)}_2,\\
&\d_t w^{(a)} -\tfrac{\mu_1}{2}\Delta w^{(a)}=\P \tilde{F}_2^{(a)},\\
&\d_t^2\phi^{(a)}-\tfrac{1}{1+\rr}\Delta \phi^{(a)}=F_3^{(a)},
\end{aligned}
\right.
\end{equation}
where the nonlinear term $\tilde{F}_2^{(a)}$ is
\begin{align*}
\tilde{F}_2^{(a)}:=\sum_{l=1}^{a_1}C_{a_1}^l(-1)^l\big( \tfrac{\mu_1}{2}\Delta S^{a_1-l}\Gamma^{a'}u+(\tfrac{\mu_1}{2}+\mu_2)\nab\div S^{a_1-l}\Gamma^{a'}u \big)+(a\gamma-\tfrac{P'(1+\rr)}{1+\rr})\nab\rr^{(a)}+F_2^{(a)}.
\end{align*}

To state the main proposition we define the normalized solution $\Phi$ and its profile $\Psi$ by
\begin{equation*}
\begin{aligned}
\Phi(t):=(\d_t+i|\nab|)\phi(t),\ \  \Psi(t):=e^{-it|\nab|}\Phi(t),
\end{aligned}
\end{equation*}
which will play an important role in the energy estimates. The function $\phi$ can be recovered from the normalized variable $\Phi$ by the formulas
\begin{equation*}
\d_t \phi=(\Phi+\overline{\Phi})/2,\ \ |\nab|\phi=(\Phi-\overline{\Phi})/2i.
\end{equation*}
More generally, we define $\Phi^{(a)}$ and $\Psi^{(a)}$ for $1\leq |a|\leq N_1$ by
\begin{equation}         \label{profile_phia}
\begin{aligned}
\Phi^{(a)}(t):=(\d_t+i|\nab|)\phi^{(a)}(t),\ \ \Psi^{(a)}(t):=e^{-it|\nab|}\Phi^{(a)}(t).
\end{aligned}
\end{equation}

Theorem \ref{Main_thm} is a consequence of the following proposition:
\begin{proposition}   \label{Main_Prop}
    Assume that $(\rr,u,\phi)$ is a solution to (\ref{Main_Sys}) on some time interval $[0,T]$, $T\geq 1$ with initial data satisfying the assumptions (\ref{MainAss_ini}). Assume also that, for any $t\in[0,T]$, the solution satisfies the bootstrap hypothesis
    \begin{gather}           \label{Main_Prop_Ass1}
        \sup\limits_{|a|\leq N_1,t\in[0,T]} \{\lV(\rr^{(a)},  u^{(a)})\rV_{H^{N(a)}}+\lV (\nab\rr^{(a)},\nab u^{(a)})\rV_{L^2([0,t]:H^{N(a)-1}\times H^{N(a)})}     +\<t\>^{-H(a)\delta}\lV \Phi^{(a)}\rV_{H^{N(a)}}\}\leq \epsilon_1,   \\\label{Main_Prop_Ass2}
        \sup_{|a|\leq N_1-1,t\in[0,T]}\<t\>^{-H(|a|+1)\delta}\lV\mathcal{F}^{-1}(|\xi|\d_{\xi}\widehat{\Psi^{(a)}})\rV_{H^{N(|a|+1)}} \leq \ep_1,
    \end{gather}
    where $\ep_1=\ep_0^{2/3}$, $\delta=10^{-15}$,
    \begin{gather*}
    H(0)=0;\ H(a)=1,\ \mathrm{for}\ 1\leq |a|\leq N_1-2;\ H(a)=2,\ \mathrm{for}\ N_1-1\leq|a|\leq N_1.\\
    N(a)=N_0-|a|h,\ N_0=60,\ h=6.
    \end{gather*}
    Then the following improved bounds hold, for any $t\in[0,T]$,
    \begin{gather}\label{Main_Prop_result1}
        \sup\limits_{|a|\leq N_1,t\in[0,T]} \{\lV  (\rr^{(a)},v^{(a)})\rV_{H^{N(a)}}+\lV (\nab\rr^{(a)}, \nab v^{(a)})\rV_{L^2([0,t]:H^{N(a)-1}\times H^{N(a)})} +\<t\>^{-H(a)\delta}\lV \Phi^{(a)}\rV_{H^{N(a)}}\}\lesssim \epsilon_0,   \\\label{Main_Prop_result2}
        \sup_{|a|\leq N_1-1,t\in[0,T]}\<t\>^{-H(|a|+1)\delta}\lV\mathcal{F}^{-1}(|\xi|\d_{\xi}\widehat{\Psi^{(a)}})\rV_{H^{N(|a|+1)}} \lesssim \ep_0.
    \end{gather}
\end{proposition}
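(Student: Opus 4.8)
The plan is to run the standard continuity/bootstrap argument: assuming \eqref{Main_Prop_Ass1}--\eqref{Main_Prop_Ass2} on $[0,T]$, I would first extract from them all the pointwise decay that is needed, and then close the improved bounds \eqref{Main_Prop_result1}--\eqref{Main_Prop_result2} in the order (i)~$(\rr^{(a)},u^{(a)})$, (ii)~$\Phi=\Phi^{(0)}$, (iii)~$\Phi^{(a)}$ for $1\le |a|\le N_1$, and (iv)~the $L^2$-bound on $|\xi|\d_\xi\widehat{\Psi^{(a)}}$. For the decay step I would feed the energy and weighted hypotheses into Duhamel's formula for the subsystem \eqref{Main_Sys_rr,v,wVecFie}: the divergence-free part $w^{(a)}$ is controlled directly by the heat semigroup via \eqref{Dec_heat}, while for $(\rr^{(a)},v^{(a)})$ one uses the operator $e^{tB}$, the linear bounds \eqref{e-tB_Lin}, \eqref{etB_L2-L2}, \eqref{e tB_Lp-Lp} and the low/high splitting \eqref{decomposition-e-tB} with cutoff $2^k\approx\langle t\rangle^{-c}$, $c>0$ small, which yields in particular the good decay $\|(\nab\rr,\nab u)\|_{L^\infty}\lesssim\ep_1\langle t\rangle^{-1-\delta}$ of \eqref{GoodDec-rho,u} together with slower rates for the higher iterates with $|a|\le N_1-1$. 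The decay of $\phi$ is obtained from the dispersive behaviour of $e^{it|\nab|}$ combined with the weighted $L^\infty$--$L^2$ inequality \eqref{rv} and \eqref{Dec_fjk}.

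With these decay estimates in hand, the energy estimate for the first two equations of \eqref{Main_Sys_VecFie} is routine: the viscosities $\mu_1,\mu_2>0$ give dissipation of $\nab u^{(a)}$, and the standard compressible Navier--Stokes device of pairing the momentum equation with $\nab\rr^{(a)}$ produces the $L^2_t$ control of $\nab\rr^{(a)}$; the quadratic and higher forcings \eqref{F1a}--\eqref{F2a} are then estimated in $L^2_xL^1_t$ by H\"older, putting a Step~1 decay factor on the term carrying fewer vector fields and the energy norm on the other factor (Lemma~\ref{Hardy}), so that the time integral converges and everything closes at $\ep_1^2=\ep_0^{4/3}\lesssim\ep_0$; the derivative budget $N(a)=N_0-|a|h$ with $h=6$ is what leaves room for the $\phi$-quadratic contributions and for the vector-field losses. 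The $|a|=0$ energy estimate for $\Phi$ is likewise direct: with the full budget $N(0)=N_0$ one treats $\tfrac{1}{1+\rr}$ perturbatively and $F_3$ as a source, getting $\|\Phi(t)\|_{H^{N_0}}\lesssim\ep_0$ without growth.

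The genuine difficulty is the energy estimate for $\Phi^{(a)}$, $1\le|a|\le N_1$. The energy identity reduces matters to bounding, for $b+c=a$, the quadratic interactions $\rr^{(b)}\Delta\phi^{(c)}$ and $u^{(b)}\cdot\nab\d_t\phi^{(c)}$, for which moving the absolute value inside the time integral fails because $\nab\rr^{(b)}$ and $u^{(b)}$ decay only like $\langle t\rangle^{-1}$ (and worse at top order). Passing to the profiles $\Phi^{(a)},\Psi^{(a)}$ one is led to the space--time integral \eqref{HardTerm} with phase $|\xi|-|\eta|$; the structural point is that its space-resonant set is empty, $\mathcal S=\emptyset$, since $\nab_\eta(|\xi|-|\eta|)=-\eta/|\eta|$ never vanishes. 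After a dyadic decomposition in the frequencies $k,k_1,k_2$, in the high--low ($k_1\sim k$, $k_2<k-5$) and high--high ($k_1\sim k_2>k+5$) regimes I would integrate by parts in $s$, estimating the boundary terms by the energies and the bulk terms by the decay of $\d_s\rr^{(b)}$ (via the $\rr$-equation and the $L^2_t$ dissipation) and of $\d_s\Psi^{(c)}=e^{-is|\nab|}\big(-\tfrac{\rr}{1+\rr}\Delta\phi^{(c)}+F_3^{(c)}\big)$, which is quadratic hence small; in the low--high ($k\sim k_2$, $k_1\le k+6$) regime I would integrate by parts in $\eta$, gaining an $s^{-1}$ from $\nab_\eta(|\xi|-|\eta|)$ and distributing the $\eta$-derivative onto the symbol, onto $\widehat{\rr^{(b)}}$ (controlled by $\|\nab\rr^{(b)}\|_{L^2}$), or onto $\widehat{\Psi^{(c)}}$ — this last case being exactly where the weighted hypothesis \eqref{Main_Prop_Ass2} enters — so that $\int^t s^{-1}(\cdots)\,ds$ costs at most a $\langle t\rangle^{\delta}$ loss, absorbed into $\langle t\rangle^{H(a)\delta}$. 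Near $\eta=0$ the symbol $m(\xi,\eta)\sim|\eta|$ compensates the singularity of $\eta/|\eta|$, and Lemma~\ref{AkBk_lem}, \eqref{Dec_fjk}, the bilinear estimate \eqref{Symbol_Pre} and Lemma~\ref{ContSymb} keep all symbols in $\SS^\infty$ with acceptable norms; the remaining cubic and error terms are subcritical.

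Finally, the weighted bound \eqref{Main_Prop_result2} follows from \eqref{xidxif}: it suffices to control $\|S\Psi^{(a)}\|$, $\|\Om\Psi^{(a)}\|$, $\|\Psi^{(a)}\|$ and $\|t\d_t\Psi^{(a)}\|$ in $H^{N(|a|+1)}$. Commuting $S$ and $\Om$ through $e^{-it|\nab|}$ and $(\d_t+i|\nab|)$ turns the first three into $H^{N(|a|+1)}$ energies of $\Phi^{(a')}$ with $|a'|=|a|+1\le N_1$, already bounded (with the admissible growth) by the previous step; for $t\d_t\Psi^{(a)}=t\,e^{-it|\nab|}\big(-\tfrac{\rr}{1+\rr}\Delta\phi^{(a)}+F_3^{(a)}\big)$ one repeats the resonance analysis above, the extra $t$ being absorbed by the $\langle t\rangle^{-1}$-decay of $\rr$ and the $s^{-1}$-gain from the $\eta$-integration by parts. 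I expect \textbf{the main obstacle} to be precisely Step~(iii): the interactions $\rr^{(b)}\Delta\phi^{(c)}$ and $u^{(b)}\cdot\nab\d_t\phi^{(c)}$ are time-resonant with no compensating null structure, so one must simultaneously exploit the emptiness of the space-resonant set, the frequency-dependent decay of $\rr$ and $u$ produced by $e^{tB}$, and the weighted bounds; balancing the resulting $s^{-1}$ and $\langle t\rangle^{\pm\delta}$ factors against the permitted slow growth $\langle t\rangle^{H(a)\delta}$ is the delicate point.
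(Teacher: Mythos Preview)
Your proposal is correct and follows essentially the same route as the paper: decay from Duhamel with the $e^{tB}$ low/high splitting (Section~3), dissipative energy estimates for $(\rr^{(a)},u^{(a)})$ including the pairing with $\nab\rr^{(a)}$ (Propositions~\ref{Prop_Ene_Sob}, \ref{Prop_Ene_VecFie}), the space--time resonance argument for $\Phi^{(a)}$ with the same high--low/high--high time-IBP versus low--high $\eta$-IBP dichotomy (Proposition~\ref{Prop_Ene_Weiphi}), and the reduction of \eqref{Main_Prop_result2} to $\|t\d_t\Psi^{(a)}\|_{H^{N(|a|+1)}}$ via \eqref{xidxif} (Proposition~\ref{Prop_Psia}). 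One small correction: in the low--high $\eta$-IBP, when the derivative lands on $\widehat{\rr^{(b)}}(\xi-\eta)$ the paper does not use $\|\nab\rr^{(b)}\|_{L^2}$ but rather controls $\|\langle r\rangle P_{k_1}\rr^{(b)}\|_{L^\infty}$ through the weighted estimate \eqref{rv} and the rotation vector fields $\tilde\Om^\alpha\rr^{(b)}$, which is where the extra vector-field room in the bootstrap is spent.
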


The bounds (\ref{Main_Prop_Ass2}) and (\ref{Main_Prop_result2}) are our main $L^2$ bounds on the derivatives of the profile $\Psi^{(a)}$ in the Fourier space. They correspond to weighted bounds in the physical space which play an important role in the energy estimates.

Before we proceed, as in \cite[Corollary 2.8]{HJLZ-incomp}, Lemma \ref{AkBk_lem} and the bootstrap assumptions of Proposition \ref{Main_Prop} yield
\begin{corollary}
	Under the assumptions of Proposition \ref{Main_Prop},  we have
	\begin{equation}       \label{Qjk_Psi}
	\Big(\sum_{(k,j)\in\mathcal{J}}2^{2N(|a|+1)k^++2k}2^{2j} \lV Q_{jk}\Psi^{(a)}\rV_{L^2}^2\Big)^{1/2} \lesssim \ep_1\<t\>^{H(|a|+1)\delta},
	\end{equation}
	for any $|a|\leq N_1-1$.
\end{corollary}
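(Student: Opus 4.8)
The plan is to combine the Hardy-type estimate of Lemma \ref{AkBk_lem} with the bootstrap hypothesis \eqref{Main_Prop_Ass2}, treating the cases $k\geq 0$ and $k\leq 0$ separately, exactly as in the definition of $B_k$. Write the quantity to be estimated as a sum over $(k,j)\in\mathcal J$ of $2^{2N(|a|+1)k^+}2^{2k}2^{2j}\lV Q_{jk}\Psi^{(a)}\rV_{L^2}^2$; the factor $2^{2j}\lV Q_{jk}\Psi^{(a)}\rV_{L^2}^2$ is precisely what appears inside $B_k(\Psi^{(a)})^2$. So the sum equals $\sum_{k\in\Z}2^{2N(|a|+1)k^+}2^{2k}B_k(\Psi^{(a)})^2$, and it suffices to bound this by $\ep_1^2\<t\>^{2H(|a|+1)\delta}$.

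First I would introduce the Fourier-localized weighted quantities $F_k(\Psi^{(a)}) = \lV P_k\Psi^{(a)}\rV_{L^2} + \sum_{l=1}^3\lV\varphi_k(\xi)\d_{\xi_l}\widehat{\Psi^{(a)}}(\xi)\rV_{L^2}$ from Lemma \ref{AkBk_lem}. The key link to \eqref{Main_Prop_Ass2} is that $\varphi_k(\xi)|\xi|\d_\xi\widehat{\Psi^{(a)}}\sim 2^k\varphi_k(\xi)\d_\xi\widehat{\Psi^{(a)}}$ on the support of $\varphi_k$, so that $2^k F_k(\Psi^{(a)}) \lesssim \lV P_k\Psi^{(a)}\rV_{L^2} + \lV\varphi_k(\xi)|\xi|\d_\xi\widehat{\Psi^{(a)}}\rV_{L^2}$ (using $2^k\lesssim 1$ for the first term when $k\leq 0$, and pulling out the $2^{N(|a|+1)k^+}$ weight for $k\geq 0$). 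Squaring and summing in $k$ with the weight $2^{2N(|a|+1)k^+}$, Plancherel and almost-orthogonality of the $P_k$ give $\sum_k 2^{2N(|a|+1)k^+}2^{2k}F_k(\Psi^{(a)})^2 \lesssim \lV\Psi^{(a)}\rV_{H^{N(|a|+1)}}^2 + \lV\FF^{-1}(|\xi|\d_\xi\widehat{\Psi^{(a)}})\rV_{H^{N(|a|+1)}}^2$. The first term is controlled by $\ep_1\<t\>^{H(|a|+1)\delta}$ via \eqref{Main_Prop_Ass1} (note $\lV\Psi^{(a)}\rV_{H^{N(a)}}=\lV\Phi^{(a)}\rV_{H^{N(a)}}$ since $e^{-it|\nabla|}$ is an isometry, and $N(|a|+1)\leq N(a)$), and the second directly by \eqref{Main_Prop_Ass2}.

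It then remains to pass from $\sum_k 2^{2N(|a|+1)k^+}2^{2k}F_k^2$ to $\sum_k 2^{2N(|a|+1)k^+}2^{2k}B_k^2$ using Lemma \ref{AkBk_lem}. For $k\geq 0$ this is immediate since $B_k\lesssim\sum_{|k'-k|\leq 4}F_{k'}$ and the weights $2^{2N(|a|+1)k^+}2^{2k}$ are comparable for $|k-k'|\leq 4$. For $k\leq 0$ one has $B_k\lesssim\sum_{k'}F_{k'}2^{-|k-k'|/2}\min\{1,2^{k'-k}\}$; here I would multiply by $2^k$, use $2^k\min\{1,2^{k'-k}\}\leq 2^{k'}$ and the Schur-test bound $\sum_{k}2^{-|k-k'|/2}\lesssim 1$ (with square-summation via Cauchy–Schwarz against the exponentially decaying kernel) to get $\sum_{k\leq 0}2^{2k}B_k^2 \lesssim \sum_{k'\in\Z}2^{2k'}F_{k'}^2$, which is absorbed into the already-estimated sum. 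Combining the two ranges yields the claimed bound. The main obstacle is purely bookkeeping: making sure the high-frequency weight $2^{2N(|a|+1)k^+}$ is handled correctly through the off-diagonal sum for $k\leq 0$ (where it is trivial, being $1$) and through the near-diagonal sum for $k\geq 0$; no genuinely new analytic input beyond Lemma \ref{AkBk_lem} and the two bootstrap assumptions is needed.
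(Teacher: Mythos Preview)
Your proposal is correct and follows exactly the approach the paper indicates (Lemma~\ref{AkBk_lem} combined with the bootstrap assumptions \eqref{Main_Prop_Ass1}--\eqref{Main_Prop_Ass2}); the paper itself gives no details beyond this reference. One cosmetic slip: the contribution $\sum_k 2^{2N(|a|+1)k^+}2^{2k}\lV P_k\Psi^{(a)}\rV_{L^2}^2$ is really bounded by $\lV\Psi^{(a)}\rV_{H^{N(|a|+1)+1}}^2$ rather than $\lV\Psi^{(a)}\rV_{H^{N(|a|+1)}}^2$, but since $N(|a|+1)+1\leq N(a)$ (recall $h=6$) and $H(a)\leq H(|a|+1)$, your appeal to \eqref{Main_Prop_Ass1} still closes the estimate.
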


Once Proposition $\ref{Main_Prop}$ is proved, Theorem $\ref{Main_thm}$ follows directly from the standard continuity argument. The rest of this paper focuses on the proof of Proposition $\ref{Main_Prop}$. The key ingredients include Proposition $\ref{Prop_Ene_Sob}-\ref{Prop_Ene_Weiphi}$ and Proposition $\ref{Prop_Psia}$.

\section{Decay of density, velocity field and angles}
In this section, we give the various decay estimates of $\rr$, $u$ and $\Phi$, which will be useful in the energy estimates in the next sections.

\subsection{Decay of $\phi$} In order for the decay estimates of $\phi$, the following frequency localized linear dispersive estimate is necessary.

\begin{lemma}[Lemma 3.1, \cite{HJLZ-incomp}] Frequency localized linear decay estimate] For any $ t\in[1,T]$ and $k\in \Z$, we have
	\begin{equation}    \label{Dis}
	\lV e^{it|\nab|} P_k f\rV_{L^{\infty}}\lesssim \lV \widehat{P_kf}\rV_{L^{\infty}}(t^{-1}2^{2k}+t^{-1+\delta}2^{2k+\delta k})+\lV \nab_{\xi}\widehat{P_kf}\rV_{L^2}(t^{-1}2^{3k/2}+t^{-1+\delta}2^{3k/2+\delta k}).
	\end{equation}
\end{lemma}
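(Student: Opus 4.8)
The plan is to realize $e^{it|\nab|}P_k f$ as an oscillatory integral and to split its behaviour according to the distance of $(x,t)$ from the light cone. Writing $g:=\widehat{P_k f}$, which is supported in $\{|\xi|\sim 2^k\}$, we have
\[
(e^{it|\nab|}P_k f)(x)=c\int_{\R^3}e^{i\Phi(x,\xi)}g(\xi)\,d\xi,\qquad \Phi(x,\xi)=x\cdot\xi+t|\xi|,
\]
so $\nab_\xi\Phi=x+t\,\xi/|\xi|$ and $\nab_\xi^2\Phi=t\,\nab^2|\xi|=O(t2^{-k})$ on $\mathrm{supp}\,g$. The critical set is $\{\hat\xi=-x/|x|,\ |x|=t\}$, and the radial direction is degenerate; this is responsible for the $t^{-1}$ (rather than $t^{-3/2}$) decay and for all the technical difficulty. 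Since the estimate is claimed for $t\in[1,T]$, we assume $t\ge 1$ throughout.

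First I would treat the region far from the cone, $\big||x|-t\big|\gtrsim\max(|x|,t)$, where $|\nab_\xi\Phi|=|x+t\hat\xi|\gtrsim t$ on $\mathrm{supp}\,g$. One integration by parts in $\xi$ against $e^{i\Phi}=\tfrac{\nab_\xi\Phi}{i|\nab_\xi\Phi|^{2}}\cdot\nab_\xi e^{i\Phi}$ produces a term carrying $\nab_\xi g$, bounded by Cauchy--Schwarz over the support (of measure $\sim 2^{3k}$) by $t^{-1}2^{3k/2}\|\nab_\xi g\|_{L^2}$, and a term carrying $g$, weighted by $\nab_\xi\!\cdot\!\big(\tfrac{\nab_\xi\Phi}{|\nab_\xi\Phi|^{2}}\varphi_{[k-1,k+1]}\big)=O\big(|\nab_\xi^2\Phi|\,|\nab_\xi\Phi|^{-2}+|\nab_\xi\Phi|^{-1}2^{-k}\big)=O(t^{-1}2^{-k})$, hence contributing $\lesssim t^{-1}2^{2k}\|g\|_{L^\infty}$. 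Because $g$ has only one derivative in $L^2$ we cannot iterate; a single integration by parts already yields the claimed bound in this region, with no $\<t\>^{\de}$ loss.

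Next, for $|x|\sim t$, I would decompose $\mathrm{supp}\,g$ into the dyadic shells $\Omega_m:=\{|\nab_\xi\Phi|\sim 2^m\}\cap\mathrm{supp}\,g$ with $t^{1/2}2^{-k/2}\lesssim 2^m\lesssim t$, each an angular shell of width $\sim 2^{m-k}/t$ about the anti-radial direction, so that $|\Omega_m|\lesssim 2^{3k}2^{2m}t^{-2}$, together with the remaining cap $\Omega_{\mathrm{core}}:=\{|\nab_\xi\Phi|\lesssim t^{1/2}2^{-k/2}\}$ of measure $\lesssim 2^{3k}(2^kt)^{-1}=2^{2k}t^{-1}$. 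On $\Omega_{\mathrm{core}}$ no integration by parts is profitable, and I would bound the contribution crudely by $|\Omega_{\mathrm{core}}|\,\|g\|_{L^\infty}\lesssim t^{-1}2^{2k}\|g\|_{L^\infty}$, which is the clean main term. On each $\Omega_m$ I would integrate by parts once along $\nab_\xi\Phi/|\nab_\xi\Phi|$: the gain $2^{-m}$, combined with the size $\sim t2^{-k}2^{-m}$ of the derivative of the shell cutoff, exactly compensates the growth of $|\Omega_m|$, so the two resulting terms are $\lesssim t^{-1}2^{2k}\|g\|_{L^\infty}$ and $\lesssim 2^{-m}|\Omega_m|^{1/2}\|\nab_\xi g\|_{L^2}\lesssim t^{-1}2^{3k/2}\|\nab_\xi g\|_{L^2}$, uniformly in $m$. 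Summing over the $O(\log\<t\>+|k|)$ admissible values of $m$ and absorbing this factor into $\<t\>^{\de}2^{\de k}$ then gives the stated inequality.

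The main obstacle will be the degeneracy of $\nab_\xi^2\Phi$ in the radial direction: a three-dimensional stationary-phase bound is unavailable, so the integration by parts must be carried out only along $\nab_\xi\Phi$, and one must keep careful track of the derivatives of the angular cutoffs $\varphi_{\Omega_m}$, which can be as large as $t2^{-k}2^{-m}$. The whole point of the dyadic decomposition in $|\nab_\xi\Phi|$ is that these cutoff losses are exactly balanced by the oscillatory gains $2^{-m}$ and the shrinking measures $|\Omega_m|$, so that each shell contributes the same $t^{-1}\big(2^{2k}\|g\|_{L^\infty}+2^{3k/2}\|\nab_\xi g\|_{L^2}\big)$ and only the harmless logarithmic (hence $\<t\>^{\de}2^{\de k}$) loss survives the summation. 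Minor points to watch are the overlap between the ``far from the cone'' regime and the largest shell $2^m\sim t$, and the $k^{+}$-versus-$k^{-}$ bookkeeping in counting the shells, neither of which affects the final form of the estimate.
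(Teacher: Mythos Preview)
The paper does not prove this lemma; it is quoted verbatim from \cite{HJLZ-incomp} (their Lemma~3.1), so there is no in-paper proof to compare against. Your stationary-phase argument---splitting into the regime $||x|-t|\gtrsim\max(|x|,t)$, where a single integration by parts along $\nab_\xi\Phi$ suffices, and the near-cone regime $|x|\sim t$, where you decompose into dyadic shells $\{|\nab_\xi\Phi|\sim 2^m\}$ around the degenerate critical direction---is the standard route and is almost certainly what the cited reference does. The core estimates you record (measure $|\Omega_m|\lesssim 2^{3k}2^{2m}t^{-2}$, cutoff-derivative loss $t2^{-k}2^{-m}$, Cauchy--Schwarz on the $\nab_\xi g$ term) are correct and yield the uniform-in-$m$ contribution $t^{-1}(2^{2k}\lV g\rV_{L^\infty}+2^{3k/2}\lV\nab_\xi g\rV_{L^2})$.

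One point of bookkeeping deserves a correction. You write the number of shells as $O(\log\<t\>+|k|)$ and propose to absorb this into $\<t\>^{\delta}2^{\delta k}$. For $k<0$ this fails as stated, since $2^{\delta k}<1$. The actual count, however, is $\tfrac12\log_2(t2^k)$ (the range of $m$ is $[\tfrac12\log_2(t2^{-k}),\log_2 t]$), and the analysis is only nontrivial when $t2^k\ge 1$; in that regime $\log(t2^k)\lesssim (t2^k)^{\delta}=\<t\>^{\delta}2^{\delta k}$ directly, regardless of the sign of $k$. So the absorption works, but for the reason that the logarithm is genuinely $\log(t2^k)$ rather than $\log t+|k|$. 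With this adjustment your argument is complete.
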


\begin{lemma}                \label{Phi_inf}
	Under the bootstrap assumption (\ref{Main_Prop_Ass2}), for any $k\in\Z$, $t\in[0,T]$ and $|a|\leq N_1$ we have
	\begin{gather}     \label{Dec_Phi}
	\lV P_k\Phi^{(a)}(t)\rV_{L^{\infty}} \lesssim \ep_1 \<t\>^{-1+\delta+H(|a|+2)\delta} 2^{k^-/2}2^{-N(|a|+1)k^+ +(h/2+2)k^+},\\ \label{Dec_Phi/nab}
	\lV|\nab|^{-1}\Phi^{(a)}(t)\rV_{L^{\infty}}\lesssim \ep_1\<t\>^{-1/2+3\delta}.
	\end{gather}	
\end{lemma}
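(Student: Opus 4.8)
The plan is to prove the two bounds \eqref{Dec_Phi} and \eqref{Dec_Phi/nab} by applying the frequency-localized dispersive estimate \eqref{Dis} to the profile $\Psi^{(a)}$, and then summing over frequencies. Since $\Phi^{(a)}=e^{it|\nab|}\Psi^{(a)}$, we have $P_k\Phi^{(a)}=e^{it|\nab|}P_k\Psi^{(a)}$, so \eqref{Dis} gives
\begin{equation*}
\lV P_k\Phi^{(a)}\rV_{L^{\infty}}\lesssim \lV \widehat{P_k\Psi^{(a)}}\rV_{L^{\infty}}\,t^{-1+\delta}2^{2k+\delta k^+}+\lV \nab_{\xi}\widehat{P_k\Psi^{(a)}}\rV_{L^2}\,t^{-1+\delta}2^{3k/2+\delta k^+},
\end{equation*}
for $t\geq 1$ (the case $t\leq 1$ being handled trivially by Bernstein and Sobolev embedding, since $\Phi^{(a)}$ is controlled in $H^{N(a)}$). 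The two factors on the right are exactly the quantities estimated by the bootstrap hypothesis \eqref{Main_Prop_Ass2} together with Corollary \eqref{Qjk_Psi}: the $L^2$ norm of $\nab_\xi\widehat{\Psi^{(a)}}$ localized at frequency $2^k$ is $2^{-k}\lV \varphi_k(\xi)|\xi|\nab_\xi\widehat{\Psi^{(a)}}\rV_{L^2}$, which contributes $2^{-k}\ep_1\<t\>^{H(|a|+1)\delta}2^{-N(|a|+1)k^+}$, while $\lV\widehat{P_k\Psi^{(a)}}\rV_{L^\infty}$ is estimated through the decomposition \eqref{Decom_Pkf} and the pointwise bound \eqref{Dec_fjk}: summing $\lV \widehat{f_{j,k}}\rV_{L^\infty}\lesssim 2^{j/2-k}2^{\beta(j+k)}\sum_{|\al|\le 1}\lV Q_{jk}\Om^\al\Psi^{(a)}\rV_{L^2}$ against the weight in \eqref{Qjk_Psi} (now with $\Om^\al\Psi^{(a)}$, noting $\Om\Psi^{(a)}$ is controlled because $\tilde\Om$ acting on $\phi$ is just $\Om$, so $|a|+1\le N_1$ keeps us in range), one gets $\lV\widehat{P_k\Psi^{(a)}}\rV_{L^\infty}\lesssim \ep_1\<t\>^{H(|a|+2)\delta}2^{-k}2^{-N(|a|+1)k^+}2^{(\beta-\ldots)k}$ type control.

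Carrying this out, for $k\le 0$ the dominant contribution to \eqref{Dec_Phi} is $t^{-1+\delta}(2^{2k}\cdot 2^{-k}+2^{3k/2}\cdot 2^{-k})\ep_1\<t\>^{H(|a|+2)\delta}\lesssim \ep_1\<t\>^{-1+\delta+H(|a|+2)\delta}2^{k/2}$, which matches $2^{k^-/2}$; for $k\ge 0$ one tracks the loss $2^{-N(|a|+1)k^+}$ from the high-frequency weight against the gains $2^{2k+\delta k}$, $2^{3k/2+\delta k}$ and the $2^{-k}$ from $|\xi|^{-1}$, producing the stated $2^{-N(|a|+1)k^+ +(h/2+2)k^+}$ (here $h/2+2=5$ comes from bookkeeping: $N(|a|+1)=N(|a|)-h$ and one loses $2^{2k}$ from the dispersive kernel plus lower-order terms, and one must be slightly generous with the exponent to absorb the $\delta k^+$ factors). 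I would then sum over $k\in\Z$; convergence on the low end is guaranteed by the $2^{k^-/2}$ factor and on the high end by choosing the exponent of $2^{k^+}$ strictly less than $N(|a|+1)$, i.e. by the fact that $h/2+2<N(|a|+1)$ for $|a|\le N_1$ with the given values $N_0=60$, $h=6$, $N_1=6$. This gives \eqref{Dec_Phi} after possibly relabeling $H(|a|+2)$ to account for the extra $\Om$ in the argument.

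For \eqref{Dec_Phi/nab}, I would write $|\nab|^{-1}\Phi^{(a)}=\sum_k 2^{-k}P_k|\nab|\,|\nab|^{-1}\Phi^{(a)}=\sum_k |\nab|^{-1}P_k\Phi^{(a)}$ and apply the frequency-localized bound just obtained, but now keeping the better $t^{-1}$ (non-$\delta$) versions of the two terms in \eqref{Dis} where affordable, or more simply interpolating: for high frequencies use the decay $\<t\>^{-1+\delta+\ldots}$ with the extra $2^{-k}$ giving summability, and for low frequencies use instead the trivial bound $\lV |\nab|^{-1}P_k\Phi^{(a)}\rV_{L^\infty}\lesssim 2^{-k}2^{3k/2}\lV P_k\Phi^{(a)}\rV_{L^2}\lesssim 2^{k/2}\ep_1\<t\>^{H\delta}$ from Bernstein and the energy bound in \eqref{Main_Prop_Ass1}, which sums over $k\le 0$. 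Balancing the crossover frequency $2^{k}\sim \<t\>^{-1}$ between the two regimes yields the rate $\<t\>^{-1/2+3\delta}$; the exponent $3\delta$ is the sum of $\delta$ from the dispersive estimate and $2\delta$ worst-case from $H(|a|+2)\delta\le 2\delta$.

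The main obstacle I anticipate is the frequency-summation bookkeeping at high frequencies: one must verify that the combined loss from (i) the dispersive kernel's $2^{2k}$ and $2^{3k/2}$ growth, (ii) the $\delta k^+$ factors in \eqref{Dis}, and (iii) the shift from $N(|a|)$ to $N(|a|+1)=N(|a|)-h$ forced by using $\Om$-derivatives in \eqref{Qjk_Psi}, still leaves an exponent on $2^{k^+}$ that is strictly negative and hence summable — this is precisely where the numerical choices $N_0=60$, $h=6$, $N_1=6$ enter, and where the specific bookkeeping constant $h/2+2$ in the statement must be matched exactly rather than just up to $O(1)$. A secondary technical point is ensuring $\Om\Psi^{(a)}$ (needed for the $L^\infty$ Fourier bound via \eqref{Dec_fjk}) is controlled: since $\Om$ commutes appropriately with $e^{-it|\nab|}$ and $\tilde\Om d=\Om d$, this reduces to $|a|+1\le N_1$, which is exactly the range $|a|\le N_1-1$ where \eqref{Main_Prop_Ass2} is assumed, with the endpoint $|a|=N_1$ needing the separate energy bound from \eqref{Main_Prop_Ass1}.
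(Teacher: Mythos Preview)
Your overall strategy for \eqref{Dec_Phi} is the same as the paper's: apply the dispersive estimate \eqref{Dis} and control the two inputs $\lV\widehat{P_k\Psi^{(a)}}\rV_{L^\infty}$ and $\lV\nab_\xi\widehat{P_k\Psi^{(a)}}\rV_{L^2}$ via the bootstrap assumption \eqref{Main_Prop_Ass2} and its corollary \eqref{Qjk_Psi}. The treatment of the gradient term is fine. The gap is in your $L^\infty$ bound on $\widehat{P_k\Psi^{(a)}}$: you propose to use only the second alternative in \eqref{Dec_fjk}, the one involving $\Om^\alpha\Psi^{(a)}$. But this forces you to invoke \eqref{Qjk_Psi} at index $|a|+1$, so the Sobolev weight drops from $N(|a|+1)$ to $N(|a|+2)=N(|a|+1)-h$, and after multiplying by $2^{2k}$ from \eqref{Dis} you end up with a high-frequency loss of $2^{hk^+}$, not the claimed $2^{(h/2+2)k^+}$. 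With $h=6$ this is $2^{6k^+}$ versus $2^{5k^+}$, and the lemma as stated requires the latter.

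The paper fixes this by splitting the $j$-sum at the threshold $j=hk^+$. For $j\le hk^+$ it uses the \emph{first} alternative in \eqref{Dec_fjk}, $\lV\widehat{f_{j,k}}\rV_{L^\infty}\lesssim 2^{3j/2}\lV Q_{jk}\Psi^{(a)}\rV_{L^2}$, which needs no rotation and hence keeps the weight $N(|a|+1)$; summing gives $2^{hk^+/2}\cdot 2^{-N(|a|+1)k^+-k}$. For $j>hk^+$ it uses the second alternative as you do, and the tail sum contributes an extra $2^{-hk^+/2}$ that exactly compensates the shift $N(|a|+1)\to N(|a|+2)$. Balancing the two pieces at $j=hk^+$ is precisely what produces the factor $2^{hk^+/2}$ rather than $2^{hk^+}$, hence the exponent $h/2+2$ after multiplying by $2^{2k}$ from the dispersive kernel. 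You flag the high-frequency bookkeeping as the obstacle, and this split is the missing idea.

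For \eqref{Dec_Phi/nab} your crossover-frequency argument is correct and is morally the same as the paper's. The paper packages it slightly differently: it interpolates $\lV P_k\Phi^{(a)}\rV_{L^p}\lesssim \lV P_k\Phi^{(a)}\rV_{L^2}^{2/p}\lV P_k\Phi^{(a)}\rV_{L^\infty}^{1-2/p}$, applies Bernstein to get $\lV |\nab|^{-1}P_k\Phi^{(a)}\rV_{L^\infty}\lesssim 2^{3k/p-k}\lV P_k\Phi^{(a)}\rV_{L^p}$, and then picks $p$ so that $2/p-1/2=\delta$, which makes the low-frequency sum barely convergent and reads off the rate $\langle t\rangle^{-1/2+3\delta}$ directly. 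Your discrete balancing does the same thing.
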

\begin{proof}
	On one hand, by (\ref{Dec_fjk}) and (\ref{Qjk_Psi}) when $k>0$,  we have
	\begin{equation}    \label{Psi_k>0}
	\begin{aligned}
	\lV \widehat{P_k\Psi^{(a)}}\rV_{L^{\infty}}\lesssim &\sum_{j\leq hk^+} 2^{3j/2}\lV Q_{jk}\Psi^{(a)}\rV_{L^2}+\sum_{j>hk^+}2^{j/2-k+\beta(j+k)}\sum_{|\alpha|\leq 1}\lV Q_{jk}\tilde{\Om}^{\alpha}\Psi^{(a)}\rV_{L^2}\\
	\lesssim& 2^{hk^+/2}\ep_1\<t\>^{H(|a|+2)\delta}2^{-N(|a|+1)k^+-k}+2^{-hk^+/2}\ep_1\<t\>^{H(|a|+2)\delta}2^{-N(|a|+2)k^+-k}\\
	\lesssim & \ep_1\<t\>^{H(|a|+2)\delta} 2^{-N(|a|+1)k^++hk^+/2-k}.
	\end{aligned}
	\end{equation}
	On the other hand, when $k\leq 0$ we have
	\begin{equation} \label{Psi_k<0}
	\begin{aligned}
	\lV \widehat{P_k\Psi^{(a)}}\rV_{L^{\infty}}\lesssim &\sum_{j\geq -k}2^{j/2-k+\beta(j+k)}\sum_{|\alpha|\leq 1}\lV Q_{jk}\tilde{\Om}^{\alpha}\Psi^{(a)}\rV_{L^2}\\
	\lesssim& \ep_1\<t\>^{H(|a|+2)\delta} 2^{-3k/2}.
	\end{aligned}
	\end{equation}
	Meanwhile, by (\ref{Main_Prop_Ass2}) and the definition of $P_k$, we also have
	\begin{equation} \label{Psi_2}
	2^k\lV \nab_{\xi}\widehat{P_k\Psi^{(a)}}(\xi)\rV_{L^2}= \lV \frac{\xi}{|\xi|}\varphi_k'(\xi) \widehat{\Psi^{(a)}}(\xi)\rV_{L^2}+2^k\lV \varphi_k(\xi)\nab_{\xi}\widehat{\Psi^{(a)}}\rV_{L^2}\lesssim \ep_1 \<t\>^{H(|a|+1)\delta}2^{-N(|a|+1)k^+}.
	\end{equation}
	Thus (\ref{Dec_Phi}) follows from (\ref{Psi_k>0}), (\ref{Psi_k<0}), (\ref{Psi_2}) and (\ref{Dis}).
	
	Next by (\ref{BernIneq}), H\"{o}lder inequality, (\ref{Main_Prop_Ass1}) and (\ref{Dec_Phi}), we have for $p\geq 2$
	\begin{align*}
	\lV|\nab|^{-1}\Phi^{(a)}(t)\rV_{L^{\infty}}&\lesssim \sum_k 2^{3k/p-k}\lV P_k \Phi^{(a)}(t) \rV_{L^p} \lesssim \sum_k 2^{3k/p-k}\lV P_k \Phi^{(a)}(t) \rV_{L^2}^{2/p} \lV P_k \Phi^{(a)}(t) \rV_{L^{\infty}}^{1-2/p}\\
	&\lesssim \ep_1 t^{(-1+\delta+H(|a|+2)\delta)(1-2/p)} \sum_k 2^{(2/p-1/2)k^-}2^{(1-2/p)(-N(|a|+2)+h/2+2)k^++k^+/p}.
	\end{align*}
	Choosing $p$ such that $2/p-1/2=\delta$, the bound (\ref{Dec_Phi/nab}) follows.
\end{proof}

As a consequence of Lemma \ref{Phi_inf}, we have
\begin{corollary}
	With the notations and hypothesis in Proposition \ref{Main_Prop}, we have
	\begin{equation}      \label{phi^2-Hna}
	\sum_{|b|+|c|\leq|a|}\lV\nab\phi^{(b)}\nab\phi^{(c)}\rV_{H^{N(a)}}\lesssim \ep_1^2\min\{\<t\>^{-1+4\delta},\<t\>^{-1+(2+|a|)\delta}\}.
	\end{equation}
\end{corollary}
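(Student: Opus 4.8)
The plan is to reduce everything to the dispersive bound (\ref{Dec_Phi}) for $\Phi^{(a)}$ and the energy bound in (\ref{Main_Prop_Ass1}), using the identities $\d_t\phi^{(m)}=(\Phi^{(m)}+\overline{\Phi^{(m)}})/2$ and $|\nab|\phi^{(m)}=(\Phi^{(m)}-\overline{\Phi^{(m)}})/(2i)$ together with the fact that the Riesz-type operator $\nab|\nab|^{-1}$ is an $L^2$-isometry on each frequency shell (hence bounded on $\dot H^N$) and, after a Littlewood--Paley localization, bounded on $L^\infty$. First I would record three consequences valid for every $|m|\le N_1$. (i) Since conjugation preserves $\dot H^N$ and $N(a)\le N(m)$ whenever $|m|\le|a|$, the bound (\ref{Main_Prop_Ass1}) gives
\[
\lV\nab\phi^{(m)}\rV_{L^2}+\lV\nab\phi^{(m)}\rV_{\dot H^{N(a)}}\lesssim \lV\Phi^{(m)}\rV_{H^{N(m)}}\lesssim \ep_1\<t\>^{H(|m|)\delta}.
\]
(ii) Applying $\nab|\nab|^{-1}P_k$ (bounded on $L^\infty$ uniformly in $k$) to $|\nab|\phi^{(m)}=(\Phi^{(m)}-\overline{\Phi^{(m)}})/(2i)$ and summing (\ref{Dec_Phi}) over $k\in\Z$ gives
\[
\lV\nab\phi^{(m)}\rV_{L^\infty}\lesssim \ep_1\<t\>^{-1+\delta+H(|m|+2)\delta}\sum_{k\in\Z}2^{k^-/2}2^{(h/2+2-N(|m|+1))k^+}\lesssim \ep_1\<t\>^{-1+\delta+H(|m|+2)\delta},
\]
the series converging because $N(|m|+1)=60-6(|m|+1)\ge 18>h/2+2=5$ for $|m|\le N_1$.

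Next, for a fixed pair $b+c$ with $|b|+|c|\le|a|$, assume without loss of generality $|b|\le|c|$. Using $\lV fg\rV_{H^{N(a)}}\lesssim \lV fg\rV_{L^2}+\lV fg\rV_{\dot H^{N(a)}}$, the crude bound $\lV fg\rV_{L^2}\le\lV f\rV_{L^\infty}\lV g\rV_{L^2}$, and the product law (\ref{Hardy_2}) for the homogeneous piece, I get
\[
\lV\nab\phi^{(b)}\nab\phi^{(c)}\rV_{H^{N(a)}}\lesssim \lV\nab\phi^{(b)}\rV_{L^\infty}\lV\nab\phi^{(c)}\rV_{L^2}+\lV\nab\phi^{(b)}\rV_{\dot H^{N(a)}}\lV\nab\phi^{(c)}\rV_{L^\infty}+\lV\nab\phi^{(b)}\rV_{L^\infty}\lV\nab\phi^{(c)}\rV_{\dot H^{N(a)}}.
\]
Inserting the three estimates from the first paragraph, every term is $\lesssim \ep_1^2\<t\>^{-1+\delta+(H_1+H_2)\delta}$, where $(H_1,H_2)$ is one of $(H(|b|+2),H(|c|))$, $(H(|b|),H(|c|+2))$. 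A short case check — using $|b|\le|c|$, $|b|+|c|\le|a|\le N_1=6$, $H(0)=0$, $H(m)=1$ for $1\le m\le4$, and $H(m)\le2$ in general — shows $H_1+H_2\le\min\{3,\,|a|+1\}$ in every case. Summing over the finitely many admissible splits $(b,c)$ then yields
\[
\sum_{|b|+|c|\le|a|}\lV\nab\phi^{(b)}\nab\phi^{(c)}\rV_{H^{N(a)}}\lesssim \ep_1^2\<t\>^{-1+\min\{4,\,|a|+2\}\delta}=\ep_1^2\min\{\<t\>^{-1+4\delta},\<t\>^{-1+(2+|a|)\delta}\},
\]
which is the claim.

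The whole argument is routine; the only points requiring a little care are (a) that $\nab|\nab|^{-1}$ is handled at the level of Littlewood--Paley pieces when passing to $L^\infty$, since it is not bounded on $L^\infty$ globally, and (b) the bookkeeping in the last step, i.e.\ checking that the frequency series in (ii) converges and that the two weights $H_1+H_2$ never exceed $\min\{3,|a|+1\}$; there is no genuine obstacle.
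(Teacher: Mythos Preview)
Your proof is correct and uses the same ingredients as the paper --- the pointwise decay (\ref{Dec_Phi}), the energy bound in (\ref{Main_Prop_Ass1}), and a product inequality --- but you treat all $|a|$ uniformly via (\ref{Hardy_2}), whereas the paper splits into two cases. For $|a|\le N_1-2$ the paper's argument is identical to yours; for $|a|\ge N_1-1$ the paper instead uses the asymmetric estimate (\ref{Hardy_1}), which places the $L^\infty$ norm (together with the $2^{N(a)k^+}$ weight) \emph{only} on the low-index factor $\nab\phi^{(b)}$. The point of the paper's split is that with $|b|\le|c|$ and $|b|+|c|\le|a|\le N_1$ one has $|b|\le 3$, so (\ref{Dec_Phi}) is only invoked for indices $\le 3$ and one never needs the values $H(7),H(8)$ or the pointwise decay of $\nab\phi^{(c)}$ at the top indices $|c|\in\{N_1-1,N_1\}$. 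Your uniform route is cleaner but tacitly extends $H(m)=2$ for $m>N_1$ and uses (\ref{Dec_Phi}) at $|m|=N_1$; since Lemma~\ref{Phi_inf} is stated for $|a|\le N_1$ this is legitimate, though the paper's version is slightly more robust in that it avoids relying on the decay estimate at the extreme index.
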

\begin{proof}
	We first consider the case of $|a|\geq N_1-1$. Without loss of generality, we may assume that $|b|\leq |c|$. By (\ref{Hardy_1}) and (\ref{Dec_Phi}) we have
	\begin{align*}
	\lV\nab\phi^{(b)}\nab\phi^{(c)}\rV_{H^{N(a)}}\lesssim &\sum_{k}2^{N(a)k^+}\lV P_k\nab\phi^{(b)}\rV_{L^{\infty}}\lV \nab\phi^{(c)}\rV_{H^{N(a)}}\\
	\lesssim & \ep_1^2\<t\>^{-1+\delta+H(|b|+2)\delta+H(c)\delta}\sum_k 2^{\tfrac{k^-}{2}+ N(a)k^+-N(|b|+1)k^++(\tfrac{h}{2}+2)k^+}\\
	\lesssim &\ep_1^2\<t\>^{-1+4\delta}.
	\end{align*}
	When $|a|\leq N_1-2$, by (\ref{Hardy_2}) and (\ref{Dec_Phi}), we have
	\begin{align*}
	\lV\nab\phi^{(b)}\nab\phi^{(c)}\rV_{H^{N(a)}}\lesssim & \lV\nab\phi^{(b)}\rV_{H^{N(a)}}\lV\nab\phi^{(c)}\rV_{L^{\infty}}+\lV\nab\phi^{(b)}\rV_{L^{\infty}}\lV\nab\phi^{(c)}\rV_{H^{N(a)}}\\
	\lesssim & \ep_1^2\<t\>^{H(b)\delta-1+\delta+H(|c|+2)\delta}+\ep_1^2\<t\>^{-1+\delta+H(|b|+2)\delta+H(b)\delta},
	\end{align*}
	then we obtain
	\begin{equation*}
	\lV\nab\phi^{(b)}\nab\phi^{(c)}\rV_{H^{N(a)}}\lesssim \left\{\begin{aligned}
	&\ep_1^2\<t\>^{-1+4\delta},\ \ {\rm for\ }|a|=N_1-2,\\
	&\ep_1^2\<t\>^{-1+(2+|a|)\delta},\ \ {\rm for\ }|a|< N_1-2,
	\end{aligned}
	\right.
	\end{equation*}
	which is acceptable. This concludes the bound (\ref{phi^2-Hna}).
\end{proof}

\subsection{Basic decay estimates of density $\rr$ and velocity field $u$} Before proving the decay estimates of $\rr$ and $u$, we need the following bilinear estimates.
\begin{lemma}[Bilinear estimates]    \label{biL-lem}
	Under the bootstrap assumption (\ref{Main_Prop_Ass1}), let $G(\rr,\phi)$ be smooth function, and $|\d_{\rr}^{k}\d_{\phi}^l G|\leq C_{k,l}$ for $k+l\leq N$, $4\leq N\leq N(0)$, let $f$ and $g$ be Schwartz functions, $\gamma\geq 1$, we have
	\begin{align}       \label{biL-Ffg-WN1}
	&\lV G(\rr,\phi)fg\rV_{W^{N,1}}\lesssim \lV f\rV_{H^N}\lV g\rV_{H^N},\\        \label{biL-Ffg-HN}
	&\lV G(\rr,\phi)fg\rV_{H^{N}}\lesssim\lV fg\rV_{H^N}\lesssim  \lV f\rV_{H^N}\lV g\rV_{H^N},\\ \label{biL-rr}
	&\lV\big((1+\rr)^{\gamma-2}-1\big)\nab\rr\rV_{W^{N,1}\cap H^N}\lesssim \lV \rr\rV_{H^N}\lV\nab\rr\rV_{H^N},\\ \label{biL-rr-HN}
	&\lV \big((1+\rr)^{\gamma-2}-1\big)\nab\rr\rV_{H^N}\lesssim \lV \rr\rV_{W^{N/2+1,\infty}}\lV\nab\rr\rV_{H^N}.
	\end{align}
\end{lemma}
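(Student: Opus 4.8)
The plan is to prove all four bounds by Littlewood--Paley decomposition combined with the composition estimate for $G(\rr,\phi)$, the product rule for $H^N$, and the fact that the bootstrap assumption (\ref{Main_Prop_Ass1}) keeps $\rr$ and $\phi$ small in $H^{N(0)}$ (hence, by Sobolev embedding in $\R^3$ with $N(0)=60$ large, small in $W^{M,\infty}$ for all the $M$ we shall need). First I would record the elementary composition fact: since $G$ is smooth with $|\d_\rr^k\d_\phi^l G|\le C_{k,l}$ for $k+l\le N$, and since $\lV(\rr,\nab\phi)\rV_{H^{N}}\lesssim \ep_1\ll1$ gives $\lV(\rr,\nab\phi)\rV_{W^{\lceil N/2\rceil,\infty}}\lesssim\ep_1$ by Sobolev embedding, one has the Moser-type bound
\begin{equation*}
\lV G(\rr,\phi)-G(0,\phi_\infty)\rV_{W^{N,\infty}}\lesssim 1,\qquad \lV \nab^{j}\big(G(\rr,\phi)\big)\rV_{L^\infty}\lesssim 1 \ \ (1\le j\le N),
\end{equation*}
and more precisely $\lV \nab^{j}(G(\rr,\phi))\rV_{L^2}\lesssim \lV(\rr,\nab\phi)\rV_{H^{j}}$ for $1\le j\le N$; these are standard and I would cite Lemma \ref{Hardy} (the Hardy-type/interpolation inequalities) together with the Leibniz rule. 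With this in hand the structure of the argument is: treat $G(\rr,\phi)$ simply as an $L^\infty\cap\dot W^{N,\cdot}$ multiplier and peel it off, reducing everything to a bilinear product estimate for $fg$.

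For (\ref{biL-Ffg-WN1}): write $G(\rr,\phi)fg = G(\rr,\phi)\cdot(fg)$ and use the algebra-type bound $\lV hk\rV_{W^{N,1}}\lesssim \sum_{j+l\le N}\lV \nab^j h\rV_{L^2}\lV\nab^l k\rV_{L^2}$ (Cauchy--Schwarz after Leibniz), so that $\lV G(\rr,\phi)fg\rV_{W^{N,1}}\lesssim \sum_{j\le N}\lV\nab^j(G(\rr,\phi))\rV_{L^\infty\cap H^?}\cdots$ — more cleanly, $\lV fg\rV_{W^{N,1}}\lesssim\lV f\rV_{H^N}\lV g\rV_{H^N}$ directly by Cauchy--Schwarz on each Leibniz term, and then $\lV G(\rr,\phi)\cdot(fg)\rV_{W^{N,1}}\lesssim (1+\lV G(\rr,\phi)-c\rV_{W^{N,\infty}})\lV fg\rV_{W^{N,1}}\lesssim\lV f\rV_{H^N}\lV g\rV_{H^N}$, absorbing the $O(1)$ constant. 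For (\ref{biL-Ffg-HN}): the first inequality is the composition/multiplier bound $\lV G(\rr,\phi)F\rV_{H^N}\lesssim (1+\lV \rr\rV_{W^{N,\infty}}+\lV\nab\phi\rV_{W^{N,\infty}})\lV F\rV_{H^N}\lesssim\lV F\rV_{H^N}$ applied with $F=fg$, using again the smallness of $\rr,\nab\phi$; the second, $\lV fg\rV_{H^N}\lesssim\lV f\rV_{H^N}\lV g\rV_{H^N}$, is the usual Sobolev algebra inequality in $\R^3$ (valid since $N\ge4>3/2$), which follows from (\ref{Hardy_2}) plus Sobolev embedding $H^N\hookrightarrow L^\infty$.

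For the $\rr$-weighted bounds (\ref{biL-rr}) and (\ref{biL-rr-HN}) the point is that $(1+\rr)^{\gamma-2}-1$ vanishes at $\rr=0$, so it should be thought of as $\rr\cdot \tilde G(\rr)$ with $\tilde G(\rr)=\int_0^1(\gamma-2)(1+s\rr)^{\gamma-3}ds$ smooth and bounded on $\{|\rr|\le1/2\}$ together with all its derivatives. Thus $\big((1+\rr)^{\gamma-2}-1\big)\nab\rr = \tilde G(\rr)\,\rr\,\nab\rr$, and for (\ref{biL-rr}) I would apply (\ref{biL-Ffg-WN1}) and (\ref{biL-Ffg-HN}) (which I will have just proved, and which both allow a smooth bounded prefactor $G=\tilde G(\rr)$) with $f=\rr$, $g=\nab\rr$, giving $\lV \tilde G(\rr)\,\rr\,\nab\rr\rV_{W^{N,1}\cap H^N}\lesssim\lV\rr\rV_{H^N}\lV\nab\rr\rV_{H^N}$. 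For (\ref{biL-rr-HN}) one wants only $\lV\rr\rV_{W^{N/2+1,\infty}}$ (not $\lV\rr\rV_{H^N}$) on one factor, so instead of the algebra inequality I would use (\ref{Hardy_1}): $\lV \tilde G(\rr)\rr\cdot\nab\rr\rV_{\dot H^N}\lesssim\sum_k 2^{Nk^+}\lV P_k(\tilde G(\rr)\rr)\rV_{L^\infty}\lV\nab\rr\rV_{\dot H^N}$, and then bound $\lV P_k(\tilde G(\rr)\rr)\rV_{L^\infty}$ by $\lV\tilde G(\rr)\rr\rV_{W^{\lceil N/2\rceil,\infty}}\lesssim \lV\rr\rV_{W^{\lceil N/2\rceil,\infty}}$ (with a geometric tail in $k$ coming from $2^{Nk^+}2^{-\lceil N/2\rceil k^+}$ after spending $\lceil N/2\rceil$ derivatives), handling the low-order $L^2$ part $\lV \tilde G(\rr)\rr\nab\rr\rV_{L^2}\lesssim\lV\rr\rV_{L^\infty}\lV\nab\rr\rV_{L^2}$ separately; the Leibniz terms where derivatives land on $\tilde G(\rr)$ are controlled the same way since every derivative of $\tilde G(\rr)$ is a polynomial in $\rr$'s derivatives times a bounded function, each factor of which is $\lesssim\lV\rr\rV_{W^{\lceil N/2\rceil,\infty}}$ or absorbed.

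The main obstacle — really the only place requiring care rather than bookkeeping — is the composition estimate for $G(\rr,\phi)$ and $\tilde G(\rr)$ in $W^{N,\infty}$ and $H^N$ with the stated low regularity threshold $W^{N/2+1,\infty}$: one must verify that the Faà di Bruno expansion of $\nab^N(G(\rr,\phi))$ is, term by term, a product of derivatives of $(\rr,\nab\phi)$ of total order $N$ times a bounded function of $(\rr,\phi)$, and then distribute those orders so that at most one factor carries more than $\lceil N/2\rceil$ derivatives (controlled in $L^2$) while the rest are controlled in $L^\infty$ via Sobolev embedding and the smallness from (\ref{Main_Prop_Ass1}); since $N\ge4$ and $N(0)=60$ this bookkeeping closes comfortably. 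I expect this to be routine given Lemma \ref{Hardy}, so the whole lemma is essentially an exercise in the paracomposition / Moser estimates once the $O(1)$ prefactor is split off via $G=\tilde G(\rr)$.
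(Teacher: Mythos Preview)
Your approach is essentially the paper's: Moser-type composition bounds for $G(\rr,\phi)$ combined with Leibniz and H\"older, and the factorization $(1+\rr)^{\gamma-2}-1=\rr\,\tilde G(\rr)$ for the last two estimates. There is one imprecision to fix, however. The claimed bounds $\lV G(\rr,\phi)-c\rV_{W^{N,\infty}}\lesssim 1$ and the multiplier estimate $\lV G(\rr,\phi)F\rV_{H^N}\lesssim(1+\lV\rr\rV_{W^{N,\infty}}+\lV\nab\phi\rV_{W^{N,\infty}})\lV F\rV_{H^N}$ are not usable at the top order $N=N(0)$: the bootstrap assumption gives only $\rr,\nab\phi\in H^{N(0)}$, hence by Sobolev embedding in $\R^3$ merely $W^{N(0)-2,\infty}$, so $\nab^{N}G(\rr,\phi)$ need not lie in $L^\infty$. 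The paper handles this by recording instead the split estimate
\[
\sum_{|n|\le N/2}\lV\d^n G(\rr,\phi)\rV_{L^\infty}+\sum_{N/2<|n|\le N}\lV\d^n G(\rr,\phi)\rV_{L^2}\lesssim 1,
\]
and then, in each Leibniz term of $\d^n(Gfg)$, placing the $G$-factor in $L^\infty$ when at most $N/2$ derivatives hit it and in $L^2$ otherwise, with $f,g$ distributed accordingly. This is exactly the distribution you yourself describe in your final ``obstacle'' paragraph; you simply need to use it consistently in place of the $W^{N,\infty}$ claims earlier in the argument. With that correction, your reduction of (\ref{biL-rr})--(\ref{biL-rr-HN}) to (\ref{biL-Ffg-WN1})--(\ref{biL-Ffg-HN}) via $\tilde G(\rr)\,\rr\,\nab\rr$ is a clean repackaging of what the paper does more directly but equivalently.
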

\begin{proof}
	The bound (\ref{biL-Ffg-WN1}) and (\ref{biL-Ffg-HN}) are obtained by H\"{o}lder, Sobolev embedding, and the bound
	\begin{equation*}
	\sum_{|n|\leq N/2}\lV \d^n G(\rr,\phi)\rV_{L^{\infty}}+\sum_{N/2<|n|\leq N}\lV \d^n G(\rr,\phi)\rV_{L^2}\lesssim (1+\lV\rr\rV_{H^N}+\lV\phi\rV_{H^N})^N\lesssim 1,
	\end{equation*}
	which follows from Sobolev embedding, $|\d_{\rr}^{k}\d_{\phi}^l G|\leq C_{k,l}$, and (\ref{Main_Prop_Ass1}).
	
	Next, we prove (\ref{biL-rr}) and (\ref{biL-rr-HN}). By the equality
	\begin{equation*}
	(1+\rr)^{\gamma-2}-1=(\gamma-2)\int_0^{\rr}(1+s)^{\gamma-3} ds
	\end{equation*}
	 and (\ref{Main_Prop_Ass1}), we get
	\begin{equation*}
	\lV (1+\rr)^{\gamma-2}-1\rV_{L^p}\lesssim \lV \rr\rV_{L^p}\lV (1+|\rr|)^{\gamma-3}\rV_{L^{\infty}}\lesssim \lV \rr\rV_{L^p},\ {\rm for }\ 1\leq p\leq \infty.
	\end{equation*}
	Moreover, by Sobolev embedding and (\ref{Main_Prop_Ass1}) we have
	\begin{align*}
	\sum_{1\leq |n|\leq N}\lV\d^n [(1+\rr)^{\gamma-2}-1]\rV_{L^2}\lesssim & \sum_{1\leq |n|\leq N}\sum_{\substack{n_1+\cdots+n_s=n,\\|n_i|\geq 1}}\lV(1+\rr)^{\gamma-2-s}\d^{n_1}\rr\cdots\d^{n_s}\rr\rV_{L^2}\\
	\lesssim &\lV \nab\rr\rV_{H^{N-1}}+\lV \nab\rr\rV_{H^{N-1}}^N\lesssim \lV \nab\rr\rV_{H^{N-1}},
	\end{align*}
	and
	\begin{equation*}
	\sum_{1\leq |n|\leq N/2}\lV\d^n [(1+\rr)^{\gamma-2}-1]\rV_{L^{\infty}}\lesssim \lV \rr\rV_{W^{N/2,\infty}}+\lV \rr\rV_{W^{N/2,\infty}}^{N/2}\lesssim \lV \rr\rV_{W^{N/2,\infty}}.
	\end{equation*}
	Then the bounds (\ref{biL-rr}) and (\ref{biL-rr-HN}) follow by H\"{o}lder, Sobolev embedding, and the above three bounds.

\end{proof}

We then prove various bounds on the nonlinearities $F_1$ and $F_2$. Indeed the nonlinearities $F_2$ can be rewritten as
\begin{equation*}
F_2=F_{2,I}+F_{2,II},
\end{equation*}
where
\begin{align*}
F_{2,I}:=&-u\cdot\nab u+(a\gamma-\frac{P'(1+\rr)}{1+\rr})\nab\rr-\frac{\rr}{1+\rr}(\frac{\mu_1}{2}\Delta u+(\frac{\mu_1}{2}+\mu_2)\nab\div u)\\
&-\frac{1}{(1+\rr)^2}[\frac{1}{2}\nab\rr(\cos^2\phi_2|\nab\phi_1|^2+|\nab\phi_2|^2)-\d_j \rr(\cos^2\phi_2\nab\phi_1\d_j\phi_1+\nab\phi_2\d_j\phi_2)],\\
F_{2,II}:=&\nab[\frac{1}{2(1+\rr)}(\cos^2\phi_2|\nab\phi_1|^2+|\nab\phi_2|^2)]-\d_j[\frac{1}{1+\rr} (\cos^2\phi_2\nab\phi_1\d_j\phi_1+\nab\phi_2\d_j\phi_2)].
\end{align*}

\begin{corollary}     \label{Non-Cor}
	Under the bootstrap assumptions (\ref{Main_Prop_Ass1}) and (\ref{Main_Prop_Ass2}), for any $N(0)-3\leq N\leq N(0)-1$, $t\in[0,T]$, we have
	\begin{align}           \label{F1,F2I-WN1}
		&\lV F_1 \rV_{W^{N,1}}+\lV F_{2,I}\rV_{W^{N,1}}\lesssim \ep_1(\lV \nab\rr\rV_{H^N}+\lV \nab u\rV_{H^{N+1}}),\\\label{F1,F2I-HN}
		&\lV F_1 \rV_{H^{N}}+\lV F_{2,I}\rV_{H^{N}}\lesssim (\lV(\rr,u)\rV_{W^{N(0)-4,\infty}}+\ep_1^2\<t\>^{-1+2\delta})(\lV \nab\rr\rV_{H^N}+\lV \nab u\rV_{H^{N+1}}),\\\label{F2II-HN}
		&\lV F_{2,II}\rV_{H^{N(0)-1}}\lesssim \lV\nab\phi\rV_{H^{N(0)}}\lV\nab\phi\rV_{L^{\infty}}\lesssim \ep_1^2\<t\>^{-1+2\delta}.
	\end{align}
\end{corollary}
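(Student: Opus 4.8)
Throughout we use repeatedly that (\ref{Main_Prop_Ass1}) with $a=0$ gives $\lV(\rr,u)\rV_{H^{N(0)}}\lesssim\ep_1$, hence $\lV\rr\rV_{L^\infty}\lesssim\ep_1\ll1$ by Sobolev embedding; consequently every coefficient occurring in $F_1,F_{2,I},F_{2,II}$ (namely $\tfrac1{1+\rr}$, $(1+\rr)^{\gamma-2}-1$, $\tfrac{\cos^2\phi_2}{(1+\rr)^2}$, $\tfrac1{2(1+\rr)}$, and so on) is of the form $G(\rr,\phi)$ with $|\d_\rr^k\d_\phi^l G|\le C_{k,l}$ on the relevant range, so Lemma~\ref{biL-lem} applies. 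We also use (\ref{phi^2-Hna}) with $|a|=0$, which gives $\lV\nab\phi\,\nab\phi\rV_{H^{N(0)}}\lesssim\ep_1^2\<t\>^{-1+2\delta}$, and the fact that $N(0)-4\ge2$, so that $\lV(\rr,u)\rV_{W^{N(0)-4,\infty}}$ dominates $\lV\rr\rV_{L^\infty}$, $\lV u\rV_{L^\infty}$, $\lV\nab\rr\rV_{L^\infty}$, $\lV\nab u\rV_{L^\infty}$ and $\lV\nab^2u\rV_{L^\infty}$. For the $W^{N,1}$ bound (\ref{F1,F2I-WN1}) I write each summand of $F_1$ and $F_{2,I}$ as $G(\rr,\phi)\,f\,g$ and apply (\ref{biL-Ffg-WN1}): for $u\cdot\nab\rr$, $\rr\,\nab\cdot u$ (from $F_1$) and $u\cdot\nab u$, $\tfrac\rr{1+\rr}\nab^2u$ (from $F_{2,I}$) I distribute the $H^N$ norms so that the undifferentiated factor contributes $\lV u\rV_{H^N}$ or $\lV\rr\rV_{H^N}\lesssim\ep_1$ while the other contributes $\lV\nab\rr\rV_{H^N}$, $\lV\nab u\rV_{H^N}$ or $\lV\nab^2u\rV_{H^N}$, each $\le\lV\nab\rr\rV_{H^N}+\lV\nab u\rV_{H^{N+1}}$; for the cubic $\phi$-term I group it as $G(\rr,\phi)\,\d_j\rr\,(\nab\phi_i\d_j\phi_i)$ and use (\ref{phi^2-Hna}) to get $\lV\nab\phi_i\d_j\phi_i\rV_{H^N}\lesssim\ep_1^2\<t\>^{-1+2\delta}\lesssim\ep_1$; and the term $-a\gamma\big((1+\rr)^{\gamma-2}-1\big)\nab\rr$ is precisely the left side of (\ref{biL-rr}). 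This gives (\ref{F1,F2I-WN1}).

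For the $H^N$ bound (\ref{F1,F2I-HN}) one factor must now sit in $L^\infty$, and the point is to avoid being left with a stray $\ep_1$ instead of the coefficient $\lV(\rr,u)\rV_{W^{N(0)-4,\infty}}+\ep_1^2\<t\>^{-1+2\delta}$. I first remove the smooth coefficient with the first inequality of (\ref{biL-Ffg-HN}), reducing to $\lV u\,\nab u\rV_{H^N}$, $\lV u\,\nab\rr\rV_{H^N}$, $\lV\rr\,\nab u\rV_{H^N}$, $\lV\rr\,\nab^2u\rV_{H^N}$ and $\lV\nab\rr\,(\nab\phi\otimes\nab\phi)\rV_{H^N}$. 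For each I split $\lV\cdot\rV_{H^N}\lesssim\lV\cdot\rV_{L^2}+\lV\cdot\rV_{\dot H^N}$: the $L^2$ piece is handled by H\"{o}lder, placing the undifferentiated $u$ or $\rr$ in $L^\infty$ (dominated by $\lV(\rr,u)\rV_{W^{N(0)-4,\infty}}$) and the rest in $L^2$ (bounded by $\lV\nab\rr\rV_{H^N}$ or $\lV\nab u\rV_{H^{N+1}}$), while the $\dot H^N$ piece is handled by the homogeneous product estimate (\ref{Hardy_2}). The key is that the ``bad'' term of (\ref{Hardy_2}) always reads $\lV\nab u\rV_{L^\infty}\lV u\rV_{\dot H^N}$, $\lV\nab\rr\rV_{L^\infty}\lV\rr\rV_{\dot H^N}$ or $\lV\nab^2u\rV_{L^\infty}\lV\rr\rV_{\dot H^N}$, and since $\lV u\rV_{\dot H^N}=\lV\nab u\rV_{\dot H^{N-1}}\le\lV\nab u\rV_{H^{N+1}}$ and $\lV\rr\rV_{\dot H^N}=\lV\nab\rr\rV_{\dot H^{N-1}}\le\lV\nab\rr\rV_{H^N}$, it is again of the form (coefficient $\lV(\rr,u)\rV_{W^{N(0)-4,\infty}}$)$\times$(top-order norm); the ``good'' term $\lV\rr\rV_{L^\infty}\lV\nab^2u\rV_{\dot H^N}=\lV\rr\rV_{L^\infty}\lV\nab u\rV_{\dot H^{N+1}}\le\lV(\rr,u)\rV_{W^{N(0)-4,\infty}}\lV\nab u\rV_{H^{N+1}}$ likewise works. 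This is exactly why the hypothesis is $N\le N(0)-1$, why $\lV\nab u\rV_{H^{N+1}}$ (rather than $\lV\nab u\rV_{H^N}$) appears on the right — the term $\tfrac\rr{1+\rr}\big(\tfrac{\mu_1}2\Delta u+(\tfrac{\mu_1}2+\mu_2)\nab\div u\big)$ carries two derivatives on $u$ — and why $N(0)-4\ge2$ is needed. The term $-a\gamma\big((1+\rr)^{\gamma-2}-1\big)\nab\rr$ is now estimated by the $H^N$ half of (\ref{biL-rr-HN}) (using $N/2+1\le N(0)-4$), giving $\lesssim\lV\rr\rV_{W^{N(0)-4,\infty}}\lV\nab\rr\rV_{H^N}$; for the cubic $\phi$-term I apply (\ref{Hardy_2}) and (\ref{phi^2-Hna}) together with $\lV\nab\rr\rV_{L^\infty}\lesssim\lV\nab\rr\rV_{H^N}$ to produce the factor $\ep_1^2\<t\>^{-1+2\delta}$. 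Summing over the summands yields (\ref{F1,F2I-HN}).

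Finally, (\ref{F2II-HN}) is immediate: $F_{2,II}$ is a finite sum of terms $\d_k\big[G(\rr,\phi)\,\nab\phi_i\,\d_j\phi_i\big]$, so the outer derivative costs one order, $\lV F_{2,II}\rV_{H^{N(0)-1}}\lesssim\lV G(\rr,\phi)\,\nab\phi\,\nab\phi\rV_{H^{N(0)}}\lesssim\lV\nab\phi\,\nab\phi\rV_{H^{N(0)}}$ by the first inequality of (\ref{biL-Ffg-HN}), and by (\ref{Hardy_2}) this is $\lesssim\lV\nab\phi\rV_{H^{N(0)}}\lV\nab\phi\rV_{L^\infty}\lesssim\ep_1^2\<t\>^{-1+2\delta}$ (one may also invoke (\ref{phi^2-Hna}) directly), where $\lV\nab\phi\rV_{H^{N(0)}}\lesssim\lV\Phi\rV_{H^{N(0)}}\lesssim\ep_1$ and $\lV\nab\phi\rV_{L^\infty}\lesssim\ep_1\<t\>^{-1+2\delta}$ follow from (\ref{Main_Prop_Ass1}) and (\ref{Dec_Phi}). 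The only genuinely delicate point in the whole argument is the bookkeeping in (\ref{F1,F2I-HN}): keeping every product estimate in the form (small or decaying coefficient)$\times$(top-order energy norm) via the homogeneous-space splitting, so that no undifferentiated $L^2$ factor — which would yield only $\ep_1$ — is ever left behind.
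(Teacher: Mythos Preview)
Your proof is correct and follows essentially the same route as the paper's: apply Lemma~\ref{biL-lem} (specifically (\ref{biL-Ffg-WN1}), (\ref{biL-Ffg-HN}), (\ref{biL-rr}), (\ref{biL-rr-HN})) term by term, combined with (\ref{Hardy_2}), (\ref{Dec_Phi}) and (\ref{phi^2-Hna}). Your explicit $L^2+\dot H^N$ splitting in the $H^N$ estimate is just a more careful write-up of the same product estimate the paper uses in one line; the paper's proof in fact only spells out the two representative terms $\tfrac{\rr}{1+\rr}\nab^2 u$ and $\tfrac{1}{(1+\rr)^2}\nab\rr\cos^2\phi_2|\nab\phi|^2$ and leaves the rest to the reader.
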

\begin{proof}
	In view of $F_1$, $F_{2,I}$ and $F_{2,II}$, the Corollary is an consequence of Lemma \ref{biL-lem}, (\ref{Dec_Phi}) and the bootstrap assumption (\ref{Main_Prop_Ass1}).
	
	For the first two bounds, we only estimate the following two terms in detail
	\begin{equation*}
	\tfrac{\rr}{1+\rr}\nab^2 u,\ \ \ \ \tfrac{1}{(1+\rr)^2}\nab\rr(\cos^2\phi_2|\nab\phi|^2).
	\end{equation*}
	because other terms can be estiamted similarly. By (\ref{biL-Ffg-WN1}) and (\ref{Main_Prop_Ass1}), we have
	\begin{equation*}
	\lV \tfrac{1}{1+\rr}\rr\nab^2 u \rV_{W^{N,1}}\lesssim \lV \rr\rV_{H^N}\lV\nab^2 u\rV_{H^N}\lesssim \ep_1\lV\nab u\rV_{H^{N+1}},
	\end{equation*}
	and by (\ref{biL-Ffg-HN}) and (\ref{Hardy_2}) we have
	\begin{equation*}
	\lV \tfrac{1}{1+\rr}\rr\nab^2 u \rV_{H^N}\lesssim \lV \rr\nab^2 u\rV_{H^N}\lesssim \lV\rr\rV_{L^{\infty}}\lV\nab^2 u\rV_{H^N}+\lV\rr\rV_{\dot{H}^N}\lV \nab^2 u\rV_{L^{\infty}}\lesssim \lV(\rr,u)\rV_{W^{2,\infty}}(\lV\nab\rr\rV_{H^N}+\lV\nab u\rV_{H^{N+1}}).
	\end{equation*}
	These are acceptable for (\ref{F1,F2I-WN1}) and (\ref{F1,F2I-HN}). From (\ref{biL-Ffg-WN1}), (\ref{biL-Ffg-HN}) and (\ref{Dec_Phi}), we can also bound the second term by
	\begin{align*}
	\lV \tfrac{1}{(1+\rr)^2}\nab\rr(\cos^2\phi_2|\nab\phi|^2)\rV_{W^{N,1}\cap H^N}
	\lesssim &\lV \nab\rr\rV_{H^N}\lV \cos^2\phi_2|\nab\phi|^2\rV_{H^N}\\
	\lesssim & \lV \nab\rr\rV_{H^N}\lV |\nab\phi|^2\rV_{H^N}\\
	\lesssim &\lV \nab\rr\rV_{H^N}\lV \nab\phi\rV_{H^N}\lV\nab\phi\rV_{L^{\infty}}\\
	\lesssim &\ep_1^2\<t\>^{-1+2\delta}\lV\nab\rr\rV_{H^N},
	\end{align*}
	which is acceptable for (\ref{F1,F2I-WN1}) and (\ref{F1,F2I-HN}).
	
	Finally, the bound (\ref{F2II-HN}) can be easily obtained by (\ref{biL-Ffg-HN}), (\ref{Hardy_2}), and (\ref{Dec_Phi}). This completes the proof of the lemma.

\end{proof}

As a consequence of Corollary \ref{Non-Cor} and linear decay estimates (\ref{Dec_heat}), (\ref{e-tB_Lin}), and (\ref{etB_L2-L2}) we get the following decay estimates of $\rr,v$, and $w$.

\begin{lemma}         \label{rr,v,w-a=0_Lem}
	With the hypothesis in Proposition \ref{Main_Prop}, let $p=\frac{3}{2\delta}$, for any $t\in[0,T]$, we have
	\begin{align}    \label{rr,v,w_inf}
	&\lV (\rr,v,w)(t)\rV_{W^{N(0)-3,p}}\lesssim \ep_1\<t\>^{-3/4+\delta},\\ \label{drr,v,w_L2dec}
	&\lV\nab(\rr,v,w)\rV_{H^{N(0)-2}}\lesssim \ep_1\<t\>^{-1/2},\\\label{d2rr,v,w_L2dec}
	&\lV\nab^2(\rr,v,w)\rV_{H^{N(0)-5}}\lesssim \ep_1\<t\>^{-1+2\delta}.
	\end{align}
\end{lemma}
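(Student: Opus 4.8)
The statement is a decay estimate for $\rr$, $v$, and $w$, and the strategy is to feed Duhamel's formula for the linear system \eqref{Main_Sys_rr,v,wVecFie} (with $a=0$) into the linear decay estimates \eqref{Dec_heat}, \eqref{e-tB_Lin}, \eqref{etB_L2-L2}, while controlling the Duhamel inputs via the nonlinearity bounds of Corollary \ref{Non-Cor}. Concretely, I would write
\begin{equation*}
(\rr,v)(t)=e^{tB}(\rr_0,v_0)^\top+\int_0^t e^{(t-s)B}\big(F_1,|\nab|^{-1}\nab\cdot F_2\big)^\top(s)\,ds,\qquad
w(t)=e^{\frac{\mu_1}{2}t\Delta}w_0+\int_0^t e^{\frac{\mu_1}{2}(t-s)\Delta}\P F_2(s)\,ds,
\end{equation*}
and split $F_2=F_{2,I}+F_{2,II}$ as in the text. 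The first observation is that the ``div-curl'' parts do no harm: $|\nab|^{-1}\nab\cdot$ and $\P$ are bounded on all the relevant spaces, so the inputs are controlled by $\|F_1\|$, $\|F_{2,I}\|$, $\|F_{2,II}\|$ in $W^{N,1}$ or $H^N$, which are exactly what \eqref{F1,F2I-WN1}, \eqref{F1,F2I-HN}, \eqref{F2II-HN} provide.

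\textbf{Proof of \eqref{drr,v,w_L2dec} and \eqref{d2rr,v,w_L2dec}.} For the $\dot H^1$- and $\dot H^2$-type bounds I would use the $L^2\to L^2$ estimates \eqref{Dec_heat} (with $p=2$) and \eqref{etB_L2-L2}: the linear evolution of the data contributes $t^{-l/2}\epsilon_0$ plus an exponentially decaying high-frequency piece, which is acceptable. For the Duhamel term, apply $\nab$ (resp.\ $\nab^2$) and distribute: writing $\nab=|\nab|^{1/2}\cdot|\nab|^{1/2}$, use $\|e^{\tau B}|\nab|^l(\cdot)\|_{L^2}\lesssim \tau^{-l/2}\|\cdot\|_{L^2}$ and $\|e^{\tau\Delta}|\nab|^l(\cdot)\|_{L^2}\lesssim\tau^{-l/2}\|\cdot\|_{L^2}$ on the time interval $[t/2,t]$ (where the singularity $\tau^{-l/2}$ is integrable for $l<2$), and on $[0,t/2]$ put all derivatives on the semigroup and use that $\|\nab(\rr,u)\|_{H^{N+1}}$ is already in $L^2_t$ by the dissipation bound in \eqref{Main_Prop_Ass1}. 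Combining with \eqref{F1,F2I-HN}--\eqref{F2II-HN}, which bound the nonlinearity by $(\|(\rr,u)\|_{W^{N(0)-4,\infty}}+\epsilon_1^2\jap{s}^{-1+2\delta})(\|\nab\rr\|_{H^N}+\|\nab u\|_{H^{N+1}})$, and closing with the bootstrap hypotheses, one obtains the claimed $\jap{t}^{-1/2}$ and $\jap{t}^{-1+2\delta}$ rates. A convolution-of-decay-rates lemma (splitting $\int_0^t=\int_0^{t/2}+\int_{t/2}^t$) organizes this cleanly.

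\textbf{Proof of \eqref{rr,v,w_inf}.} For the $W^{N(0)-3,p}$ bound with $p=3/(2\delta)$ I would interpolate: either interpolate $\|\cdot\|_{L^p}$ between $L^2$ and $L^\infty$ using \eqref{chazhi}, or directly run Duhamel with the $L^p$ endpoint of the linear estimates — \eqref{Dec_heat} gives $\|e^{\tau\Delta}|\nab|^l f\|_{L^\infty}\lesssim\tau^{-3/(2q)-l/2}\|f\|_{L^q}$ and \eqref{e-tB_Lin} the analogue for $e^{\tau B}$; taking $q=1$ on the nonlinearity (where \eqref{F1,F2I-WN1} supplies the $W^{N,1}$ bound $\epsilon_1(\|\nab\rr\|_{H^N}+\|\nab u\|_{H^{N+1}})$) and $q=2$ on the data, one gets a gain of $\tau^{-3/2}$ from the $L^1\to L^\infty$ kernel, more than enough, with the interpolation down to $L^p$ costing only $\delta$ in the exponent. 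The split $[0,t/2]\cup[t/2,t]$ plus the $L^2_t$-integrability of $\|\nab(\rr,u)\|$ again yields $\jap{t}^{-3/4+\delta}$; the $3/4$ (rather than $1$) comes from the low-frequency behavior of the compressible semigroup $e^{\tau B}$, which is the weakest link.

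\textbf{Main obstacle.} The delicate point is the Duhamel term near the diagonal $s\approx t$, where the gain from the semigroup degenerates: one must avoid putting too many derivatives on $e^{(t-s)B}$ there and instead exploit the $L^2_t H^{N+1}$ control of $\nab u$ from the dissipation, so that $\int_{t/2}^t(t-s)^{-\theta}\|\nab(\rr,u)(s)\|\,ds$ is handled by Cauchy--Schwarz in $s$. A secondary subtlety is that the linear decay for $e^{tB}$ is genuinely worse than for the heat flow at low frequency (only $t^{-3/4}$ in $L^p$ for the chosen $p$, versus the heat rate), so the exponents in \eqref{rr,v,w_inf}--\eqref{d2rr,v,w_L2dec} are dictated by the compressible part $(\rr,v)$ and cannot be improved by this argument alone — which is exactly why the refined low/high-frequency estimates \eqref{Lin-2}--\eqref{Lin-3} are needed elsewhere for \eqref{GoodDec-rho,u}, but are not required for the coarser bounds claimed in this lemma.
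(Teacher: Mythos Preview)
Your overall strategy---Duhamel plus the linear estimates \eqref{Dec_heat}, \eqref{e-tB_Lin}, \eqref{etB_L2-L2} plus the nonlinearity bounds of Corollary~\ref{Non-Cor}, with the $[0,t/2]\cup[t/2,t]$ split---is exactly the paper's approach. But there is a genuine gap in the internal logic.

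The bootstrap hypotheses \eqref{Main_Prop_Ass1}--\eqref{Main_Prop_Ass2} do \emph{not} contain any pointwise-in-time decay of $\|(\rr,u)\|_{W^{N(0)-4,\infty}}$; they give only a uniform $H^{N(0)}$ bound and $L^2_t$ dissipation. So when you invoke \eqref{F1,F2I-HN} and write the factor $\|(\rr,u)\|_{W^{N(0)-4,\infty}}$, you cannot ``close with the bootstrap hypotheses'': without decay of that factor, the near-diagonal Duhamel integral $\int_{t/2}^t\langle t-s\rangle^{-\theta}\,\ep_1\,\|\nab(\rr,u)(s)\|\,ds$ is merely bounded (the tail of the $L^2_t$ dissipation integral has no quantitative rate), and you get no decay for either \eqref{drr,v,w_L2dec} or \eqref{d2rr,v,w_L2dec}. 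This is why the paper proves \eqref{rr,v,w_inf} \emph{first}, and why that proof is not a direct consequence of the bootstrap but a separate self-closing continuity argument: on $[t/2,t]$ the Duhamel contribution is shown to be bounded by $\ep_1\sup_{s\in[t/2,t]}\|(\rr,v,w)(s)\|_{W^{N(0)-3,p}}$ plus acceptable terms, and this is absorbed into the left-hand side because $\ep_1$ is small. Only after \eqref{rr,v,w_inf} is established can one insert $\|(\rr,u)(s)\|_{W^{N(0)-4,\infty}}\lesssim\ep_1\langle s\rangle^{-3/4+\delta}$ into \eqref{F1,F2I-HN} and run the Duhamel argument for \eqref{drr,v,w_L2dec} and then \eqref{d2rr,v,w_L2dec}.

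In short: your order is backwards, and the key mechanism you are missing is the self-closing bootstrap for \eqref{rr,v,w_inf}. Once you put that in place (and reorder the three estimates accordingly), the rest of your plan goes through essentially as you describe.
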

\begin{proof}
	For $0\leq t\leq 1$, (\ref{rr,v,w_inf})-(\ref{d2rr,v,w_L2dec}) are the consequences of Sobolev embedding and (\ref{Main_Prop_Ass1}). Now it suffices to prove the bounds (\ref{rr,v,w_inf})-(\ref{d2rr,v,w_L2dec}) for $t> 1$.
	
	\textbf{Proof of (\ref{rr,v,w_inf}) for $t>1$.} From (\ref{Main_Sys_rr,v,w}), it suffices to prove
	\begin{equation}
	\lV e^{tB} (\rr_0,v_0)^{\top}\rV_{W^{N(0)-3,p}}+\lV e^{t\Delta} w_0\rV_{W^{N(0)-3,p}}\lesssim \ep_0 t^{-3/4+\delta} \label{rr,v,w_inf_Lin}
	\end{equation}
	and
	\begin{equation}
	\begin{aligned}
	&\lV \int_0^t  e^{(t-s)B} (F_1,\frac{\nab\cdot F_2}{|\nab|})^{\top} ds\rV _{W^{N(0)-3,p}}+\lV \int_0^t  e^{(t-s)\Delta} \P F_2 ds \rV_{W^{N(0)-3,p}}\\
	\lesssim &\ep_1^2 t^{-1+2\delta}+\ep_1 \sup_{s\in[t/2,t]}\lV (\rr,v,w)(s)\rV_{W^{N(0)-3,p}},\label{rr,v,w_inf_NonLin}
	\end{aligned}
	\end{equation}
	then, by Duhamel's formula and (\ref{rr,v,w_inf_Lin}) and  (\ref{rr,v,w_inf_NonLin}) we get
	\begin{align*}
	&\sup_{t\in[1,T]}\<t\>^{3/4-\delta}\lV  (\rr,v,w)\rV_{W^{N(0)-3,p}}\lesssim \ep_1 +\ep_1\sup_{t\in[1,T]}\<t\>^{3/4-\delta}\lV  (\rr,v,w)\rV_{W^{N(0)-3,p}},
	\end{align*}
	which implies (\ref{rr,v,w_inf}) immediately.
	
	Now we prove the bounds (\ref{rr,v,w_inf_Lin}) and (\ref{rr,v,w_inf_NonLin}), respectively. The bound (\ref{rr,v,w_inf_Lin}) follows from (\ref{chazhi}), (\ref{e-tB_Lin}), (\ref{Dec_heat}) and (\ref{MainAss_ini}). (\ref{rr,v,w_inf_NonLin}) can be divided into two parts. By (\ref{e-tB_Lin}) and (\ref{Dec_heat}), we have
	\begin{equation*}        \label{Dec_rr_0-t/2}
	\begin{aligned}
	I_{[0,\tfrac{t}{2}]}:=&\lV \int_0^{t/2}  e^{(t-s)B} (F_1,\frac{\nab\cdot F_2}{|\nab|})^{\top} +e^{(t-s)\Delta} \P F_2 ds\rV _{W^{N(0)-3,p}}\\
	\lesssim & \int_0^{t/2} (t-s)^{-3/2+\delta} \lV (F_1, F_2^I)\rV_{W^{N(0)-3,1}}+e^{-c(t-s)} \lV(F_1, F_2)\rV_{H^{N(0)-1} }\\
	&+(t-s)^{-2+\delta} \lV \frac{1+\cos^2 \phi_2}{1+\rr}\nab\phi\nab\phi\rV_{W^{N(0)-3,1}}ds.
	\end{aligned}
	\end{equation*}
    Then (\ref{F1,F2I-WN1}), (\ref{F1,F2I-HN}) and (\ref{biL-Ffg-WN1}) imply
	\begin{equation}      \label{N1}
	\begin{aligned}
	I_{[0,\tfrac{t}{2}]}\lesssim &\int_0^{t/2} (t-s)^{-3/2+\delta} \ep_1 (\lV \nab\rr\rV_{H^{N(0)-1}}+\lV \nab u\rV_{H^{N(0)}})+e^{-c(t-s)} (\ep_1^2 +\ep_1 \lV\nab u\rV_{H^{N(0)}} )+\ep_1^2(t-s)^{-2+\delta}ds\\
	\lesssim & \ep_1^2 t^{-1+\delta}.
	\end{aligned}
	\end{equation}
	For the other part, by (\ref{chazhi}), (\ref{e-tB_Lin}), (\ref{Dec_heat}) and (\ref{BernIneq}), we have
	\begin{equation*}           \label{Dec_rr_t/2-t}
	\begin{aligned}
	I_{[\tfrac{t}{2},t]}:=&\lV \int_{t/2}^t  e^{(t-s)B} (F_1,\frac{\nab\cdot F_2}{|\nab|})^{\top}+e^{(t-s)\Delta}\P F_2 ds\rV _{W^{N(0)-3,p}}\\
	\lesssim & \int_{t/2}^{t-1} (t-s)^{-3/4+\delta} \lV(F_1, F_2^I)\rV_{H^{N(0)-1}}+ (t-s)^{-5/4+5\delta/3} \lV \frac{1+\cos^2 \phi_2}{1+\rr}\nab\phi\nab\phi\rV_{H^{N(0)-1}}\\
	&+e^{-c(t-s)} \lV(F_1, F_2)\rV_{H^{N(0)-1}}ds+\int_{t-1}^t \lV (F_1,F_2)\rV_{H^{N(0)-1}} ds
	\end{aligned}
	\end{equation*}
	Then by (\ref{F1,F2I-HN}), (\ref{F2II-HN}) and (\ref{Main_Prop_Ass1}), we get
	\begin{equation}       \label{N2}
	\begin{aligned}
	I_{[\tfrac{t}{2},t]}\lesssim &\int_{t/2}^{t-1} (t-s)^{-3/4+\delta} (\lV (\rr,u)\rV_{W^{N(0)-3,p}}+\ep_1^2 s^{-1+2\delta})(\lV \nab\rr\rV_{H^{N(0)-1}}+\lV \nab u\rV_{H^{N(0)}})\\
	&+  (t-s)^{-5/4+5\delta/3}\ep_1^2\<s\>^{-1+2\delta}+ e^{-c(t-s)} [\lV(\rr,u)\rV_{W^{N(0)-3,p}}(\lV\nab\rr\rV_{H^{N(0)-1}}+\lV\nab u\rV_{H^{N(0)}})+\ep_1^2\<s\>^{-1+2\delta}]ds\\
	&+\int_{t-1}^t \lV(\rr,u)\rV_{W^{N(0)-3,p}}(\lV\nab\rr\rV_{H^{N(0)-1}}+\lV\nab u\rV_{H^{N(0)}})+\ep_1^2\<s\>^{-1+2\delta}ds\\
	\lesssim & \ep_1\sup_{s\in[t/2,t]}\lV (\rr,u)(s)\rV_{W^{N(0)-3,p}}+\ep_1^2\<t\>^{-1+2\delta}.
	\end{aligned}
	\end{equation}
	Hence, the desired bound (\ref{rr,v,w_inf_NonLin}) follows from (\ref{N1}) and (\ref{N2}). Thus the proof of (\ref{rr,v,w_inf}) completes.

	\textbf{Proof of (\ref{drr,v,w_L2dec}) for $t>1$.} By Duhamel's formula, it suffices to prove that
	\begin{equation}         \label{drr,v,w_L2dec_Lin}
	\lV e^{tB}|\nab|(\rr_0,v_0)\rV_{H^{N(0)-2}}+\lV e^{t\Delta}|\nab| w_0\rV_{H^{N(0)-2}} \lesssim \ep_0t^{-1/2},
	\end{equation}
	and
	\begin{equation}      \label{drr,v,w_L2dec_NonLin}
	\int_0^t \lV e^{(t-s)B}(|\nab|F_1,\nab\cdot F_2)^{\top}\rV_{H^{N(0)-2}}+\lV e^{(t-s)\Delta}|\nab|\P F_2 \rV_{H^{N(0)-2}} ds\lesssim \ep_1 t^{-3/4+2\delta}.
	\end{equation}
	The bound (\ref{drr,v,w_L2dec_Lin}) follows by (\ref{etB_L2-L2}), (\ref{Dec_heat}) and (\ref{MainAss_ini}). Thus we only show (\ref{drr,v,w_L2dec_NonLin}). Using (\ref{etB_L2-L2}), (\ref{biL-Ffg-WN1}) and (\ref{biL-Ffg-HN}), it follows that
	\begin{align*}
	&\int_0^{t/2} \lV e^{(t-s)B}(|\nab|F_1,\nab\cdot F_2)^{\top}\rV_{H^{N(0)-2}} ds\\
	\lesssim & \int_0^{t/2} (t-s)^{-5/4} \lV (F_1,F_2^I)\rV_{W^{N(0)-2,1}}+(t-s)^{-7/4}\lV\nab\phi\rV_{H^{N(0)-2}}^2+e^{-c(t-s)}\lV(F_1,F_2)\rV_{H^{N(0)-1}} ds,
	\end{align*}
	and
	\begin{align*}
	&\int_{t/2}^t \lV e^{(t-s)B}(|\nab|F_1,\nab\cdot F_2)^{\top}\rV_{H^{N(0)-2}} ds\\
	\lesssim & \int_{t/2}^t \<t-s\>^{-1/2}\lV(F_1,F_2^I)\rV_{H^{N(0)-1}}+\<t-s\>^{-1}\lV\nab\phi\rV_{H^{N(0)}}^2 +e^{-c(t-s)}\lV(F_1,F_2)\rV_{H^{N(0)-1}} ds.
	\end{align*}
	Then by (\ref{F1,F2I-WN1}), (\ref{F1,F2I-HN}), (\ref{F2II-HN}), (\ref{rr,v,w_inf}), H\"{o}lder and (\ref{Main_Prop_Ass1}) we get
	\begin{align*}
	&\int_0^t \lV e^{(t-s)B}(|\nab|F_1,\nab\cdot F_2)^{\top}\rV_{H^{N(0)-2}} ds\\
	\lesssim & \int_0^{t/2} (t-s)^{-5/4}\ep_1(\lV \nab\rr\rV_{H^{N(0)-2}}+\lV\nab u\rV_{H^{N(0)-1}})+(t-s)^{-7/4}\ep_1^2 +e^{-c(t-s)}(\ep_1^2+ \ep_1\lV\nab u\rV_{H^{N(0)}}) ds\\
	&+\int_{t/2}^t \Big[\<t-s\>^{-1/2}\ep_1 s^{-3/4+\delta}(\lV \nab\rr\rV_{H^{N(0)-1}}+\lV\nab u\rV_{H^{N(0)}}) +\<t-s\>^{-1}\ep_1^2 s^{-1+2\delta} \\
	&\ \ \ \ \ \ \ \ \ \ \ \ +e^{-c(t-s)}(\ep_1 s^{-3/4+\delta}(\lV \nab\rr\rV_{H^{N(0)-1}}+\lV\nab u\rV_{H^{N(0)}})+\ep_1^2 s^{-1+2\delta})\Big]ds\\
	\lesssim & \ep_1^2 t^{-3/4}+\ep_1^2 t^{-2}+\ep_1^2t^{-3/4+2\delta}+\ep_1^2 t^{-1+3\delta}+\ep_1^2 t^{-3/4}\\
	\lesssim & \ep_1^2 t^{-3/4+2\delta}.
	\end{align*}
	Hence, the proof of (\ref{drr,v,w_L2dec}) for $\rr,v$ is obtained. Using similar argument, we can also get the bound (\ref{drr,v,w_L2dec}) for $w$. This completes the proof of (\ref{drr,v,w_L2dec}).
	
	\textbf{Proof of (\ref{d2rr,v,w_L2dec}) for $t>1$.} By Duhamel's formula, it suffices to prove
	\begin{gather}         \label{d2rr,v,w_L2dec_Lin}
	\lV e^{tB}\nab^2(\rr_0,v_0)\rV_{H^{N(0)-5}}+\lV e^{t\Delta}\nab^2 w_0\rV_{H^{N(0)-5}} \lesssim \ep_0t^{-1},\\
	     \label{d2rr,v,w_L2dec_NonLin}
	\int_0^t \lV e^{(t-s)B}\nab^2(F_1,|\nab|^{-1}\nab\cdot F_2)^{\top}\rV_{H^{N(0)-5}}+\lV e^{(t-s)\Delta}\nab^2\P F_2 \rV_{H^{N(0)-5}} ds\lesssim \ep_1 t^{-1+2\delta}.
	\end{gather}
	From (\ref{etB_L2-L2}), (\ref{Dec_heat}) and (\ref{MainAss_ini}), we obtain (\ref{d2rr,v,w_L2dec_Lin}). For (\ref{d2rr,v,w_L2dec_NonLin}), by (\ref{etB_L2-L2}) and (\ref{Dec_heat}) we have
	\begin{equation*}             \label{d2rrL2_0,t/2}
	\begin{aligned}
	\mathcal{I}_{[0,\tfrac{t}{2}]}:=&\int_0^{t/2} \lV e^{(t-s)B}\nab^2(F_1,|\nab|^{-1}\nab\cdot F_2)^{\top}\rV_{H^{N(0)-5}}+\lV e^{(t-s)\Delta}\nab^2\P F_2 \rV_{H^{N(0)-5}} ds\\
	\lesssim & \int_0^{t/2} (t-s)^{-7/4}\lV(F_1,F_{2,I})\rV_{W^{N(0)-5,1}}+(t-s)^{-9/4}\lV\nab\phi\rV_{H^{N(0)}}^2
	+e^{-c(t-s)}\lV (F_1,F_2)\rV_{H^{N(0)-3}}ds.
	\end{aligned}
	\end{equation*}
	Thus by Corollary \ref{Non-Cor}, H\"{o}lder and (\ref{Main_Prop_Ass1}) we get
	\begin{equation}         \label{d2rrL2_0,t/2-}
	\begin{aligned}
	\mathcal{I}_{[0,\tfrac{t}{2}]}\lesssim \int_0^{t/2} \ep_1t^{-7/4}(\lV\nab\rr\rV_{H^{N(0)-2}}+\lV\nab u\rV_{H^{N(0)-1}})+\ep_1^2t^{-9/4}+\ep_1^2 e^{-c(t-s)}ds
	\lesssim \ep_1^2t^{-5/4}.
	\end{aligned}
	\end{equation}
	Next, by (\ref{etB_L2-L2}) and (\ref{Dec_heat}) we have
	\begin{equation*}         \label{d2rrL2_t/2,t}
	\begin{aligned}
	\mathcal{I}_{[\tfrac{t}{2},t]}:=&\int_{t/2}^t \lV e^{(t-s)B}\nab^2(F_1,|\nab|^{-1}\nab\cdot F_2)^{\top}\rV_{H^{N(0)-5}}+\lV e^{(t-s)\Delta}\nab^2\P F_2 \rV_{H^{N(0)-5}} ds\\
	\lesssim & \int_{t/2}^t \<t-s\>^{-1}\lV (F_1,F_{2,I})\rV_{H^{N(0)-3}}+\<t-s\>^{-3/2}\lV (\nab\phi)^2\rV_{H^{N(0)-2}}+e^{-c(t-s)}\lV(F_1,F_2)\rV_{H^{N(0)-3}}ds.
	\end{aligned}
	\end{equation*}	
	Since (\ref{F1,F2I-HN}), (\ref{rr,v,w_inf}) and (\ref{drr,v,w_L2dec}) give
	\begin{align*}
	\lV (F_1,F_{2,I})\rV_{H^{N(0)-3}}\lesssim\ep_1^2\<t\>^{-5/4+\delta},
	\end{align*}
	by (\ref{F2II-HN}) and (\ref{Dec_Phi}) we can bound the left hand side of (\ref{d2rrL2_t/2,t}) by
	\begin{equation}        \label{d2rrL2_t/2,t-}
	\begin{aligned}
	\mathcal{I}_{[\tfrac{t}{2},t]}\lesssim &\int_{t/2}^t\<t-s\>^{-1}\ep_1^2\<s\>^{-5/4+\delta}+\<t-s\>^{-3/2}\ep_1^2\<s\>^{-1+2\delta}+e^{-c(t-s)}\ep_1^2\<t\>^{-1+2\delta}ds\lesssim \ep_1^2 \<t\>^{-1+2\delta}.
	\end{aligned}
	\end{equation}
	Hence, (\ref{d2rrL2_0,t/2-}) and (\ref{d2rrL2_t/2,t-}) imply the bound (\ref{d2rr,v,w_L2dec_NonLin}). This completes the proof of (\ref{d2rr,v,w_L2dec}).	
\end{proof}

We prove now the decay estimates of $\rr^{(a)},u^{(a)}$ for $1\leq |a|\leq N_1$ by induction. From the system (\ref{Main_Sys_rr,v,wVecFie}), we rewrite the nonlinearities $\tilde{F}_2^{(a)}$ as
\begin{equation*}
\tilde{F}_2^{(a)}=\tilde{F}_{2,I}^{(a)}+\tilde{F}_{2,II}^{(a)}+\tilde{F}_{2,III}^{(a)}+(S+1)^{a_1}\Gamma^{a'}Err2,
\end{equation*}
where
\begin{align*}
\tilde{F}_{2,I}^{(a)}:=&\sum_{l=1}^{a_1}C_{a_1}^l(-1)^l\big( \frac{\mu_1}{2}\Delta S^{a_1-l}\Gamma^{a'}u+(\frac{\mu_1}{2}+\mu_2)\nab\div S^{a_1-l}\Gamma^{a'}u \big),\\
\tilde{F}_{2,II}^{(a)}:=&(a\gamma-\frac{P'(1+\rr)}{1+\rr})\nab\rr^{(a)}-\sum_{b+c=a}C^b_a Z^b u\cdot\nab Z^c u-\sum_{b+c=a,|b|\geq 1}C^b_a Z^b\Big(\frac{P'(1+\rr)}{1+\rr}\Big)\nab Z^c\rr\\
&+\sum_{b+c=a}C^b_a  \nab Z^b \Big(\frac{\rr}{1+\rr}\Big)\Big[\frac{\mu_1}{2} \nab (S-1)^{(c_1)}\Gamma^{(c')} u+(\frac{\mu_1}{2}+\mu_2)\div(S-1)^{c_1}\Gamma^{c'}u \Big]\\
&-\sum_{b+c+e=a}C^{b,c}_a\Big[\frac{1}{2} \nab Z^b \Big(\frac{1}{1+\rr}\Big) \big( \nab(S-1)^{c_1}\Gamma^{c'}\phi\cdot\nab(S-1)^{e_1}\Gamma^{e'}\phi \big)\\
&\ \ \ \ \ \ -\d_j Z^b \Big(\frac{1}{1+\rr}\Big)\big(  \nab(S-1)^{c_1}\Gamma^{c'}\phi\cdot\d_j(S-1)^{e_1}\Gamma^{e'}\phi \big) \Big],
\end{align*}
and
\begin{align*}
\tilde{F}_{2,III}^{(a)}:=& -\sum_{b+c=a}C^b_a \nab \Big[Z^b \Big(\frac{\rr}{1+\rr}\Big)\Big(\frac{\mu_1}{2} \nab (S-1)^{(c_1)}\Gamma^{(c')} u+(\frac{\mu_1}{2}+\mu_2)\div Z^b \Big(\frac{\rr}{1+\rr}\Big)\div(S-1)^{c_1}\Gamma^{c'}u \Big)\Big]\\
&+\sum_{b+c+e=a}C^{b,c}_a \Big[\frac{1}{2} \nab \Big( Z^b \Big(\frac{1}{1+\rr}\Big) \nab(S-1)^{c_1}\Gamma^{c'}\phi\cdot\nab(S-1)^{e_1}\Gamma^{e'}\phi \Big)\\
&\ \ \ \ \ \ -\d_j \Big( Z^b \big(\frac{1}{1+\rr}\big)\cdot  \nab(S-1)^{c_1}\Gamma^{c'}\phi\cdot\d_j(S-1)^{e_1}\Gamma^{e'}\phi \Big) \Big].
\end{align*}

\begin{lemma}         \label{F1a,F2a_Lem}
	Under the bootstrap assumptions (\ref{Main_Prop_Ass1}) and (\ref{Main_Prop_Ass2}), for any $N(a)-3\leq N\leq N(a)-1$, $t\in[0,T]$, we have
	\begin{align*}
	\lV \tilde{F}_1^{(a)}\rV_{W^{N,1}}+\lV \tilde{F}_{2,II}^{(a)}\rV_{W^{N,1}}\lesssim & \ep_1\sum_{|b|\leq |a|}(\lV \nab\rr^{(b)}\rV_{H^{N}}+\lV \nab u^{(b)}\rV_{H^{N}}),\\
	\lV \tilde{F}_1^{(a)}\rV_{H^{N}}+\lV \tilde{F}_{2,II}^{(a)}\rV_{H^N}\lesssim & \lV(\rr^{(b)},u^{(b)})\rV_{W^{N(a)/2,\infty}}\lV (\nab\rr^{(c)},\nab u^{(c)})\rV_{H^N}\\
	&+\ep_1^2\<t\>^{-1+5\delta}\lV \nab\rr^{(b)}\rV_{H^N}.
	\end{align*}
\end{lemma}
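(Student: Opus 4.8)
The plan is to follow the proof of Corollary~\ref{Non-Cor}, now keeping track of the vector fields. First I would expand every $Z$ in $\tilde F_1^{(a)}$ and $\tilde F_{2,II}^{(a)}$ by the Leibniz rule, and wherever a vector field lands on one of the smooth coefficients $\tfrac{P'(1+\rr)}{1+\rr}$, $\tfrac{\rr}{1+\rr}$, $\tfrac{1}{1+\rr}$, by the Fa\`a di Bruno formula. This reduces each term to a finite sum of products
\[
G(\rr,\phi)\cdot\rr^{(b_1)}\cdots\rr^{(b_k)}\cdot(\text{two dynamic factors}),
\]
with $|b_1|+\dots+|b_k|\le|a|$, $|b_i|\ge1$, $G$ smooth with bounded derivatives, and the two dynamic factors taken among $\rr^{(b')},\,u^{(b')},\,\nab\rr^{(b')},\,\nab u^{(b')},\,\nab\phi^{(b')}$ with total vector-field count $\le|a|$. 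Two structural facts drive the argument: (a) since $[Z,\nab]$ is again $\nab$ composed with lower-order vector fields, $\nab Z^c(\cdot)$ is a combination of the $\nab(\cdot)^{(c')}$ with $|c'|\le|c|$, so every term inherited from $F_1$ or $F_{2,II}$ carries a gradient on at least one dynamic factor --- on both $\phi$-factors in the terms quadratic in $\nab\phi$; (b) the budget $N(a)=N_0-|a|h$ with $N_0=60$, $h=6$ gives $N(b')\ge N(a)$ for all $|b'|\le|a|$ and $N(b_i)\ge N_0-N_1h=24$, leaving ample room for the embeddings $H^{N(b_i)}\hookrightarrow W^{2,\infty}$, $H^{N(b)}\hookrightarrow W^{N(a)/2,\infty}$ and for $N\le N(a)-1$.

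For the $W^{N,1}$ estimate I would bound a representative term of each type, the rest being analogous. The bilinear terms $Z^bu\cdot\nab Z^c\rr$ and $Z^b\rr\cdot\nab Z^cu$ ($b+c=a$) are handled by H\"{o}lder, $\lV u^{(b)}\nab\rr^{(c)}\rV_{W^{N,1}}\lesssim\lV u^{(b)}\rV_{H^N}\lV\nab\rr^{(c)}\rV_{H^N}\lesssim\ep_1\lV\nab\rr^{(c)}\rV_{H^N}$, since $N\le N(a)-1\le N(b)$ and by \eqref{Main_Prop_Ass1}. The term $\big(a\gamma-\tfrac{P'(1+\rr)}{1+\rr}\big)\nab\rr^{(a)}=-a\gamma\big((1+\rr)^{\gamma-2}-1\big)\nab\rr^{(a)}$ is controlled using the computation in the proof of \eqref{biL-rr}, which gives $\lV(1+\rr)^{\gamma-2}-1\rV_{H^N}\lesssim\lV\rr\rV_{H^N}\lesssim\ep_1$. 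For a term with a differentiated coefficient such as $Z^b\big(\tfrac{P'(1+\rr)}{1+\rr}\big)\nab Z^c\rr$, $|b|\ge1$, I would peel off the $\rr^{(b_i)}$ of highest order, use \eqref{biL-Ffg-WN1} to absorb $G$, and bound each remaining $\rr^{(b_i)}$ by $\lV\rr^{(b_i)}\rV_{H^2}\lesssim\ep_1$. Finally, for the terms quadratic in $\nab\phi$ with a coefficient $\nab Z^b\big(\tfrac{1}{1+\rr}\big)$ --- which by the chain rule always contains a factor $\nab\rr^{(b')}$, $|b'|\le|b|$, times a bounded function of $\rr$ and extra $\rr^{(b_i)}$'s --- I would use \eqref{biL-Ffg-WN1} and \eqref{phi^2-Hna} to bound the $\phi$-part by $\ep_1^2\<t\>^{-1+4\delta}\le\ep_1$, leaving $\ep_1\lV\nab\rr^{(b')}\rV_{H^N}$. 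Summing the finitely many terms gives the first estimate.

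For the $H^N$ estimate the scheme is the same, with derivatives distributed via the Hardy-type inequalities \eqref{Hardy_1}--\eqref{Hardy_2} rather than H\"{o}lder: for a bilinear term $f^{(b)}\nab g^{(c)}$ with $f^{(b)}\in\{\rr^{(b)},u^{(b)}\}$ one has $\lV f^{(b)}\nab g^{(c)}\rV_{H^N}\lesssim\lV f^{(b)}\rV_{L^\infty}\lV\nab g^{(c)}\rV_{H^N}+\lV f^{(b)}\rV_{\dot H^N}\lV\nab g^{(c)}\rV_{L^\infty}$, where $\lV f^{(b)}\rV_{L^\infty}\le\lV(\rr^{(b)},u^{(b)})\rV_{W^{N(a)/2,\infty}}$ and, in the second term, $\lV f^{(b)}\rV_{\dot H^N}\lesssim\ep_1$ with $\lV\nab g^{(c)}\rV_{L^\infty}$ absorbed into a $W^{N(a)/2,\infty}$-type norm of $(\rr^{(c)},u^{(c)})$; differentiated coefficients are again Fa\`a di Bruno--expanded and handled by \eqref{biL-Ffg-HN}, placing all but the highest-order factor in $L^\infty$ and using $\lV G_k(\rr)\rV_{L^\infty}\lesssim1$, $\lV G_k(\rr)\rV_{\dot H^N}\lesssim\lV\rr\rV_{H^N}\lesssim\ep_1$. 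The terms quadratic in $\nab\phi$ with a $\nab Z^b\big(\tfrac{1}{1+\rr}\big)$-type coefficient are bounded, via \eqref{biL-Ffg-HN}, \eqref{Hardy_2} and \eqref{phi^2-Hna}, by $\ep_1^2\<t\>^{-1+5\delta}\lV\nab\rr^{(b')}\rV_{H^N}$, which is the second term on the right of the claim.

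I expect the main obstacle to be organizational rather than analytic: in each term one must check that, after extracting one factor of $\ep_1$ (or the decaying $\<t\>^{-1+\cdot}$ from an $\nab\phi$ pair), the surviving high-order factor is always of the gradient type $\nab\rr^{(b)}$ or $\nab u^{(b)}$ and lies in $H^N$ with $N\le N(a)-1\le N(b)$, while every other factor --- in particular the arbitrarily many $\rr^{(b_i)}$ produced by the Fa\`a di Bruno expansion --- is controlled in $L^\infty$ by $\ep_1$ through the Sobolev embedding that the budget $N(b_i)\ge 24$ permits. The individual bilinear estimates themselves are routine consequences of Lemma~\ref{biL-lem}, Lemma~\ref{Hardy} and \eqref{phi^2-Hna}.
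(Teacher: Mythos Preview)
Your proposal is correct and follows exactly the approach the paper intends: the paper in fact omits the proof of this lemma entirely, implicitly deferring to the argument of Corollary~\ref{Non-Cor} extended to the vector-field terms, which is precisely what you outline. Your identification of the key structural points --- that a gradient always survives on at least one dynamic factor, that the regularity budget $N(b)\ge N(a)$ allows the embeddings, and that the Fa\`a di Bruno factors are harmless in $L^\infty$ --- together with the use of Lemma~\ref{biL-lem}, Lemma~\ref{Hardy} and \eqref{phi^2-Hna}, is the right scheme.
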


\begin{lemma} With the hypothesis in Proposition \ref{Main_Prop}, let $p=\frac{3}{2\delta}$, for any $t\in[0,T]$, we have
	\begin{align} \label{rra,v,w_inf}
	&\lV  (\rr^{(a)},v^{(a)},w^{(a)})(t)\rV_{W^{N(a)-3,p}}\lesssim \ep_1\<t\>^{-3/4+(2+|a|)\delta},\ \ \ {\rm for}\ 1\leq|a|\leq N_1.\\	\label{drra,v,w_L2dec}
	&\lV\nab(\rr^{(a)},v^{(a)},w^{(a)})\rV_{H^{N(a)-2}}\lesssim \ep_1\<t\>^{-1/2+(2+|a|)\delta},\\\label{d2rra,v,w_L2dec}
	&\lV\nab^2(\rr^{(a)},v^{(a)},w^{(a)})\rV_{H^{N(a)-5}}\lesssim \ep_1\<t\>^{-1+(2+|a|)\delta}.
	\end{align}
\end{lemma}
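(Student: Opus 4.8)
We argue by induction on $|a|$, the base case $|a|=0$ being Lemma~\ref{rr,v,w-a=0_Lem}. Fix $1\le|a|\le N_1$ and assume \eqref{rra,v,w_inf}--\eqref{d2rra,v,w_L2dec} hold for all multi-indices $b$ with $|b|\le|a|-1$. For $0\le t\le1$ the three bounds follow from Sobolev embedding and \eqref{Main_Prop_Ass1}, so it suffices to treat $t>1$. Starting from the system \eqref{Main_Sys_rr,v,wVecFie}, Duhamel's formula writes $(\rr^{(a)},v^{(a)})^\top$ as $e^{tB}(\rr_0^{(a)},v_0^{(a)})^\top$ plus $\int_0^t e^{(t-s)B}(\tilde F_1^{(a)},|\nab|^{-1}\nab\cdot\tilde F_2^{(a)})^\top\,ds$, and $w^{(a)}=\P u^{(a)}$ as $e^{t\Delta}w_0^{(a)}$ plus $\int_0^t e^{(t-s)\Delta}\P\tilde F_2^{(a)}\,ds$. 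Since $\P$ and $|\nab|^{-1}\nab\cdot$ are Calder\'on--Zygmund operators, bounded on all the spaces in play, and since $u^{(b)}=-|\nab|^{-1}\nab v^{(b)}+w^{(b)}$ allows us to pass between bounds on $u^{(b)}$ and on $(v^{(b)},w^{(b)})$, we work throughout with $e^{tB}$, $e^{t\Delta}$ and the norms in the statement, and we recall the decomposition $\tilde F_2^{(a)}=\tilde F_{2,I}^{(a)}+\tilde F_{2,II}^{(a)}+\tilde F_{2,III}^{(a)}+(S+1)^{a_1}\Gamma^{a'}Err2$.

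For \eqref{rra,v,w_inf} the plan is a self-improving estimate: with $M(t):=\sup_{1\le s\le t}\langle s\rangle^{3/4-(2+|a|)\delta}\lV(\rr^{(a)},v^{(a)},w^{(a)})(s)\rV_{W^{N(a)-3,p}}$, $p=\tfrac{3}{2\delta}$, I will show $M(t)\lesssim\ep_0+\ep_1 M(t)$, which closes since $\ep_1$ is small. The linear part contributes $\lesssim\ep_0 t^{-3/4+\delta}$ by interpolating \eqref{e-tB_Lin} (with $p=1$) and \eqref{etB_L2-L2} via \eqref{chazhi} and using \eqref{MainAss_ini}; the heat part is identical with \eqref{Dec_heat}. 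For the Duhamel term I split the $s$-integral into $[0,t/2]$ and $[t/2,t]$. On $[0,t/2]$ the semigroup factor is bounded by $(t-s)^{-3/2+\delta}$ (resp.\ $(t-s)^{-2+\delta}$ for the $\nab\phi\,\nab\phi$-type contributions hidden in $\tilde F_{2,II}^{(a)}$, $\tilde F_{2,III}^{(a)}$) times the $W^{N,1}$-norm of the source, invoking the $W^{N,1}$-bounds of Lemma~\ref{F1a,F2a_Lem}, the quadratic bound \eqref{phi^2-Hna} (after integrating by parts to move the outer $\nab$ in $\tilde F_{2,III}^{(a)}$ onto $e^{(t-s)B}$), and the inductive bounds for the lower-order factors. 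On $[t/2,t-1]$ I instead pair $(t-s)^{-3/4+\delta}$ (resp.\ $(t-s)^{-5/4+c\delta}$) with the $H^N$-bounds of Lemma~\ref{F1a,F2a_Lem}, the $\eqref{F2II-HN}$-type bounds, and the $W^{N(a)-3,p}$-decay of $(\rr^{(a)},u^{(a)})$ entering $M(t)$; on $[t-1,t]$ the integrand is bounded trivially via \eqref{Main_Prop_Ass1} and \eqref{phi^2-Hna}. Two subtleties: the $|b|=|a|$ terms on the right of Lemma~\ref{F1a,F2a_Lem} carry only the $L^2_t H^{N(a)-1}$-bootstrap norm of \eqref{Main_Prop_Ass1}, hence must be absorbed by Cauchy--Schwarz in $s$ against $(t-s)^{-3+2\delta}$, contributing $\lesssim\ep_1^2 t^{-1+\delta}$; and the linear commutator $\tilde F_{2,I}^{(a)}=\sum_{l\ge1}c_l\,\nab^2 S^{a_1-l}\Gamma^{a'}u$ is not in $W^{N,1}$, so it is always routed through \eqref{etB_L2-L2}/\eqref{Dec_heat} against $\lV\nab^2 u^{(b)}\rV_{H^N}$ with $|b|=|a|-l\le|a|-1$ and the inductive bound \eqref{d2rra,v,w_L2dec}, the resulting time exponent being $\le-3/4+(2+|a|)\delta$. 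Collecting everything yields $M(t)\lesssim\ep_0$, i.e.\ \eqref{rra,v,w_inf}.

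Once \eqref{rra,v,w_inf} is available, \eqref{drra,v,w_L2dec} and \eqref{d2rra,v,w_L2dec} follow by the same two-region Duhamel scheme but using $L^2$-based linear estimates throughout: \eqref{etB_L2-L2} and \eqref{Dec_heat} applied to $|\nab|$, resp.\ $\nab^2$, of the data (yielding $\ep_0 t^{-1/2}$, resp.\ $\ep_0 t^{-1}$, from \eqref{MainAss_ini}) and of $|\nab|F_1^{(a)}$-type and $\nab\cdot F_2^{(a)}$-type sources; on $[0,t/2]$ one uses the extra semigroup decay $(t-s)^{-5/4}$, resp.\ $(t-s)^{-7/4}$, against the $W^{N,1}$-bounds, and on $[t/2,t]$ one uses $\langle t-s\rangle^{-1/2}$, resp.\ $\langle t-s\rangle^{-1}$, against the $H^N$-bounds together with the just-proven \eqref{rra,v,w_inf} (and, for \eqref{d2rra,v,w_L2dec}, also \eqref{drra,v,w_L2dec}), the $|b|=|a|$ contributions again being handled by Cauchy--Schwarz in time. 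The main obstacle is the first step: controlling the slowly decaying near-diagonal ($s\approx t$) part of the Duhamel integral in \eqref{rra,v,w_inf} so that the time exponent comes out as $-3/4+(2+|a|)\delta$ and no worse — this is precisely what forces the self-bootstrapped quantity $M(t)$, the Cauchy--Schwarz-in-time device for the top-order $|b|=|a|$ terms, and the separate $H^N$-treatment of the linear commutator $\tilde F_{2,I}^{(a)}$.
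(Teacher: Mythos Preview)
Your proposal is correct and follows essentially the same route as the paper: induction on $|a|$, Duhamel with the $[0,t/2]$--$[t/2,t]$ split, the self-improving quantity $M(t)$ for \eqref{rra,v,w_inf}, the $W^{N,1}/H^N$ bounds of Lemma~\ref{F1a,F2a_Lem} for $F_1^{(a)}$ and $\tilde F_{2,II}^{(a)}$, the separate $L^2$-based treatment of the linear commutator $\tilde F_{2,I}^{(a)}$ via the inductive hypothesis \eqref{d2rra,v,w_L2dec} at level $|b|\le|a|-1$, Cauchy--Schwarz in $s$ for the top-order factors, and then \eqref{drra,v,w_L2dec}, \eqref{d2rra,v,w_L2dec} in sequence using the already-proven \eqref{rra,v,w_inf}.

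Two small technical corrections that do not affect the scheme: (i) $\P$ and $|\nab|^{-1}\nab\cdot$ are \emph{not} bounded on $L^1$, so the $W^{N,1}$ route cannot go through them directly; instead the Riesz symbol $\xi_j/|\xi|$ is absorbed into the semigroup multiplier (the proof of \eqref{e-tB_Lin} goes through unchanged since $|e^{tB(\xi)}|\lesssim e^{-c|\xi|^2 t}$ at low frequencies is insensitive to such a bounded factor); (ii) for $\tilde F_{2,I}^{(a)}$ in the $W^{N(a)-3,p}$ estimate you need the $L^2\!\to\!L^p$ decay obtained by interpolating \eqref{e-tB_Lin} (with source in $L^2$) and \eqref{etB_L2-L2}, not \eqref{etB_L2-L2} alone, which only lands in $L^2$.
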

\begin{proof}
	We prove the bounds (\ref{rra,v,w_inf})-(\ref{d2rra,v,w_L2dec}) by induction. Precisely, by Lemma \ref{rr,v,w-a=0_Lem}, we may assume that the bounds (\ref{rra,v,w_inf})-(\ref{d2rra,v,w_L2dec}) hold for $|a|\leq l-1\leq N_1-1$, then we prove (\ref{rra,v,w_inf})-(\ref{d2rra,v,w_L2dec}) for $|a|=l$. Now it suffices to prove these bounds for $t>1$, otherwise by (\ref{BernIneq}) and (\ref{Main_Prop_Ass1}) the bounds (\ref{rra,v,w_inf})-(\ref{d2rra,v,w_L2dec}) for $t\leq 1$ follow immediately.
	
	{\bf Proof of (\ref{rra,v,w_inf}) for $t>1$.}
	
	By Duhamel's formula, it suffices to prove
	\begin{gather}       \label{rra,v,w_inf_Lin}
	\lV e^{tB} (\rr^{(a)}_0,v^{(a)}_0)^{\top}\rV_{W^{N(a)-3,p}}+\lV  e^{t\Delta} w^{(a)}_0\rV_{W^{N(a)-3,p}}\lesssim \ep_0 t^{-3/4+\delta},  \\    \label{rra,v,w_inf_Lin2}
	\sum_{|b|\leq |a|-1}(\lV \int_0^t  e^{(t-s)B} (0,\Delta u^{(b)})^{\top} ds\rV _{W^{N(a)-3,p}}+\lV \int_0^t  e^{(t-s)\Delta} \P \Delta u^{(b)} ds \rV_{W^{N(a)-3,p}})\lesssim \ep_1 t^{-3/4+(2+|b|)\delta},
	\end{gather}
	\begin{equation}
	\begin{aligned}
	&\lV \int_0^t  e^{(t-s)B} (F_1^{(a)}, \tilde{F}_{2,II}^{(a)})^{\top} ds\rV _{W^{N(a)-3,p}}+\lV \int_0^t  e^{(t-s)\Delta}  \tilde{F}_{2,II}^{(a)} ds \rV_{W^{N(a)-3,p}}\\
	\lesssim &\ep_1^2 t^{-3/4+(2+|a|)\delta}+\ep_1 \sup_{s\in[t/2,t]}\lV (\rr^{(a)},v^{(a)},w^{(a)})(s)\rV_{W^{N(a)/2,p}},\label{rra,v,w_inf_NonLin}
	\end{aligned}
	\end{equation}
	\begin{equation}        \label{rra,v,w_inf_F2III}
	\begin{aligned}
	&\int_0^t\lV   e^{(t-s)B} (0, \tilde{F}_{2,III}^{(a)})^{\top} \rV _{W^{N(a)-3,p}}+\lV   e^{(t-s)\Delta}  \tilde{F}_{2,III}^{(a)} \rV_{W^{N(a)-3,p}} ds\\
	\lesssim & \ep_1\<t\>^{-3/4+(2+|a|)\delta}+\ep_1\sup_{s\in[t/2,t]}(\lV(\rr^{(a)},u^{(a)})(s)\rV_{W^{1,p}}),
	\end{aligned}
	\end{equation}
	and
	\begin{equation}     \label{rra,v,w_inf_Err2}
	\begin{aligned}
	&\int_0^t\lV   e^{(t-s)B} (0, (S+1)^{a_1}\Gamma^{a'}Err2)^{\top} \rV _{W^{N(a)-3,p}}+\lV   e^{(t-s)\Delta}  (S+1)^{a_1}\Gamma^{a'}Err2  \rV_{W^{N(a)-3,p}}ds\\
	\lesssim &\ep_1^2 \<t\>^{-1+(2+|a|)\delta}
	\end{aligned}
	\end{equation}

	Now we begin to prove the bounds (\ref{rra,v,w_inf_Lin})-(\ref{rra,v,w_inf_Err2}), respectively. The bound (\ref{rra,v,w_inf_Lin}) follows from (\ref{e-tB_Lin}), (\ref{Dec_heat}) and (\ref{MainAss_ini}) directly.
	
	{\bf Step 1. Proof of the bound (\ref{rra,v,w_inf_Lin2}).} By (\ref{e-tB_Lin}), (\ref{Dec_heat}) and inductive assumptions, we have for any $|b|\leq |a|-1$
	\begin{align*}
	&\lV \int_0^t  e^{(t-s)B} (0,\Delta u^{(b)})^{\top} ds\rV _{W^{N(a)-3,p}}+\lV \int_0^t  e^{(t-s)\Delta}  \Delta u^{(b)} ds \rV_{W^{N(a)-3,p}}\\
	\lesssim & \int_0^{t/2} (t-s)^{-7/4+\delta}\lV u^{(b)}\rV_{H^{N(a)-3}}ds+\int_{t/2}^t \<t-s\>^{-3/4+\delta}\lV \nab^2u^{(b)}\rV_{H^{N(a)-1}}ds\\
	&+\int_0^t e^{-c(t-s)}\lV \nab^2u^{(b)}\rV_{H^{N(a)-1}}ds\\
	\lesssim & \ep_1 t^{-3/4+\delta}+ \ep_1 t^{-1+(2+|b|)\delta}t^{1/4+\delta}+t^{-2}\int_0^{t/2}(t-s)^2 e^{-c(t-s)}\ep_1ds+\int_{t/2}^t e^{-c(t-s)}\ep_1\<s\>^{-1+(2+|b|)\delta}ds\\
	\lesssim &  \ep_1\<t\>^{-3/4+(2+|a|)\delta}.
	\end{align*}
	Thus, the bound (\ref{rra,v,w_inf_Lin2}) follows.
	
    {\bf Step 2. Proof of the bound (\ref{rra,v,w_inf_NonLin}).} By (\ref{e-tB_Lin}) and (\ref{Dec_heat}) we obtain
    \begin{align*}
    &\int_0^{t/2}\lV e^{(t-s)B} (F_1^{(a)}, \tilde{F}_{2,II}^{(a)})^{\top} \rV _{W^{N(a)-3,p}}+\lV e^{(t-s)\Delta} \tilde{F}_{2,II}^{(a)}\rV_{W^{N(a)-3,p}} ds\\
    \lesssim & \int_0^{t/2} (t-s)^{-3/2+\delta}\lV (F_1^{(a)},\tilde{F}_{2,II}^{(a)})\rV_{W^{N(a)-3,1}}+e^{-c(t-s)} \lV (F_1^{(a)},\tilde{F}_{2,II}^{(a)})\rV_{H^{N(a)-1}}ds,
    \end{align*}
    and
    \begin{align*}
    &\int_{t/2}^t\lV   e^{(t-s)B} (F_1^{(a)}, \tilde{F}_{2,II}^{(a)})^{\top}\rV _{W^{N(a)-3,p}}+\lV  e^{(t-s)\Delta} \tilde{F}_{2,II}^{(a)} \rV_{W^{N(a)-3,p}}ds\\
    \lesssim & \int_{t/2}^t \<t-s\>^{-3/4+\delta}\lV (F_1^{(a)},\tilde{F}_{2,II}^{(a)})\rV_{H^{N(a)-1}}+e^{-c(t-s)} \lV (F_1^{(a)},\tilde{F}_{2,II}^{(a)})\rV_{H^{N(a)-1}}ds\\
    \lesssim &\int_{t/2}^t \<t-s\>^{-3/4+\delta}\lV (F_1^{(a)},\tilde{F}_{2,II}^{(a)})\rV_{H^{N(a)-1}} ds.
    \end{align*}
    Then by Lemma \ref{F1a,F2a_Lem}, inductive assumptions, H\"{o}lder and (\ref{Main_Prop_Ass1}), we have
    \begin{align*}
    &\int_0^t\lV e^{(t-s)B} (F_1^{(a)}, \tilde{F}_{2,II}^{(a)})^{\top} \rV _{W^{N(a)-3,p}}+\lV e^{(t-s)\Delta}  \tilde{F}_{2,II}^{(a)}\rV_{W^{N(a)-3,p}} ds\\
    \lesssim & \int_0^{t/2} \ep_1(t-s)^{-3/2+\delta}\sum_{|b|\leq |a|} \lV \nab(\rr^{(b)},u^{(b)})\rV_{H^{N(a)-3}}+\ep_1e^{-c(t-s)}\sum_{|b|\leq |a|} \lV (\nab\rr^{(b)},\nab u^{(b)})\rV_{H^{N(a)-1}\times H^{N(a)}}ds\\
    &+\int_{t/2}^t \<t-s\>^{-3/4+\delta}\big[(\ep_1\<s\>^{-3/4+(2+|a|)\delta}+\lV(\rr^{(a)},u^{(a)})\rV_{W^{N(a)/2,\infty}})\lV (\nab\rr^{(c)},\nab u^{(c)})\rV_{H^{N(a)-1}\times H^{N(a)}}\\
    &\ \ \ \ \ \ \ \ +\ep_1^2\<s\>^{-1+5\delta}\sum_{|b|\leq |a|}\lV\nab\rr^{(b)}\rV_{H^{N(a)-1}}\big] ds\\
    \lesssim & \ep_1^2 t^{-1+\delta}+\ep_1(\ep_1\<t\>^{-3/4+(2+|a|)\delta}+\sup_{s\in[t/2,t]}\lV(\rr^{(a)},u^{(a)})(s)\rV_{W^{N(a)/2,p}})+\ep_1^3\<t\>^{-1+5\delta}\\
    \lesssim & \ep_1^2\<t\>^{-3/4+(2+|a|)\delta}+\ep_1 \sup_{s\in[t/2,t]}\lV(\rr^{(a)},u^{(a)})(s)\rV_{W^{N(a)/2,p}}.
    \end{align*}
    which yields the bounds (\ref{rra,v,w_inf_NonLin}).

    {\bf Step 3. Proof of the bound (\ref{rra,v,w_inf_F2III}) and (\ref{rra,v,w_inf_Err2}).} By the definition of $\tilde{F}_{2,III}^{(a)}$, we only estimate for any $|b|+|c|\leq |a|$ in detail
    \begin{equation}       \label{F2IIIa-Dec}
    \begin{aligned}
    &\int_0^t\Big(\lV   e^{(t-s)B} (0,\nab(\rr^{(b)}\nab u^{(c)}+\nab\phi^{(b)}\nab\phi^{(c)}))^{\top} \rV _{W^{N(a)-3,p}}\\
    &\ \ \ \ \ \ \ \ \ \ +\lV  e^{(t-s)\Delta}  \nab(\rr^{(b)}\nab u^{(c)}+\nab\phi^{(b)}\nab\phi^{(c)}) \rV_{W^{N(a)-3,p}}\Big)ds
    \end{aligned}
    \end{equation}
    From (\ref{e-tB_Lin}) and (\ref{Dec_heat}), we have
    \begin{align*}
    (\ref{F2IIIa-Dec})\lesssim &\int_0^{t/2}(t-s)^{-2+\delta}\lV \rr^{(b)}\nab u^{(c)}+\nab\phi^{(b)}\nab\phi^{(c)}\rV_{W^{N(a)-3,1}}ds\\
    &+\int_{t/2}^t \<t-s\>^{-5/4+\delta} \lV \rr^{(b)}\nab u^{(c)}+\nab\phi^{(b)}\nab\phi^{(c)}\rV_{H^{N(a)}} ds\\
    &+\int_0^te^{-c(t-s)}\lV \rr^{(b)}\nab u^{(c)}+\nab\phi^{(b)}\nab\phi^{(c)}\rV_{H^{N(a)}}ds.
    \end{align*}
    Then by (\ref{Hardy_2}) and (\ref{phi^2-Hna}) we get
    \begin{align*}
    (\ref{F2IIIa-Dec})\lesssim &\ep_1^2t^{-1+4\delta}+\int_{t/2}^t \<t-s\>^{-5/4+\delta}(\lV \rr^{(b)}\rV_{H^{N(a)}}\lV\nab u^{(c)}\rV_{L^{\infty}}+\lV\rr^{(b)}\rV_{L^{\infty}}\lV \nab u^{(c)}\rV_{H^{N(a)}}+\ep_1\<s\>^{-1+5\delta})ds\\
    &+\int_0^t e^{-c(t-s)} (\lV \rr^{(b)}\rV_{H^{N(a)}}\lV\nab u^{(c)}\rV_{L^{\infty}}+\lV\rr^{(b)}\rV_{L^{\infty}}\lV \nab u^{(c)}\rV_{H^{N(a)}}+\ep_1\<s\>^{-1+5\delta})ds\\
    \lesssim & \ep_1^2t^{-1+5\delta}+\ep_1\sum_{|b|\leq |a|}\sup_{s\in[t/2,t]}(\lV\nab u^{(b)}(s)\rV_{L^{\infty}}+\lV\rr^{(b)}(s)\rV_{L^{\infty}})\\
    \lesssim & \ep_1^2 t^{-3/4+(2+|a|)\delta}+\ep_1\sup_{s\in[t/2,t]}(\lV u^{(a)}(s)\rV_{W^{2,p}}+\lV\rr^{(a)}(s)\rV_{W^{2,p}}).
    \end{align*}
    Hence the bound (\ref{rra,v,w_inf_F2III}) follows. Using the similar argument to (\ref{rra,v,w_inf_F2III}), we obtain (\ref{rra,v,w_inf_Err2}). This completes the proof of (\ref{rra,v,w_inf}).

    {\bf Proof of (\ref{drra,v,w_L2dec}) for $t>1$.}

    By Duhamel's formula, it suffices to prove
    \begin{gather}       \label{drra,v,w_L2_Lin}
    \lV \nab e^{tB} (\rr^{(a)}_0,v^{(a)}_0)^{\top}\rV_{H^{N(a)-2}}+\lV \nab e^{t\Delta} w^{(a)}_0\rV_{H^{N(a)-2}}\lesssim \ep_0 t^{-1/2},  \\    \label{drra,v,w_L2_Lin2}
    \sum_{|b|\leq |a|-1}(\lV \int_0^t \nab e^{(t-s)B} (0,\Delta u^{(b)})^{\top} ds\rV _{H^{N(a)-2}}+\lV \int_0^t  \nab e^{(t-s)\Delta} \Delta u^{(b)} ds \rV_{H^{N(a)-2}})\lesssim \ep_1 t^{-1/2+(2+|b|)\delta},
    \end{gather}
    \begin{equation}
    \begin{aligned}
    \lV \int_0^t \nab e^{(t-s)B} (F_1^{(a)}, \tilde{F}_{2,II}^{(a)})^{\top} ds\rV _{H^{N(a)-2}}+\lV \int_0^t \nab e^{(t-s)\Delta} \P \tilde{F}_{2,II}^{(a)} ds \rV_{H^{N(a)-2}}
    \lesssim \ep_1^2 t^{-3/4+(3+|a|)\delta}, \label{drra,v,w_L2_NonLin}
    \end{aligned}
    \end{equation}
    \begin{equation}        \label{drra,v,w_L2_F2III}
    \begin{aligned}
    \int_0^t\lV \nab  e^{(t-s)B} (0, \tilde{F}_{2,III}^{(a)})^{\top} \rV _{H^{N(a)-2}}+\lV \nab  e^{(t-s)\Delta} \tilde{F}_{2,III}^{(a)} \rV_{H^{N(a)-2}} ds
    \lesssim  \ep_1\<t\>^{-3/4+5\delta},
    \end{aligned}
    \end{equation}
    and
    \begin{equation}     \label{drra,v,w_L2_Err2}
    \begin{aligned}
    \int_0^t\lV  \nab e^{(t-s)B} (0, Z^{a}Err2)^{\top} \rV _{H^{N(a)-2}}+\lV \nab  e^{(t-s)\Delta}  Z^{a}Err2  \rV_{H^{N(a)-2}}ds
    \lesssim \ep_1^2 \<t\>^{-1+(2+|a|)\delta}.
    \end{aligned}
    \end{equation}

    Now we begin to prove the bounds (\ref{drra,v,w_L2_Lin})-(\ref{drra,v,w_L2_Err2}), respectively. The bound (\ref{drra,v,w_L2_Lin}) follows from (\ref{etB_L2-L2}), (\ref{Dec_heat}) and (\ref{MainAss_ini}) directly.

    {\bf Step 1. Proof of the bound (\ref{drra,v,w_L2_Lin2}).} By (\ref{etB_L2-L2}), (\ref{Dec_heat}), (\ref{Main_Prop_Ass1}) and inductive assumptions, we have for any $|b|\leq |a|-1$
    \begin{align*}
    &\lV \int_0^t \nab e^{(t-s)B} (0,\Delta u^{(b)})^{\top} ds\rV _{H^{N(a)-2}}+\lV \int_0^t  \nab e^{(t-s)\Delta} \Delta u^{(b)} ds \rV_{H^{N(a)-2}}\\
    \lesssim &\int_0^{t/2} (t-s)^{-3/2}\lV u^{(b)}\rV_{H^{N(a)-2}}ds+\int_{t/2}^t \<t-s\>^{-1/2}\lV\nab^2 u^{(b)}\rV_{H^{N(a)-1}}ds\\
    &+\int_0^t e^{-c(t-s)}\lV \nab^3 u^{(b)}\rV_{H^{N(a)-2}}ds\\
    \lesssim & \ep_1t^{-1/2}+\ep_1t^{1/2-1+(2+|b|)\delta}+\ep_1t^{-2}\int_0^{t/2}(t-s)^2e^{-c(t-s)}ds+\ep_1t^{-1+(2+|b|)\delta}\\
    \lesssim & \ep_1t^{-1/2+(2+|b|)\delta}.
    \end{align*}

    {\bf Step 2. Proof of the bound (\ref{drra,v,w_L2_NonLin}).} By (\ref{etB_L2-L2}), (\ref{Dec_heat}) we have
    \begin{equation}     \label{drr,u-1/2_F1a}
    \begin{aligned}
    &\lV \int_0^t \nab e^{(t-s)B} (F_1^{(a)}, \tilde{F}_{2,II}^{(a)})^{\top} ds\rV _{H^{N(a)-2}}+\lV \int_0^t \nab e^{(t-s)\Delta} \P \tilde{F}_{2,II}^{(a)} ds \rV_{H^{N(a)-2}}\\
    \lesssim & \int_0^{t/2} (t-s)^{-5/4}\lV(F_1^{(a)},F_{2,II}^{(a)})\rV_{W^{N(a)-2,1}}ds+\int_{t/2}^t \<t-s\>^{-1/2}\lV(F_1^{(a)},F_{2,II}^{(a)})\rV_{H^{N(a)-1}}ds\\
    &+\int_0^t e^{-c(t-s)}\lV(F_1^{(a)},F_{2,II}^{(a)})\rV_{H^{N(a)-1}} ds.
    \end{aligned}
    \end{equation}
    From Lemma \ref{F1a,F2a_Lem}, (\ref{rra,v,w_inf}) and (\ref{Main_Prop_Ass1}), we may bound this by
    \begin{align*}
    {\rm LHS}(\ref{drr,u-1/2_F1a})\lesssim & \int_0^{t/2} (t-s)^{-5/4} \ep_1\sum_{|b|\leq |a|}\lV (\nab\rr^{(b)},\nab u^{(b)})\rV_{H^{N(a)-2}}ds\\
    &+\int_{t/2}^t\<t-s\>^{-1/2}\ep_1\<s\>^{-3/4+(2+|a|)\delta}\sum_{|b|\leq |a|}\lV \nab\rr^{(b)},\nab u^{(b)}\rV_{H^{N(a)-1}\times H^{N(a)}}ds\\
    &+\int_0^t e^{-c(t-s)} \ep_1\<s\>^{-3/4+(2+|a|)\delta}\sum_{|b|\leq |a|}\lV \nab\rr^{(b)},\nab u^{(b)}\rV_{H^{N(a)-1}\times H^{N(a)}}ds\\
    \lesssim & \ep_1^2t^{-3/4}+\ep_1^2 t^{-3/4+(3+|a|)\delta}\lesssim \ep_1^2t^{-3/4+(3+|a|)\delta}.
    \end{align*}

    {\bf Step 3. Proof of the bound (\ref{drra,v,w_L2_F2III}).} By the definition of $\tilde{F}^{(a)}_{2,III}$, we only prove the following term in detail
    \begin{equation}        \label{drra,-1/2,F2,III}
    \int_0^t\lV \nab  e^{(t-s)B} (0, \nab(\rr^{(b)}\nab u^{(c)}+\nab\phi^{(b)}\nab\phi^{(c)}))^{\top} \rV _{H^{N(a)-2}}+\lV \nab  e^{(t-s)\Delta} \nab(\rr^{(b)}\nab u^{(c)}+\nab\phi^{(b)}\nab\phi^{(c)}) \rV_{H^{N(a)-2}} ds,
    \end{equation}
    for $|b|+|c|\leq |a|$. Using (\ref{etB_L2-L2}) and (\ref{Dec_heat}), it follows that
    \begin{align*}
    (\ref{drra,-1/2,F2,III})\lesssim & \int_0^{t/2} (t-s)^{-7/4}\lV \rr^{(b)}\nab u^{(c)}+\nab\phi^{(b)}\nab\phi^{(c)}\rV_{W^{N(a)-2,1}} ds\\
    &+\int_{t/2}^t \<t-s\>^{-1}\lV \rr^{(b)}\nab u^{(c)}+\nab\phi^{(b)}\nab\phi^{(c)}\rV_{H^{N(a)}} ds\\
    &+\int_0^t e^{-c(t-s)}\lV \rr^{(b)}\nab u^{(c)}+\nab\phi^{(b)}\nab\phi^{(c)}\rV_{H^{N(a)}}ds.
    \end{align*}
    By (\ref{biL-Ffg-WN1}), (\ref{biL-Ffg-HN}) and (\ref{Main_Prop_Ass1}), one may obtain
    \begin{equation}        \label{rrua,phiphia-biL}
    \lV \rr^{(b)}\nab u^{(c)}+\nab\phi^{(b)}\nab\phi^{(c)}\rV_{W^{N(a)-2,1}}+\lV \rr^{(b)}\nab u^{(c)}+\nab\phi^{(b)}\nab\phi^{(c)}\rV_{H^{N(a)}}
    \lesssim  \ep_1\lV\nab u^{(c)}\rV_{H^{N(a)}}+\ep_1^2\<t\>^{3\delta}
    \end{equation}
    and from (\ref{Main_Prop_Ass1}), (\ref{rra,v,w_inf}) and (\ref{Dec_Phi}) we have
    \begin{align*}
    &\lV \rr^{(b)}\nab u^{(c)}+\nab\phi^{(b)}\nab\phi^{(c)}\rV_{H^{N(a)}}\\
    \lesssim & \lV\rr^{(b)}\rV_{H^{N(a)}}\lV\nab u^{(c)}\rV_{L^{\infty}}+\lV\rr^{(b)}\rV_{L^{\infty}}\lV\nab u^{(c)}\rV_{H^{N(a)}}+\ep_1^2\<t\>^{-1+4\delta}\\
    \lesssim & \ep_1^2\<t\>^{-3/4+(2+|c|)\delta}+\ep_1t^{-3/4+(2+|b|)\delta}\lV\nab u^{(c)}\rV_{H^{N(a)}}.
    \end{align*}
    Hence, by the above two bounds, we may bound (\ref{drra,-1/2,F2,III}) by
    \begin{align*}
    (\ref{drra,-1/2,F2,III})\lesssim & \int_0^{t/2}((t-s)^{-7/4}+e^{-c(t-s)})(\ep_1\sum_{|c|\leq |a|}\lV\nab u^{(c)}\rV_{H^{N(a)}}+\ep_1^2\<s\>^{3\delta})ds\\
    &+\int_{t/2}^t (\<t-s\>^{-1}+e^{-c(t-s)})(\ep_1^2\<t\>^{-3/4+(2+|a|)\delta}+\ep_1t^{-3/4+(2+|a|)\delta}\sum_{|c|\leq |a|}\lV\nab u^{(c)}\rV_{H^{N(a)}})ds\\
    \lesssim & \ep_1^2t^{-3/4+3\delta}+\ep_1^2t^{-3/4+(3+|a|)\delta}\lesssim \ep_1^2 t^{-3/4+(3+|a|)\delta}.
    \end{align*}
    This implies the bound (\ref{drra,v,w_L2_F2III}).

    Finally, the bound (\ref{drra,v,w_L2_Err2}) can also be obtained using the similar argument to that of the bounds (\ref{drra,v,w_L2_NonLin}) and (\ref{drra,v,w_L2_F2III}).    This completes the proof of (\ref{drra,v,w_L2dec}).

    {\bf Proof of (\ref{d2rra,v,w_L2dec}) for $t>1$.}

    It suffices to prove
    \begin{align}       \label{d2rra,v,w_L2_Lin}
    &\lV \nab^2 e^{tB} (\rr^{(a)}_0,v^{(a)}_0)^{\top}\rV_{H^{N(a)-5}}+\lV \nab^2 e^{t\Delta} w^{(a)}_0\rV_{H^{N(a)-5}}\lesssim \ep_0 t^{-1},  \\    \label{d2rra,v,w_L2_Lin2}
    &\sum_{|b|\leq |a|-1}(\lV \int_0^t \nab^2 e^{(t-s)B} (0,\Delta u^{(b)})^{\top} ds\rV _{H^{N(a)-5}}+\lV \int_0^t  \nab^2 e^{(t-s)\Delta} \Delta u^{(b)} ds \rV_{H^{N(a)-5}})\lesssim \ep_1 t^{-1+(2+|a|)\delta},
    \end{align}
    \begin{equation}
    \begin{aligned}
    \lV \int_0^t \nab^2 e^{(t-s)B} (F_1^{(a)}, \tilde{F}_{2,II}^{(a)})^{\top} ds\rV _{H^{N(a)-5}}+\lV \int_0^t \nab^2 e^{(t-s)\Delta} \P \tilde{F}_{2,II}^{(a)} ds \rV_{H^{N(a)-5}}
    \lesssim \ep_1^2 t^{-5/4+(4+|a|)\delta} \label{d2rra,v,w_L2_NonLin}
    \end{aligned}
    \end{equation}
    \begin{equation}        \label{d2rra,v,w_L2_F2III}
    \begin{aligned}
    \int_0^t\lV \nab^2  e^{(t-s)B} (0, \tilde{F}_{2,III}^{(a)})^{\top} \rV _{H^{N(a)-5}}+\lV \nab^2  e^{(t-s)\Delta} \tilde{F}_{2,III}^{(a)} \rV_{H^{N(a)-5}} ds
    \lesssim  \ep_1\<t\>^{-1+(2+|a|)\delta},
    \end{aligned}
    \end{equation}
    and
    \begin{equation}     \label{d2rra,v,w_L2_Err2}
    \begin{aligned}
    \int_0^t\lV  \nab e^{(t-s)B} (0, Z^{a}Err2)^{\top} \rV _{H^{N(a)-2}}+\lV \nab  e^{(t-s)\Delta}  Z^{a}Err2  \rV_{H^{N(a)-2}}ds
    \lesssim \ep_1^2 \<t\>^{-1+(2+|a|)\delta}.
    \end{aligned}
    \end{equation}
    then, by Duhamel's formula and (\ref{d2rra,v,w_L2_Lin})-(\ref{d2rra,v,w_L2_Err2}), the bound (\ref{d2rra,v,w_L2dec}) follows.

    Now we begin to prove the bounds (\ref{d2rra,v,w_L2_Lin})-(\ref{d2rra,v,w_L2_Err2}), respectively. The bound (\ref{d2rra,v,w_L2_Lin}) follows from (\ref{etB_L2-L2}), (\ref{Dec_heat}) and (\ref{MainAss_ini}) directly.

    {\bf Step 1. Proof of the bound (\ref{d2rra,v,w_L2_Lin2}).} By (\ref{etB_L2-L2}), (\ref{Dec_heat}), (\ref{Main_Prop_Ass1}) and inductive assumptions, we have for any $|b|\leq |a|-1$
    \begin{align*}
    &\lV \int_0^t \nab^2 e^{(t-s)B} (0,\Delta u^{(b)})^{\top} ds\rV _{H^{N(a)-5}}+\lV \int_0^t  \nab^2 e^{(t-s)\Delta} \Delta u^{(b)} ds \rV_{H^{N(a)-5}}\\
    \lesssim &\int_0^{t/2} (t-s)^{-2}\lV u^{(b)}\rV_{H^{N(a)-5}}ds+\int_{t/2}^t \<t-s\>^{-1}\lV\nab^2 u^{(b)}\rV_{H^{N(a)-3}}ds\\
    &+\int_0^t e^{-c(t-s)}\lV \nab^4 u^{(b)}\rV_{H^{N(a)-5}}ds\\
    \lesssim & \ep_1t^{-1}+\ep_1t^{\delta-1+(2+|b|)\delta}+\ep_1t^{-2}\int_0^{t/2}(t-s)^2e^{-c(t-s)}ds+\ep_1t^{-1+(2+|b|)\delta}\\
    \lesssim & \ep_1t^{-1+(2+|a|)\delta}.
    \end{align*}

    {\bf Step 2. Proof of the bound (\ref{d2rra,v,w_L2_NonLin}).} By (\ref{etB_L2-L2}), (\ref{Dec_heat}) we have
    \begin{equation}          \label{d2rra,-1,Nonl}
    \begin{aligned}
    &\lV \int_0^t \nab^2 e^{(t-s)B} (F_1^{(a)}, \tilde{F}_{2,II}^{(a)})^{\top} ds\rV _{H^{N(a)-5}}+\lV \int_0^t \nab^2 e^{(t-s)\Delta} \P \tilde{F}_{2,II}^{(a)} ds \rV_{H^{N(a)-5}}\\
    \lesssim & \int_0^{t/2} (t-s)^{-7/4}\lV(F_1^{(a)},F_{2,II}^{(a)})\rV_{W^{N(a)-5,1}}ds+\int_{t/2}^t \<t-s\>^{-1}\lV(F_1^{(a)},F_{2,II}^{(a)})\rV_{H^{N(a)-3}}ds\\
    &+\int_0^t e^{-c(t-s)}\lV(F_1^{(a)},F_{2,II}^{(a)})\rV_{H^{N(a)-3}} ds.
    \end{aligned}
    \end{equation}
    Then from Lemma \ref{F1a,F2a_Lem}, (\ref{rra,v,w_inf}), (\ref{drra,v,w_L2dec}) and (\ref{Main_Prop_Ass1}), we may bound this by
    \begin{align*}
    {\rm LHS}(\ref{d2rra,-1,Nonl})\lesssim & \int_0^{t/2}(t-s)^{-7/4} \ep_1\sum_{|b|\leq |a|}\lV (\nab\rr^{(b)},\nab u^{(b)})\rV_{H^{N(a)-1}}ds\\
    &+\int_{t/2}^t \<t-s\>^{-1} \ep_1^2 \<s\>^{-5/4+(4+|a|)\delta}ds+\int_0^t e^{-c(t-s)} \ep_1^2 \<s\>^{-5/4+(4+|a|)\delta}ds\\
    \lesssim & \ep_1^2t^{-5/4}+\ep_1^2 t^{-5/4+(5+|a|)\delta}\lesssim \ep_1^2t^{-5/4+(5+|a|)\delta}.
    \end{align*}

    {\bf Step 3. Proof of the bound (\ref{d2rra,v,w_L2_F2III}).} We only estimate the following term in detail
    \begin{equation}        \label{d2rra,-1,F2,III}
    \int_0^t\lV \nab^2  e^{(t-s)B} (0, \nab(\rr^{(b)}\nab u^{(c)}+\nab\phi^{(b)}\nab\phi^{(c)}))^{\top} \rV _{H^{N(a)-5}}+\lV \nab^2  e^{(t-s)\Delta} \nab(\rr^{(b)}\nab u^{(c)}+\nab\phi^{(b)}\nab\phi^{(c)}) \rV_{H^{N(a)-5}} ds,
    \end{equation}
    for $|b|+|c|\leq |a|$. In fact, it follows from (\ref{etB_L2-L2}) and (\ref{Dec_heat}) that
    \begin{align*}
    (\ref{d2rra,-1,F2,III})\lesssim & \int_0^{t/2} (t-s)^{-9/4}\lV \rr^{(b)}\nab u^{(c)}+\nab\phi^{(b)}\nab\phi^{(c)}\rV_{W^{N(a)-5,1}} ds\\
    &+\int_{t/2}^t \<t-s\>^{-3/2}\lV \rr^{(b)}\nab u^{(c)}+\nab\phi^{(b)}\nab\phi^{(c)}\rV_{H^{N(a)-2}} ds\\
    &+\int_0^t e^{-c(t-s)}\lV \rr^{(b)}\nab u^{(c)}+\nab\phi^{(b)}\nab\phi^{(c)}\rV_{H^{N(a)-2}}ds.
    \end{align*}
    Then by (\ref{rra,v,w_inf}), (\ref{drra,v,w_L2dec}) and (\ref{phi^2-Hna}), we obtain
    \begin{align*}
    &\lV \rr^{(b)}\nab u^{(c)}+\nab\phi^{(b)}\nab\phi^{(c)}\rV_{H^{N(a)-2}}\\
    \lesssim & \lV\rr^{(b)}\rV_{W^{N(a)/2,\infty}}\lV\nab u^{(c)}\rV_{H^{N(a)-2}}+\lV\nab\rr^{(b)}\rV_{H^{N(a)-3}}\lV\nab u^{(c)}\rV_{W^{N(a)/2,\infty}}+\ep_1^2\<t\>^{-1+(2+|a|)\delta}\\
    \lesssim & \ep_1^2\<t\>^{-5/4+(4+|a|)\delta}+\ep_1t^{-1+(2+|a|)\delta}\lesssim \ep_1^2\<t\>^{-1+(2+|a|)\delta},
    \end{align*}
  which, combined with (\ref{rrua,phiphia-biL}), yields
    \begin{align*}
    (\ref{d2rra,-1,F2,III})\lesssim & \int_0^{t/2}((t-s)^{-9/4}+e^{-c(t-s)})(\ep_1\sum_{|c|\leq |a|}\lV\nab u^{(c)}\rV_{H^{N(a)}}+\ep_1^2\<s\>^{3\delta})ds\\
    &+\int_{t/2}^t (\<t-s\>^{-3/2}+e^{-c(t-s)})\ep_1^2\<t\>^{-1+(2+|a|)\delta}ds\\
    \lesssim & \ep_1^2t^{-5/4+3\delta}+\ep_1^2t^{-1+(2+|a|)\delta}\lesssim \ep_1^2 t^{-1+(2+|a|)\delta}.
    \end{align*}
    This implies the bound (\ref{drra,v,w_L2_F2III}).

    Finally, the bound (\ref{d2rra,v,w_L2_Err2}) can also be obtained using the similar argument to that of (\ref{d2rra,v,w_L2_NonLin}) and (\ref{d2rra,v,w_L2_F2III}). This completes the proof of (\ref{d2rra,v,w_L2dec}).

\end{proof}

\subsection{Decay estimates of $\nab\rr^{(a)},\nab u^{(a)}$ for any $|a|\leq N_1$} Due to the presence of dissipation, by the linear estimate (\ref{e tB_Lp-Lp}), we have the following improved pointwise decay estimates.

\begin{lemma} With the hypothesis in Proposition \ref{Main_Prop}, let $l(|a|+1)=l(|a|)/40$ for $0\leq |a|\leq N_1$ and $l(0)=10^{-2}$. For any $t\in[0,T]$ we have
	\begin{equation}     \label{drr,v,w_inf}
	\lV |\nab|(\rr,v,w)\rV_{W^{N(2)-2,\infty}} \lesssim \ep_1 \<t\>^{-1-l(0)+2\delta},
	\end{equation}
	\begin{equation}  \label{drra,v,w_a_inf}
	\lV |\nab|(\rr^{(a)},v^{(a)},w^{(a)})\rV_{W^{N(a+2)-2,\infty}} \lesssim \ep_1 \<t\>^{-1-l(|a|)/3+(2+|a|)\delta},\ {\rm for}\ 1\leq |a|\leq N_1-2,
	\end{equation}
	\begin{equation}  \label{drra,v,w_a_inf_5}
	\lV|\nab|(\rr^{(a)},v^{(a)},w^{(a)})\rV_{W^{N(a+2)-2,\infty}} \lesssim \ep_1 \<t\>^{-1+(2+|a|)\delta},\ {\rm for}\ N_1-1\leq |a|\leq N_1.
	\end{equation}	
\end{lemma}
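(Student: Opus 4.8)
The plan is to prove \eqref{drr,v,w_inf}, \eqref{drra,v,w_a_inf} and \eqref{drra,v,w_a_inf_5} simultaneously by induction on $|a|$, $0\le|a|\le N_1$, with the base case and the inductive step sharing the same structure; as in the preceding lemmas it suffices to treat $t>1$, since for $t\le1$ the bounds follow from \eqref{BernIneq} and \eqref{Main_Prop_Ass1}. The divergence-free part $w^{(a)}$ solves the heat equation $\d_t w^{(a)}-\tfrac{\mu_1}{2}\Delta w^{(a)}=\P\tilde F_2^{(a)}$ and, by parabolic smoothing, is strictly easier than $(\rr^{(a)},v^{(a)})$: one runs the argument below with $e^{(t-s)\Delta}$ and \eqref{Dec_heat} in place of $e^{(t-s)B}$ and the $e^{tB}$-bounds. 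So I concentrate on $(\rr^{(a)},v^{(a)})$. The new ingredient compared with the $L^2$-decay lemmas above is a Littlewood--Paley splitting $|\nabla|(\rr^{(a)},v^{(a)})=P_{\le k}|\nabla|(\rr^{(a)},v^{(a)})+P_{>k}|\nabla|(\rr^{(a)},v^{(a)})$ at the scale $2^{k}\approx\langle t\rangle^{-c_0}$, where $c_0=c_0(|a|)>0$ is a small parameter comparable to $l(|a|)$ to be fixed at the end; the low-frequency half is handled by Bernstein together with the $L^2$-decay of $\nabla$ and $\nabla^2$ of $(\rr^{(a)},v^{(a)})$ already established in Lemma~\ref{rr,v,w-a=0_Lem} and the lemma before this one, and the high-frequency half by the dissipative estimate \eqref{e tB_Lp-Lp}. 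Note throughout that $N(a+2)-2=N(a)-14$, so there is ample Sobolev room: Lemma~\ref{F1a,F2a_Lem} and Corollary~\ref{Non-Cor} control the relevant norms of the nonlinearities up to Sobolev exponent $N(a)-1$.

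For the low-frequency half, Bernstein \eqref{BernIneq} with \eqref{drr,v,w_L2dec}--\eqref{d2rr,v,w_L2dec} (when $|a|=0$), resp.\ \eqref{drra,v,w_L2dec}--\eqref{d2rra,v,w_L2dec} (when $1\le|a|\le N_1$), gives after summing the dyadic pieces
\[
\lV|\nabla|P_{\le k}(\rr^{(a)},v^{(a)})\rV_{W^{N(a+2)-2,\infty}}\lesssim\ep_1\big(2^{k/2}\langle t\rangle^{-1+(2+|a|)\delta}+\langle t\rangle^{-5/4}\big)\lesssim\ep_1\langle t\rangle^{-1-c_0/2+(2+|a|)\delta},
\]
which is acceptable once $c_0\ge\tfrac23 l(|a|)$ (and $c_0\ge 2l(0)$ in the base case).

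For the high-frequency half I would use Duhamel's formula for $(\rr^{(a)},v^{(a)})$ with $e^{tB}$ and the nonlinear decomposition preceding Lemma~\ref{F1a,F2a_Lem}, and apply $|\nabla|P_{>k}$; since $|\nabla|$, $P_{>k}$ and $e^{\tau B}$ are commuting Fourier multipliers, \eqref{e tB_Lp-Lp} gives $\lV|\nabla|P_{>k}e^{\tau B}g\rV_{L^\infty}\lesssim\langle\tau\rangle^{-3/2}2^{-13k^-}\lV g\rV_{W^{1,\infty}}$. The data term is then $\lesssim\langle t\rangle^{-3/2}2^{-13k^-}\lV(\rr^{(a)}_0,v^{(a)}_0)\rV_{H^{N(a+2)}}\lesssim\ep_0\langle t\rangle^{-3/2+13c_0}$ by \eqref{MainAss_ini}, admissible since $c_0\le l(|a|)\le10^{-2}<\tfrac1{26}$. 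In the Duhamel integral $\int_0^t=\int_0^{t/2}+\int_{t/2}^t$: on $[0,t/2]$ the gap $t-s\gtrsim t$ is exploited through the $L^1\!\to\!L^\infty$ form of \eqref{e-tB_Lin} (time decay $(t-s)^{-2}$, and $(t-s)^{-5/2}$ on the divergence-form terms in $\tilde F_{2,II}^{(a)},\tilde F_{2,III}^{(a)}$), bounding the nonlinearities in $W^{N,1}$ by \eqref{F1,F2I-WN1}, Lemma~\ref{F1a,F2a_Lem} and $\lV\nabla\phi^{(b)}\rV_{H^N}\lesssim\ep_1$, which contributes $\lesssim\ep_1^2\langle t\rangle^{-3/2}$. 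On $[t/2,t]$ the nonlinearity is evaluated at $s\ge t/2$ where it decays: interpolating the $L^2$-decay bounds gives $\lV\nabla\rr^{(b)}\rV_{L^\infty}+\lV\nabla^2 u^{(b)}\rV_{L^\infty}\lesssim\ep_1\langle s\rangle^{-1+(2+|b|)\delta}$, so together with the sup-norm decay \eqref{rr,v,w_inf}/\eqref{rra,v,w_inf} of $(\rr^{(b)},u^{(b)})$ at rate $\langle s\rangle^{-3/4+(2+|b|)\delta}$ and the dispersive decay \eqref{Dec_Phi} (used also with one extra derivative) and \eqref{phi^2-Hna} of $\nabla\phi$, one checks that all of the products in $F_1^{(a)}$, $\tilde F_{2,II}^{(a)}$, the divergence-form pieces of $\tilde F_{2,III}^{(a)}$, and $(S+1)^{a_1}\Gamma^{a'}Err2$ decay in $W^{1,\infty}$ at least like $\ep_1^2\langle s\rangle^{-7/4+(4+|a|)\delta}$, the only genuinely slow contribution being the commutator terms $\Delta u^{(b)},\nabla\div u^{(b)}$ with $|b|\le|a|-1$ inside $\tilde F_{2,I}^{(a)}$, which the inductive hypothesis \eqref{drr,v,w_inf}/\eqref{drra,v,w_a_inf} controls by $\lV\Delta u^{(b)}(s)\rV_{W^{1,\infty}}\lesssim\ep_1\langle s\rangle^{-1-l(|b|)/3+(2+|b|)\delta}$. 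Using $\int_{t/2}^t\langle t-s\rangle^{-3/2}\langle s\rangle^{-1-\theta}\,ds\lesssim\langle t\rangle^{-1-\theta}$ for $\theta\ge0$, the $[t/2,t]$ high-frequency contribution is $\lesssim\ep_1\,2^{-13k^-}\langle t\rangle^{-1-\min\{3/4,\,l(|a|-1)/3\}+(4+|a|)\delta}=\ep_1\langle t\rangle^{\,13c_0-1-\min\{3/4,\,l(|a|-1)/3\}+(4+|a|)\delta}$.

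Combining the two halves, the induction closes as soon as $c_0=c_0(|a|)$ satisfies $\tfrac23 l(|a|)\le c_0$ and $13c_0\le\min\{\tfrac34,\tfrac13 l(|a|-1)\}-\tfrac13 l(|a|)$; using $l(|a|-1)=40\,l(|a|)$ this is the window $\tfrac23 l(|a|)\le c_0\le l(|a|)$ for $1\le|a|\le N_1-2$ (and a non-empty window $2l(0)\le c_0\le c_*$ with a fixed constant $c_*<\tfrac1{26}$ for $|a|=0$) — non-empty precisely because of the recursion $l(|a|)=l(|a|-1)/40$ written into the statement, the factor $40$ being exactly what absorbs both the low-frequency gain $\langle t\rangle^{-c_0/2}$ and the high-frequency derivative loss $2^{-13k^-}=\langle t\rangle^{13c_0}$ while still leaving the net exponent below $-1-l(|a|)/3$. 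For $N_1-1\le|a|\le N_1$ one has $H(a)=2$ and the input $L^2$-decay bounds already carry the growing factor $\langle t\rangle^{(2+|a|)\delta}$, so there is nothing to propagate: one takes $c_0$ infinitesimal and absorbs everything into $\langle t\rangle^{(2+|a|)\delta}$, giving \eqref{drra,v,w_a_inf_5}. I expect the main obstacle to be exactly this bookkeeping of the three competing powers of $\langle t\rangle$ — the low-frequency gain, the high-frequency loss, and the inductive rate $\langle t\rangle^{-l(|a|-1)/3}$ — together with verifying that the worst nonlinear term, the divergence-form piece $\sum_j\d_j\big((1+\rr)^{-1}\nabla\phi^{(b)}\d_j\phi^{(c)}\big)$, genuinely decays like $\langle s\rangle^{-2+}$ in $L^\infty$ rather than the $\langle s\rangle^{-1}$ it produces in the top Sobolev norm $H^{N(a)}$, since it is this extra decay that opens the $c_0$-window at all.
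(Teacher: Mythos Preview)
Your proposal is correct and follows essentially the same strategy as the paper: induction on $|a|$, a low/high frequency split at scale $2^k\approx\langle t\rangle^{-c_0}$ with $c_0$ comparable to $l(|a|)$, the dissipative bound \eqref{e tB_Lp-Lp} on the high piece, and the identification of the commutator terms $\Delta u^{(b)}$ in $\tilde F_{2,I}^{(a)}$ as the bottleneck that forces the recursion $l(|a|)=l(|a|-1)/40$. The only organizational difference is that the paper performs the frequency split \emph{inside} the Duhamel integral on $[t/2,t]$ (bounding the low-frequency part of the nonlinearity via the $L^2\!\to\!L^\infty$ form of \eqref{e-tB_Lin}), whereas you split the solution itself and handle $P_{\le k}$ directly by Bernstein together with the $L^2$ decay of $\nabla^2(\rr^{(a)},v^{(a)})$ already established in \eqref{d2rr,v,w_L2dec}/\eqref{d2rra,v,w_L2dec}; also, for $N_1-1\le|a|\le N_1$ the paper simply drops the frequency split (since the target carries no extra $l$-decay) rather than sending $c_0\to0$.
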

\begin{proof} {\bf Proof of the bound (\ref{drr,v,w_inf}).}
	
	By Duhamel's formula it suffices to show the bounds for $t>1$
	\begin{gather}       \label{drr,v-Lin}
	\lV e^{tB}|\nab|(\rr_0,v_0)\rV_{W^{N(2)-2,\infty}}+\lV e^{t\Delta}|\nab|w_0\rV_{W^{N(2)-2,\infty}}\lesssim \ep_0t^{-5/4},\\\label{drr,v-NonL}
	\int_0^t \lV  e^{(t-s)B}(|\nab|F_1,\nab\cdot F_2)^{\top}\rV_{W^{N(2)-2,\infty}}ds \lesssim \ep_1^2 t^{-1-l(0)+2\delta},\\\label{dw_inf_NonLin}
	\int_0^t \lV e^{(t-s)\Delta}|\nab|\P F_2\rV_{W^{N(2)-2,\infty}}ds \lesssim \ep_1^2 t^{-5/4}.
	\end{gather}
	The first bound (\ref{drr,v-Lin}) is obtained by (\ref{e-tB_Lin}) and (\ref{Dec_heat}). Now we prove (\ref{drr,v-NonL}) and (\ref{dw_inf_NonLin}).
	
	\textbf{Step 1. We prove the bound (\ref{drr,v-NonL}).} By (\ref{e-tB_Lin}) we have
	\begin{equation}           \label{LHS3.58}
	\begin{aligned}
	&\int_0^{t/2} \lV  e^{(t-s)B}(|\nab|F_1,\nab\cdot F_2)^{\top}\rV_{W^{N(2)-2,\infty}}ds \\
	\lesssim & \int_0^{t/2}(t-s)^{-2}\lV(F_1,F_{2,I})\rV_{W^{N(2)-2,1}\cap H^{N(2)+1}}+(t-s)^{-5/2}\lV \tfrac{1+\cos^2\phi_2}{1+\rr}|\nab\phi|^2\rV_{W^{N(2)-2,1}\cap H^{N(2)+1}},
	\end{aligned}
	\end{equation}
	which, by Lemma \ref{biL-lem} and (\ref{Main_Prop_Ass1}) , can be bounded by
	\begin{align*}
	& \int_0^{t/2}(t-s)^{-2}\lV(\nab u,\nab\rr)\rV_{H^{N(2)+2}}(\lV(u,\rr)\rV_{H^{N(2)+1}}+\lV\Phi\rV_{H^{N(2)+1}}^2)+(t-s)^{-5/2}\lV\Phi\rV_{H^{N(2)+2}}^2ds\\
	\lesssim & \int_0^{t/2}\ep_1 t^{-2}\lV(\nab u,\nab\rr)\rV_{H^{N(2)+2}}+\ep_1^2 t^{-5/2}ds\lesssim \ep_1^2 t^{-3/2}.
	\end{align*}
	Next, let $k(0)$ satisfy $2^{k(0)}\simeq \<t\>^{-l(0)}$, by frequency decomposition $\mathbf{1}=P_{\leq k(0)}+P_{>k(0)}$, we may rewrite the integral
	\begin{align*}
	\int_{t/2}^t  e^{(t-s)B}(|\nab|F_1,\nab\cdot F_2)^{\top}ds=&\int_{t/2}^t P_{\leq k(0)} e^{(t-s)B}(|\nab|F_1,\nab\cdot F_2)^{\top}ds+\int_{t/2}^t  P_{>k(0)} e^{(t-s)B}(|\nab|F_1,\nab\cdot F_2)^{\top}ds\\
	=& \mathbf{I}_{\leq k(0)}+\mathbf{I}_{>k(0)}.
	\end{align*}
	We estimate now the low frequency part and high frequency part, respectively.
		
	\emph{Low frequency part: $\mathbf{I}_{\leq k(0)}$.} From (\ref{e-tB_Lin}), (\ref{rr,v,w_inf}), (\ref{drr,v,w_L2dec}) and (\ref{Dec_Phi}), we get
	\begin{align*}
	\left|\mathbf{I}_{\leq k(0)}\right|\lesssim & \int_{t/2}^t \<t-s\>^{-5/4}\big(\lV (u,\rr)\rV_{L^{\infty}}\lV(\nab u,\nab\rr)\rV_{H^2}+\lV\nab\rr\rV_{L^2}\lV\Phi\rV_{L^2}\lV\Phi\rV_{L^{\infty}}+2^{k(0)}\lV\Phi\rV_{L^2}\lV\Phi\rV_{L^{\infty}}\big)ds\\
	\lesssim & \ep_1^2 t^{-5/4}+\ep_1^3t^{-3/2+2\delta}+\ep_1^2t^{-1-l(0)+2\delta}\lesssim \ep_1^2 t^{-1-l(0)+2\delta}.
	\end{align*}
	
	\emph{High frequency part: $\mathbf{I}_{>k(0)}$.}	By (\ref{e tB_Lp-Lp}), (\ref{rr,v,w_inf}) and (\ref{Dec_Phi}), we have
	\begin{align*}
	\left|\mathbf{I}_{>k(0)}\right|\lesssim &\int_{t/2}^t \<t-s\>^{-3/2}2^{-13k(0)} \lV(|\nab|F_1,\nab\cdot F_2)^{\top}\rV_{W^{N(2)-2,\infty}}  ds\\
	\lesssim & 2^{-13k(0)}\cdot\sup_{s\in[t/2,t]}\lV  (|\nab|F_1,\nab\cdot F_2)^{\top}(s)\rV_{W^{N(2)-2,\infty}}\\
	\lesssim & t^{13l(0)}\cdot\sup_{s\in[t/2,t]} \big(\lV(u,\rr)(s)\rV_{W^{N(2)+2,\infty}}^2+\sum_{k}2^{N(2)k}\lV P_k\nab\phi(s)\rV_{L^{\infty}}\lV\nab\phi(s)\rV_{L^{\infty}}\big)\\
	\lesssim & t^{13/100}(\ep_1^2 t^{-3/2+2\delta}+\ep_1^2t^{-2+4\delta})\lesssim \ep_1^2 t^{-5/4},
	\end{align*}
	where we have used (\ref{rr,v,w_inf}) and (\ref{Dec_Phi}). Hence, the bound (\ref{drr,v-NonL}) follows.
	
	\textbf{Step 2. We prove the bound (\ref{dw_inf_NonLin}).}
	By (\ref{Dec_heat}) we have
	\begin{equation*}       \label{dw,nonl}
	\begin{aligned}
	&\int_0^{t/2} \lV  e^{(t-s)\Delta}|\nab|\P F_2\rV_{W^{N(2)-2,\infty}}ds\\
	\lesssim &\int_0^{t/2} (t-s)^{-2}\lV F_{2,I}\rV_{W^{N(2)-2,1}}+(t-s)^{-5/2}\lV \frac{1+\cos^2\phi_2}{1+\rr}\nab\phi\nab\phi\rV_{W^{N(2)-2,1}} ds,
	\end{aligned}
	\end{equation*}
	which, by (\ref{F1,F2I-WN1}), (\ref{biL-Ffg-WN1}) and (\ref{Main_Prop_Ass1}), is further bounded by
	\begin{equation*}
	\int_0^{t/2} \ep_1t^{-2}(\lV\nab \rr(s)\rV_{H^{N(0)-1}}+\lV\nab u(s)\rV_{H^{N(0)}})+\ep_1^2t^{-5/2} ds\lesssim \ep_1^2t^{-3/2}.
	\end{equation*}
	
	Next, using (\ref{Dec_heat}), we get
	\begin{equation}     \label{dw,-1-delta,t/2}
	\begin{aligned}
	&\int_{t/2}^t\lV e^{t\Delta} |\nab|\P F_2\rV_{W^{N(2)-2,\infty}}ds\\
	\lesssim &\int_{t/2}^{t-1}(t-s)^{-5/4}\lV F_{2,I}\rV_{H^{N(2)-2}}+(t-s)^{-1}\lV \frac{1+\cos^2\phi_2}{1+\rr}\nab\phi\nab\phi\rV_{W^{N(2)-2,\infty}}ds\\
	&+\int_{t-1}^t \lV   |\nab| F_2\rV_{W^{N(2)-2,\infty}}ds.
	\end{aligned}
	\end{equation}
	From (\ref{rr,v,w_inf}), (\ref{drr,v,w_L2dec}) and (\ref{Dec_Phi}), we have
	\begin{equation*}
	\begin{aligned}     \label{dw_I1}
	\lV F_{2,I}\rV_{H^{N(2)-2}}\lesssim & \lV(\rr,u)\rV_{W^{N(2)-1,\infty}}\lV \nab(\rr,u)\rV_{H^{N(2)-1}}+\lV \nab\rr\rV_{L^{\infty}}\lV\nab\phi\rV_{L^{\infty}}\lV\nab\phi\rV_{H^{N(0)}}+\lV \nab\rr\rV_{H^{N(2)-2}}\lV \nab\phi\rV_{L^{\infty}}^2 \\
	\lesssim & \ep_1^2\<t\>^{-5/4+\delta}+\ep_1^3\<t\>^{-7/4+3\delta}+\ep_1^3\<t\>^{-5/2+4\delta}\lesssim \ep_1^2\<t\>^{-5/4+\delta},
	\end{aligned}
	\end{equation*}
	\begin{equation*}      \label{dw_I2}
	\begin{aligned}
	\lV \frac{1+\cos^2\phi_2}{1+\rr}\nab\phi\nab\phi\rV_{W^{N(2)-2,\infty}}\lesssim (1+\ep_1)\lV\nab\phi\rV_{W^{N(2)-2,\infty}}^2
	\lesssim \ep_1^2t^{-2+4\delta}.
	\end{aligned}
	\end{equation*}
	and
	\begin{equation*}
	\lV   |\nab| F_2\rV_{W^{N(2)-2,\infty}}\lesssim \ep_1^2\<t\>^{-5/4+\delta}.
	\end{equation*}
	Thus by these estimates, we get
	\begin{equation*}
	{\rm LHS}(\ref{dw,-1-delta,t/2})\lesssim \int_{t/2}^{t-1}(t-s)^{-5/4}\ep_1^2\<s\>^{-5/4+\delta}+(t-s)^{-1}\ep_1^2\<s\>^{-2+2\delta}ds+\int_{t-1}^t \ep_1^2\<s\>^{-5/4+\delta}\lesssim \ep_1^2 t^{-5/4+\delta},
	\end{equation*}
	which gives the bound (\ref{dw_inf_NonLin}). Thus this completes the proof of (\ref{drr,v,w_inf}).
	
	{\bf Proof of the bound (\ref{drra,v,w_a_inf}).}
	
	We prove the bound (\ref{drra,v,w_a_inf}) by induction. Precisely, by (\ref{drr,v,w_inf}) we assume that the bound
	\begin{equation}  \label{drra,v,w-IndAss}
	\lV |\nab|(\rr^{(b)},v^{(b)},w^{(b)})\rV_{W^{N(b+2)-2,\infty}} \lesssim \ep_1 \<t\>^{-1-l(|b|)/3+(2+|b|)\delta},\ {\rm for}\ |b|<|a|
	\end{equation}
	holds, then we prove the bound (\ref{drra,v,w_a_inf}).
	
	Since the bound (\ref{drra,v,w_a_inf}) for $w^{(a)}$ can be proved by a similar argument to the bound (\ref{drra,v,w_a_inf}) for $\rr^{(a)}$ and $v^{(a)}$ using (\ref{Dec_heat}), (\ref{Main_Prop_Ass1}), (\ref{rra,v,w_inf}), (\ref{drra,v,w_L2dec}), (\ref{Dec_Phi}) and inductive assumptions (\ref{drra,v,w-IndAss}), we only prove the bound (\ref{drra,v,w_a_inf} for $\rr^{(a)}$ and $v^{(a)}$ in detail here. By Duhamel's formula it suffices to show the following bounds
	\begin{gather}       \label{drra,v-Lin1}
	\lV e^{tB}|\nab|(\rr_0^{(a)},v_0^{(a)})\rV_{W^{N(a+2)-2,\infty}}\lesssim \ep_0 t^{-5/4},\\\label{drra,v-Lin2}
	\int_0^t \lV  e^{(t-s)B}(0,|\nab|\Delta u^{(b)})^{\top}\rV_{W^{N(a+2)-2,\infty}}ds \lesssim \ep_1^2 t^{-1-l(|a|)/3+(2+|b|)\delta},\ {\rm for}\ |b|<|a|,\\      \label{drra,v-NonL}
	\int_0^t \lV  e^{(t-s)B}(|\nab|\tilde{F}^{(a)}_1,\nab\cdot \tilde{F}^{(a)}_{2,II})^{\top}\rV_{W^{N(a+2)-2,\infty}}ds \lesssim \ep_1^2 t^{-1-l(|a|)+4\delta},\\ \label{drra,v-F2III+Err2}
	\int_0^t \lV  e^{(t-s)B}(0,\nab\cdot \tilde{F}^{(a)}_{2,III}+\nab\cdot(S+1)^{a_1}\Gamma^{a'}Err2)^{\top}\rV_{W^{N(a+2)-2,\infty}}ds \lesssim \ep_1^2 t^{-1-l(|a|)+4\delta}
	\end{gather}
	The first bound (\ref{drra,v-Lin1}) is obtained by (\ref{e-tB_Lin}). Next we show (\ref{drra,v-Lin2}) and (\ref{drra,v-NonL}).
	
	\textbf{Step 1. Proof of the bound (\ref{drra,v-Lin2}).} From (\ref{e-tB_Lin}) and (\ref{Main_Prop_Ass1}), we have
	\begin{align*}
	\int_0^{t/2} \lV  e^{(t-s)B}(0,|\nab|\Delta u^{(b)})^{\top}\rV_{W^{N(2)-2,\infty}}ds
	\lesssim  \int_0^{t/2} (t-s)^{-3/4-3/2}\lV u^{(b)}\rV_{H^{N(2)+3}} ds\lesssim \ep_1 t^{-5/4}.
	\end{align*}
	Next, by frequency decomposition we divide the following integral into low frequency part and high frequency part, i.e
	$$\int_{t/2}^t  e^{(t-s)B}(0,|\nab|\Delta u^{(b)})^{\top}ds=\int_{t/2}^t P_{\leq k(|a|)} e^{(t-s)B}(0,|\nab|\Delta u^{(b)})^{\top}ds+\int_{t/2}^t  P_{>k(|a|)} e^{(t-s)B}(0,|\nab|\Delta u^{(b)})^{\top}ds, $$
	where $k(|a|)$ satisfies $2^{k(|a|)}\simeq \<t\>^{-l(|a|)}$. Then we estimate these two contributions separately.

	\emph{Low frequency part.} By (\ref{e-tB_Lin}) and (\ref{d2rra,v,w_L2dec}),
	\begin{align*}
	\int_{t/2}^t \lV P_{\leq k(|a|)} e^{(t-s)B}(0,|\nab|\Delta u^{(b)})^{\top}\rV_{L^{\infty}}ds\lesssim & \int_{t/2}^t \<t-s\>^{-3/4-1/3}2^{k(|a|)/3}\lV\Delta u^{(b)}\rV_{L^2}ds\lesssim \ep_1 t^{-1+(2+|b|)\delta-l(|a|)/3}.
	\end{align*}
	
	\emph{High frequency part.} By (\ref{e tB_Lp-Lp}), (\ref{drra,v,w-IndAss}) and $l(|b|+1)=\frac{l(|b|)}{40}$ for $0\leq |b|\leq N_1$
	\begin{align*}
	&\int_{t/2}^t \lV P_{> k(|a|)} e^{(t-s)B}(0,|\nab|\Delta u^{(b)})^{\top}\rV_{W^{N(a+2)-2,\infty}}ds\\
	\lesssim &\int_{t/2}^t\<t-s\>^{-3/2}2^{-13k(|a|)} \lV\nab u^{(b)}\rV_{W^{N(a+2),\infty}}ds\\
	\lesssim &\ep_1 t^{13l(|a|)-1-l(|b|)/3+(2+|b|)\delta+\delta}\lesssim \ep_1 t^{-1-l(|a|)/3+(2+|a|)\delta}.
	\end{align*}
	This completes the proof of (\ref{drra,v-Lin2}).
	
	\textbf{Step 2. Proof of the bound (\ref{drra,v-NonL}).} On one hand, by (\ref{e-tB_Lin}), H\"{o}lder and (\ref{Main_Prop_Ass1}), we have
	\begin{align*}
	&\int_0^{t/2} \lV  e^{(t-s)B}(|\nab|F_1^{(a)},\nab\cdot \tilde{F}_{2,II}^{(a)})^{\top}\rV_{W^{N(a+2)-2,\infty}}ds\\
	\lesssim & \int_0^{t/2} (t-s)^{-2} \sum_{b+c=a}\lV (\nab\rr^{(b)},\nab u^{(b)})\rV_{H^{N(a+1)}}\Big(\lV(\rr^{(c)},u^{(c)})\rV_{H^{N(a+1)}}+\sum_{e+\tilde{e}=c}\lV \Phi^{(e)}\rV_{H^{N(a+1)}}\lV\Phi^{(\tilde{e})}\rV_{H^{N(a+1)}}\Big)ds\\
	\lesssim & \ep_1\<t\>^{-3/2}(\ep_1+\ep_1^2\<t\>^{2\delta})\lesssim \ep_1^2\<t\>^{-3/2+2\delta}.
	\end{align*}
	On the other hand, for the nonlinear terms $F_1^{(a)}$ and $\tilde{F}_{2,II}^{(a)}$, by (\ref{e-tB_Lin}), (\ref{rra,v,w_inf}) and (\ref{drra,v,w_L2dec}), we have
	\begin{align*}
	&\int_{t/2}^t \lV  e^{(t-s)B}(|\nab|F_1^{(a)},\nab\cdot \tilde{F}_{2,II}^{(a)})^{\top}\rV_{W^{N(a+2)-2,\infty}}ds\\
	\lesssim & \int_{t/2}^t \<t-s\>^{-5/4} \lV (F_1^{(a)},\tilde{F}_{2,II}^{(a)})\rV_{H^{N(a+2)+1}}ds\\
	\lesssim & \sup_{s\in[t/2,t]}\Big[\sum_{b+c=a}\lV(\rr^{(b)},u^{(b)})\rV_{W^{2,\infty}}\lV(\nab\rr^{(c)},\nab u^{(c)})\rV_{H^{N(a+2)+2}}\\
	&\ \ \ \ \ \ \ \ +\sum_{b+c+e=a}\lV \nab\rr^{(b)}\rV_{H^{N(a+2)+1}}\lV\Phi^{(c)}\rV_{L^{\infty}}\lV\Phi^{(e)}\rV_{H^{N(a+1)}}\Big]\\
	\lesssim & \ep_1^2 t^{-5/4+(4+|a|)\delta}+\ep_1^3 t^{-1/2+(2+|b|)\delta-1+4\delta}\lesssim \ep_1^2 t^{-5/4+(4+|a|)\delta}.
	\end{align*}
	
	{\bf Step 3. Proof of the bound (\ref{drra,v-F2III+Err2}).} It suffices to estimate
	\begin{equation*}
	\int_0^t \lV e^{(t-s)B}\nab^2(0,\rr^{(b)}\nab u^{(c)})^{\top}\rV_{W^{N(a+2)-2,\infty}}ds+\int_0^t \lV e^{(t-s)B}\nab^2(0,\Phi^{(b)}\Phi^{(c)})^{\top}\rV_{W^{N(a+2)-2,\infty}}ds,
	\end{equation*}
	for $b+c=a$. The first term can be estimated by (\ref{e-tB_Lin}), (\ref{Main_Prop_Ass1}), (\ref{rra,v,w_inf}) and (\ref{drra,v,w_L2dec}). For the second term, by (\ref{e-tB_Lin}) and (\ref{Main_Prop_Ass1}), we have
	\begin{equation*}
	\int_0^{t/2} \lV e^{(t-s)B}\nab^2(0,\Phi^{(b)}\Phi^{(c)})^{\top}\rV_{W^{N(a+2)-2,\infty}}ds\lesssim \int_0^{t/2} (t-s)^{-5/2}\lV\Phi^{(b)}\rV_{H^{N(a+1)}}\lV\Phi^{(c)}\rV_{H^{N(a+1)}}ds
	\lesssim \ep_1^2 \<t\>^{-3/2+2\delta}.
	\end{equation*}
	Next, by frequency decomposition, we divide the following integral into low frequency part and high frequency part,
	\begin{align*}
	     \int_{t/2}^t \lV e^{(t-s)B}\nab^2(0,  \big( \Phi^{(c)} \Phi^{(e)} \big))\rV_{W^{N(a+2)-2,\infty}}ds\lesssim & \int_{t/2}^t \lV P_{\leq k(|a|)} e^{(t-s)B}\nab^2(0,  \big( \Phi^{(c)} \Phi^{(e)} \big))\rV_{L^{\infty}}ds\\
	     &+\int_{t/2}^t \lV P_{> k(|a|)} e^{(t-s)B}\nab^2(0,  \big( \Phi^{(c)} \Phi^{(e)} \big))\rV_{W^{N(a+2)-2,\infty}}ds\\
	     =:& I_{\leq k(|a|)}+I_{> k(|a|)},
	\end{align*}
	then we control these two parts separately. By (\ref{e-tB_Lin}) and (\ref{Main_Prop_Ass1}), we have
	\begin{align*}
	I_{\leq k(|a|)}\lesssim & \int_{t/2}^t \<t-s\>^{-5/4} 2^{k(|a|)} \lV \Phi^{(c)}\rV_{L^2}\lV\Phi^{(e)}\rV_{L^{\infty}}ds\\
	\lesssim & 2^{k(|a|)} \ep_1^2 t^{-1+4\delta}\lesssim \ep_1^2 t^{-1-l(|a|)+4\delta}.
	\end{align*}
	By (\ref{e tB_Lp-Lp}), (\ref{Main_Prop_Ass1}) and (\ref{Dec_Phi}), we have
	\begin{align*}
	I_{> k(|a|)}\lesssim & \int_{t/2}^t \<t-s\>^{-3/2}2^{-13k_0}\lV \Phi^{(c)}\rV_{W^{N(a+2),\infty}}\lV \Phi^{(e)}\rV_{W^{N(a+2),\infty}}ds\\
	\lesssim & 2^{-13k_0} \ep_1^2t^{-2+6\delta}\lesssim \ep_1^2 t^{-3/2}.
	\end{align*}
	Thus the bound (\ref{drra,v-F2III+Err2}) follows and the proof of (\ref{drra,v,w_a_inf}) completes.

	{\bf Proof of the bound (\ref{drra,v,w_a_inf_5}).}
	
	By Duhamel's formula, (\ref{Dec_heat}) and (\ref{e-tB_Lin}) it suffices to show that
	\begin{gather}       \label{drra-Lin2}
	\lV\int_0^t  e^{(t-s)B} \nab(0,\Delta u^{(b)})^{\top}+  e^{(t-s)\Delta}|\nab|\P\Delta u^{(b)}ds \rV_{W^{N(a+2)-2,\infty}} \lesssim \ep_1 t^{-1+(2+|a|)\delta},\ {\rm for}\ |b|<|a|,\\ \label{drra-NonL}
	\lV \int_0^t e^{(t-s)B}\nab(F_1^{(a)},F_2^{(a)})^{\top}+ e^{(t-s)\Delta}\nab\P \tilde{F}_2^{(a)}ds\rV_{W^{N(a+2)-2,\infty}} \lesssim \ep_1^2 t^{-1+(2+|a|)\delta}.
	\end{gather}
	By (\ref{e-tB_Lin}), (\ref{Dec_heat}), (\ref{Main_Prop_Ass1}) and (\ref{d2rra,v,w_L2dec}), we have
	\begin{align*}
	&\lV\int_0^t  e^{(t-s)B} \nab(0,\Delta u^{(b)})^{\top}+  e^{(t-s)\Delta}|\nab|\P\Delta u^{(b)}ds \rV_{W^{N(a+2)-2,\infty}} \\
	\lesssim &\int_0^{t/2} (t-s)^{-9/4}\lV u^{(b)}\rV_{H^{N(a+2)+3}}ds+\int_{t/2}^t \<t-s\>^{-5/4}\lV\nab^2 u^{(b)}\rV_{H^{N(a+2)+1}}ds\\
	\lesssim & \ep_1t^{-5/4}+\ep_1t^{-1+(2+|b|)\delta}\lesssim \ep_1 t^{-1+(2+|b|)\delta}.
	\end{align*}
	Next, we prove the bound (\ref{drra-NonL}). From the proof of (\ref{drra,v,w_a_inf}), it suffices to prove that for $b+c=a$
	\begin{align}          \label{d2phi^2}
	\lV \int_0^t e^{(t-s)B}\nab(0,\nab(\Phi^{(b)}\Phi^{(c)}))^{\top}+ e^{(t-s)\Delta}\nab^2 (\Phi^{(b)}\Phi^{(c)}) ds\rV_{W^{N(a+2)-2,\infty}}\lesssim\ep_1^2 t^{-1+(2+|a|)\delta}.
	\end{align}
	We may assume that $|b|\leq |c|$, by (\ref{e-tB_Lin}), (\ref{Dec_heat}), (\ref{Main_Prop_Ass1}) and (\ref{Dec_Phi}) we may bound the left-hand side by
	\begin{align*}
	{\rm LHS}(\ref{d2phi^2})\lesssim &\int_0^{t/2} (t-s)^{-5/2}\lV\Phi^{(b)}\rV_{H^{N(a)}} \lV\Phi^{(c)}\rV_{H^{N(a)}}ds\\
	&+\int_{t/2}^t \<t-s\>^{-7/4}\sum_{k}2^{(N(a+2)+2)k^+}\lV P_k\Phi^{(b)}\rV_{L^{\infty}}\lV \Phi^{(c)}\rV_{H^{N(a)}}ds\\
	\lesssim & \ep_1^2 t^{-3/2+3\delta}+\ep_1^2 t^{-1+4\delta}\lesssim \ep_1^2 t^{-1+(2+|a|)\delta}.
	\end{align*}
	This completes the proof of (\ref{drra,v,w_a_inf_5}).
\end{proof}

    An immediate consequence of the above lemma is the decay estimates for $\d_t\rr^{(a)},\d_t u^{(a)}$ and $\d_t\Psi^{(a)}$.
\begin{lemma}         \label{dt_v_2norm_Lem}
	Under the assumptions (\ref{Main_Prop_Ass1}) and (\ref{Main_Prop_Ass2}), for any $t\in[0,T]$, we have
	
	(i)
	\begin{gather}\label{dtrr-L2}
	\lV\d_t\rr^{(a)}\rV_{H^{N(a)-1}}\lesssim \sum_{|b|\leq |a|} \lV(\nab\rr^{(b)},\nab u^{(b)})\rV_{H^{N(a)-1}},\ {\rm for}\ |a|\leq N_1,\\        \label{dtrr-inf}
	\lV\d_t\rr^{(a)}\rV_{W^{N(a+2)-3,\infty}}+\lV\d_t u^{(a)}\rV_{W^{N(a+2)-4,\infty}}\lesssim \ep_1\<t\>^{-1-l(a)/4},\ \ {\rm for}\ |a|\leq N_1-2,
	\end{gather}
	and
	\begin{gather}   \label{dtu-L2}
	\lV\d_t u^{(a)}\rV_{H^n}\lesssim \sum_{|l|\leq |a|}\lV(\nab\rr^{(l)}, \nab^2 u^{(l)})\rV_{H^{n}}+\ep_1^2\<t\>^{-1+5\delta},\ \ \mathrm{for}\ n\leq N(a)-1,\ |a|\leq N_1.
	\end{gather}
	(ii) For any $|a|\leq N_1-1$,
	\begin{equation}    \label{dtPsi_L2_dec}
	\lV\d_t \Psi^{(a)}\rV_{H^{N(a)-3}}\lesssim \ep_1^2 \<t\>^{-3/4+(2+|a|)\delta}.
	\end{equation}
	In particular, if $|a|\leq N_1-2$, we have
	\begin{equation}       \label{dtPsi_L2_dec-<4}
	\lV\d_t \Psi^{(a)}\rV_{H^{N(a)-1}}\lesssim \ep_1^2 \<t\>^{-3/4+(2+|a|)\delta}+\ep_1\<t\>^{-1+3\delta}\sum_{|b|\leq |a|}\lV\nab u^{(b)}\rV_{H^{N(a)}}.
	\end{equation}
\end{lemma}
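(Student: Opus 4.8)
The plan is to read off each time derivative from the evolution equations \eqref{Main_Sys_VecFie} and then estimate the resulting algebraic expressions by the bilinear bounds of Lemmas \ref{biL-lem} and \ref{Hardy}, the decay estimates already established in this section, and the bootstrap hypotheses \eqref{Main_Prop_Ass1}--\eqref{Main_Prop_Ass2}. Concretely, the first equation of \eqref{Main_Sys_VecFie} gives $\d_t\rr^{(a)}=-\nab\cdot u^{(a)}+F_1^{(a)}$; the second gives $\d_t u^{(a)}=\tfrac{\mu_1}{2}\Delta(S-1)^{a_1}\Gamma^{a'}u+(\tfrac{\mu_1}{2}+\mu_2)\nab\div(S-1)^{a_1}\Gamma^{a'}u-a\gamma(1+\rr)^{\gamma-2}\nab\rr^{(a)}+F_2^{(a)}$; and for $\Psi^{(a)}$ one uses the identity $\d_t\Psi^{(a)}=e^{-it|\nab|}\big(\d_t\Phi^{(a)}-i|\nab|\Phi^{(a)}\big)=e^{-it|\nab|}\big(\d_t^2\phi^{(a)}-\Delta\phi^{(a)}\big)$, which by the $\phi^{(a)}$-equation and the splitting $\tfrac{1}{1+\rr}\Delta\phi^{(a)}=\Delta\phi^{(a)}-\tfrac{\rr}{1+\rr}\Delta\phi^{(a)}$ becomes $\d_t\Psi^{(a)}=e^{-it|\nab|}\big(F_3^{(a)}-\tfrac{\rr}{1+\rr}\Delta\phi^{(a)}\big)$; since $e^{-it|\nab|}$ is unitary on every $H^s$, $\|\d_t\Psi^{(a)}\|_{H^s}=\|F_3^{(a)}-\tfrac{\rr}{1+\rr}\Delta\phi^{(a)}\|_{H^s}$.

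For \eqref{dtrr-L2} it then suffices to bound $F_1^{(a)}$: each term of \eqref{F1a} is a product of a vector-field derivative of $(\rr,u)$ with a gradient of another, one factor carrying an index $\le\lfloor|a|/2\rfloor$, so placing that factor in $W^{\cdot,\infty}$ by Sobolev embedding and \eqref{Main_Prop_Ass1} and the other in $H^{N(a)-1}$ gives $\|F_1^{(a)}\|_{H^{N(a)-1}}\lesssim\ep_1\sum_{|b|\le|a|}\|(\nab\rr^{(b)},\nab u^{(b)})\|_{H^{N(a)-1}}$. The pointwise bound \eqref{dtrr-inf} is the same algebra in $W^{\cdot,\infty}$: the linear parts $\nab\cdot u^{(a)}$, $\Delta(S-1)^{a_1}\Gamma^{a'}u$ and $(1+\rr)^{\gamma-2}\nab\rr^{(a)}$ are handled by \eqref{drr,v,w_inf}/\eqref{drra,v,w_a_inf} after writing $u^{(l)}=-|\nab|^{-1}\nab v^{(l)}+w^{(l)}$ and using the $L^\infty$-boundedness of frequency-localized Riesz multipliers, while the quadratic remainders gain a factor $\ep_1$ and extra decay; one then checks that $l(|b|)/3$ (or $(2+|b|)\delta$) beats $l(|a|)/4$ for $|b|\le|a|$, which holds since $l$ is decreasing and $l(|a|)\gg\delta$ in the relevant range. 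For \eqref{dtu-L2} the two linear terms $a\gamma(1+\rr)^{\gamma-2}\nab\rr^{(a)}$ (the coefficient $(1+\rr)^{\gamma-2}$ absorbed as in \eqref{biL-rr-HN}) and $\Delta(S-1)^{a_1}\Gamma^{a'}u$, which is a combination of the $\Delta u^{(l)}$ with $|l|\le|a|$, produce exactly the $\nab\rr^{(l)}$ and $\nab^2u^{(l)}$ on the right-hand side, and the quadratic pieces of $F_2^{(a)}$ --- transport, the $Z^b(\tfrac{\rr}{1+\rr})\Delta(\cdots)u$ terms, and the $\nab\phi\,\nab\phi$ terms including $Err2$ --- are all $\lesssim\ep_1^2\<t\>^{-1+5\delta}$ by Lemma \ref{biL-lem}, \eqref{phi^2-Hna}, the decay of $\nab u^{(c)}$ and \eqref{Dec_Phi}.

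Finally, for part (ii) one estimates $\|F_3^{(a)}-\tfrac{\rr}{1+\rr}\Delta\phi^{(a)}\|_{H^{N(a)-3}}$ term by term using \eqref{F3a}. The pieces $\tfrac{\rr}{1+\rr}\Delta\phi^{(a)}$ and $Z^b(\tfrac{1}{1+\rr})\Delta\phi^{(c)}$ are split by \eqref{Hardy_2} as $\|\rr^{(\cdot)}\|_{L^\infty}\,\|\Delta\phi^{(\cdot)}\|_{\dot H^{N(a)-3}}+\|\rr^{(\cdot)}\|_{\dot H^{N(a)-3}}\,\|\Delta\phi^{(\cdot)}\|_{L^\infty}$, where $\|\Delta\phi^{(c)}\|_{L^\infty}\lesssim\ep_1\<t\>^{-1+3\delta}$ by summing \eqref{Dec_Phi} over dyadic frequencies (the weight $2^{k^-/2}2^{(-N(|c|+1)+h/2+2)k^+}$ being summable), $\|\Delta\phi^{(c)}\|_{\dot H^{N(a)-3}}\le\|\,|\nab|\phi^{(c)}\|_{H^{N(a)-1}}\le\|\Phi^{(c)}\|_{H^{N(a)-1}}\lesssim\ep_1\<t\>^{H(c)\delta}$ by \eqref{Main_Prop_Ass1}, and the slow rate $\<t\>^{-3/4+\delta}$ of $\|\rr^{(\cdot)}\|_{L^\infty}$ coming from \eqref{rr,v,w_inf}/\eqref{rra,v,w_inf} is exactly what yields the exponent $-3/4+(2+|a|)\delta$; the transport, cubic and $Err3$ pieces of $F_3^{(a)}$ decay like $\<t\>^{-1+\cdots}$ after using \eqref{Dec_Phi}, the $L^\infty$-decay of $u^{(b)}$ (again via $u^{(b)}=-|\nab|^{-1}\nab v^{(b)}+w^{(b)}$ and \eqref{rr,v,w_inf}/\eqref{rra,v,w_inf}) and, for the factor $\d_tu^{(b)}$, the bound \eqref{dtu-L2} just proved. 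For \eqref{dtPsi_L2_dec-<4} the argument is identical at the higher Sobolev level $H^{N(a)-1}$, except that in the $\d_tu^{(b)}$-terms the invocation of \eqref{dtu-L2} at that level no longer lets $\|\nab^2u^{(l)}\|_{H^{N(a)-1}}=\|\nab u^{(l)}\|_{H^{N(a)}}$ be swallowed into $\ep_1$, so this norm is kept explicit, producing the extra summand $\ep_1\<t\>^{-1+3\delta}\sum_{|b|\le|a|}\|\nab u^{(b)}\|_{H^{N(a)}}$.

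The work here is almost entirely bookkeeping: no new inequality is required, but one must organize the many terms of $F_1^{(a)},F_2^{(a)},F_3^{(a)}$ so that in every bilinear (or trilinear) product the factor placed in $L^\infty$ carries a small enough vector-field index to fall under one of the decay bounds of Section 3 with enough spatial derivatives left over, and then verify that the resulting powers of $\<t\>^\delta$, $\<t\>^{l(\cdot)}$ and the loss $h$ in $N(a)=N_0-|a|h$ recombine into the stated exponents. The one non-mechanical step is the identity $\tfrac{1}{1+\rr}\Delta\phi^{(a)}=\Delta\phi^{(a)}-\tfrac{\rr}{1+\rr}\Delta\phi^{(a)}$, which makes $e^{-it|\nab|}$ act as an isometry and isolates the density--$\phi$ interaction forcing the $\<t\>^{-3/4}$ rate in part (ii).
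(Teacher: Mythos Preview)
Your approach is essentially the same as the paper's: both read off $\d_t\rr^{(a)}$, $\d_t u^{(a)}$, and $\d_t\Psi^{(a)}$ from the evolution equations \eqref{Main_Sys_VecFie}, use the identity $\d_t\Psi^{(a)}=e^{-it|\nab|}\big(F_3^{(a)}-\tfrac{\rr}{1+\rr}\Delta\phi^{(a)}\big)$, and then estimate each nonlinear term by placing the factor with smaller vector-field index in $L^\infty$ via the decay bounds of Section~3. Your identification of the $\rr^{(b)}\Delta\phi^{(c)}$ interaction as the source of the $\langle t\rangle^{-3/4}$ rate in part~(ii) is correct and is exactly what drives the paper's exponent, even though the paper only details the $\d_tu^{(b)}\cdot\nab\phi^{(c)}$ term explicitly.

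One minor imprecision: in your treatment of \eqref{dtu-L2} you assert that \emph{all} quadratic pieces of $F_2^{(a)}$ (transport $u^{(b)}\cdot\nab u^{(c)}$, the $\rr^{(b)}\nab^2 u^{(c)}$ terms, etc.) are $\lesssim\ep_1^2\langle t\rangle^{-1+5\delta}$. For $n=N(a)-1$ this is not quite available, since \eqref{drra,v,w_L2dec} only controls $\|\nab u^{(c)}\|_{H^{N(c)-2}}$, not $H^{N(c)-1}$. The paper instead bounds these terms by $\ep_1\,\|(\nab\rr^{(c)},\nab u^{(c)})\|_{H^n\times H^{n+1}}$ and absorbs them into the \emph{first} summand on the right of \eqref{dtu-L2}; only the $\nab\phi^{(c)}\nab\phi^{(e)}$ terms (via \eqref{phi^2-Hna}) genuinely land in the $\ep_1^2\langle t\rangle^{-1+5\delta}$ remainder. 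This does not affect the validity of \eqref{dtu-L2}, only the bookkeeping of which term goes where.
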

\begin{proof}
	{\bf (i)} The first bound (\ref{dtrr-L2}) follows directly from $\rr^{(a)}$-equation in (\ref{Main_Sys_VecFie}). For the second bound (\ref{dtrr-inf}), by $\rr^{(a)}$-equation (\ref{Main_Sys_VecFie}), (\ref{drr,v,w_inf}) and (\ref{drra,v,w_a_inf}) we have
	\begin{align*}
	\lV\d_t\rr^{(a)}\rV_{W^{N(a+2)-3,\infty}}
	\lesssim &\lV\nab\cdot u^{(a)}\rV_{W^{N(a+2)-3,\infty}}+\sum_{b+c=a}\big(\lV u^{(b)}\cdot\nab \rr^{(c)}\rV_{W^{N(a+2)-3,\infty}}+\lV\rr^{(b)}\nab\cdot u^{(c)}\rV_{W^{N(a+2)-3,\infty}}\big)\\
	\lesssim & \lV\nab\cdot u^{(a)}\rV_{W^{N(a+2)-2,p}}+\sum_{|c|\leq |a|}\lV(\nab\cdot u^{(c)},\nab\rr^{(c)})\rV_{W^{N(a+2)-2,p}}\\
	\lesssim & \ep_1\<t\>^{-1-l(a)/4}.
	\end{align*}
	and similarly, by $u^{(a)}$-equation in (\ref{Main_Sys_VecFie}) we have
	\begin{align*}
	\lV\d_t u^{(a)}\rV_{W^{N(a+2)-4,\infty}}\lesssim &\sum_{|b|\leq |a|}\lV \nab^2 u^{(b)}\rV_{W^{N(a+2)-4,\infty}}+\lV \nab \rr^{(a)}\rV_{W^{N(a+2)-4,\infty}}+\sum_{b+c=a}\big[\lV u^{(b)}\nab u^{(c)}\rV_{W^{N(a+2)-4,\infty}}\\
	&+\lV \rr^{(b)}\nab \rr^{(c)}\rV_{W^{N(a+2)-4,\infty}}
	+\lV \rr^{(b)}\nab^2 u^{(c)}\rV_{W^{N(a+2)-4,\infty}}+\lV \nab(\nab\phi^{(b)}\nab \phi^{(c)})\rV_{W^{N(a+2)-4,\infty}}\big]
	\end{align*}
	Then by $|a|\leq N_1-2$, (\ref{drra,v,w_a_inf}) and (\ref{Dec_Phi}), we may bound this by
	\begin{align*}
	\lV\d_t u^{(a)}\rV_{W^{N(a+2)-4,\infty}}\lesssim & \ep_1\<t\>^{-1-l(a)/4}+\ep_1^2\<t\>^{-3/4+(2+|b|)\delta-1}+\ep_1^2\<t\>^{-2+6\delta}\lesssim \ep_1\<t\>^{-1-l(a)/4}.
	\end{align*}
	This concludes the proof of (\ref{dtrr-inf}).

	Next, we prove the bound (\ref{dtu-L2}). In fact, from $u^{(a)}$-equation in (\ref{Main_Sys_VecFie}) we have that 	\begin{align*}
	\lV\d_t u^{(a)}\rV_{H^n}\lesssim &\sum_{|b|\leq |a|}\lV \nab u^{(b)}\rV_{H^{n+1}}+\lV\nab\rr^{(a)}\rV_{H^{n}}\\
	&+\sum_{|b|+|c|\leq |a|}\big[\lV u^{(b)}\cdot\nab u^{(c)}\rV_{H^n}
	+\lV \rr^{(b)}\nab\rr^{(c)}\rV_{H^n}+\lV \rr^{(b)}\nab^2 u^{(c)}\rV_{H^n}\big]\\
	&+\sum_{b+c+e=a}\lV Z^b(\frac{1}{1+\rr})\nab(\nab\phi^{(c)}\nab\phi^{(e)})\rV_{H^n}+\sum_{|c|\leq |a|}\lV  Z^c Err2\rV_{H^n}.
	\end{align*}
	Moreover, from (\ref{Main_Prop_Ass1}) it follows that
	\begin{align*}
	&\lV (u^{(b)}\cdot\nab u^{(c)})\rV_{H^n}+\lV \rr^{(b)}\nab\rr^{(c)}\rV_{H^n}+\lV (\rr^{(b)}\nab^2 u^{(c)})\rV_{H^n}\\
	\lesssim &\lV u^{(b)}\rV_{H^n}\lV(\nab\rr^{(c)},\nab u^{(c)})\rV_{H^n\times H^{n+1}}\lesssim \ep_1 \lV(\nab\rr^{(c)},\nab u^{(c)})\rV_{H^n\times H^{n+1}},
	\end{align*}
	and from (\ref{phi^2-Hna}) and (\ref{Main_Prop_Ass1}) we obtain
	\begin{align*}
	\lV Z^b(\frac{1}{1+\rr})\nab(\nab\phi^{(c)}\nab\phi^{(e)})\rV_{H^n}
	\lesssim  (1+\lV\rr^{(b)}\rV_{H^n})\lV \nab\phi^{(c)}\nab \phi^{(e)}\rV_{H^{n+1}}\lesssim\ep_1^2\<t\>^{-1+4\delta}.
	\end{align*}
	The term $\lV Z^a Err2\rV_{H^n}$ is estimated using similar argument. Hence, the bound (\ref{dtu-L2}) follows.
	
	{\bf (ii)} We prove the bound (\ref{dtPsi_L2_dec}) first. In view of $\phi^{(a)}$-equation in (\ref{Main_Sys_VecFie}), it suffices to prove that
	\begin{equation}       \label{dtPsi_Dec}
	\begin{aligned}
	&\sum_{b+c=a}\big(\lV\rr^{(b)}\nab\phi^{(c)}\rV_{H^{N(a)-3}}+\lV \d_t u^{(b)}\cdot\nab \phi^{(c)}\rV_{H^{N(a)-3}}+\lV  u^{(b)}\cdot\nab \d_t\phi^{(c)}\rV_{H^{N(a)-3}}\big)\\
	&+\sum_{b+c+e=a}\lV u^{(b)}\cdot\nab(u^{(c)}\cdot\nab \phi^{(e)})\rV_{H^{N(a)-3}}+\lV Z^a Err2\rV_{H^{N(a)-3}}\lesssim \ep_1^2\<t\>^{-3/4+(|a|+1)\delta}.
	\end{aligned}
	\end{equation}

	Here we only estimate the first term $\d_t u^{(b)}\cdot\nab \phi^{(c)}$ in detail, the other terms in (\ref{dtPsi_Dec}) are similar.
	
	\emph{Case 1: $|a|\geq N_1-1$.}
	
	When $|b|\leq |c|$, using (\ref{dtrr-inf}) and (\ref{Main_Prop_Ass1}), it follows that
	\begin{equation}\label{dtPsi_N1-1_b<c}
	\begin{aligned}
	\lV \d_t u^{(b)}\cdot\nab \phi^{(c)}\rV_{H^{N(a)-3}}\lesssim &\big[\sum_{k\geq 0}2^{(N(a)-3)k^+}\lV \d_t P_k u^{(b)}\rV_{\infty}+\lV \d_t u^{(b)}\rV_{\infty}\big]\lV\Phi^{(c)}\rV_{H^{N(a)-3}}\\
	\lesssim &\ep_1\<t\>^{H(c)\delta}\lV \d_t u^{(b)}\rV_{W^{N(|b|+2)-4,\infty}}\\
	\lesssim & \ep_1\<t\>^{2\delta}\lV \d_t u^{(b)}\rV_{W^{N(|b|+2)-2,\infty}}
	\lesssim \ep_1^2 \<t\>^{-1}.
	\end{aligned}
	\end{equation}
	If $|b|>|c|$, it follows from (\ref{dtu-L2}) and (\ref{Dec_Phi}) that
	\begin{equation}     \label{dtPsi_N1-1_b>c}
	\lV \d_t u^{(b)}\cdot\nab \phi^{(c)}\rV_{H^{N(a)-3}}\lesssim \lV \d_t u^{(b)}\rV_{H^{N(a)-3}}\sum_k 2^{(N(a)-3)k^+}\lV\Phi^{(c)}\rV_{\infty}\lesssim \ep_1^2\<t\>^{-1+3\delta}.
	\end{equation}
	
	\emph{Case 2: $|a|\leq N_1-2$.}
	
	Using (\ref{dtrr-inf}), (\ref{dtu-L2}), (\ref{Dec_Phi}) and (\ref{Main_Prop_Ass1}), we get
	\begin{equation}      \label{dtPsi_N1-2}
	\lV \d_t u^{(b)}\cdot\nab \phi^{(c)}\rV_{H^{N(a)-3}}\lesssim \lV \d_t u^{(b)}\rV_{H^{N(a)-3}}\lV \Phi^{(c)}\rV_{\infty}+\lV \d_t u^{(b)}\rV_{\infty}\lV\Phi^{(c)}\rV_{H^{N(a)-3}}\lesssim \ep_1^2\<t\>^{-1+5\delta}.
	\end{equation}
	From (\ref{dtPsi_N1-1_b<c}), (\ref{dtPsi_N1-1_b>c}) and (\ref{dtPsi_N1-2}), the bound (\ref{dtPsi_Dec}) for the first term is obtained.
	
	Finally, we prove the bound (\ref{dtPsi_L2_dec-<4}). By $\phi^{(a)}$-equation in (\ref{Main_Sys_VecFie}), it suffices to show
	\begin{equation*}
	\begin{aligned}
	&\sum_{b+c=a}\big(\lV\rr^{(b)}\nab\phi^{(c)}\rV_{H^{N(a)-1}}+\lV \d_t u^{(b)}\cdot\nab \phi^{(c)}\rV_{H^{N(a)-1}}+\lV  u^{(b)}\cdot\nab \d_t\phi^{(c)}\rV_{H^{N(a)-1}}\big)\\
	&+\sum_{b+c+e=a}\lV u^{(b)}\cdot\nab(u^{(c)}\cdot\nab \phi^{(e)})\rV_{H^{N(a)-1}}+\lV Z^a Err2\rV_{H^{N(a)-1}}\\
	\lesssim &\ep_1^2\<t\>^{-3/4+(2+|a|)\delta}+\ep_1\<t\>^{-1+3\delta}\sum_{|b|\leq |a|}\lV \nab u^{(b)}\rV_{H^{N(a)}}.
	\end{aligned}
	\end{equation*}
	Here we only estimate the term
	\begin{equation*}
	\lV \d_t u^{(b)}\cdot\nab \phi^{(c)}\rV_{H^{N(a)-1}}
	\end{equation*}
	in detail. By $|a|\leq N_1-2$, (\ref{dtu-L2}), (\ref{Dec_Phi}) and (\ref{dtrr-inf}), we may bound this by
	\begin{align*}
	\lV \d_t u^{(b)}\cdot\nab \phi^{(c)}\rV_{H^{N(a)-1}}\lesssim &\lV\d_t u^{(b)}\rV_{H^{N(a)-1}}\lV\nab\phi^{(c)}\rV_{L^{\infty}}+\lV\d_tu^{(b)}\rV_{L^{\infty}}\lV\nab\phi^{(c)}\rV_{H^{N(a)-1}}\\
	\lesssim & ( \sum_{|l|\leq |b|}\lV \nab u^{(l)}\rV_{H^{n+1}}+\ep_1^2\<t\>^{-1+5\delta})\ep_1\<t\>^{-1+3\delta}+\ep_1^2\<t\>^{-1-l(b)/4+H(c)\delta}\\
	\lesssim & \ep_1^2\<t\>^{-1}+\ep_1\<t\>^{-1+3\delta}\sum_{|l|\leq |b|}\lV \nab u^{(l)}\rV_{H^{n+1}}.
	\end{align*}
	This completes the proof of the Lemma.

\end{proof}

\section{Energy estimates, I: Sobolev bounds}
In this section we prove the energy bounds (\ref{Main_Prop_result1}) for $a=0$.
\subsection{The bound on $\rr$ and $u$} We start with the Sobolev bound in (\ref{Main_Prop_result1}).

\begin{proposition}            \label{Prop_Ene_Sob}
	With the notation and hypothesis in Proposition $\ref{Main_Prop}$, for any $t\in[0,T]$, we have
	\begin{equation}         \label{E-rr&u}
	\lV \rr\rV_{H^{N(0)}}^2+\lV u\rV_{H^{N(0)}}^2+\int_0^t\big( \lV\nab \rr\rV_{H^{N(0)-1}}^2 + \lV\nab u\rV_{H^{N(0)}}^2+ \lV\div u\rV_{H^{N(0)}}^2\big) ds\lesssim \epsilon_0^2.
	\end{equation}
\end{proposition}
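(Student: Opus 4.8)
The plan is to run a standard energy estimate on the $(\rr,u)$-subsystem of \eqref{Main_Sys}, treating the $\phi$-contributions as forcing. Write $\Lambda := \langle\nab\rangle^{N(0)}$ (realized via Littlewood–Paley pieces so that $\Lambda$ commutes with constant-coefficient operators) and apply $\Lambda$ to the first two equations of \eqref{Main_Sys}. Pair the $\rr$-equation with $a\gamma\,\Lambda\rr$ in $L^2$ and the $u$-equation with $\Lambda u$ in $L^2$; adding, the linear coupling terms $a\gamma(\nab\Lambda\rr,\Lambda u)$ and $a\gamma(\Lambda\nab\cdot u,\Lambda\rr)$ cancel after integration by parts, and the dissipative terms produce
$\tfrac{\mu_1}{2}\|\nab\Lambda u\|_{L^2}^2 + (\tfrac{\mu_1}{2}+\mu_2)\|\div\Lambda u\|_{L^2}^2$, which is exactly the coercive quantity appearing on the left of \eqref{E-rr&u}. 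Thus one obtains, for each frequency block and after summation,
\begin{equation*}
\tfrac{d}{dt}\Big(a\gamma\|\rr\|_{\dot H^{N(0)}}^2 + \|u\|_{\dot H^{N(0)}}^2\Big) + c\big(\|\nab u\|_{\dot H^{N(0)}}^2 + \|\div u\|_{\dot H^{N(0)}}^2\big) \lesssim \big|(\Lambda F_1,a\gamma\Lambda\rr)\big| + \big|(\Lambda F_2,\Lambda u)\big|,
\end{equation*}
together with the analogous lower-order identity summing up to the full $H^{N(0)}$ norm. The term $\|\nab\rr\|_{\dot H^{N(0)-1}}^2$ on the left of \eqref{E-rr&u} is not produced by this pairing directly; to recover it one uses the standard hypocoercivity trick for compressible Navier–Stokes: add a small multiple $\kappa\,\tfrac{d}{dt}(\Lambda v,\Lambda|\nab|^{-1}\rr)$ (equivalently $(\nab\cdot\Lambda u,\Lambda|\nab|^{-1}\rr)$) to the energy; using $\d_t v = (\mu_1+\mu_2)\Delta v + a\gamma|\nab|\rr + (\text{nonlin})$ this generates $-\kappa a\gamma\|\nab\rr\|_{\dot H^{N(0)-1}}^2$ plus terms controlled by $\|\nab v\|$ and the already-dissipated $\|\nab u\|$-norms, provided $\kappa$ is chosen small relative to $\mu_1,\mu_2$.

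Next I would estimate the right-hand side using the nonlinear bounds already proved. For the $H^{N(0)}$ forcing one writes $F_1,F_2 = F_{2,I}+F_{2,II}$ and invokes Corollary \ref{Non-Cor} (with $N = N(0)-1$, extended by one derivative exactly as the dissipation allows): $\|(\Lambda F_1,\Lambda F_{2,I})\|_{L^2}\lesssim (\|(\rr,u)\|_{W^{N(0)-4,\infty}}+\ep_1^2\langle t\rangle^{-1+2\delta})(\|\nab\rr\|_{H^{N(0)-1}}+\|\nab u\|_{H^{N(0)}})$ and $\|F_{2,II}\|_{H^{N(0)-1}}\lesssim \ep_1^2\langle t\rangle^{-1+2\delta}$, the latter being absorbed after Cauchy–Schwarz against $\|\nab u\|_{H^{N(0)}}$ using that $F_{2,II}$ is in divergence form. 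By the decay estimates of Lemma \ref{rr,v,w-a=0_Lem} (and the $L^\infty$-decay \eqref{drr,v,w_inf}), $\|(\rr,u)\|_{W^{N(0)-4,\infty}}\lesssim \ep_1\langle t\rangle^{-1-l(0)+2\delta}$, which is integrable in time. Hence the right-hand side is bounded by
\begin{equation*}
\big(\ep_1\langle t\rangle^{-1-l(0)+2\delta} + \ep_1^2\langle t\rangle^{-1+2\delta}\big)\big(\|\nab\rr\|_{H^{N(0)-1}}^2 + \|\nab u\|_{H^{N(0)}}^2\big) + \ep_1^4\langle t\rangle^{-2+4\delta},
\end{equation*}
after Young's inequality; the first factor being small and decaying lets the dissipation absorb it, and the last term is integrable.

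Finally, integrating the (corrected) energy identity on $[0,t]$, using that the modified energy $E(t) := a\gamma\|\rr\|_{H^{N(0)}}^2 + \|u\|_{H^{N(0)}}^2 + 2\kappa(\nab\cdot u, |\nab|^{-1}\rr)_{H^{N(0)-1}}$ is equivalent to $\|\rr\|_{H^{N(0)}}^2+\|u\|_{H^{N(0)}}^2$ for $\kappa$ small, and that $E(0)\lesssim \ep_0^2$ by \eqref{MainAss_ini}, a Grönwall argument with the integrable coefficient $\ep_1\langle t\rangle^{-1-l(0)+2\delta}+\ep_1^2\langle t\rangle^{-1+2\delta}$ closes the bound at level $\ep_0^2$ (recall $\ep_1=\ep_0^{2/3}$, so $\ep_1^4 = \ep_0^{8/3}\ll\ep_0^2$). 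I expect the main obstacle to be bookkeeping around the hypocoercive correction: one must check that the commutator and nonlinear contributions created by differentiating $(\nab\cdot u,|\nab|^{-1}\rr)$ — in particular terms where $|\nab|^{-1}$ acts on a product — are genuinely controlled by the available norms, and that the low-frequency behavior of $|\nab|^{-1}$ does not spoil the estimate; this is where the decomposition $u = -|\nab|^{-1}\nab v + w$ and the separate parabolic decay of $w$ from Lemma \ref{rr,v,w-a=0_Lem} are used to keep everything at the right frequencies. The genuinely nonlinear terms themselves are routine given Corollary \ref{Non-Cor}.
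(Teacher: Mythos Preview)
Your overall architecture---energy identity plus a small hypocoercive cross term to generate the missing $\|\nab\rr\|_{H^{N(0)-1}}^2$ dissipation---is the same as the paper's. Two points, however, separate your sketch from a working proof.

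\textbf{Top-order quasilinear terms.} You propose to bound $|(\Lambda F_1,a\gamma\Lambda\rr)| + |(\Lambda F_2,\Lambda u)|$ by invoking Corollary~\ref{Non-Cor} ``extended by one derivative.'' This does not close: Corollary~\ref{Non-Cor} is stated for $N\le N(0)-1$ precisely because at level $N(0)$ the transport term $u\cdot\nab\rr$ in $F_1$ places $N(0)+1$ derivatives on $\rr$, and the quasilinear term $\tfrac{\rr}{1+\rr}\Delta u$ in $F_{2,I}$ places $N(0)+2$ on $u$; neither is directly controlled. The paper avoids this by using the \emph{nonlinear} weighted energy $\int\tfrac{P'(1+\rr)}{1+\rr}|\d^n\rr|^2 + (1+\rr)|\d^n u|^2$, so that the worst pieces become $\int \tfrac{P'}{1+\rr}\,\d^n\rr\,(u\cdot\nab\d^n\rr)$ and $\int(1+\rr)\d^n u\cdot\d^n(\tfrac{1}{1+\rr}\Delta u)$, each handled by a symmetrization/integration-by-parts in space (see the treatment of $R_1,R_4$). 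With your flat energy you would instead need an explicit Kato--Ponce commutator step for $[\Lambda,u\cdot\nab]\rr$ and a separate integration by parts for $\rr\Delta u$; this is doable but is not what Corollary~\ref{Non-Cor} gives you, and you have not supplied it.

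\textbf{The cross term.} Your correction $(\nab\cdot\Lambda u,\Lambda|\nab|^{-1}\rr)$ is not the one the paper uses and is awkward: it formally involves $|\nab|^{-1}\rr$, whose low-frequency control is not part of the hypothesis, and (after the cancellation you note) the pressure contribution you recover is $a\gamma\||\nab|^{1/2}\rr\|^2$-type rather than $\|\nab\rr\|^2$. The paper instead pairs the $\d^n u$-equation with $\d^n\nab\rr$ for $|n|\le N(0)-1$, giving the cross term $(\nab\rr,u)_{H^{N(0)-1}}$; the pressure term then produces exactly $\int\tfrac{P'}{1+\rr}|\d^n\nab\rr|^2\approx a\gamma\|\nab\rr\|_{H^{N(0)-1}}^2$, and the $\d_t\rr$-contribution costs only $\|\nab\cdot u\|_{H^{N(0)-1}}^2$, already dissipated. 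This is the standard Danchin-type correction in physical space, with no negative-order operator.

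A minor difference: the paper does not need the sharp pointwise decay \eqref{drr,v,w_inf}; it closes using only the $L^2_t$-integrability of $\|\nab\rr\|_{H^{N(0)-1}}$ and $\|\nab u\|_{H^{N(0)}}$ from the bootstrap, together with the $\langle t\rangle^{-1+2\delta}$ decay of $\|\nab\phi\|_{L^\infty}$.
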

\begin{proof}
{\bf Step 1. Higher order estimate for $\rr$ and $u$.}	

We define the energy functional
\begin{align*}
E_0(t)=&\sum\limits_{|n|\leq N(0)}\big[\frac{1}{2}\int_{\R^3}\frac{P'(1+\rr)}{1+\rr}|\d^{n} \rr|^2 +(1+\rr)|\d^n u|^2dx
+\int_0^t \int_{\R^3} \frac{\mu_1}{2} |\d^{n}\nab u|^2+(\frac{\mu_1}{2}+\mu_2)|\d^n \div u|^2 dx ds\big].
\end{align*}

From (\ref{Main_Sys}), we have
\begin{equation}       \label{dtE0}
\begin{aligned}
	\frac{d}{dt}E_0(t)=&\sum_{|n|\leq N(0)}\int_{\R^3}\frac{1}{2}\d_t(\frac{P'(1+\rr)}{1+\rr})|\d^n \rr|^2-\frac{P'(1+\rr)}{1+\rr}\d^n\rr\cdot\d^n(u\cdot\nab\rr+\rr\nab\cdot u)dx\\
	&-\sum_{|n|\leq N(0)}\int_{\R^3} \frac{P'(1+\rr)}{1+\rr} \d^n\rr \nab\cdot\d^n u+(1+\rr)\d^n u\cdot\d^n\big(\frac{P'(1+\rr)}{1+\rr}\nab\rr\big) dx \\
	&+\sum_{|n|\leq N(0)} \int_{\R^3} \frac{1}{2}\d_t \rr|\d^n u|^2+(1+\rr)\d^n u\cdot \d^n \mathcal{N} dx\\
	&+\sum_{|n|\leq N(0)}\int_{\R^3} (1+\rr)\d^n u\cdot\sum_{|n_1|\geq 1,n_1+n_2=n}\d^{n_1}( \frac{1}{1+\rr})\d^{n_2}\big(\frac{\mu_1}{2}\Delta u+(\frac{\mu_1}{2}+\mu_2)\nab\div u\big)dx\\
	=&R_1+R_2+R_3+R_4,
\end{aligned}
\end{equation}
where $\mathcal{N}$ denoted as
\begin{equation*}
\mathcal{N}:=-u\cdot\nab u+\frac{1}{1+\rr}\big[\frac{1}{2}\nab(\cos^2 \phi_2|\nab\phi_1|^2+|\nab\phi_2|^2)-\sum_{j=1}^3\d_j(\cos^2\phi_2 \nab\phi_1 \d_j \phi_1+\nab\phi_2\d_j\phi_2) \big].
\end{equation*}

Then by using H\"{o}lder's inequality, Sobolev embedding, (\ref{Main_Sys}) and the assumptions (\ref{Main_Prop_Ass1}), we have the estimates of $R_i(i=1,\cdots,4)$ in the following.

For the estimate of $R_1$, note that $P(1+\rr)=a(1+\rr)^{\gamma}$, by (\ref{dtrr-L2}) we get
\begin{align*}
R_1\lesssim&\lV(1+\rr)^{\gamma-3}\rV_{L^{\infty}}\big(\lV\d_t \rr\rV_{L^2}\lV \rr\rV_{L^2} \lV \rr\rV_{L^{\infty}}+\lV\d_t\rr\rV_{L^{\infty}} \lV\nab^{N(0)}\rr\rV_{L^2}^2\big)+\lV\frac{P'(1+\rr)}{1+\rr}\rV_{L^{\infty}}\lV\rr\rV_{H^{N(0)-1}} \lV u\cdot\nab\rr\rV_{H^{N(0)-1}}\\
&-\sum_{|n|=N(0)}\int_{\R^3} \frac{P'(1+\rr)}{1+\rr}\d^n\rr\cdot(u\cdot\nab\d^n\rr+\sum_{|n_1|\geq 1,n_1+n_2=n}\d^{n_1}u\cdot\nab\d^{n_2}\rr)dx\\
&+\lV\frac{P'(1+\rr)}{1+\rr}\rV_{L^{\infty}}\lV\rr\rV_{H^{N(0)}} \lV \rr\nab\cdot u\rV_{H^{N(0)}}\\
\lesssim & (\lV\nab u\rV_{H^{N(0)}}+\lV\nab \rr\rV_{H^{N(0)-1}})\lV\rr\rV_{H^{N(0)}}\lV\nab\rr\rV_{H^{N(0)-1}}-\frac{1}{2}\sum_{|n|=N(0)}\int_{\R^3} \frac{P'(1+\rr)}{1+\rr}u\cdot\nab|\d^n\rr|^2 dx,
\end{align*}
By integration by parts, Sobolev embedding and (\ref{Main_Prop_Ass1}), we may bound the second term by
\begin{align*}
-\frac{1}{2}\sum_{|n|=N(0)}\int_{\R^3} \frac{P'(1+\rr)}{1+\rr}u\cdot\nab|\d^n\rr|^2 dx=&\frac{1}{2}\sum_{|n|=N(0)}\int_{\R^3} (\nab\frac{P'(1+\rr)}{1+\rr}u+\frac{P'(1+\rr)}{1+\rr}\nab\cdot u)|\d^n\rr|^2 dx\\
\lesssim & (\lV\nab\rr\rV_{H^2}+1)\lV \nab u\rV_{H^2}\lV\nab\rr\rV_{H^{N(0)-1}}^2\lesssim \lV \nab u\rV_{H^2}\lV\nab\rr\rV_{H^{N(0)-1}}^2.
\end{align*}
Thus we have
\begin{align*}
R_1\lesssim (\lV\nab u\rV_{H^{N(0)}}+\lV\nab\rr\rV_{H^{N(0)-1}})^2\lV\rr\rV_{H^{N(0)}},
\end{align*}
which implies
\begin{equation}
\begin{aligned}
\int_0^t R_1(s)ds\lesssim (\lV \nab u\rV_{L^2([0,T]; H^{N(0)})}^2+\lV \nab\rr\rV_{L^2([0,T]; H^{N(0)-1})}^2)\lV \rr\rV_{L^{\infty}([0,T]; H^{N(0)})}
\lesssim  \ep_1^3.
\end{aligned}
\end{equation}

For $R_2$, let $e\in\{(1,0,0),(0,1,0),(0,0,1)\}$, integration by parts gives
\begin{align*}
R_2=&-\sum_{|n|\leq N(0)}\int_{\R^3} \frac{P'(1+\rr)}{1+\rr} \d^n\rr \nab\cdot\d^n u+\d^n u\cdot\frac{P'(1+\rr)}{1+\rr} \nab\d^n\rr\\
&+\d^n u\cdot \sum_{|n_1|\geq 1,n_1+n_2=n}\d^{n_1}(\frac{P'(1+\rr)}{1+\rr})\nab\d^{n_2}\rr +\rr\d^n u\cdot\d^n\big(\frac{P'(1+\rr)}{1+\rr}\nab\rr\big) dx\\
=&-\sum_{|n|\leq N(0)}\int_{\R^3}-\nab(\frac{P'(1+\rr)}{1+\rr})\cdot\d^n u\d^n\rr+\d^n u\cdot \sum_{|n_1|\geq 1,n_1+n_2=n}\d^{n_1}(\frac{P'(1+\rr)}{1+\rr})\nab\d^{n_2}\rr\\
&-\d^{e}(\rr\d^n u)\cdot \d^{n-e}(\frac{P'(1+\rr)}{1+\rr}\nab\rr) dx\\
\lesssim & \lV  \nab\rr\rV_{H^{N(0)-1}}\lV\nab u\rV_{H^{N(0)}} \lV\rr\rV_{H^{N(0)}}.
\end{align*}
Then from (\ref{Main_Prop_Ass1}), we get
\begin{equation}
\int_0^t R_2(s)ds\lesssim \ep_1^3.
\end{equation}

Finally, we estimate the terms $R_3$ and $R_4$. By $\rr$-equation in (\ref{Main_Sys}), Sobolev embedding and (\ref{Main_Prop_Ass1}), one has
\begin{align*}
&\sum_{|n|\leq N(0)}\int_{\R^3} \frac{1}{2}\d_t \rr|\d^n u|^2+(1+\rr)\d^n u\cdot \d^n (u\cdot\nab u) dx+R_4\\
\lesssim & \lV\d_t\rr\rV_{L^2} \lV u\rV_{L^2}\lV u\rV_{L^{\infty}}+\lV\d_t\rr\rV_{L^{\infty}} \lV \nab u\rV_{H^{N(0)}}^2+\lV 1+\rr\rV_{L^{\infty}}\lV u\rV_{H^{N(0)}}(\lV\nab u\rV_{H^{N(0)}}^2+\lV\nab\rr\rV_{H^{N(0)-1}}\lV\nab u\rV_{H^{N(0)}})\\
\lesssim & \lV u\rV_{H^{N(0)}}\lV\nab u\rV_{H^{N(0)}}^2+\lV u\rV_{H^{N(0)}}(\lV\nab u\rV_{H^{N(0)}}^2+\lV\nab\rr\rV_{H^{N(0)-1}}\lV\nab u\rV_{H^{N(0)}})\\
\lesssim &\lV u\rV_{H^{N(0)}}(\lV\nab u\rV_{H^{N(0)}}+\lV\nab\rr\rV_{H^{N(0)-1}})^2,
\end{align*}
For the other terms in $R_3$, by integration by parts, Sobolev embedding and (\ref{Main_Prop_Ass1}), we have
\begin{align*}
&\sum_{|n|\leq N(0)}\int_{\R^3} (1+\rr)\d^n u\cdot\d^n (\mathcal{N}+u\cdot\nab u)dx\\
=& \int_{\R^3} -\frac{1}{2}\nab\cdot u (\cos^2 \phi_2|\nab\phi_1|^2+|\nab\phi_2|^2)+\sum_{j=1}^3 \d_j u (\cos^2\phi_2 \nab\phi_1 \d_j \phi_1+\nab\phi_2\d_j\phi_2) dx\\
&-\sum_{1\leq|n|\leq N(0)}\int_{\R^3} \d^e ((1+\rr)\d^n u)\cdot\d^{n-e}(\mathcal{N}+u\cdot\nab u)dx\\
\lesssim & \lV \nab u\rV_{H^{N(0)}}\lV\nab\phi\rV_{H^{N(0)}}\lV\nab\phi\rV_{L^{\infty}}.
\end{align*}
Hence, by H\"{o}lder, (\ref{Main_Prop_Ass1}) and (\ref{Dec_Phi}) we get
\begin{align*}
\int_0^t R_3(s)+R_4(s)ds\lesssim &\int_0^t\lV u\rV_{H^{N(0)}}(\lV\nab u\rV_{ H^{N(0)}}+\lV\nab\rr\rV_{H^{N(0)-1}})^2
+\lV \nab u\rV_{H^{N(0)}}\lV\nab\phi\rV_{ H^{N(0)}}\lV\nab\phi(s)\rV_{L^{\infty}}ds\\
\lesssim & \ep_1^3+\ep_1^2 \int_0^t \lV\nab u\rV_{ H^{N(0)}}\<s\>^{-1+2\delta} ds\lesssim \ep_1^3.
\end{align*}
This concludes the bound
\begin{equation}       \label{E0<ep02}
E_0(t)\lesssim E_0(0)+\int_0^t \d_s E_0(s)ds\lesssim \ep_0^2.
\end{equation}
Using (\ref{Main_Prop_Ass1}), it is easy to obtain
\begin{equation}             \label{1-(1+rr)gamma}
\begin{aligned}
&\lV \frac{P'(1+\rr)}{1+\rr}-a\gamma\rV_{L^{\infty}}=a\gamma\lV (1+\rr)^{\gamma-2}-1\rV_{L^{\infty}}\\
=&a\gamma\lV\int_0^{\rr}\frac{d}{d s}(1+s)^{\gamma-2}ds\rV_{L^{\infty}}\lesssim \lV (1+\rr)^{\gamma-3}\rV_{L^{\infty}}\lV\rr\rV_{L^{\infty}}\lesssim  \ep_1.
\end{aligned}
\end{equation}
Therefore, combining (\ref{E0<ep02}) and (\ref{1-(1+rr)gamma}) we further obtain
\begin{equation}          \label{E_rr&u-inequ}
\frac{a\gamma}{4}\lV\rr\rV_{H^{N(0)}}^2 +\frac{1}{4}\lV u\rV_{H^{N(0)}}^2 +\int_0^t \frac{\mu_1}{2}\lV\nab u\rV_{H^{N(0)}}^2+(\frac{\mu_1}{2}+\mu_2)\lV\div u\rV_{H^{N(0)}}^2 ds\lesssim E_0(t)\lesssim  \ep_0^2.
\end{equation}

{\bf Step 2. The estimate for dissipation of density.}

Taking $\d^n$ on the $u$-equation of (\ref{Main_Sys}) and multiplying $\d^n\nab \rr$, by integration by parts we have
\begin{equation}            \label{Diss_rr}
\begin{aligned}
&\sum_{|n|\leq N(0)-1}\int_{\R^3}\d^n\nab\rr\cdot\d^n\d_t u dx+\sum_{|n|\leq N(0)-1}\int_{\R^3}\d^n\nab\rr\cdot\d^n(\frac{P'(1+\rr)}{1+\rr}\nab\rr)dx\\
=& \sum_{|n|\leq N(0)-1}\int_{\R^3}\d^n\nab\rr\cdot\d^n\big(\frac{1}{1+\rr}(\frac{\mu_1}{2}\Delta u+(\frac{\mu_1}{2}+\mu_2)\nab\div u)+\mathcal{N}\big)dx.
\end{aligned}
\end{equation}

We use integration by parts and $\rr$-equation in (\ref{Main_Sys}) to rewrite the first term on the left hand side of \eqref{Diss_rr} by
\begin{equation}          \label{Diss_rr_L1}
\begin{aligned}
&\sum_{|n|\leq N(0)-1}\int_{\R^3}\d^n\nab\rr\cdot\d^n\d_t udx\\
=& \sum_{|n|\leq N(0)-1}\big(\frac{d}{dt}\int_{\R^3}\d^n\nab\rr\cdot\d^n udx-\int_{\R^3}\d^n\d_t\nab\rr\cdot\d^n udx\big)\\
=&\sum_{|n|\leq N(0)-1}\big(\frac{d}{dt}\int_{\R^3}\d^n\nab\rr\cdot\d^n udx+\int_{\R^3}\d^n\nab(\nab\cdot u+u\cdot\nab\rr+\rr\nab\cdot u)\cdot\d^n udx\big)\\
=&\sum_{|n|\leq N(0)-1}\big(\frac{d}{dt}\int_{\R^3}\d^n\nab\rr\cdot\d^n udx-\lV\d^n\nab u\rV_{L^2}^2-\int_{\R^3} \d^n(u\cdot\nab\rr+\rr\nab\cdot u)\d^n\nab\cdot u dx\big).
\end{aligned}
\end{equation}
For the second term on the left hand side of (\ref{Diss_rr}), one has
\begin{equation}              \label{Diss_rr_L2}
\begin{aligned}
&\sum_{|n|\leq N(0)-1}\int_{\R^3}\d^n\nab\rr\cdot\d^n(\frac{P'(1+\rr)}{1+\rr}\nab\rr)dx\\
=&\sum_{|n|\leq N(0)-1}\big(\int_{\R^3}\frac{P'(1+\rr)}{1+\rr}|\d^n\nab\rr|^2dx+\int_{\R^3} \d^n\nab\rr \cdot\sum_{|n_1|\geq 1,n_1+n_2=n}\d^{n_1}(\frac{P'(1+\rr)}{1+\rr})\d^{n_2}\nab\rr dx\big).
\end{aligned}
\end{equation}
Then from (\ref{Diss_rr})-(\ref{Diss_rr_L2}), we obtain
\begin{equation}         \label{LHS4.11}
\begin{aligned}
&\sum_{|n|\leq N(0)-1}(\frac{d}{dt}\int_{\R^3}\d^n\nab\rr\cdot\d^n udx-\lV\d^n\nab u\rV_{L^2}^2+\int_{\R^3}\frac{P'(1+\rr)}{1+\rr}|\d^n\nab\rr|^2dx)\\
=&\sum_{|n|\leq N(0)-1}\int_{\R^3} \d^n(u\cdot\nab\rr+\rr\nab\cdot u)\d^n\nab\cdot u dx-\d^n\nab\rr \cdot\sum_{|n_1|\geq 1,n_1+n_2=n}\d^{n_1}(\frac{P'(1+\rr)}{1+\rr})\d^{n_2}\nab\rr\\
&+\d^n\nab\rr\cdot\d^n\big(\frac{1}{1+\rr}(\frac{\mu_1}{2}\Delta u+(\frac{\mu_1}{2}+\mu_2)\nab\div u)+\mathcal{N}\big)dx.
\end{aligned}
\end{equation}
By Sobolev embedding, (\ref{Main_Prop_Ass1}) and the following observation
\begin{equation*}
\sum_{|n|\leq N(0)-1}\int_{\R^3}\d^n \nab\rr\cdot\d^n\Delta udx\leq \frac{1}{4}\sum_{|n|\leq N(0)-1}\int_{\R^3}\frac{P'(1+\rr)}{1+\rr}|\d^n\nab\rr|^2dx+C(\mu_1,\mu_2)\lV \nab u\rV_{H^{N(0)}}^2,
\end{equation*}
we get
\begin{align*}
{\rm LHS}(\ref{LHS4.11})\leq & C\lV\nab u\rV_{H^{N(0)}}^2\lV \rr\rV_{H^{N(0)}}+C\lV\nab\rr\rV_{H^{N(0)-1}}^2\lV\rr\rV_{H^{N(0)}}+\frac{1}{4}\sum_{|n|\leq N(0)-1}\int_{\R^3}\frac{P'(1+\rr)}{1+\rr}|\d^n\nab\rr|^2dx\\
&+C(\mu_1,\mu_2)\lV \nab u\rV_{H^{N(0)}}^2+\lV\nab\rr\rV_{H^{N(0)-1}}(\lV\nab u\rV_{H^{N(0)}}^2+C\lV \nab\phi\rV_{H^{N(0)}}\lV\nab\phi\rV_{L^{\infty}}).
\end{align*}
Noticing that
\begin{equation*}
\d^n\nab\rr\cdot\d^n u=\frac{1}{2}(|\d^n(u+\nab\rr)|^2-|\d^n\nab\rr|^2-|\d^n u|^2),
\end{equation*}
it follows that
\begin{align*}
&\sum_{|n|\leq N(0)-1}\frac{1}{2}\frac{d}{dt}(\lV\d^n(u+\nab\rr)\rV_{L^2}^2-\lV\d^n\nab\rr\rV_{L^2}^2-\lV\d^n u\rV_{L^2}^2)\\
&+\sum_{|n|\leq N(0)-1}\frac{3}{4}\int_{\R^3}\frac{P'(1+\rr)}{1+\rr}|\d^n\nab\rr|^2dx-C(\mu_1,\mu_2)\lV\nab u\rV_{H^{N(0)}}^2\\
\lesssim & \lV\nab u\rV_{H^{N(0)}}^2\lV \rr\rV_{H^{N(0)}}+C\lV\nab\rr\rV_{H^{N(0)-1}}^2\lV\rr\rV_{H^{N(0)}}+\lV\nab\rr\rV_{H^{N(0)-1}}(\lV\nab u\rV_{H^{N(0)}}^2+C\lV \nab\phi\rV_{H^{N(0)}}\lV\nab\phi\rV_{L^{\infty}}).
\end{align*}
Hence, by bootstrap assumption (\ref{Main_Prop_Ass1}), H\"{o}lder and (\ref{Dec_Phi}), we have
\begin{equation}              \label{Diss_rr_ineq}
\begin{aligned}
&\frac{1}{2}(\lV u+\nab\rr\rV_{H^{N(0)-1}}^2-\lV\nab\rr\rV_{H^{N(0)-1}}^2-\lV u\rV_{H^{N(0)-1}}^2)\\
&+\frac{3}{4}\sum_{l=1}^{N(0)}\int_0^t\int_{\R^3} \frac{P'(1+\rr)}{1+\rr}|\nab^l\rr|^2dxds-C(\mu_1,\mu_2)\int_0^t \lV\nab u\rV_{H^{N(0)}}^2 ds\lesssim \ep_0^2.
\end{aligned}
\end{equation}

{\bf Step 3. We prove the energy estimate (\ref{E-rr&u})}

In order to prove the bound (\ref{E-rr&u}), choosing
\begin{equation*}
c_1:=\min\{\frac{1}{4},\frac{\mu_1}{4C(\mu_1,\mu_2)}\}
\end{equation*}
and multiplying (\ref{Diss_rr_ineq}) by $c_1$, then adding one to the inequality (\ref{E_rr&u-inequ}), we have
\begin{align*}
&\frac{c_1}{2}\lV u+\nab\rr\rV_{H^{N(0)-1}}^2+\frac{a\gamma}{8}\lV\rr\rV_{H^{N(0)}}^2 +\frac{1}{8}\lV u\rV_{H^{N(0)}}^2 +\frac{3c_1}{4}\sum_{l=1}^{N(0)}\int_0^t\int_{\R^3} \frac{P'(1+\rr)}{1+\rr}|\nab^l\rr|^2dxds\\
&+\int_0^t \frac{\mu_1}{4}\lV\nab u\rV_{H^{N(0)}}^2+(\frac{\mu_1}{2}+\mu_2)\lV\div u\rV_{H^{N(0)}}^2 ds\lesssim  \ep_0^2,
\end{align*}
which implies the bound (\ref{E-rr&u}).

\end{proof}

\subsection{The bound on $\phi$}
\begin{proposition}            \label{Prop_Ene_phi}
	With the notation and hypothesis in Proposition $\ref{Main_Prop}$, for any $t\in[0,T]$, we have
	\begin{equation*}
	\lV \d_t \phi\rV_{H^{N(0)}}^2+\lV \nab\phi\rV_{H^{N(0)}}^2\lesssim \epsilon_0^2.
	\end{equation*}
\end{proposition}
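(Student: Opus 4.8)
The plan is to recast the $\phi$-equation of \eqref{Main_Sys} as a quasilinear wave equation and run a commuted, coercive energy estimate. Writing $D=\partial_t+u\cdot\nabla$ for the material derivative and moving every first- and second-order nonlinear term of $F_3$ except $Err3$ to the left-hand side, the third equation of \eqref{Main_Sys} becomes
\[
(1+\rr)\,D^2\phi-\Delta\phi=(1+\rr)\,Err3,\qquad\text{i.e.}\qquad g^{\mu\nu}\partial_\mu\partial_\nu\phi=(1+\rr)\,Err3,
\]
with $g^{00}=1+\rr$, $g^{0i}=(1+\rr)u_i$, $g^{ij}=(1+\rr)u_iu_j-\delta_{ij}$; by \eqref{Main_Prop_Ass1} and \eqref{rr,v,w_inf}, $\|\rr\|_{L^\infty}+\|u\|_{L^\infty}\lesssim\ep_1$, so $g^{\mu\nu}$ is a small perturbation of the constant-coefficient (Minkowski) symbol $m^{\mu\nu}$. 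I would then work with the energy
\[
E^\phi(t):=\sum_{|n|\le N(0)}\int_{\R^3}\Big[\tfrac12(1+\rr)\,|\partial_t\partial^n\phi|^2+\tfrac12\big(\delta_{ij}-(1+\rr)u_iu_j\big)\,\partial_i\partial^n\phi\,\partial_j\partial^n\phi\Big]\,dx,
\]
which, for $\ep_1$ small, is comparable to $\|\partial_t\phi\|_{H^{N(0)}}^2+\|\nabla\phi\|_{H^{N(0)}}^2$ and satisfies $E^\phi(0)\lesssim\ep_0^2$ by \eqref{MainAss_ini}.

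\textbf{Energy identity.} Applying $\partial^n$ for $|n|\le N(0)$ gives $g^{\mu\nu}\partial_\mu\partial_\nu\partial^n\phi=\partial^n[(1+\rr)Err3]-[\partial^n,g^{\mu\nu}\partial_\mu\partial_\nu]\phi$; multiplying by $\partial_t\partial^n\phi$, integrating by parts and summing over $n$ yields a relation of the form
\[
\frac{d}{dt}E^\phi(t)=\sum_{|n|\le N(0)}\int_{\R^3}\Big[-\tfrac12\partial_t g^{\mu\nu}\,\partial_\mu\partial^n\phi\,\partial_\nu\partial^n\phi+\partial_\mu g^{\mu\nu}\,\partial_\nu\partial^n\phi\,\partial_t\partial^n\phi+\big(\partial^n[(1+\rr)Err3]-[\partial^n,g^{\mu\nu}\partial_\mu\partial_\nu]\phi\big)\,\partial_t\partial^n\phi\Big]dx.
\]
Two structural points are essential: by the continuity equation $\partial_\mu g^{\mu0}=\partial_t(1+\rr)+\nabla\cdot((1+\rr)u)=0$, so only the $\nu=j$ part of the second term survives and it equals $(1+\rr)\dot u_j$, a coefficient carrying \emph{no} extra derivatives on $\partial_t u$; and by \eqref{dtrr-inf} (together with \eqref{rr,v,w_inf}, \eqref{drr,v,w_inf}) one has $\|(1+\rr)\dot u\|_{L^\infty}+\|\partial_t g^{\mu\nu}\|_{L^\infty}\lesssim\ep_1\langle t\rangle^{-1-l(0)/4}$. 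Hence the first two terms are bounded by $\ep_1\langle t\rangle^{-1-l(0)/4}E^\phi(t)$, which is integrable in time.

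\textbf{Forcing and commutator terms.} Since $\tan\phi_2,\ \sin2\phi_2=O(\phi_2)$ near the equilibrium, $Err3$ is $O(\phi)$ times a quadratic form in $(\partial_t\phi,\nabla\phi)$; distributing the derivatives and using $\|\phi\|_{L^\infty}\lesssim\ep_1\langle t\rangle^{-1/2+3\delta}$ from \eqref{Dec_Phi/nab}, $\|\partial^2\phi\|_{W^{h,\infty}}\lesssim\ep_1\langle t\rangle^{-1+2\delta}$ from \eqref{Dec_Phi}, and $\|(\partial_t\phi,\nabla\phi)\|_{H^{N(0)}}\lesssim\ep_1$, one gets $\|(1+\rr)Err3\|_{H^{N(0)}}\lesssim\ep_1^3\langle t\rangle^{-3/2+O(\delta)}$, which is integrable when paired with $\partial_t\partial^n\phi$. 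The commutator $[\partial^n,g^{\mu\nu}\partial_\mu\partial_\nu]\phi$ is the heart of the matter: each of its terms carries at least one derivative on $g^{\mu\nu}-m^{\mu\nu}$, a smooth function of $(\rr,u)$ vanishing at the origin, so every factor $\partial^m(g^{\mu\nu}-m^{\mu\nu})$ with $1\le|m|\le N(0)$ may be written as $\partial^{m-e}\nabla(g^{\mu\nu}-m^{\mu\nu})$ with $\nabla(g^{\mu\nu}-m^{\mu\nu})=O(\nabla\rr,\nabla u)$. When this factor carries many derivatives it is placed in $L^2$, where by \eqref{E-rr&u} it lies in $L^2_t H^{N(0)-1}$ (the $\rr$-part) resp.\ $L^2_t H^{N(0)}$ (the $u$-part) with norm $\lesssim\ep_0$, and is paired against the complementary factor $\partial^{n-m}\partial^2\phi$, which then has few derivatives and decays like $\langle s\rangle^{-1+2\delta}$ in $L^\infty$ (a function of $s$ in $L^2_t$), so Cauchy–Schwarz in $s$ closes the contribution; when the $(\rr,u)$-factor carries few derivatives one uses instead its pointwise decay from \eqref{drr,v,w_inf} and keeps the high-derivative $\phi$-factor in $L^2$. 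The extra product factors $\rr,u$ inside $g^{0i},g^{ij}$ are always placed in $L^\infty$ with their decay. Summing everything, $\int_0^t|\tfrac{d}{ds}E^\phi(s)|\,ds\lesssim\ep_1^3+\ep_1\ep_0\lesssim\ep_0^2$, whence $E^\phi(t)\lesssim E^\phi(0)+\ep_0^2\lesssim\ep_0^2$ and the proposition follows.

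\textbf{Main difficulty.} The only genuine obstacle is the top-order commutator term $\partial^n(g^{\mu\nu}-m^{\mu\nu})\cdot\partial^2\phi$: bounding it naively by $\|(\rr,u)\|_{\dot H^{N(0)}}\|\partial^2\phi\|_{L^\infty}\lesssim\ep_1^2\langle t\rangle^{-1+2\delta}$ is \emph{not} time-integrable, and the same borderline behaviour is what obstructs the elementary energy (based on $\partial_t^2\phi-\tfrac1{1+\rr}\Delta\phi=F_3$) through the terms $\partial_t u\cdot\nabla\phi$ and $u\cdot\nabla\partial_t\phi$. The resolution — and the reason for the quasilinear recasting, which confines $\partial_t u$ to harmless zeroth-order positions via the continuity equation — is that the commutator always spends at least one derivative on $\rr$ or $u$, so one may invoke $\nabla\rr,\nabla u$ at high Sobolev regularity; these are square-integrable in time by the dissipation bound \eqref{E-rr&u}, and Cauchy–Schwarz in $s$ then upgrades the borderline $\langle t\rangle^{-1+2\delta}$ to an integrable contribution.
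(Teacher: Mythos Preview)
Your quasilinear recasting is elegant but contains a computational error that hides exactly the difficulty you are trying to avoid. Expanding $(1+\rr)D^2\phi$ gives
\[
(1+\rr)D^2\phi=(1+\rr)\big[\partial_t^2\phi+2u_i\partial_i\partial_t\phi+u_iu_j\partial_i\partial_j\phi\big]+(1+\rr)\dot u\cdot\nabla\phi,
\]
so the correct identity is
\[
g^{\mu\nu}\partial_\mu\partial_\nu\phi=(1+\rr)\,Err3-(1+\rr)\,\dot u\cdot\nabla\phi,
\]
not $g^{\mu\nu}\partial_\mu\partial_\nu\phi=(1+\rr)Err3$. The extra first-order forcing term $(1+\rr)\dot u\cdot\nabla\phi$ is precisely the $\partial_t u\cdot\nabla\phi$ obstruction you set out to eliminate: when $\partial^n$ with $|n|=N(0)$ falls entirely on $\dot u$, you must control $\|\partial^n\dot u\|_{L^2}$. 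Using the momentum equation $(1+\rr)\dot u=-P'(1+\rr)\nabla\rr+\tfrac{\mu_1}{2}\Delta u+\cdots$ this requires $\|\nabla u\|_{\dot H^{N(0)+1}}$ (or $\|\nabla\rr\|_{\dot H^{N(0)}}$), one derivative beyond the dissipation bound \eqref{E-rr&u}. Your commutator argument does not apply here because this is a forcing term, not a commutator, and it carries \emph{no} spatial derivative on $(\rr,u)$ to trade against dissipation. So the claim that the recasting ``confines $\partial_t u$ to harmless zeroth-order positions'' is false: the continuity equation kills $\partial_\mu g^{\mu 0}$, but $\dot u$ reappears in the first-order remainder.

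The paper faces the same top-order term $\int(1+\rr)\partial^n\partial_s\phi\cdot\partial^n(\partial_s u\cdot\nabla\phi)$ and resolves it by integration by parts \emph{in time} (its Step~1): this produces a boundary term and a bulk term with $\partial_s^2\partial^n\phi$, which is then replaced via the $\phi$-equation by $\partial^n\big(\tfrac{\Delta\phi}{1+\rr}\big)$ plus lower order; a further spatial integration by parts on the resulting $\partial^n\Delta\phi$ against $\partial^n(u\cdot\nabla\phi)$ exposes the symmetric structure $u\cdot\nabla|\partial^n\nabla\phi|^2$ and closes the estimate using only $\|\nabla u\|_{L^2_tH^{N(0)}}$ and the pointwise decay \eqref{drr,v,w_inf}, \eqref{Dec_Phi}. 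Your commutator analysis and use of the continuity equation are correct and do simplify the remaining terms, but you still need this time-integration-by-parts step for the forcing piece $-(1+\rr)\dot u\cdot\nabla\phi$; without it the argument does not close.
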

\begin{proof}
    By (\ref{Main_Sys}), integration by parts, Sobolev embedding, (\ref{dtrr-L2}), (\ref{dtu-L2}) and (\ref{Dec_Phi}), it is easy to obtain the basic energy estimate
    \begin{align*}
    &\frac{1}{2}\frac{d}{dt}\int_{\R^3}(1+\rr)|\d_t\phi|^2+|\nab\phi|^2 dx\\
    =&\int_{\R^3} \frac{1}{2}\d_t\rr|\d_t\phi|^2+(1+\rr)\d_t\phi(-\d_t u\cdot\nab\phi-u\cdot\nab\d_t\phi-u\cdot\nab(u\cdot\nab\phi)+Err)dx\\
    \lesssim & \lV      \d_t\rr\rV_{L^2}\lV\d_t\phi\rV_{L^2}\lV\d_t\phi\rV_{L^{\infty}}+\lV\d_t\phi\rV_{L^2}\lV\d_t u\rV_{L^2}\lV\nab\phi\rV_{L^{\infty}}+\frac{1}{2}\int_{\R^3} ((1+\rr)\nab\cdot u+u\cdot\nab\rr)|\d_t\phi|^2dx\\
    &+\lV\d_t\phi\rV_{L^2}\lV\nab u\rV_{H^2}^2\lV\nab\phi\rV_{H^1}+\lV\d_t\phi\rV_{L^2}\lV\phi\rV_{L^{\infty}}\lV\nab\phi\rV_{L^2}\lV\nab\phi\rV_{L^{\infty}}\\
    \lesssim & \ep_1^2 \lV(\nab\rr,\nab u)\rV_{H^{N(0)-1}}\<t\>^{-1+2\delta}+\ep_1^4\<t\>^{-2+7\delta}+\ep_1^2(\lV\nab u\rV_{L^2}+\lV\nab\rr\rV_{L^2})\<t\>^{-1+2\delta}
    +\ep_1^2\lV\nab u\rV_{H^2}^2+\ep_1^4\<t\>^{-5/4},
    \end{align*}
    then from integration in time, we obtain
    \begin{equation}      \label{Ephi_n=0}
    \frac{1}{2}\int_0^t\int_{\R^3}(1+\rr)|\d_t\phi|^2+|\nab\phi|^2 dxds\lesssim \ep_0^2+\ep_1^3\lesssim \ep_0^2.
    \end{equation}

    Now it suffices to consider the following higher order energy
    \begin{equation*}
    E_1(t):=\frac{1}{2}\sum_{|n|=N(0)}\int_{\R^3}(1+\rr)|\d^n\d_t\phi|^2+|\d^n\nab\phi|^2 dx.
    \end{equation*}
    From the system (\ref{Main_Sys}), we have
    \begin{align*}
    \frac{d}{dt}E_1(t)=&\sum_{|n|=N(0)}\int_{\R^3}\frac{1}{2} \d_t\rr|\d^n\d_t\phi|^2 +(1+\rr)\d^n \d_t\phi \sum_{|n_1|\geq 1,n_1+n_2=n}\d^{n_1}(\frac{1}{1+\rr})\d^{n_2}\Delta\phi dx\\
    &+\sum_{|n|=N(0)}\int_{\R^3} (1+\rr)\d^n \d_t\phi \cdot\d^n (-\d_t u\cdot\nab\phi-2u\cdot\nab\d_t\phi-u\cdot\nab(u\cdot\nab\phi)+Err3)dx.
    \end{align*}
    By (\ref{dtrr-inf}) (\ref{drr,v,w_inf}), (\ref{Dec_Phi}) and (\ref{Main_Prop_Ass1}),  the first term can be estimated as
    \begin{equation}    \label{E1-two}
    \begin{aligned}
    &\int_0^t\sum_{|n|=N(0)}\int_{\R^3}\frac{1}{2} \d_s\rr|\d^n\d_s\phi|^2 +(1+\rr)\d^n \d_s\phi \sum_{|n_1|\geq 1,n_1+n_2=n}\d^{n_1}(\frac{1}{1+\rr})\d^{n_2}\Delta\phi dxds\\
    \lesssim & \int_0^t \lV\d_s\rr\rV_{L^{\infty}}\lV\Phi\rV_{H^{N(0)}}^2+\lV\Phi\rV_{H^{N(0)}}(\lV\<\nab\>^{N(0)/2}|\nab|\rr\rV_{L^{\infty}}\lV\Phi\rV_{H^{N(0)}}\\
    &+\lV|\nab|\rr\rV_{H^{N(0)-1}}\lV\<\nab\>^{N(0)/2}\nab\phi\rV_{L^{\infty}})ds\\
    \lesssim & \int_0^t \ep_1^3 \<s\>^{-1-l(0)+3\delta}+\ep_1(\ep_1^2\<s\>^{-1-l(0)+3\delta}+\ep_1\<s\>^{-1+2\delta}\lV|\nab|\rr\rV_{H^{N(0)-1}})ds\\
    \lesssim & \ep_1^3.
    \end{aligned}
    \end{equation}
    Next we consider the second term.

    {\bf Step 1. We prove the bound}
    \begin{equation}    \label{E_I1}
    I_1:=\int_0^t\int_{\R^3}(1+\rr) \d^n \d_s\phi\cdot \d^n(\d_s u\cdot\nab \phi)dxds\lesssim \ep_1^3,\ \ {\rm for}\ |n|=N(0).
    \end{equation}
    Integration by parts in time gives
    \begin{align*}
    I_1= &\int_{\R^3}(1+\rr)\d^n \d_s\phi\cdot \d^n(u\cdot\nab\phi)dx\big|_0^t-\int_0^t\int_{\R^3}\d_s\rr\ \d^n \d_s\phi\cdot \d^n( u\cdot\nab \phi)dxds\\
    &-\int_0^t\int_{\R^3} (1+\rr)\d^n \d_s\phi\cdot \d^n( u\cdot\nab \d_s\phi)dxds-\int_0^t\int_{\R^3}(1+\rr) \d^n \d_s^2\phi\cdot \d^n( u\cdot\nab \phi)dxds\\
    =&I_{11}-I_{12}-I_{13}-I_{14}.
    \end{align*}
    By (\ref{Main_Prop_Ass1}) and (\ref{dtrr-L2}), the first two terms $I_{11}$ and $I_{12}$ can be estimated as
    \begin{align*}
    |I_{11}-I_{12}|\lesssim \lV\Phi\rV_{H^{N(0)}}^2\lV u\rV_{H^{N(0)}}+\int_0^t \lV\d_s\rr\rV_{L^2}\lV\Phi\rV_{H^{N(0)}}^2\lV\nab u\rV_{H^{N(0)}}ds
    \lesssim \ep_1^3+\int_0^t \lV\Phi\rV_{H^{N(0)}}^2\lV\nab u\rV_{H^{N(0)}}^2ds\lesssim \ep_1^3.
    \end{align*}
    For the third term $I_{13}$, from integration by parts and H\"{o}lder, we have
    \begin{align*}
    I_{13}=&\int_0^t\int_{\R^3}\frac{1}{2}(1+\rr) u\cdot\nab|\d^n\d_s\phi|^2+(1+\rr)\d^n \d_s\phi\cdot \sum_{n_1+n_2=n,n_2<n}(\d^{n_1} u\cdot\nab \d^{n_2}\d_s\phi)dxds\\
    \lesssim & \int_0^t\int_{\R^3} -\frac{1}{2}(u\cdot\nab\rr+(1+\rr)\nab\cdot u)|\d^n\d_s\phi|^2 dxds
    \\
    &+\int_0^t\lV\Phi\rV_{H^{N(0)}}(\lV\<\nab\>^{|n|/2}\nab u\rV_{L^{\infty}}\lV\Phi\rV_{H^{N(0)}}+\lV\nab u\rV_{H^{N(0)}}\lV\<\nab\>^{|n|/2}\Phi\rV_{L^{\infty}})ds,
    \end{align*}
    then by (\ref{drr,v,w_inf}), (\ref{Dec_Phi}) and (\ref{Main_Prop_Ass1}) we may bound this by
    \begin{align*}
    I_{13}\lesssim &\int_0^t \lV(\nab\rr,\nab u)\rV_{H^2}^2\lV\Phi\rV_{H^{N(0)}}^2ds+\ep_1\int_0^t \big(\ep_1^2\<s\>^{-1-l(0)+3\delta}+\lV\nab u\rV_{H^{N(0)}}\<s\>^{-1+2\delta}\big)ds\\
    \lesssim & \ep_1^4+\ep_1^3
    \lesssim \ep_1^3 .
    \end{align*}
    For the last term $I_{14}$, it follows from (\ref{Main_Sys}) that
    \begin{align*}
    I_{14}= & \int_0^t\int_{\R^3} (1+\rr)\d^n \Big(\frac{\Delta\phi}{1+\rr}\Big)\cdot \d^n(u\cdot\nab\phi) dxds-\int_0^t\int_{\R^3}  (1+\rr)\d^n \d_s(u\cdot\nab\phi)\cdot \d^n(u\cdot\nab\phi) dxds\\
    &-\int_0^t\int_{\R^3}(1+\rr) \d^n (u\cdot\nab\d_s\phi)\cdot \d^n(u\cdot\nab\phi) dxds+\int_0^t\int_{\R^3} (1+\rr)\d^n Err3\cdot \d^n(u\cdot\nab\phi) dxds\\
    =&I_{141}+I_{142}+I_{143}+I_{144}.
    \end{align*}
    By integration by parts, we may estimate $I_{141}$ by
    \begin{align*}
    I_{141}=&\int_0^t\int_{\R^3} -\d^n \d_i\phi\cdot \d^n(u\cdot\nab\d_i\phi+\d_i u\cdot\nab\phi)
    +(1+\rr)\sum_{|n_1|\geq 1,n_1+n_2=n}\d^{n_1}\Big(\frac{1}{1+\rr}\Big)\d^{n_2}\Delta\phi\cdot \d^n(u\cdot\nab\phi) dxds\\
    \lesssim  & \int_0^t\int_{\R^3}\frac{1}{2}\nab\cdot u|\d^n\nab\phi|^2  -\sum_{n_1+n_2=n,n_2<n}\d^n\d_i\phi\cdot(\d^{n_1}u\cdot\nab\d^{n_2}\d_i\phi)-\d^n\d_i\phi\cdot\d^n(\d_i u\cdot\nab\phi)dxds\\
    &+\int_0^t \lV \nab\rr\rV_{H^{N(0)-1}}\lV \nab u\rV_{H^{N(0)}}\lV \Phi\rV_{H^{N(0)}}^2 ds,
    \end{align*}
  which,  from (\ref{Main_Prop_Ass1}), (\ref{drr,v,w_inf}) and (\ref{Dec_Phi}), can be estimated as
    \begin{align*}
    I_{141}\lesssim & \int_0^t \lV\nab u\rV_{L^{\infty}}\lV \Phi\rV_{H^{N(0)}}^2+ \lV\d^n\d_i\phi\rV_2(\lV\<\nab\>^{|n|/2}\nab u\rV_{L^{\infty}}\lV\Phi\rV_{H^{N(0)}}+\lV\nab u\rV_{H^{N(0)}}\lV\<\nab\>^{|n|/2}\Phi\rV_{L^{\infty}})ds+\ep_1^4\\
    \lesssim &\int_0^t \ep_1^3\<s\>^{-1-l(0)+3\delta}+\ep_1^2\<t\>^{-1+2\delta}\lV \nab u\rV_{H^{N(0)}}ds+ \ep_1^4\lesssim \ep_1^3.
    \end{align*}

    Similarly, using (\ref{Main_Prop_Ass1}), (\ref{drr,v,w_inf}) and (\ref{Dec_Phi}) we obtain
    \begin{align*}
    I_{142}=&-\frac{1}{2} \int_{\R^3}(1+\rr)|\d^n(u\cdot\nab\phi)|^2dx\Big|_0^t+\int_0^t\int_{\R^3}\d_s\rr|\d^n(u\cdot\nab\phi)|^2 dxds\\
    \lesssim & \ep_1^4+\int_0^t \lV\d_s\rr\rV_{L^2}\lV \nab u\rV_{H^{N(0)}}^2\lV\Phi\rV_{H^{N(0)}}^2ds\lesssim \ep_1^4.
    \end{align*}
    and
    \begin{align*}
    I_{144}\lesssim &\int_0^t\int_{\R^3}\frac{1}{2}((1+\rr)\nab\cdot u+u\cdot\nab\rr)|\d^n(u\cdot\nab\phi)|^2dxds\\
    &-\int_0^t\int_{\R^3}(1+\rr) \sum_{n_2<n}\d^{n_1}u\cdot\nab\d^{n_2}(u\cdot\nab\phi) \cdot\d^n(u\cdot\nab\phi)dxds\\
    &+\int_0^t \lV\Phi\rV_{H^{N(0)}}^3(1+\lV u\rV_{H^{N(0)}}^2)(\lV\<\nab\>^{|n|/2}\nab u\rV_{L^{\infty}}\lV\Phi\rV_{H^{N(0)}}+\lV\nab u\rV_{H^{N(0)}}\lV\<\nab\>^{|n|/2}\Phi\rV_{L^{\infty}})ds\\
    \lesssim & \int_0^t  (\lV  u\rV_{H^{N(0)}}+\lV \rr\rV_{H^{N(0)}})\lV \nab u\rV_{H^{N(0)}}^2\lV\Phi\rV_{H^{N(0)}}^2ds+\int_0^t \ep_1^3(\ep_1^2\<s\>^{-1-l(0)+3\delta}+\ep_1\<s\>^{-1+2\delta}\lV\nab u\rV_{H^{N(0)}})ds\\
    \lesssim & \ep_1^5.
    \end{align*}
    Finally, let $e\in\{(1,0,0),(0,1,0),(0,0,1)\}$, by integration by parts, we rewrite $I_{143}$ as
    \begin{align*}
    I_{143}=& -\int_0^t\int_{\R^3}(1+\rr)[ \d^n(u\cdot\nab \d_s\phi)\cdot (u\cdot\nab\d^n\phi)+\sum_{|n_2|<|n|}\d^n(u\cdot\nab \d_s\phi)\cdot (\d^{n_1}u\cdot\nab\d^{n_2}\phi)]dxds\\
    =&-\int_0^t\int_{\R^3} \Big[\frac{1}{2}\d_s((1+\rr)|u\cdot\nab\d^n\phi|^2)-\frac{1}{2}\d_s\rr|u\cdot\nab\d^n\phi|^2\\
    &-(1+\rr)(\d_s u\cdot\nab\d^n\phi) \cdot (u\cdot\nab\d^n\phi)+(1+\rr)\sum_{n_1+n_2=n,|n_2|<|n|}(\d^{n_1}u\cdot\nab\d^{n_2}\d_s\phi)\cdot(u\cdot\nab\d^n\phi)\\
    &-(1+\rr)\sum_{n_1+n_2=n,|n_2|<|n|}\d^{n-e}(u\cdot\nab\d_s\phi)\cdot\d^e (\d^{n_1}u\cdot\nab\d^{n_2}\phi)\\
    &-\d^e \rr\sum_{n_1+n_2=n,|n_2|<|n|}\d^{n-e}(u\cdot\nab\d_s\phi)\cdot (\d^{n_1}u\cdot\nab\d^{n_2}\phi)\Big]dxds,
    \end{align*}
    By (\ref{Main_Prop_Ass1}), (\ref{dtrr-L2}) and (\ref{dtu-L2}), we have
    \begin{align*}
    I_{143}\lesssim & \sup_{s\in[0,t]}\lV u(s)\rV_{H^{N(0)}}^2\lV \Phi(s)\rV_{H^{N(0)}}^2+\int_0^t (\lV\d_s\rr\rV_{L^2}+\lV\d\rr\rV_{L^2})\lV\nab u\rV_{H^{N(0)}}^2\lV\Phi\rV_{H^{N(0)}} ds\\
    &+\int_0^t\big(\lV\d_s u\rV_{L^{\infty}}\lV \nab u\rV_{H^{N(0)}}\lV\Phi\rV_{H^{N(0)}}^2+\lV \nab u\rV_{H^{N(0)}}^2\lV \Phi\rV_{H^{N(0)}}^2\big)ds\\
    \lesssim &\ep_1^4.
    \end{align*}
    Hence, the desired bound (\ref{E_I1}) follows.

    {\bf Step 2. We prove the bound}
    \begin{equation} \label{E0_I3}
    I_2:=\int_0^t \int_{\R^3} (1+\rr)\d^n\d_s \phi\cdot \d^n (u\cdot\nab\d_s\phi)dxds\lesssim \ep_1^3,\ \ {\rm for }\ |n|=N(0).
    \end{equation}

    Since $I_2$ can be rewritten as
    \begin{equation*}
    I_2=\int_0^t \int_{\R^3} -\frac{1}{2}\big(u\cdot\nab\rr+(1+\rr)\nab\cdot u\big)(\d^n\d_s\phi)^2+(1+\rr)\sum_{n_1+n_2=n,|n_2|<|n|}\d^n\d_s\phi\cdot(\d^{n_1}u\cdot\nab\d^{n_2}\d_s\phi)dxds,
    \end{equation*}
    it follow from (\ref{drr,v,w_inf}) and (\ref{Dec_Phi}) that
    \begin{align*}
    I_2\lesssim  &\int_0^t \lV \nab(\rr,u)\rV_{L^{\infty}}\lV\Phi\rV_{H^{N(0)}}^2+\lV\d^n\Phi\rV_{L^2} (\lV\<\nab\>^{|n|/2}\nab u\rV_{L^{\infty}}\lV\Phi\rV_{H^{N(0)}}+\lV\nab u\rV_{H^{N(0)}}\lV\<\nab\>^{|n|/2}\Phi\rV_{L^{\infty}})ds\\
    \lesssim & \int_0^t \ep_1^3\<s\>^{-1-l(0)+3\delta}+\ep_1^2\<s\>^{-1+2\delta}\lV \nab u\rV_{H^{N(0)}}ds
    \lesssim \ep_1^3.
    \end{align*}

    {\bf Step 3. We prove the bound}
    \begin{equation}     \label{E0_I4}
    I_3:=\int_0^t\int_{\R^3}(1+\rr)\d^n\d_s\phi\cdot\d^n(u\cdot\nab(u\cdot\nab\phi))dxds\lesssim \ep_1^4,\ \ {\rm for}\ |n|=N(0).
    \end{equation}
    By integration by parts, we rewrite this by
    \begin{align*}
    I_3=&\int_0^t\int_{\R^3}(1+\rr) \d^n\d_s\phi\cdot(u\cdot\nab(u\cdot\nab\d^n\phi))dxds\\
    &+\int_0^t\int_{\R^3} \sum_{n_1+n_2+n_3=n,|n_3|<|n|}(1+\rr)\d^n\d_s\phi\cdot(\d^{n_1}u\cdot\nab(\d^{n_2}u\cdot\nab\d^{n_3}\phi))dxds\\
    =:&I_{31}+I_{32}.
    \end{align*}
    The second term $I_{32}$ can be estimated directly using H\"{o}lder and (\ref{Main_Prop_Ass1}). By integration by parts in times, the first term $I_{31}$ can be further rewritten as
    \begin{align*}
    I_{31}=&-\int_0^t\int_{\R^3} (1+\rr)(u\cdot\nab\d^n\d_s\phi) \cdot (u\cdot\nab\d^n\phi)+\big(u\cdot\nab\rr +(1+\rr)\nab\cdot u\big)\d^n\d_s\phi \cdot (u\cdot\nab\d^n\phi) dxds\\
    =& -\int_0^t \int_{\R^3} \d_s\big( \frac{1}{2}(1+\rr) (u\cdot\nab\d^n\phi)^2\big)-\frac{1}{2}\d_s\rr|u\cdot\nab\d^n\phi|^2 dx -(1+\rr)(\d_s u\cdot\nab\d^n\phi)\cdot (u\cdot\nab\d^n\phi)\\
    &+\big(u\cdot\nab\rr +(1+\rr)\nab\cdot u\big)\d^n\d_s\phi \cdot (u\cdot\nab\d^n\phi)dxds.
    \end{align*}
    From (\ref{dtrr-L2}), (\ref{dtu-L2}) and (\ref{drr,v,w_inf}) we may bound this by
    \begin{align*}
    I_{31}\lesssim &\sup_{s\in[0,t]} \lV u(s)\rV_{H^{N(0)}}^2\lV\Phi(s)\rV_{H^{N(0)}}^2+\int_0^t \lV\d_s\rr\rV_{L^2}\lV\nab u\rV_{H^{N(0)}}^2\lV\Phi\rV_{H^{N(0)}}^2+ \lV\d_s u\rV_{\infty}\lV\nab u\rV_{H^{N(0)}}\lV\Phi\rV_{H^{N(0)}}^2ds\\
    &+ \int_0^t \lV\nab(\rr,u)\rV_{L^{\infty}}\lV u\rV_{L^{\infty}}\lV \Phi\rV_{H^{N(0)}}^2 ds\\
    \lesssim &\ep_1^4+\int_0^t \ep_1^3 \lV\nab u\rV_{H^{N(0)}}^2+\ep_1^3 \<s\>^{-3/4}\lV\nab u\rV_{H^{N(0)}}+\ep_1^2\lV\nab(\rr,u)\rV_{H^{N(0)-1}}^2 ds\lesssim \ep_1^4 .
    \end{align*}
    Thus the bound (\ref{E0_I4}) is obtained.

    {\bf Step 4. We prove the bound}
    \begin{equation}      \label{E0_I5}
    I_4:=\int_0^t\int_{\R^3}(1+\rr)\d^n\d_s\phi \cdot \d^n Err3(s)dxds\lesssim \ep_1^4,\ \ {\rm for}\ |n|=N(0).
    \end{equation}
    In fact, this is a consequence of
    \begin{align*}
    \lV\d^n Err3(s)\rV_{L^2}\lesssim \lV\Phi\rV_{H^{N(0)}} \lV\Phi\rV_{L^{\infty}}\lV |\nab|^{-1}\Phi\rV_{L^{\infty}}\lesssim \ep_1^3\<s\>^{-5/4},
    \end{align*}
    which is given by  (\ref{Dec_Phi/nab}).

    Hence, from (\ref{E1-two})-(\ref{E0_I5}) and the assumption (\ref{MainAss_ini}), we have
    \begin{equation*}
    E_1(t)=E_1(0)+\int_0^t \d_s E_1(s)ds \lesssim \ep_0^2+\ep_1^3\lesssim \ep_0^2.
    \end{equation*}
    Combining this and (\ref{Ephi_n=0}), we obtain
    \begin{equation*}
    \frac{1}{2}\sum_{|n|\in\{0,N(0)\}}\int_{\R^3} (1+\rr)|\d^n\d_t\phi|^2+|\d^n\nab\phi|^2 dx\lesssim \ep_0^2,
    \end{equation*}
    which completes the proof of the Proposition.

\end{proof}

\section{Energy estimates, II: Weighted norms}
In this section we prove the energy bounds (\ref{Main_Prop_result1}) for $1\leq |a|\leq N_1$.
\subsection{The bound on $Z^a\rr$ and $Z^au$ for $1\leq|a|\leq N_1$.} We start with the weighted Sobolev bounds in (\ref{Main_Prop_result1}).

\begin{proposition}            \label{Prop_Ene_VecFie}
	With the notation and hypothesis in Proposition $\ref{Main_Prop}$, for any $t\in[0,T]$, we have
	\begin{equation}         \label{E-rr&u_VecFie}
	\lV \rr^{(a)}\rV_{H^{N(a)}}^2+\lV u^{(a)}\rV_{H^{N(a)}}^2+\int_0^t\big( \lV\nab \rr^{(a)}\rV_{H^{N(a)-1}}^2 + \lV\nab u^{(a)}\rV_{H^{N(a)}}^2+ \lV\div u^{(a)}\rV_{H^{N(a)}}^2\big) ds\lesssim \epsilon_0^2.
	\end{equation}
\end{proposition}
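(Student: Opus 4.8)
# Proof Proposal for Proposition 5.1

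\textbf{Overall strategy.} The plan is to mimic the argument of Proposition \ref{Prop_Ene_Sob} (the $a=0$ case), now applied to the vector-field-differentiated system \eqref{Main_Sys_VecFie}, and to run an induction on $|a|$ from $1$ to $N_1$. The key structural point is that the equations for $\rr^{(a)}=Z^a\rr$ and $u^{(a)}=Z^a u$ have exactly the same linear part as the equations for $\rr,u$ (the dissipative compressible part), with the nonlinearities $F_1^{(a)},F_2^{(a)}$ from \eqref{F1a}--\eqref{F2a} playing the role of $F_1,F_2$. So the two-step scheme — (i) a weighted energy identity for $(\rr^{(a)},u^{(a)})$ giving control of the $H^{N(a)}$ norm plus the velocity dissipation $\int_0^t\lV\nabla u^{(a)}\rV_{H^{N(a)}}^2$, and (ii) a cross-term estimate pairing the $u^{(a)}$-equation with $\nabla\rr^{(a)}$ to extract the density dissipation $\int_0^t\lV\nabla\rr^{(a)}\rV_{H^{N(a)-1}}^2$ — should carry over verbatim in form. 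The two pieces are then combined with a small coefficient $c_1=\min\{1/4,\mu_1/(4C(\mu_1,\mu_2))\}$ exactly as in Step 3 of Proposition \ref{Prop_Ene_Sob}.

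\textbf{Step 1: weighted high-order energy.} I would introduce
$$E_a(t):=\sum_{|n|\leq N(a)}\Big[\tfrac12\int_{\R^3}\tfrac{P'(1+\rr)}{1+\rr}|\partial^n\rr^{(a)}|^2+(1+\rr)|\partial^n u^{(a)}|^2\,dx+\int_0^t\int_{\R^3}\tfrac{\mu_1}{2}|\partial^n\nabla u^{(a)}|^2+(\tfrac{\mu_1}{2}+\mu_2)|\partial^n\div u^{(a)}|^2\,dx\,ds\Big],$$
differentiate in $t$ using \eqref{Main_Sys_VecFie}, and sort the resulting terms into analogues of $R_1,\dots,R_4$ plus the genuinely new contributions coming from $F_1^{(a)},F_2^{(a)}$ — in particular the commutator terms $\sum_{b+c=a,|b|\geq1}Z^b(\tfrac{P'(1+\rr)}{1+\rr})\nabla Z^c\rr$, the terms with $Z^b(\tfrac{\rr}{1+\rr})$ multiplying lower-order second derivatives of $u$, and the $\phi$-quadratic terms $Z^b(\tfrac{1}{1+\rr})(\nabla\phi^{(c)}\nabla\phi^{(e)})$-type expressions. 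The highest-order transport terms (where all derivatives land on $\rr^{(a)}$ or $u^{(a)}$) are handled by integration by parts exactly as for $R_1$, producing $\lesssim\lV\nabla u\rV_{H^{N(a)+1}}\,\lV\nabla\rr^{(a)}\rV_{H^{N(a)-1}}^2$-type quantities; all other terms are lower order and are estimated using the bootstrap bounds \eqref{Main_Prop_Ass1}, the decay estimates of $\rr,u,\rr^{(b)},u^{(b)}$ for $|b|<|a|$ from Section 3, the $\phi^{(b)}$ decay \eqref{Dec_Phi} and \eqref{phi^2-Hna}, Lemma \ref{biL-lem}, and the $\d_t$ bounds of Lemma \ref{dt_v_2norm_Lem}. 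After time integration every such contribution is $\lesssim\epsilon_1^3$ (or $\epsilon_1^2(1+t)^{0^-}$ growth absorbed into $\epsilon_0^2$ by the smallness $\epsilon_1=\epsilon_0^{2/3}$), so $E_a(t)\lesssim E_a(0)+\epsilon_1^3\lesssim\epsilon_0^2$.

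\textbf{Step 2: density dissipation and closing.} Apply $\partial^n$ to the $u^{(a)}$-equation, pair with $\partial^n\nabla\rr^{(a)}$, and integrate; integrating by parts in time on the $\partial^n\nabla\rr^{(a)}\cdot\partial^n\partial_t u^{(a)}$ term and using the $\rr^{(a)}$-equation to rewrite $\partial_t\nabla\rr^{(a)}=-\nabla(\nabla\cdot u^{(a)})-\nabla F_1^{(a)}$, one obtains, after absorbing $\int\partial^n\nabla\rr^{(a)}\cdot\partial^n\Delta u^{(a)}$ into a fraction of $\int\tfrac{P'(1+\rr)}{1+\rr}|\partial^n\nabla\rr^{(a)}|^2$ plus $C(\mu_1,\mu_2)\lV\nabla u^{(a)}\rV_{H^{N(a)}}^2$, the analogue of \eqref{Diss_rr_ineq}: the density dissipation $\tfrac34\int_0^t\int\tfrac{P'(1+\rr)}{1+\rr}|\nabla^l\rr^{(a)}|^2$ is bounded by $C(\mu_1,\mu_2)\int_0^t\lV\nabla u^{(a)}\rV_{H^{N(a)}}^2+\epsilon_0^2$, where the last term collects all lower-order and $\phi$-quadratic contributions via \eqref{Main_Prop_Ass1} and \eqref{Dec_Phi}. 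Multiplying by $c_1$ and adding to the Step 1 bound (together with the $|\tfrac{P'(1+\rr)}{1+\rr}-a\gamma|_{L^\infty}\lesssim\epsilon_1$ estimate \eqref{1-(1+rr)gamma} to replace the weight by the constant $a\gamma$) gives \eqref{E-rr&u_VecFie}.

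\textbf{Main obstacle.} The genuinely new difficulty compared to $a=0$ is the bookkeeping of the vector-field commutator terms — especially the pieces of $F_2^{(a)}$ in which $S$ and $\tilde\Omega$ hit the second-order elliptic operator and produce $\Delta S^{a_1-l}\Gamma^{a'}u$ with $l\geq 1$ (the $\tilde F_{2,I}^{(a)}$ part): these look like top-order derivatives of $u$ with one fewer vector field, and one must check that they are genuinely controlled by $\lV\nabla u^{(b)}\rV$ for $|b|\leq|a|-1$ via the inductive decay estimates \eqref{drra,v,w_L2dec}, rather than requiring $H^{N(a)}$-energy of $u^{(a)}$ itself. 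A second delicate point is that the $\phi$-quadratic source terms in $F_2^{(a)}$ only decay like $\langle t\rangle^{-1+(2+|a|)\delta}$ (from \eqref{phi^2-Hna}), which is non-integrable; these must be paired against $\nabla u^{(a)}$ or $\nabla\rr^{(a)}$ and absorbed using Cauchy–Schwarz into the dissipation plus an $\epsilon_1^2\int_0^t\langle s\rangle^{-2+\cdots}\,ds$ remainder, exactly as the $\nabla\phi$ terms are handled in Step 1 of Proposition \ref{Prop_Ene_Sob}. Everything else is a routine but lengthy repetition of the $a=0$ argument.
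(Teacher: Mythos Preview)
Your approach is essentially the same as the paper's: induction on $|a|$, the weighted energy $\mathcal{E}_a$ you wrote down, the cross-term pairing with $\nabla\rr^{(a)}$ for the density dissipation, and the final combination with the small constant $c_1$. The paper carries this out exactly, splitting $\frac{d}{dt}\mathcal{E}_a$ into five pieces $\mathcal{R}_1,\dots,\mathcal{R}_5$.

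One point deserves sharpening. For the terms you flag as the main obstacle --- the pieces $\Delta S^{a_1-l}\Gamma^{a'}u$ with $l\geq 1$ coming from the commutation of $S$ with the viscosity operator (the paper's $\mathcal{R}_5$) --- you say they should be controlled ``via the inductive decay estimates \eqref{drra,v,w_L2dec}''. That would not close: the pointwise decay $\lV\nabla u^{(b)}\rV\lesssim\epsilon_1\langle s\rangle^{-1/2+(2+|b|)\delta}$, when squared and integrated in $s$, gives a small positive power of $t$, not $\epsilon_0^2$. What the paper actually does is integrate by parts to turn $(1+\rr)\,\partial^n u^{(a)}\cdot\partial^n\Delta S^{a_1-l}\Gamma^{a'}u$ into $-(1+\rr)\,\partial^n\nabla u^{(a)}\cdot\partial^n\nabla S^{a_1-l}\Gamma^{a'}u$ plus lower order, then apply Young's inequality to split this into $\tfrac{1}{2}$ of the current dissipation $\int_0^t\tfrac{\mu_1}{2}\lV\partial^n\nabla u^{(a)}\rV^2+(\tfrac{\mu_1}{2}+\mu_2)\lV\partial^n\div u^{(a)}\rV^2$ (absorbed to the left, giving $\mathcal{E}_a(t)\leq\tfrac12\mathcal{E}_a(t)+C\epsilon_0^2$) plus $C\sum_{|b|<|a|}\int_0^t\lV\nabla u^{(b)}\rV_{H^{N(a)}}^2\,ds$, and the latter is $\lesssim\epsilon_0^2$ by the \emph{inductive hypothesis on the energy estimate \eqref{E-rr&u_VecFie} itself}, not by any decay lemma. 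Your Step~1 conclusion ``$E_a(t)\lesssim E_a(0)+\epsilon_1^3$'' therefore understates the mechanism: the correct closure is $E_a(t)\leq\tfrac12 E_a(t)+C\epsilon_0^2$. With this adjustment your outline is complete.
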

\begin{proof}
	We prove the bound (\ref{E-rr&u_VecFie}) by induction. From (\ref{E-rr&u}), we may assume that (\ref{E-rr&u_VecFie}) holds for all $|a|\leq l-1\leq N_1-1$, it then suffices to prove (\ref{E-rr&u_VecFie}) for $|a|=l$.
	
	{\bf Step 1. Higher order estimates for $\rr$ and $u$.}
	
	We define the energy functional
	\begin{align*}
	\mathcal{E}_0(t)=&\sum\limits_{|n|\leq N(a)}\big[\frac{1}{2}\int_{\R^3}\frac{P'(1+\rr)}{1+\rr}|\d^{n} \rr^{(a)}|^2 +(1+\rr)|\d^n u^{(a)}|^2dx \\
	&+\int_0^t \int_{\R^3} \frac{\mu_1}{2} |\d^{n}\nab u^{(a)}|^2+(\frac{\mu_1}{2}+\mu_2)|\d^n \div u^{(a)}|^2 dx ds\big].
	\end{align*}
	
	By (\ref{Main_Sys_VecFie}), we have
	\begin{equation}       \label{dt(E0)}
	\begin{aligned}
	\frac{d}{dt}\mathcal{E}_0(t)=&\sum_{|n|\leq N(a)}\int_{\R^3}\frac{1}{2}\d_t(\frac{P'(1+\rr)}{1+\rr})|\d^n \rr^{(a)}|^2dx\\
	&-\sum_{|n|\leq N(a)}\int_{\R^3}\frac{P'(1+\rr)}{1+\rr}\d^n\rr^{(a)}\cdot\d^n\sum_{b+c=a}C_a^b(u^{(b)}\cdot\nab\rr^{(c)}+\rr^{(b)}\nab\cdot u^{(c)})dx\\
	&-\sum_{|n|\leq N(a)}\int_{\R^3} \frac{P'(1+\rr)}{1+\rr} \d^n\rr^{(a)} \nab\cdot\d^n u^{(a)}+(1+\rr)\d^n u^{(a)}\cdot\d^n\big(\frac{P'(1+\rr)}{1+\rr}\nab\rr^{(a)}\big) dx \\
	&+\sum_{|n|\leq N(a)} \int_{\R^3} \frac{1}{2}\d_t \rr|\d^n u^{(a)}|^2+(1+\rr)\d^n u^{(a)}\cdot \d^n F_2^{(a)} dx\\
	&+\sum_{|n|\leq N(a)}\int_{\R^3} (1+\rr)\d^n u^{(a)}\cdot\sum_{1\leq l\leq a_1}C_{a_1}^l(-1)^l\d^{n}\big(\frac{\mu_1}{2}\Delta S^{a_1-l}\Gamma^{a'} u\\
	&\ \ \ \ \ \ \ +(\frac{\mu_1}{2}+\mu_2)\nab\div S^{a_1-l}\Gamma^{a'}u\big)+\rr\d^n u^{(a)}\cdot\d^{n}\big(\frac{\mu_1}{2}\Delta  u^{(a)} +(\frac{\mu_1}{2}+\mu_2)\nab\div u^{(a)}\big)dx\\
	&=\mathcal{R}_1+\mathcal{R}_2+\mathcal{R}_3+\mathcal{R}_4+\mathcal{R}_5,
	\end{aligned}
	\end{equation}
We will estimate the five terms, respectively. First, we prove the bound
	\begin{equation} \label{R1,R2}
	\int_0^t \mathcal{R}_1(s)+\mathcal{R}_2(s)ds\lesssim \ep_1^3.
	\end{equation}
In fact, by H\"{o}lder, we may bound left hand side of (\ref{R1,R2}) by
	\begin{align*}
	{\rm LHS}(\ref{R1,R2})\lesssim &\int_0^t\Big[ \lV\d_s\rr\rV_{L^{\infty}}\lV(1+\rr)^{\gamma-3}\rV_{L^{\infty}}\lV\rr^{(a)}\rV_{H^{N(a)}}^2-\sum_{|n|\leq N(a)}\int_{\R^3}\frac{P'(1+\rr)}{1+\rr}\d^n\rr^{(a)}\cdot(u\cdot\nab\d^n\rr^{(a)})dx\\
	&+\sum_{b+c=a}\lV\rr^{(a)}\rV_{H^{N(a)}}\lV\nab u^{(b)}\rV_{H^{N(a)}}\lV\nab\rr^{(c)}\rV_{H^{N(c)-1}}\Big]ds,
	\end{align*}
	which by (\ref{Main_Prop_Ass1}), (\ref{dtrr-inf}), and (\ref{drr,v,w_inf}) is bounded by
	\begin{align*}
	{\rm LHS}(\ref{R1,R2})\lesssim & \int_0^t \ep_1^3\<s\>^{-1-l(0)/4}+\sum_{|n|\leq N(a)}\frac{1}{2}\int_{\R^3}(u\cdot\nab\frac{P'(1+\rr)}{1+\rr}+\frac{P'(1+\rr)}{1+\rr}\nab\cdot u)|\d^n\rr^{(a)}|^2dxds
	+\ep_1^3\\
	\lesssim & \ep_1^3+\int_0^t (\lV u\rV_{H^2}\lV\nab\rr\rV_{L^{\infty}}+\lV \nab u\rV_{L^{\infty}})\lV\rr^{(a)}\rV_{H^{N(a)}}^2 ds\\
	\lesssim &\ep_1^3+\int_0^t \ep_1^3\<s\>^{-1-l(0)+3\delta}ds\lesssim \ep_1^3.
	\end{align*}
	Next, we prove the bound
	\begin{equation}      \label{R3}
	\int_0^t \mathcal{R}_3(s)ds\lesssim \ep_1^3.
	\end{equation}
	Integration by parts yields
	\begin{align*}
	\mathcal{R}_3=&-\sum_{|n|\leq N(a)}\int_{\R^3} -\d^n\rr^{(a)}\d^n u^{(a)}\cdot\nab\Big(  \frac{P'(1+\rr)}{1+\rr}\Big)+\sum_{n_1+n_2=n,|n_1|\geq 1}\d^n u^{(a)}\cdot\d^{n_1}\Big(\frac{P'(1+\rr)}{1+\rr}\Big)\nab\d^{n_2}\rr^{(a)}\\
	&\ \ \ \ \ \ \ \ \ -\d^{e}\rr\ \d^n u^{(a)}\cdot\d^{n-e}\big(\frac{P'(1+\rr)}{1+\rr}\nab\rr^{(a)}\big)-\rr\ \d^{n+e} u^{(a)}\cdot\d^{n-e}\big(\frac{P'(1+\rr)}{1+\rr}\nab\rr^{(a)}\big) dx,
	\end{align*}
	where $e\in\{(1,0,0),(0,1,0),(0,0,1)\}$.
	By H\"{o}lder and (\ref{Main_Prop_Ass1}), we may bound this by
	\begin{align*}
	\int_0^t \mathcal{R}_3(s)ds\lesssim &\int_0^t \lV\nab\rr^{(a)}\rV_{H^{N(a)-1}}\lV u^{(a)}\rV_{H^{N(a)}}\lV\nab\rr\rV_{H^{N(a)}}+\lV \nab\rr\rV_{H^2}\lV u^{(a)}\rV_{H^{N(a)}}\lV\nab\rr^{(a)}\rV_{H^{N(a)-1}}\\
	&+\lV\rr\rV_{H^3}\lV\nab u^{(a)}\rV_{H^{N(a)}}\lV\nab\rr^{(a)}\rV_{H^{N(a)-1}}ds\\
	\lesssim & \ep_1^3.
	\end{align*}
	
	Nest we prove the bound
	\begin{equation}        \label{R4}
	\int_0^t \mathcal{R}_4(s)ds\lesssim \ep_1^3.
	\end{equation}
	
	By (\ref{dtrr-inf}), it is easy to estimate the first term in the integrand of $\mathcal{R}_4$ by
	\begin{equation*}
	\sum_{|n|\leq N(a)} \int_0^t\int_{\R^3} \frac{1}{2}\d_s \rr|\d^n u^{(a)}|^2 dxds\lesssim \int_0^t \lV\d_s\rr\rV_{L^{\infty}}\lV u^{(a)}\rV_{H^{N(a)}}^2 ds\lesssim \int_0^t \ep_1^3\<s\>^{-1-l(0)/4}ds\lesssim \ep_1^3.
	\end{equation*}
	Then we consider the second term in the integrand of $\mathcal{R}_4$. From the definition of $F_2^{(a)}$ (\ref{F2a}), we have
	\begin{align*}
	\sum_{|n|\leq N(a)} \int_{\R^3}(1+\rr)\d^n u^{(a)}\cdot \d^n F_2^{(a)} dx=&\sum_{|n|\leq N(a)} \int_{\R^3}(1+\rr)\d^n u^{(a)}\cdot \d^n F_{2,I}^{(a)} dx\\
	&+\sum_{|n|\leq N(a)} \int_{\R^3}(1+\rr)\d^n u^{(a)}\cdot \d^n F_{2,II}^{(a)} dx\\
	&+\sum_{|n|\leq N(a)} \int_{\R^3}(1+\rr)\d^n u^{(a)}\cdot \d^n(S+1)^{a_1}\Gamma^{a'}Err2dx\\
	=:&\mathcal{R}_{41}+\mathcal{R}_{42}+\mathcal{R}_{43},
	\end{align*}
	where
	\begin{align*}
	F_{2,I}^{(a)}:=&-\sum_{b+c=a}C^b_a u^{(b)}\cdot\nab u^{(c)}-\sum_{b+c=a,|b|\geq 1}C^b_a Z^b(\frac{P'(1+\rr)}{1+\rr})\nab\rr^{(c)}\\
	&-\sum_{b+c=a}C^b_a Z^b \big(\frac{\rr}{1+\rr}\big)\big( \frac{\mu_1}{2}\Delta (S-1)^{(c_1)}\Gamma^{(c')} u+(\frac{\mu_1}{2}+\mu_2)\nab\div(S-1)^{c_1}\Gamma^{c'}u \big), \\
	F_{2,II}^{(a)}:=&\sum_{b+c+e=a}C^{b,c}_aZ^b \Big(\frac{1}{1+\rr}\Big)\big[\frac{1}{2} \nab\big( \nab(S-1)^{c_1}\Gamma^{c'}\phi\cdot\nab(S-1)^{e_1}\Gamma^{e'}\phi \big)\\
	&-\d_j\big(  \nab(S-1)^{c_1}\Gamma^{c'}\phi\cdot\d_j(S-1)^{e_1}\Gamma^{e'}\phi \big) \big].
	\end{align*}
	
	On one hand, we consider the contribution of $\mathcal{R}_{41}$. By integration by parts, H\"{o}lder, and Sobolev embedding, we may bound $\mathcal{R}_{41}$ by
	\begin{align*}
	\mathcal{R}_{41}=&\int_{\R^3} (1+\rr)u^{(a)} F^{(a)}_{2,I}-\sum_{1\leq|n|\leq N(a)}\big( \d^e\rr\d^n u^{(a)}+(1+\rr)\d^{n+e}u^{(a)} \big) \d^{n-e}F_{2,I}^{(a)} dx\\
	\lesssim & \Big[\lV(1+\rr)u^{(a)}\rV_{L^2}+\sum_{1\leq|n|\leq N(a)}\lV\big( \d^e\rr\d^n u^{(a)}+(1+\rr)\d^{n+e}u^{(a)} \big)\rV_{L^2}\Big] \lV F^{(a)}_{2,I}\rV_{H^{N(a)-1}}\\
	\lesssim & (1+\lV\rr\rV_{H^3})(\lV u^{(a)}\rV_{H^2}+\lV\nab u^{(a)}\rV_{H^{N(a)}})\Big( \sum_{b+c=a}\lV \nab u^{(b)}\rV_{H^{N(a)-1}}\lV\nab u^{(c)}\rV_{H^{N(a)}}\\
	& +\sum_{b+c=a}\sum_{|\tilde{b}|\leq |b|}\lV\nab \rr^{(\tilde{b})}\rV_{H^{N(a)-1}}(1+\lV\nab\rr^{(\tilde{b})}\rV_{H^{N(a)-1}})^{|b|}(\lV\nab\rr^{(c)}\rV_{H^{N(a)-1}}+\lV\nab u^{(c)}\rV_{H^{N(a)}})\Big),
	\end{align*}
	which is acceptable by (\ref{Main_Prop_Ass1}).
	
	On the other hand, we consider the contribution of $\mathcal{R}_{42}$. By the symmetry, we may assume that $|c|\leq |e|$. It suffices to consider the term
	\begin{equation}   \label{R42'}
	\sum_{|n|\leq N(a),|c+e|\leq |a|}\int_{\R^3}(1+\rr)\d^n u^{(a)}\d^n \nab(\nab\phi^{(c)}\nab\phi^{(e)})dx,
	\end{equation}
	By integration by parts, we may estimate this by
	\begin{align*}
	(\ref{R42'})=\sum_{|n|\leq N(a),|c+e|\leq |a|}\int_{\R^3}  [\nab\rr\d^n u^{(a)}+(1+\rr)\d^n\nab u^{(a)}]\d^n (\nab\phi^{(c)}\nab\phi^{(e)})dx,
	\end{align*}
	we then use H\"{o}lder and Sobolev embedding to bound previous by
	\begin{align*}
	(\ref{R42'})\lesssim & \sum_{|n|\leq N(a),|c+e|\leq |a|} (1+\lV\rr\rV_{H^3})\lV\nab u^{(a)}\rV_{H^{N(a)}}\sum_{k}2^{N(a)k^+}\lV P_{k}\nab\phi^{(c)}\rV_{L^{\infty}}\lV\nab\phi^{(e)}\rV_{H^{N(a)}}\\
	\lesssim & \ep_1^2 \<s\>^{-1+4\delta}\lV\nab u^{(a)}\rV_{H^{N(a)}},
	\end{align*}
	which is acceptable by (\ref{Main_Prop_Ass1}). A similar argument could be made for $\mathcal{R}_{43}$.
	
	Thus, this concludes the bound (\ref{R4}).
	
	Finally, we prove the bound
	\begin{equation}        \label{R5}
	\int_0^t \mathcal{R}_5(s)ds\leq \frac{1}{2}\mathcal{E}_0(t)+C\ep_0^2.
	\end{equation}
	Let $e\in\{(1,0,0),(0,1,0),(0,0,1)\}$, by integration by parts, $\mathcal{R}_5$ can be rewritten as
	\begin{align*}
	\mathcal{R}_5=&-\sum_{|n|\leq N(a)}\int_{\R^3}\sum_{1\leq l\leq a_1}C_{a_1}^l(-1)^l (1+\rr)\big(\frac{\mu_1}{2}\d^n\nab u^{(a)}\d^n\nab S^{a_1-l}\Gamma^{a'}u\\
	&\ \ \ \ \ \ \ \ \ \ \ \ \ \ \  +(\frac{\mu_1}{2}+\mu_2)\d^n\div u^{(a)}\d^n\div S^{a_1-l}\Gamma^{a'}u\big)dx\\
	&-\sum_{|n|\leq N(a)}\int_{\R^3}\sum_{1\leq l\leq a_1}C_{a_1}^l(-1)^l \d^n u^{(a)}\big(\frac{\mu_1}{2}\nab\rr\cdot\d^n\nab S^{a_1-l}\Gamma^{a'}u\\
	&\ \ \ \ \ \ \ \ \ \ \ \ \ \ \ \ \ \ +(\frac{\mu_1}{2}+\mu_2)\div \rr\cdot\d^n\div S^{a_1-l}\Gamma^{a'}u\big)dx\\
	&+\int_{\R^3} \rr u^{(a)}\big(\frac{\mu_1}{2}\Delta  u^{(a)} +(\frac{\mu_1}{2}+\mu_2)\nab\div u^{(a)}\big)dx\\
	&-\sum_{|n|\leq N(a)}\int_{\R^3}(\d^{e}\rr\d^n u^{(a)}+\rr\d^{n+e} u^{(a)})\cdot\d^{n-e}\big(\frac{\mu_1}{2}\Delta  u^{(a)} +(\frac{\mu_1}{2}+\mu_2)\nab\div u^{(a)}\big)dx.
	\end{align*}
	We then use H\"{o}lder, inductive assumption, and (\ref{Main_Prop_Ass1}) to bound this by
	\begin{align*}
	\int_0^t \mathcal{R}_5(s)ds\leq &\frac{1}{2}\sum_{|n|\leq N(a)}\int_0^t \frac{\mu_1}{2}\lV\d^n\nab u^{(a)}\rV_{L^2}^2+(\frac{\mu_1}{2}+\mu_2)\lV\d^n\div u^{(a)}\rV_{L^2}^2 ds
	+C\sum_{|b|<|a|}\int_0^t \lV\nab u^{(b)}\rV_{H^{N(a)}}^2 ds\\
	&+C\int_0^t \sum_{|b|\leq |a|}\lV u^{(a)}\rV_{H^{N(a)}}\lV\nab\rr\rV_{H^2}\lV\nab u^{(b)}\rV_{H^{N(a)}}+  \lV\rr\rV_{H^3}\lV\nab u^{(a)}\rV_{H^{N(a)}}^2 ds\\
	\leq & \frac{1}{2}\sum_{|n|\leq N(a)}\int_0^t \frac{\mu_1}{2}\lV\d^n\nab u^{(a)}\rV_{L^2}^2+(\frac{\mu_1}{2}+\mu_2)\lV\d^n\div u^{(a)}\rV_{L^2}^2 ds+C\ep_0^2+C\ep_1^3.
	\end{align*}
	This implies the bound (\ref{R5}) by definition of $\mathcal{E}_0$.
	
	Hence, from (\ref{R1,R2})-(\ref{R5}) and (\ref{MainAss_ini}), one obtains
	\begin{align*}
	\mathcal{E}_0(t)=\mathcal{E}_0(0)+\int_0^t \frac{d}{ds}\mathcal{E}_0(s)ds\leq \frac{1}{2}\mathcal{E}_0(t)+C\ep_0^2,
	\end{align*}
	which, combining with (\ref{1-(1+rr)gamma}), implies
	\begin{equation}          \label{E-rra&ua-VF}
	\frac{a\gamma}{2}\lV \rr^{(a)}\rV_{H^{N(a)}}^2 +\lV u^{(a)}\rV_{H^{N(a)}}^2+\int_0^t \frac{\mu_1}{2} \lV\nab u^{(a)}\rV_{H^{N(a)}}^2+(\frac{\mu_1}{2}+\mu_2)\lV \div u^{(a)}\rV_{H^{N(a)}}^2 ds\lesssim \ep_0^2.
	\end{equation}
	
	{\bf Step 2. The estimate for dissipation of density $\rr^{(a)}$.}
	
	Taking $\d^n$ on the $Z^a u$-equation of (\ref{Main_Sys_VecFie}) and multiplying $\d^n\nab Z^a\rr$, from integration by parts we have
	\begin{equation}            \label{Diss_rra}
	\begin{aligned}
	&\sum_{|n|\leq N(a)-1}\int_{\R^3}\d^n\nab\rr^{(a)}\cdot\d^n\d_t u^{(a)} dx+\sum_{|n|\leq N(a)-1}\int_{\R^3}\d^n\nab\rr^{(a)}\cdot\d^n(\frac{P'(1+\rr)}{1+\rr}\nab\rr^{(a)})dx\\
	=& \sum_{|n|\leq N(a)-1}\int_{\R^3}\d^n\nab\rr^{(a)}\cdot\d^n\big(\frac{\mu_1}{2}\Delta(S-1)^{a_1}\Gamma^{a'} u+(\frac{\mu_1}{2}+\mu_2)\nab\div (S-1)^{a_1}\Gamma^{a'} u+F^{(a)}_2\big)dx.
	\end{aligned}
	\end{equation}
	By $\rr$-equation in (\ref{Main_Sys_VecFie}), the first term in the left-hand side of (\ref{Diss_rra}) can be rewritten as
	\begin{equation}          \label{Diss_rra_L1}
	\begin{aligned}
	&\sum_{|n|\leq N(a)-1}\int_{\R^3}\d^n\nab\rr^{(a)}\cdot\d^n\d_t u^{(a)}dx\\
	=& \sum_{|n|\leq N(a)-1}\big(\frac{d}{dt}\int_{\R^3}\d^n\nab\rr^{(a)}\cdot\d^n u^{(a)}dx-\int_{\R^3}\d^n\d_t\nab\rr^{(a)}\cdot\d^n u^{(a)}dx\big)\\
	=&\sum_{|n|\leq N(a)-1}\big(\frac{d}{dt}\int_{\R^3}\d^n\nab\rr^{(a)}\cdot\d^n u^{(a)}dx\\
	&\ \ \ -\int_{\R^3}\d^n[\nab\cdot u^{(a)}+\sum_{b+c=a}C_a^b(u^{(b)}\cdot\nab\rr^{(c)}+\rr^{(b)}\nab\cdot u^{(c)})]\d^n\nab\cdot u^{(a)}dx\big)\\
	=&\sum_{|n|\leq N(a)-1}\big(\frac{d}{dt}\int_{\R^3}\d^n\nab\rr^{(a)}\cdot\d^n u^{(a)}dx-\lV\d^n\nab\cdot u^{(a)}\rV_{L^2}^2\\
	&\ \ \ -\sum_{b+c=a}C_a^b\int_{\R^3} \d^n(u^{(b)}\cdot\nab\rr^{(c)}+\rr^{(b)}\nab\cdot u^{(c)})\d^n\nab\cdot u^{(a)} dx\big).
	\end{aligned}
	\end{equation}
	We use integration by parts to rewrite the second term in the left-hand side of (\ref{Diss_rra}) as
	\begin{equation}              \label{Diss_rra_L2}
	\begin{aligned}
	&\sum_{|n|\leq N(a)-1}\int_{\R^3}\d^n\nab\rr^{(a)}\cdot\d^n(\frac{P'(1+\rr)}{1+\rr}\nab\rr^{(a)})dx\\
	=&\sum_{|n|\leq N(a)-1}\big(\int_{\R^3}\frac{P'(1+\rr)}{1+\rr}|\d^n\nab\rr^{(a)}|^2dx+\int_{\R^3} \d^n\nab\rr^{(a)} \cdot\sum_{|n_1|\geq 1,n_1+n_2=n}\d^{n_1}(\frac{P'(1+\rr)}{1+\rr})\d^{n_2}\nab\rr^{(a)} dx\big).
	\end{aligned}
	\end{equation}
	Then from (\ref{Diss_rra})-(\ref{Diss_rra_L2}), H\"{o}lder, and (\ref{1-(1+rr)gamma}), we obtain
	\begin{equation}           \label{Diss_rra2}
	\begin{aligned}
	&\sum_{|n|\leq N(a)-1}(\frac{d}{dt}\int_{\R^3}\d^n\nab\rr^{(a)}\cdot\d^n u^{(a)}dx-\lV\d^n\nab u^{(a)}\rV_{L^2}^2+\int_{\R^3}\frac{P'(1+\rr)}{1+\rr}|\d^n\nab\rr^{(a)}|^2dx)\\
	=&\sum_{|n|\leq N(a)-1}\int_{\R^3}\Big[\sum_{b+c=a}C_a^b \d^n(u^{(b)}\cdot\nab\rr^{(c)}+\rr^{(b)}\nab\cdot u^{(c)})\d^n\nab\cdot u^{(a)} \\
	&-\d^n\nab\rr^{(a)} \cdot\sum_{|n_1|\geq 1,n_1+n_2=n}\d^{n_1}(\frac{P'(1+\rr)}{1+\rr})\d^{n_2}\nab\rr^{(a)}\\
	&+\d^n\nab\rr^{(a)}\cdot\d^n\big(\frac{\mu_1}{2}\Delta (S-1)^{a_1}\Gamma^{a'} u+(\frac{\mu_1}{2}+\mu_2)\nab\div (S-1)^{a_1}\Gamma^{a'} u+F^{(a)}_2\big)\Big]dx\\
	\leq & C\sum_{b+c=a}\lV\nab u^{(a)}\rV_{H^{N(a)}}\lV\nab u^{(b)}\rV_{H^{N(a)}}\lV \rr^{(c)}\rV_{H^{N(a)}}+C\lV\nab\rr^{(a)}\rV_{H^{N(a)-1}}^2\lV\rr\rV_{H^{N(a)}}\\
	&+\frac{1}{4}\sum_{|n|\leq N(a)-1}\int_{\R^3}\frac{P'(1+\rr)}{1+\rr}|\d^n\nab\rr^{(a)}|^2dx+C(\mu_1,\mu_2)\sum_{|b|\leq |a|}\lV \nab u^{(b)}\rV_{H^{N(a)}}^2\\
	&+\lV\nab\rr^{(a)}\rV_{H^{N(a)-1}}\lV F_2^{(a)}\rV_{H^{N(a)-1}}.
	\end{aligned}
	\end{equation}
	Noticing that
	\begin{equation*}
	\d^n\nab\rr\cdot\d^n u=\frac{1}{2}(|\d^n(u+\nab\rr)|^2-|\d^n\nab\rr|^2-|\d^n u|^2),
	\end{equation*}
	by Sobolev embedding and (\ref{phi^2-Hna}),  the nonlinear terms $F_2^{(a)}$ can be bounded by
	\begin{equation}          \label{F2a-HN(a)-1}
	\begin{aligned}
	\lV F_2^{(a)}\rV_{H^{N(a)-1}}\leq & C\sum_{b+c=a}(\lV u^{(b)}\rV_{H^{N(a)}}+\lV \rr^{(b)}\rV_{H^{N(a)}})(\lV \nab u^{(c)}\rV_{H^{N(a)}}+\lV \nab\rr^{(c)}\rV_{H^{N(a)-1}})\\
	&+C\sum_{b+c=a}\lV\nab\phi^{(b)}\nab\phi^{(c)}\rV_{H^{N(a)}}\\
	\leq &C\sum_{b+c=a}(\lV u^{(b)}\rV_{H^{N(a)}}+\lV \rr^{(b)}\rV_{H^{N(a)}})(\lV \nab u^{(c)}\rV_{H^{N(a)}}+\lV \nab\rr^{(c)}\rV_{H^{N(a)-1}})+C\ep_1^2\<t\>^{-1+4\delta}.
	\end{aligned}
	\end{equation}	
	Therefore,  (\ref{Diss_rra2}) can be further bounded by
	\begin{align*}
	&\sum_{|n|\leq N(a)-1}\big(\frac{1}{2}\frac{d}{dt}(\lV\d^n(u^{(a)}+\nab\rr^{(a)})\rV_{L^2}^2-\lV\d^n\nab\rr^{(a)}\rV_{L^2}^2-\lV\d^n u^{(a)}\rV_{L^2}^2)\\
	&+\frac{3}{4}\int_{\R^3}\frac{P'(1+\rr)}{1+\rr}|\d^n\nab\rr^{(a)}|^2dx\big)-C(\mu_1,\mu_2)\sum_{|b|\leq |a|}\lV\nab u^{(b)}\rV_{H^{N(a)}}^2\\
	\lesssim & \sum_{b+c=a}\lV\nab u^{(a)}\rV_{H^{N(a)}}\lV\nab u^{(b)}\rV_{H^{N(a)}}\lV \rr^{(c)}\rV_{H^{N(a)}}+\lV\nab\rr^{(a)}\rV_{H^{N(a)-1}}^2\lV\rr\rV_{H^{N(a)}}\\
	&+\lV\nab\rr^{(a)}\rV_{H^{N(a)-1}}(\sum_{b+c=a}(\lV u^{(b)}\rV_{H^{N(a)}}+\lV \rr^{(b)}\rV_{H^{N(a)}})(\lV \nab u^{(c)}\rV_{H^{N(a)}}+\lV \nab\rr^{(c)}\rV_{H^{N(a)-1}})+\ep_1^2\<t\>^{-1+4\delta}).
	\end{align*}
	Then by integration in time and (\ref{Main_Prop_Ass1}) we have
	\begin{equation}              \label{Diss_rra_ineq}
	\begin{aligned}
	&\frac{1}{2}(\lV u^{(a)}+\nab\rr^{(a)}\rV_{H^{N(a)-1}}^2-\lV\nab\rr^{(a)}\rV_{H^{N(a)-1}}^2-\lV u^{(a)}\rV_{H^{N(a)-1}}^2)\\
	&+\frac{3}{4}\sum_{l=1}^{N(a)}\int_0^t\int_{\R^3} \frac{P'(1+\rr)}{1+\rr}|\nab^l\rr^{(a)}|^2dxds-C(\mu_1,\mu_2)\int_0^t \lV\nab u^{(a)}\rV_{H^{N(a)}}^2 ds\lesssim \ep_0^2.
	\end{aligned}
	\end{equation}
	
	{\bf Step 3. The energy estimate (\ref{E-rr&u_VecFie})}
	
	In order to prove the bound (\ref{E-rr&u_VecFie}), choosing
	\begin{equation*}
	c_1:=\min\{\frac{1}{4},\frac{a\ga}{4},\frac{\mu_1}{4C(\mu_1,\mu_2)}\}
	\end{equation*}
	multiplying (\ref{Diss_rra_ineq}) by $c_1$ and then adding one to the inequality (\ref{E-rra&ua-VF}), we have
	\begin{align*}
	&\frac{c_1}{2}\lV u^{(a)}+\nab\rr^{(a)}\rV_{H^{N(a)-1}}^2+\frac{a\gamma}{8}\lV\rr^{(a)}\rV_{H^{N(a)}}^2 +\frac{1}{8}\lV u^{(a)}\rV_{H^{N(a)}}^2 +\frac{3c_1}{4}\sum_{l=1}^{N(a)}\int_0^t\int_{\R^3} \frac{P'(1+\rr)}{1+\rr}|\nab^l\rr^{(a)}|^2dxds\\
	&+\int_0^t \frac{\mu_1}{4}\lV\nab u^{(a)}\rV_{H^{N(a)}}^2+(\frac{\mu_1}{2}+\mu_2)\lV\div u^{(a)}\rV_{H^{N(a)}}^2 ds\lesssim  \ep_0^2.
	\end{align*}
	This implies the bound (\ref{E-rr&u_VecFie}). We complete the proof of the lemma.

\end{proof}

\subsection{The bound on $Z^a\phi$ for $1\leq|a|\leq N_1$.}
\begin{proposition}            \label{Prop_Ene_Weiphi}
	With the notation and hypothesis in Proposition $\ref{Main_Prop}$, for any $t\in[0,T]$, we have
	\begin{equation*}
	\lV \d_t \phi^{(a)}\rV_{H^{N(a)}}^2+\lV \nab\phi^{(a)}\rV_{H^{N(a)}}^2\lesssim \epsilon_0^2\<t\>^{2H(a)\delta}.
	\end{equation*}
\end{proposition}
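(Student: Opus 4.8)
The plan is to run a weighted energy estimate for $\phi^{(a)}$ parallel to the proof of Proposition~\ref{Prop_Ene_phi}, now permitting the slow growth $\<t\>^{2H(a)\delta}$ and concentrating the effort on the two quadratic interactions $\rr^{(b)}\Delta\phi^{(c)}$ and $u^{(b)}\cdot\nab\d_t\phi^{(c)}$ with $b+c=a$, $|b|\ge 1$, which are the only terms that cannot be closed directly by the decay estimates of the previous section. I would argue by induction on $|a|$, so that for such $b,c$ both $\phi^{(c)}$ and its profile $\Psi^{(c)}$ are already under control at one lower order. Introduce
\begin{equation*}
\mathcal{E}_a(t):=\tfrac12\sum_{|n|\le N(a)}\int_{\R^3}\big[(1+\rr)\,|\d^n\d_t\phi^{(a)}|^2+|\d^n\nab\phi^{(a)}|^2\big]\,dx,
\end{equation*}
which, since $\lV\rr\rV_{L^\infty}\lesssim\ep_1$, is comparable to $\lV\d_t\phi^{(a)}\rV_{H^{N(a)}}^2+\lV\nab\phi^{(a)}\rV_{H^{N(a)}}^2$. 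Multiplying the $\phi^{(a)}$-equation in \eqref{Main_Sys_VecFie} by $(1+\rr)\d^n\d_t\phi^{(a)}$, summing over $|n|\le N(a)$ and integrating by parts in $x$ yields
\begin{equation*}
\tfrac{d}{dt}\mathcal{E}_a(t)=\sum_{|n|\le N(a)}\int_{\R^3}\Big[\tfrac12\d_t\rr\,|\d^n\d_t\phi^{(a)}|^2+(1+\rr)\d^n\d_t\phi^{(a)}\!\!\sum_{|n_1|\ge 1}\!\!\d^{n_1}\!\big(\tfrac1{1+\rr}\big)\d^{n_2}\Delta\phi^{(a)}+(1+\rr)\d^n\d_t\phi^{(a)}\,\d^n F_3^{(a)}\Big]dx .
\end{equation*}

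The first two families of terms are absorbed by the decay bounds: using $\lV\d_t\rr\rV_{L^\infty}\lesssim\ep_1\<t\>^{-1-l(0)/4}$, the decay of $\nab\rr$ and of its higher derivatives, together with $\lV\Phi^{(a)}\rV_{H^{N(a)}}\lesssim\ep_1\<t\>^{H(a)\delta}$ and $\lV\nab\phi^{(a)}\rV_{H^{N(a)}}\lesssim\ep_1\<t\>^{H(a)\delta}$, integration in time produces a contribution $\lesssim\ep_1^3\<t\>^{2H(a)\delta}=\ep_0^2\<t\>^{2H(a)\delta}$. In $\d^n F_3^{(a)}$, see \eqref{F3a}, the cubic pieces $Z^bu\cdot\nab(Z^cu\cdot\nab Z^e\phi)$, the convective terms $\d_tZ^bu\cdot\nab Z^c\phi$ and the terms $u\cdot\nab\d_t\phi^{(a)}$, $\d_tu\cdot\nab\phi^{(a)}$ involving the full velocity (handled through the antisymmetric identity $\tfrac12\int(1+\rr)u\cdot\nab|\d^n\d_t\phi^{(a)}|^2\,dx=-\tfrac12\int\nab\!\cdot\!((1+\rr)u)|\d^n\d_t\phi^{(a)}|^2\,dx$ and, for the worst pieces, integration by parts in $t$), the remainders of $Z^b(\tfrac1{1+\rr})\Delta Z^c\phi$ in which at least one derivative falls on $\rr$, and $(S+2)^{a_1}\Gamma^{a'}Err3$, are all estimated exactly as the quantities $I_1,\dots,I_4$ in the proof of Proposition~\ref{Prop_Ene_phi}, using Lemma~\ref{dt_v_2norm_Lem}, the pointwise decay of $\rr^{(b)},u^{(b)},\Phi^{(b)}$, the bound \eqref{Dec_Phi} and the bootstrap \eqref{Main_Prop_Ass1}, with the extra vector-field weights soaked up by \eqref{Main_Prop_Ass1}; each contributes $\lesssim\ep_1^3\<t\>^{2H(a)\delta}$.

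What remains is the core of the estimate: the parts of $(1+\rr)\d^n\d_t\phi^{(a)}\,\d^n(\rr^{(b)}\Delta\phi^{(c)})$ and of $(1+\rr)\d^n\d_t\phi^{(a)}\,\d^n(u^{(b)}\cdot\nab\d_t\phi^{(c)})$ in which all spatial derivatives land on the $\phi^{(c)}$-factor, so that $\rr^{(b)}$ (resp.\ $u^{(b)}$) carries no derivative and only the weak decay $\<t\>^{-3/4}$. Writing $\d_t\phi^{(a)}$, $\Delta\phi^{(c)}$ and $\nab\d_t\phi^{(c)}$ through the profiles $\Psi^{(a)},\Psi^{(c)}$ and inserting the oscillation $e^{-is(|\xi|-|\eta|)}$, both reduce to bounding
\begin{equation*}
\int_0^t\!\!\int_{\R^6}e^{-is(|\xi|-|\eta|)}\,m(\xi,\eta)\,\overline{\widehat{P_k\Psi^{(a)}}}(\xi)\,\widehat{P_{k_1}\rr^{(b)}}(\xi-\eta)\,\widehat{P_{k_2}\Psi^{(c)}}(\eta)\,d\xi\,d\eta\,ds,\qquad m(\xi,\eta)=(1+|\xi|^{2N(a)})|\eta|,
\end{equation*}
and the analogous integral with $\rr^{(b)}$ replaced by a component of $u^{(b)}$. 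The time-resonant set is $\{|\xi|=|\eta|\}$, while $\nab_\eta(|\xi|-|\eta|)=-\eta/|\eta|$ never vanishes, so the space-resonant set is empty. I would split into dyadic interactions. For the high-low case ($k_1\sim k$, $k_2<k-5$) and the high-high case ($k_1\sim k_2>k+5$) one has $||\xi|-|\eta||\gtrsim 2^{\max(k,k_2)}$, hence one integrates by parts in $s$: the boundary terms are bounded, with the surplus regularity $N(b)\gg N(a)$ making the dyadic summation converge, by $\ep_1\lV\Psi^{(a)}\rV_{H^{N(a)}}\lV\rr^{(b)}\rV_{H^{N(b)}}\lV|\nab|\Psi^{(c)}\rV_{H^{N(c)}}\lesssim\ep_1^3\<t\>^{2H(a)\delta}$, and the bulk terms fall on $\d_s\rr^{(b)}$, $\d_s\Psi^{(a)}$ or $\d_s\Psi^{(c)}$, whose faster decay, in particular \eqref{dtPsi_L2_dec}, makes the time integral $\lesssim\ep_1^3$. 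For the low-high case ($k\sim k_2$, $k_1\le k+6$), where the time resonance is active, I would integrate by parts in $\eta$ instead: since $|\nab_\eta(|\xi|-|\eta|)|\equiv 1$ one gains a factor $s^{-1}$, and the $\eta$-derivative hits $\widehat{\rr^{(b)}}(\xi-\eta)$ (then use the decay of $\lV\nab\rr^{(b)}\rV_{L^2}$), or $\d_\eta\widehat{\Psi^{(c)}}$ (then use the weighted bound \eqref{Main_Prop_Ass2} and \eqref{Qjk_Psi}), or the symbol; in every case the time integral is of the schematic form $\int_0^t s^{-1}\<s\>^{-1/2+(2+|b|)\delta}\<s\>^{H(|c|+1)\delta}\,ds\lesssim\<t\>^{2H(a)\delta}$ once $|c|<|a|$ is used, and summing all pieces gives $\lesssim\ep_0^2\<t\>^{2H(a)\delta}$.

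Collecting the estimates, $\mathcal{E}_a(t)\le\mathcal{E}_a(0)+C\ep_1^3\<t\>^{2H(a)\delta}\lesssim\ep_0^2\<t\>^{2H(a)\delta}$, and the equivalence of $\mathcal{E}_a$ with the Sobolev norms finishes the proof. I expect the main obstacle to be the low-high part of $\rr^{(b)}\Delta\phi^{(c)}$ and $u^{(b)}\cdot\nab\d_t\phi^{(c)}$: the derivative $|\eta|$ in the symbol $m$ cannot be disposed of by a plain $L^\infty\times L^2$ split without breaking the $\<t\>^{2H(a)\delta}$ budget, so the integration by parts in $\eta$ must be arranged with the resulting $s^{-1}$ gain tightly balanced against the growing weighted norm of $\Psi^{(c)}$ and the slow growth of $\nab\rr^{(b)}$; a secondary technical nuisance is the Leibniz and chain-rule bookkeeping needed to isolate the model bilinear form above from $\d^n(Z^b(\tfrac1{1+\rr})\Delta Z^c\phi)$.
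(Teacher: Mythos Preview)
Your strategy matches the paper's: same weighted energy $\mathcal{E}_a$, same split into ``easy'' terms (handled as in Proposition~\ref{Prop_Ene_phi}) versus the two critical quadratic interactions $\rr^{(b)}\Delta\phi^{(c)}$ and $u^{(b)}\cdot\nab\d_t\phi^{(c)}$, and the same space--time resonance trichotomy (integrate by parts in $s$ for high--low and high--high, in $\eta$ for low--high). The paper does not set up an induction on $|a|$; it works directly from the bootstrap hypotheses, but this is organisational.

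There is, however, a real slip in your low--high analysis. When $\nab_\eta$ lands on $\widehat{\rr^{(b)}}(\xi-\eta)$ you do \emph{not} obtain $\nab\rr^{(b)}$; Fourier differentiation is multiplication by $x$ in physical space, so what appears is $\lV\FF^{-1}(\nab_\eta\widehat{P_{k_1}\rr^{(b)}})\rV\sim\lV\<r\>P_{k_1}\rr^{(b)}\rV$. This quantity does \emph{not} decay like $\<t\>^{-1/2}$; your time integral $\int_0^t s^{-1}\<s\>^{-1/2+(2+|b|)\delta}\<s\>^{H(|c|+1)\delta}\,ds$ is therefore based on a wrong premise. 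The paper closes this term instead via the weighted $L^\infty$--$L^2$ estimate \eqref{rv}, which bounds $\sum_{k_1}\lV\<r\>P_{k_1}\rr^{(b)}\rV_{L^\infty}\lesssim\sum_{|\alpha|\le 2}\lV\tilde\Om^\alpha\rr^{(b)}\rV_{H^2}\lesssim\ep_1$ from \eqref{Main_Prop_Ass1}. This requires two extra rotation vector fields on $\rr^{(b)}$, hence $|b|\le N_1-2$, which is why the paper first peels off the sub-case $|a|\ge N_1-1$, $|c|\le 2$ (where $|b|$ could be $N_1-1$) by a direct H\"older estimate using the fast decay of $\nab\phi^{(c)}$. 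The resulting low--high time integral is then $\int_1^t s^{-1}\ep_1^3\<s\>^{H(a)\delta+H(|c|+1)\delta}\,ds\lesssim\ep_1^3\<t\>^{2H(a)\delta}$, closing only because $H(|c|+1)\le H(a)$, with no spare decay. Your outline should be adjusted to use \eqref{rv} rather than $\lV\nab\rr^{(b)}\rV_{L^2}$ at this step.
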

\begin{proof}
    By $\lV \rr\rV_{H^2}\lesssim \ep_1$ it suffices to show that
    \begin{equation*}
    \EE_a(t):=\frac{1}{2}\sum_{|n|\in\{0,N(a)\}}\int_{\R^3}(1+\rr)|\d^n\d_t\phi^{(a)}|^2|+|\d^n\nab\phi^{(a)}|^2 dx\lesssim \ep_0^2\<t\>^{2H(a)\delta}.
    \end{equation*}
    From $\phi^{(a)}$-equation in (\ref{Main_Sys_VecFie}) and integration by parts we have
    \begin{align*}
    \frac{d}{dt}\EE_a(t)=&\int_{\R^3}\sum_{|n|\in\{0,N(a)\}} \frac{1}{2}\d_t\rr|\d^n\d_t\phi^{(a)}|^2dx+\sum_{\substack{|n|=N(a),|n_1|\geq 1,\\n_1+n_2=n}}(1+\rr)\d^n\d_t\phi^{(a)}\d^{n_1}\big( \frac{1}{1+\rr} \big)\d^{n_2}\Delta\phi^{(a)}dx\\
    &-\int_{\R^3} \sum_{\substack{|n|\in\{0,N(a)\},\\b+c=a,|b|\geq 1}}C_a^b (1+\rr)\d^n\d_t\phi^{(a)}\d^n\big( Z^b(\frac{1}{1+\rr})\Delta\phi^{(c)} \big)dx\\
    &-2\int_{\R^3} \sum_{\substack{|n|\in\{0,N(a)\},\\b+c=a}}C_a^b (1+\rr)\d^n\d_t\phi^{(a)}\d^n\big( u^{(b)}\cdot\nab\d_t\phi^{(c)} \big)dx\\
    &-\int_{\R^3} \sum_{\substack{|n|\in\{0,N(a)\},\\b+c=a}}C_a^b (1+\rr)\d^n\d_t\phi^{(a)}\d^n\big( \d_tu^{(b)}\cdot\nab\phi^{(c)} \big)dx\\
    &-\int_{\R^3} \sum_{\substack{|n|\in\{0,N(a)\},\\b+c+e=a}}C_a^{b,c} (1+\rr)\d^n\d_t\phi^{(a)}\d^n\big( u^{(b)}\cdot\nab(u^{(c)}\cdot\nab\phi^{(e)}) \big)dx\\
    &+\int_{\R^3} \sum_{|n|\in\{0,N(a)\}} (1+\rr)\d^n\d_t\phi^{(a)}\d^n (S+2)^{a_1}\Gamma^{a'}Err3 dx\\
    =:&\bfR_1+\bfR_2+\bfR_3+\bfR_4+\bfR_5+\bfR_6.
    \end{align*}

    \textbf{Step 1. We prove the bound
    \begin{equation*}
    \int_0^t \bfR_1(s)ds\lesssim \ep_1^3.
    \end{equation*}}

    By (\ref{dtrr-inf}) we may bound the first term in $\bfR_1$ by
    \begin{align*}
    \int_{\R^3}\sum_{|n|\in\{0,N(a)\}} \frac{1}{2}\d_s\rr|\d^n\d_s\phi^{(a)}|^2dx\lesssim \lV\d_s\rr\rV_{L^{\infty}}\lV\d_s\phi^{(a)}\rV_{H^{N(a)}}^2\lesssim \ep_1^3\<s\>^{-1-l(0)/4+2H(a)\delta}.
    \end{align*}
    For the second term in $\bfR_1$, when $|a|\geq N_1-2$, by (\ref{drr,v,w_inf}) we have
    \begin{align*}
    &\int_{\R^3} \sum_{\substack{|n|=N(a),|n_1|\geq 1,\\n_1+n_2=n}}(1+\rr)\d^n\d_s\phi^{(a)}\d^{n_1}\big( \frac{1}{1+\rr} \big)\d^{n_2}\Delta\phi^{(a)}dx\\
    \lesssim & \lV\d_s\phi^{(a)}\rV_{H^{N(a)}}\lV\nab\rr\rV_{W^{N(a),\infty}}(1+\lV\rr\rV_{H^{N(a)}})^{|n_1|}\lV\nab\phi^{(a)}\rV_{H^{N(a)}}\lesssim \ep_1^3\<s\>^{-1-l(0)+2\delta+2H(a)\delta};
    \end{align*}
    when $|a|\leq N_1-3$, by (\ref{drr,v,w_inf}) and (\ref{Dec_Phi}) we have
    \begin{align*}
    &\int_{\R^3} \sum_{\substack{|n|=N(a),|n_1|\geq 1,\\n_1+n_2=n}}(1+\rr)\d^n\d_s\phi^{(a)}\d^{n_1}\big( \frac{1}{1+\rr} \big)\d^{n_2}\Delta\phi^{(a)}dx\\
    \lesssim & \lV\d_s\phi^{(a)}\rV_{H^{N(a)}}\big(\lV\nab\rr\rV_{W^{N(a)/2,\infty}}\lV\nab\phi^{(a)}\rV_{H^{N(a)}}+\lV\nab\rr\rV_{H^{N(a)-1}}\lV\nab\phi^{(a)}\rV_{W^{N(a)/2,\infty}}\big)(1+\lV\rr\rV_{H^{N(a)}})^{|n_1|}\\
    \lesssim &\ep_1^3\<s\>^{-1-l(0)+2\delta+2H(a)\delta}+\ep_1^2\<s\>^{H(a)\delta-1+\delta+H(a+2)\delta}\lV\nab\rr\rV_{H^{N(a)-1}}.
    \end{align*}
    Thus combining these bounds and (\ref{Main_Prop_Ass1}), we get
    \begin{equation*}
    \int_0^t \bfR_1(s)ds\lesssim \int_0^t \ep_1^3\<s\>^{-1-l(0)+3\delta+2H(a)\delta}+\ep_1^2\<s\>^{H(a)\delta-1+\delta+H(a+2)\delta}\lV\nab\rr\rV_{H^{N(a)-1}}ds\lesssim \ep_1^3.
    \end{equation*}

    \textbf{Step 2. We prove the bound
    	\begin{equation*}
    	\int_0^t \bfR_2(s)ds\lesssim \ep_1^3\<t\>^{2H(a)\delta}.
    	\end{equation*}}
    Noticing that for any $|b|\geq 1$
    \begin{equation*}
    Z^b(\frac{1}{1+\rr})=\sum_{b_1+\cdots+b_s=b}C_{b_1\cdots b_s}(1+\rr)^{-s-1}\rr^{(b_1)}\cdots\rr^{(b_s)}.
    \end{equation*}
    By H\"{o}lder and (\ref{Main_Prop_Ass1}), it suffices to prove
    \begin{equation}           \label{bfR2-rep}
    \int_0^t\int_{\R^3} \sum_{|n|\in\{0,N(a)\}} \d^n\d_s\phi^{(a)}\d^n\big( \rr^{(b)}\Delta\phi^{(c)} \big)dxds\lesssim \ep_1^3\<t\>^{2H(a)\delta},
    \end{equation}
    for $|b|+|c|\leq |a|,|b|\geq 1$.

    For the contribution when $|a|\geq N_1-1,|c|\leq 2$, by H\"{o}lder, (\ref{Main_Prop_Ass1}) and (\ref{Dec_Phi}) we may bound the left-hand side of (\ref{bfR2-rep}) by
    \begin{align*}
    {\rm LHS}(\ref{bfR2-rep})\lesssim & \int_0^t \lV \d_s\phi^{(a)}\rV_{H^{N(a)}}\lV\rr^{(b)}\rV_{H^{N(a)}}\sum_k 2^{N(a)k^++k}\lV P_k\nab\phi^{(c)}\rV_{L^{\infty}}ds\\
    \lesssim & \int_0^t \ep_1^3\<s\>^{H(a)\delta-1+\delta+H(c+2)\delta}ds\lesssim \ep_1^3\<t\>^{2H(a)\delta}.
    \end{align*}

    We now consider the contribution when $|a|\geq N_1-1, |b|\geq 1, |c|\geq 3$ or $|a|\leq N_1-2,|b|\geq 1$. We decompose the left hand side of (\ref{bfR2-rep}) dyadically in frequency, and use (\ref{profile_phia}) to rewrite the functions $\d_s\phi^{(a)}$ and $|\nab|\phi^{(c)}$ in terms of the profiles $\Psi^{(a)}$ and $\Psi^{(c)}$, thus it suffices to estimate
    \begin{equation*}
    \mathfrak{R}_{k,k_1,k_2}:=\int_1^t\int_{\R^6} e^{-is(|\xi|-|\eta|)}(1+\xi^{2n})|\eta|\overline{\widehat{P_k\Psi^{(a)}}}(\xi) \widehat{P_{k_1}\rr^{(b)}}(\xi-\eta) \widehat{P_{k_2}\Psi^{(c)}}(\eta) d\xi d\eta ds.
    \end{equation*}

    {\bf Low-high interaction: the contribution when $k_1\leq k+6$, $k-5\leq k_2\leq k+5$.}

    \noindent By integration by parts in $\eta$, we have
    \begin{align*}
    \mathfrak{R}_{k,k_1,k_2}=&i\int_1^t s^{-1}\int_{\R^6} \nab_{\eta}\big( |\eta|(1+\xi^{2n}) \big) \overline{ \widehat{P_k\Phi^{(a)}}}(\xi) \widehat{P_{k_1}\rr^{(b)}}(\xi-\eta) \widehat{P_{k_2}\Phi^{(c)}}(\eta)\\
    &+|\eta|(1+\xi^{2n})\overline{ \widehat{P_k\Phi^{(a)}}}(\xi) \nab_{\eta}\widehat{P_{k_1}\rr^{(b)}}(\xi-\eta) \widehat{P_{k_2}\Phi^{(c)}}(\eta)\\
    &+|\eta|(1+\xi^{2n})\overline{ \widehat{P_k\Phi^{(a)}}}(\xi) \widehat{P_{k_1}\rr^{(b)}}(\xi-\eta) e^{is|\eta|}\nab_{\eta}\widehat{P_{k_2}\Psi^{(c)}}(\eta) d\xi d\eta ds.
    \end{align*}
    We then use (\ref{Symbol_Pre}) and H\"{o}lder to bound this by
    \begin{align*}
    \sum_{k_1\leq k+6,k_2=k+O(1)}\mathfrak{R}_{k,k_1,k_2}\lesssim &\int_1^t s^{-1} \sum_{k_1\leq k+6,k_2=k+O(1)}\Big[ 2^{2N(a)k^+}\lV P_k\Phi^{(a)}\rV_{L^2}\lV P_{k_1}\rr^{(b)}\rV_{L^{\infty}}\lV P_{k_2}\Phi^{(c)}\rV_{L^2} \\
    &+2^{2N(a)k^++k_2} \lV P_k\Phi^{(a)}\rV_{L^2}\lV \mathcal{F}^{-1}(\nab_{\eta}\widehat{P_{k_1}\rr^{(b)}(\eta)})\rV_{L^{\infty}}\lV P_{k_2}\Phi^{(c)}\rV_{L^2}\\
    & 2^{2N(a)k^++k_2}\lV P_k\Phi^{(a)}\rV_{L^2}\lV P_{k_1}\rr^{(b)}\rV_{L^{\infty}}\lV \nab_{\eta}\widehat{P_{k_2}\Phi^{(c)}(\eta)}\rV_{L^2} \Big]ds\\
    \lesssim &\int_1^t s^{-1} \lV\Phi^{(a)}\rV_{H^{N(a)}}\Big[\lV\nab\rr^{(b)}\rV_{H^2}\lV\Phi^{(c)}\rV_{H^{N(a)}} +\sum_{k_1}\lV\<r\>P_{k_1}\rr^{(b)}\rV_{L^{\infty}}\lV\Phi^{(c)}\rV_{H^{N(a)+1}}\\
    &+\lV\nab\rr^{(b)}\rV_{H^2}\lV\FF^{-1}(|\xi|\nab_{\xi}\widehat{\Psi^{(c)}})\rV_{H^{N(a)}} \Big] ds.
    \end{align*}
    Since from (\ref{rv}), (\ref{Main_Prop_Ass1}), and $|b|\leq N_1-2$, we have
    \begin{align*}
    \sum_{k_1}\lV\<r\>P_{k_1}\rr^{(b)}\rV_{L^{\infty}}\lesssim & \sum_{k_1}\sum_{|\alpha|\leq 1}\lV\d_r\tilde{\Om}^{\alpha}P_{k_1}\rr^{(b)}\rV_{L^2}^{1/2}\sum_{|\alpha|\leq 2}\lV\tilde{\Om}^{\alpha}P_{k_1}\rr^{(b)}\rV_{L^2}^{1/2}\\
    \lesssim &\sum_{k_1}2^{k_1^-/2-k_1^+/2}\sum_{|\alpha|\leq 1}\lV\tilde{\Om}^{\alpha}P_{k_1}\rr^{(b)}\rV_{H^2}^{1/2}\sum_{|\alpha|\leq 2}\lV\tilde{\Om}^{\alpha}P_{k_1}\rr^{(b)}\rV_{L^2}^{1/2}\\
    \lesssim & \sum_{|\alpha|\leq 2}\lV\tilde{\Om}^{\alpha}\rr^{(b)}\rV_{H^2}\lesssim \ep_1.
    \end{align*}
    Combining this estimate, (\ref{Main_Prop_Ass1}) and (\ref{Main_Prop_Ass2}) we may continue to bound $\mathfrak{R}_{k,k_1,k_2}$ by
    \begin{align*}
    \sum_{k_1\leq k-5,k_2=k+O(1)}\mathfrak{R}_{k,k_1,k_2}\lesssim & \int_1^t s^{-1}\ep_1\<s\>^{H(a)\delta}(\ep_1^2\<s\>^{H(c)\delta}+\ep_1^2\<s\>^{H(c+1)\delta})ds\\
    \lesssim &\int_1^t \ep_1^3\<s\>^{-1+2H(a)\delta}ds\lesssim \ep_1^3\<t\>^{2H(a)\delta}.
    \end{align*}
    This concludes the treatment of low-high interaction.

    {\bf High-low interaction: The contribution when $k-3<k_1<k+3,k_2<k-5$.}

    \noindent Integration by parts in time, we divide $\mathfrak{R}_{k,k_1,k_2}$ into four terms
    \begin{equation}              \label{rrd2v-HL}
    \begin{aligned}
    \mathfrak{R}_{k,k_1,k_2}=& i\int_{\R^6}\frac{(1+\xi^{2n})|\eta|}{|\xi|-|\eta|} \overline{\widehat{P_k\Phi^{(a)}}}(\xi) \widehat{P_{k_1}\rr^{(b)}}(\xi-\eta) \widehat{P_{k_2}\Phi^{(c)}}(\eta) d\xi d\eta\Big|_1^t\\
    &-i\int_1^t\int_{\R^6}\frac{(1+\xi^{2n})|\eta|}{|\xi|-|\eta|} \overline{e^{is|\xi|}\widehat{P_k\d_s\Psi^{(a)}}}(\xi) \widehat{P_{k_1}\rr^{(b)}}(\xi-\eta) \widehat{P_{k_2}\Phi^{(c)}}(\eta) d\xi d\eta ds\\
    &-i\int_1^t\int_{\R^6}\frac{(1+\xi^{2n})|\eta|}{|\xi|-|\eta|} \overline{\widehat{P_k\Phi^{(a)}}}(\xi) \widehat{P_{k_1}\d_s\rr^{(b)}}(\xi-\eta) \widehat{P_{k_2}\Phi^{(c)}}(\eta) d\xi d\eta ds\\
    &-i\int_1^t\int_{\R^6}\frac{(1+\xi^{2n})|\eta|}{|\xi|-|\eta|} \overline{\widehat{P_k\Phi^{(a)}}}(\xi) \widehat{P_{k_1}\rr^{(b)}}(\xi-\eta) e^{is|\eta|}\widehat{P_{k_2}\d_s\Psi^{(c)}}(\eta) d\xi d\eta ds\\
    =:& \mathfrak{R}^0_{k,k_1,k_2}+\mathfrak{R}^1_{k,k_1,k_2}+\mathfrak{R}^2_{k,k_1,k_2}+\mathfrak{R}^3_{k,k_1,k_2}.
    \end{aligned}
    \end{equation}

    \emph{Case 1. The contribution of $\mathfrak{R}^0_{k,k_1,k_2}$.}

    \noindent By (\ref{BernIneq}) and (\ref{Main_Prop_Ass1}), we may control this by
    \begin{align*}
    \sum_{k_1=k+O(1),k_2\leq k-5}\mathfrak{R}_{k,k_1,k_2}^0\lesssim & \sum_{k_1=k+O(1),k_2\leq k-5} 2^{k_2-k+2N(a)k^+}\lV P_k\Phi^{(a)}\rV_{L^2}\lV P_{k_1}\rr^{(b)}\rV_{L^2}\lV P_{k_2}\Phi^{(c)}\rV_{L^{\infty}}\\
    \lesssim & \lV \Phi^{(a)}\rV_{H^{N(a)}}\lV \rr^{(b)}\rV_{H^{N(a)}}\lV \Phi^{(c)}\rV_{H^2}\lesssim \ep_1^3\<t\>^{H(a)\delta+H(c)\delta}\lesssim \ep_1^2\<t\>^{2H(a)\delta}.
    \end{align*}

    \emph{Case 2: The contribution of $\mathfrak{R}^1_{k,k_1,k_2}$.}

    \noindent By (\ref{BernIneq}), (\ref{dtPsi_L2_dec}) and (\ref{Main_Prop_Ass1}), we may bound this contribution when $|a|\geq N_1-1,|b|\geq 1, |c|\geq 3$ by
    \begin{align*}
    \sum_{k_1=k+O(1),k_2\leq k-5}\mathfrak{R}^1_{k,k_1,k_2}\lesssim & \int_1^t \sum_{k_1=k+O(1),k_2\leq k-5} 2^{k_2-k+2N(a)k^+}\lV P_k\d_s\Psi^{(a)}\rV_{L^2}\lV P_{k_1}\rr^{(b)}\rV_{L^6}\lV P_{k_2}\Phi^{(c)}\rV_{L^3}ds\\
    \lesssim &\int_1^t \lV\d_s\Psi^{(a)}\rV_{H^{N(a)-5}} \lV\nab\rr^{(b)}\rV_{H^{N(a)+4}}\lV\Phi^{(c)}\rV_{H^2}ds\\
    \lesssim & \int_1^t \ep_1^2\<s\>^{-3/4+(2+|a|)\delta+H(c)\delta}\lV\nab\rr^{(b)}\rV_{H^{N(b)}}ds\lesssim \ep_1^3.
    \end{align*}
    When $|a|\leq N_1-2$, we utilize (\ref{dtPsi_L2_dec-<4}), (\ref{Dec_Phi}) and (\ref{Main_Prop_Ass1}) to control the contribution by
    \begin{align*}
    \sum_{k_1=k+O(1),k_2\leq k-5}\mathfrak{R}^1_{k,k_1,k_2}\lesssim & \int_1^t \sum_{k_1=k+O(1),k_2\leq k-5} 2^{k_2-k+2N(a)k^+}\lV P_k\d_s\Psi^{(a)}\rV_{L^2}\lV P_{k_1}\rr^{(b)}\rV_{L^2}\lV P_{k_2}\Phi^{(c)}\rV_{L^{\infty}}ds\\
    \lesssim &\int_1^t \lV\d_s\Psi^{(a)}\rV_{H^{N(a)-1}} \lV\rr^{(b)}\rV_{H^{N(a)}}\sum_{k_2}2^{k_2^+}\lV\Phi^{(c)}\rV_{L^{\infty}}ds\\
    \lesssim & \int_1^t(\ep_1^2\<s\>^{-3/4+(2+|a|)\delta}+\ep_1\<t\>^{-1+3\delta}\sum_{|\tilde{b}|\leq |a|}\lV\nab u^{(\tilde{b})}\rV_{H^{N(a)}})\ep_1^2\<s\>^{-1+\delta+H(c+2)\delta} ds\lesssim \ep_1^3.
    \end{align*}

    \emph{Case 3: The contribution of $\mathfrak{R}^2_{k,k_1,k_2}$.}

    \noindent By (\ref{BernIneq}), (\ref{Main_Prop_Ass1}) and (\ref{dtrr-inf}) we may bound this contribution when $|a|\geq N_1-1,|b|\geq 1,|c|\geq 3$ by
    \begin{align*}
    \sum_{k_1=k+O(1),k_2\leq k-5}\mathfrak{R}^2_{k,k_1,k_2}\lesssim & \int_1^t \sum_{k_1=k+O(1),k_2\leq k-5} 2^{k_2-k+2N(a)k^+}\lV P_k\Phi^{(a)}\rV_{L^2}\lV P_{k_1}\d_s\rr^{(b)}\rV_{L^{\infty}}\lV P_{k_2}\Phi^{(c)}\rV_{L^2}ds\\
    \lesssim & \int_1^t \lV\Phi^{(a)}\rV_{H^{N(a)}}\sum_{k_1}2^{N(a)k_1^+}\lV P_{k_1}\d_s\rr^{(b)}\rV_{L^{\infty}}\lV\Phi^{(c)}\rV_{L^2}ds\\
    \lesssim & \int_1^t \ep_1^3 \<s\>^{H(a)\delta-1-l(b)/4+H(c)\delta}ds \lesssim \ep_1^3.
    \end{align*}
    By (\ref{Main_Prop_Ass1}) and (\ref{dtrr-L2}) we may bound this contribution when $|a|\leq N_1-2,|b|\geq 1$ by
    \begin{align*}
    \sum_{k_1=k+O(1),k_2\leq k-5}\mathfrak{R}^2_{k,k_1,k_2}\lesssim & \int_1^t \sum_{k_1=k+O(1),k_2\leq k-5} 2^{k_2-k+2N(a)k^+}\lV P_k\Phi^{(a)}\rV_{L^2}\lV P_{k_1}\d_s\rr^{(b)}\rV_{L^2}\lV P_{k_2}\Phi^{(c)}\rV_{L^{\infty}}ds\\
    \lesssim & \int_1^t \lV\Phi^{(a)}\rV_{H^{N(a)}}\lV \d_s\rr^{(b)}\rV_{H^{N(a)-1}}\lV\Phi^{(c)}\rV_{L^{\infty}}ds\\
    \lesssim & \int_1^t \ep_1^2 \<s\>^{H(a)\delta-1+\delta+H(c+2)\delta} \sum_{|b|\leq |a|}\lV(\nab\rr^{(b)},\nab u^{(b)})\rV_{H^{N(a)-1}} ds \lesssim \ep_1^3.
    \end{align*}

    \emph{Case 4: The contribution of $\mathfrak{R}^3_{k,k_1,k_2}$.}

    \noindent By (\ref{dtPsi_L2_dec}) and (\ref{Main_Prop_Ass1}) we bound this contribution by
    \begin{align*}
    \sum_{k_1=k+O(1),k_2\leq k-5}\mathfrak{R}^3_{k,k_1,k_2}\lesssim & \int_1^t\sum_{\substack{k_1=k+O(1),k_2\leq k-5,\\k_1\geq 0}} 2^{k_2-k+2N(a)k^+}\lV P_k\Phi^{(a)}\rV_{L^2}\lV P_{k_1}\rr^{(b)}\rV_{L^2}\lV P_{k_2}\d_s\Psi^{(c)}\rV_{L^{\infty}}\\
    &+\sum_{\substack{k_1=k+O(1),k_2\leq k-5,\\k_1< 0}} 2^{k_2-k}\lV P_k\Phi^{(a)}\rV_{L^2}\lV P_{k_1}\rr^{(b)}\rV_{L^{\infty}}\lV P_{k_2}\d_s\Psi^{(c)}\rV_{L^2}ds\\
    \lesssim & \int_1^t\lV\Phi^{(a)}\rV_{H^{N(a)}}\lV \nab\rr^{(b)}\rV_{H^{N(a)-1}}\lV \d_s\Psi^{(c)}\rV_{H^2}ds\\
    \lesssim & \int_1^t \ep_1^4 \<s\>^{H(a)\delta-3/4+(2+|c|)\delta}\lV \nab\rr^{(b)}\rV_{H^{N(a)-1}}ds\lesssim \ep_1^4.
    \end{align*}
    This concludes the treatment of high-low interaction.

    {\bf High-high interaction: the contribution when $k_1=k_2+O(1)$, $k_2>k+5$.}

    \noindent This case will follows as easily as the high-low interaction, by (\ref{rrd2v-HL}) it suffices to show
    \begin{equation*}
    \sum_{k_2>k+5,k_1=k_2+O(1)}(\mathfrak{R}^0_{k,k_1,k_2}+\mathfrak{R}^1_{k,k_1,k_2}+\mathfrak{R}^2_{k,k_1,k_2}+\mathfrak{R}^3_{k,k_1,k_2})\lesssim \ep_1^3\<t\>^{2H(a)\delta}.
    \end{equation*}

    \emph{Case 1: The contribution of $\mathfrak{R}^0_{k,k_1,k_2}$.}

    \noindent By (\ref{Main_Prop_Ass1}) we have
    \begin{align*}
    \sum_{k_2>k+5,k_1=k_2+O(1)}\mathfrak{R}^0_{k,k_1,k_2}\lesssim &\sum_{k_2>k+5,k_1=k_2+O(1)} 2^{2N(a)k^+}\lV P_k\Phi^{(a)}\rV_{L^{\infty}}\lV P_{k_1}\rr^{(b)}\rV_{L^2}\lV P_{k_2}\Phi^{(c)}\rV_{L^2}\\
    \lesssim &\lV \Phi^{(a)}\rV_{H^2} \lV \rr^{(b)}\rV_{H^{N(a)}}\lV \Phi^{(c)}\rV_{H^{N(a)}}\lesssim \ep_1^3\<t\>^{2H(a)\delta}.
    \end{align*}

    \emph{Case 2: The contribution of $\mathfrak{R}^1_{k,k_1,k_2}$.}

    \noindent By (\ref{dtPsi_L2_dec}) and (\ref{rra,v,w_inf}) we may bound this contribution by
    \begin{align*}
    \sum_{k_2>k+5,k_1=k_2+O(1)}\mathfrak{R}^1_{k,k_1,k_2}\lesssim &\int_1^t \sum_{k_2>k+5,k_1=k_2+O(1)} 2^{2N(a)k^+}\lV P_k\d_s\Psi^{(a)}\rV_{L^{2+\delta}}\lV P_{k_1}\rr^{(b)}\rV_{L^{2(2+\delta)/\delta}}\lV P_{k_2}\Phi^{(c)}\rV_{L^2}ds\\
    \lesssim & \int_1^t \lV \d_s\Psi^{(a)}\rV_{H^{N(a)/2}}\lV \rr^{(b)}\rV_{W^{N(a)/2,2(2+\delta)/\delta}}\lV\Phi^{(c)}\rV_{H^{N(a)}}ds\\
    \lesssim & \int_1^t \ep_1^4\<s\>^{-3/4+(2+|a|)\delta-3/4+(3+|b|)\delta+H(c)\delta}ds \lesssim \ep_1^4.
    \end{align*}

    \emph{Case 3: The contribution of $\mathfrak{R}^2_{k,k_1,k_2}$.}

    \noindent By (\ref{BernIneq}), (\ref{Main_Prop_Ass1}) and (\ref{dtrr-inf}) we may bound this contribution when $|a|\geq N_1-1,|b|\geq 1,|c|\geq 3$ by
    \begin{align*}
    \sum_{k_2>k+5,k_1=k_2+O(1)}\mathfrak{R}^2_{k,k_1,k_2}\lesssim & \int_1^t \sum_{\substack{k_2>k+5,k_1=k_2+O(1),\\k<0}} \lV P_k\Phi^{(a)}\rV_{L^{2+\delta}}\lV P_{k_1}\d_s\rr^{(b)}\rV_{L^{2(2+\delta)/\delta}}\lV P_{k_2}\Phi^{(c)}\rV_{L^2}\\
    	&+\sum_{\substack{k_2>k+5,k_1=k_2+O(1),\\k\geq 0}}2^{2N(a)k^+}\lV P_k\Phi^{(a)}\rV_{L^2}\lV P_{k_1}\d_s\rr^{(b)}\rV_{L^{\infty}}\lV P_{k_2}\Phi^{(c)}\rV_{L^2}ds\\
    \lesssim & \int_1^t \lV\Phi^{(a)}\rV_{H^{N(a)}}\sum_{k_1}2^{k_1^+}\lV P_{k_1}\d_s\rr^{(b)}\rV_{L^{2(2+\delta)/\delta}}\lV\Phi^{(c)}\rV_{H^{N(a)}}ds\\
    \lesssim & \int_1^t \ep_1^3 \<s\>^{H(a)\delta-1-l(b)/5+H(c)\delta}ds \lesssim \ep_1^3.
    \end{align*}
    By (\ref{Main_Prop_Ass1}), (\ref{dtrr-L2}) and (\ref{Dec_Phi}) we may bound this contribution when $|a|\leq N_1-2,|b|\geq 1$ by
    \begin{align*}
    \sum_{k_2>k+5,k_1=k_2+O(1)}\mathfrak{R}^2_{k,k_1,k_2}\lesssim & \int_1^t \sum_{\substack{k_2>k+5,k_1=k_2+O(1),\\k<0}} \lV P_k\Phi^{(a)}\rV_{L^{2+\delta}}\lV P_{k_1}\d_s\rr^{(b)}\rV_{L^2}\lV P_{k_2}\Phi^{(c)}\rV_{L^{2(2+\delta)/\delta}}\\
    &+\sum_{\substack{k_2>k+5,k_1=k_2+O(1),\\k\geq 0}}2^{2N(a)k^+}\lV P_k\Phi^{(a)}\rV_{L^2}\lV P_{k_1}\d_s\rr^{(b)}\rV_{L^2}\lV P_{k_2}\Phi^{(c)}\rV_{L^{\infty}}ds\\
    \lesssim & \int_1^t \lV\Phi^{(a)}\rV_{H^{N(a)}}\lV \d_s\rr^{(b)}\rV_{H^{N(a)-1}}\sum_{k_2}2^{2k_2^+}\lV P_{k_2}\Phi^{(c)}\rV_{L^{2(2+\delta)/\delta}}ds\\
    \lesssim & \int_1^t \ep_1^2 \<s\>^{H(a)\delta-1+\delta+H(c+2)\delta}\sum_{|b|\leq |a|}\lV(\nab\rr^{(b)},\nab u^{(b)})\rV_{H^{N(a)-1}} ds \lesssim \ep_1^3.
    \end{align*}

    \emph{Case 4: The contribution of $\mathfrak{R}^3_{k,k_1,k_2}$.}

    \noindent By (\ref{dtPsi_L2_dec}) and (\ref{Main_Prop_Ass1}) we bound this contribution by
    \begin{align*}
    \sum_{k_2>k+5,k_1=k_2+O(1)}\mathfrak{R}^3_{k,k_1,k_2}\lesssim & \int_1^t\sum_{\substack{k_2>k+5,k_1=k_2+O(1),\\k\geq 0}} 2^{2N(a)k^+}\lV P_k\Phi^{(a)}\rV_{L^2}\lV P_{k_1}\rr^{(b)}\rV_{L^2}\lV P_{k_2}\d_s\Psi^{(c)}\rV_{L^{\infty}}\\
    &+\sum_{\substack{k_2>k+5,k_1=k_2+O(1),\\k< 0}} \lV P_k\Phi^{(a)}\rV_{L^{2+\delta}}\lV P_{k_1}\rr^{(b)}\rV_{L^{2(2+\delta)/\delta}}\lV P_{k_2}\d_s\Psi^{(c)}\rV_{L^2}ds\\
    \lesssim & \int_1^t\lV\Phi^{(a)}\rV_{H^{N(a)}}\lV \nab\rr^{(b)}\rV_{H^{N(a)-1}}\lV \d_s\Psi^{(c)}\rV_{H^2}ds\\
    \lesssim & \int_1^t \ep_1^4 \<s\>^{H(a)\delta-3/4+(2+|c|)\delta}\lV\nab\rr^{(b)}\rV_{H^{N(a)-1}}ds\lesssim \ep_1^4.
    \end{align*}
    This concludes the treatment of the high-high interaction, and thus of Step 2.

     \textbf{Step 3. We prove the bound
    	\begin{equation*}
    	\int_0^t \bfR_3(s)ds\lesssim \ep_1^3\<t\>^{2H(a)\delta}.
    	\end{equation*}}

    \emph{Case 1: Prove the bound
    \begin{equation*}
    \int_0^t \int_{\R^3} \sum_{|n|\in\{0,N(a)\},b+c=a} \rr\d^n\d_s\phi^{(a)}\d^n\big( u^{(b)}\cdot\nab\d_s\phi^{(c)} \big)dxds\lesssim \ep_1^4.
    \end{equation*}}

    When $c=a$, integration by parts in space yields
    \begin{equation}        \label{case3(a)-c=a}
    \begin{aligned}
    &\int_0^t \int_{\R^3} \sum_{|n|\in\{0,N(a)\}} \rr\d^n\d_s\phi^{(a)}\d^n\big( u\cdot\nab\d_s\phi^{(a)} \big)dxds\\
    =&\int_0^t \int_{\R^3} \sum_{|n|\in\{0,N(a)\}} \rr\d^n\d_s\phi^{(a)}\big( u\cdot\nab\d^n\d_s\phi^{(a)} +\sum_{n_1+n_2=n}\d^{n_1}u\cdot\nab\d^{n_2}\d_s\phi^{(a)}\big)dxds\\
    =&\int_0^t \int_{\R^3} \sum_{|n|\in\{0,N(a)\}}\Big[ -\frac{1}{2}(u\cdot\nab\rr+\rr\nab\cdot u)|\d^n\d_s\phi^{(a)}|^2 \\
    &\ \ \ \ \ +\sum_{\substack{n_1+n_2=n,\\|n_1|\geq 1}}\rr\d^n\d_s\phi^{(a)}\big(\d^{n_1}u\cdot\nab\d^{n_2}\d_s\phi^{(a)}\big)\Big]dxds,
    \end{aligned}
    \end{equation}
    By (\ref{rr,v,w_inf}) and (\ref{Main_Prop_Ass1}) we may control this term by
    \begin{align*}
    {\rm LHS}(\ref{case3(a)-c=a})\lesssim &\int_0^t \lV\rr\rV_{W^{1,\infty}}\lV\nab u\rV_{H^2}\lV \d_s\phi^{(a)}\rV_{H^{N(a)}}^2+\lV\rr\rV_{\infty}\lV\d_s\phi^{(a)}\rV_{H^{N(a)}}^2\lV\nab u\rV_{H^{N(a)}}ds\\
    \lesssim &\int_0^t \ep_1^3\<s\>^{-3/4+\delta+2H(a)\delta}\lV \nab u\rV_{H^{N(a)}}ds\lesssim \ep_1^4.
    \end{align*}

    When $|c|\leq |a|-1$, by H\"{o}lder we have
    \begin{align*}
    \int_0^t\int_{\R^3} \sum_{\substack{|n|\in\{0,N(a)\},\\b+c=a,|c|\leq |a|-1}} \rr\d^n\d_s\phi^{(a)}\d^n\big( u^{(b)}\cdot\nab\d_s\phi^{(c)} \big)dx\lesssim& \int_0^t \lV\rr\rV_{\infty}\lV\d_s\phi^{(a)}\rV_{H^{N(a)}}\lV u^{(b)}\nab\d_s\phi^{(c)}\rV_{H^{N(a)}}ds\\
    \lesssim & \int_0^t \ep_1^2\<s\>^{-3/4+\delta+H(a)\delta} \lV u^{(b)}\nab\d_s\phi^{(c)}\rV_{H^{N(a)}}ds.
    \end{align*}
    Noticing that by $|c|\leq |a|-1$ and (\ref{Main_Prop_Ass1}) we have
    \begin{equation*}
    \lV u^{(b)}\nab\d_s\phi^{(c)}\rV_{H^{N(a)}}\lesssim \lV\nab u^{(b)}\rV_{H^{N(a)}}\lV\d_s\phi^{(c)}\rV_{H^{N(a)+1}}\lesssim \ep_1\<s\>^{H(c)\delta}\lV \nab u^{(b)}\rV_{H^{N(a)}}.
    \end{equation*}
    Combining these estimates we see that
    \begin{equation*}
    \int_0^t \ep_1^2\<s\>^{-3/4+H(a)\delta} \lV u^{(b)}\nab\d_s\phi^{(c)}\rV_{H^{N(a)}}ds\lesssim \int_0^t \ep_1^3 \<s\>^{-3/4+\delta+H(a)\delta+H(c)\delta}\lV\nab u^{(b)}\rV_{H^{N(a)}} ds\lesssim \ep_1^4.
    \end{equation*}
    This concludes the bound of Case 1.

    \emph{Case 2: Prove the following bound for $c=a$ or $|a|\geq N_1-1,|c|\leq 2$}
    \begin{equation*}
    \int_0^t\int_{\R^3} \sum_{\substack{|n|\in\{0,N(a)\},\\b+c=a}} \d^n\d_s\phi^{(a)}\d^n\big( u^{(b)}\cdot\nab\d_s\phi^{(c)} \big)dx\lesssim \ep_1^3\<t\>^{2H(a)\delta}.
    \end{equation*}
    When $c=a$, by integration by parts, (\ref{drr,v,w_inf}) and (\ref{Dec_Phi}) we may bound the left-hand side by
    \begin{align*}
    &\int_0^t\int_{\R^3} \sum_{|n|\in\{0,N(a)\}} \d^n\d_s\phi^{(a)}\d^n\big( u\cdot\nab\d_s\phi^{(a)} \big)dx\\
    =&\int_0^t \int_{\R^3} \sum_{|n|\in\{0,N(a)\}}\Big[-\frac{1}{2}\nab\cdot u|\d^n\d_s\phi^{(a)}|^2 +\d^n\d_s\phi^{(a)}\big(\sum_{n_1+n_2=n,|n_1|\geq 1} \d^{n_1}u\cdot\nab\d^{n_2}\d_s\phi^{(a)}\big)\Big]dxds\\
    \lesssim &\int_0^t\lV\nab u\rV_{L^{\infty}}\lV\d_s\phi^{(a)}\rV_{H^{N(a)}}^2+\sum_{|a|\geq N_1-1}\lV\d_s\phi^{(a)}\rV_{H^{N(a)}}^2\lV \nab u\rV_{W^{N(a)-1,\infty}}\\
    &+\sum_{|a|\leq N_1-2}\lV\d_s\phi^{(a)}\rV_{H^{N(a)}}\lV\nab u\rV_{H^{N(a)}}\lV\d_s\phi^{(a)}\rV_{W^{N(a),\infty}}ds\\
    \lesssim & \int_0^t  \ep_1^3\<s\>^{-1-l(0)+3\delta+2H(a)\delta}+\ep_1^2\<s\>^{-1+\delta+H(a+2)\delta}\lV\nab u\rV_{H^{N(a)}}ds \lesssim \ep_1^3.
    \end{align*}
    When $|a|\geq N_1-1,|c|\leq 2$, by (\ref{Dec_Phi}) we have
    \begin{align*}
    \int_0^t\int_{\R^3} \sum_{|n|\in\{0,N(a)\}} \d^n\d_s\phi^{(a)}\d^n\big( u^{(b)}\cdot\nab\d_s\phi^{(c)} \big)dx\lesssim &\int_0^t \lV \d_s\phi^{(a)}\rV_{H^{N(a)}}\lV u^{(b)}\rV_{H^{N(a)}}\lV\d_s\phi^{(c)}\rV_{W^{N(a)+1,\infty}}ds\\
    \lesssim & \int_0^t \ep_1^3 \<s\>^{H(a)\delta-1+\delta+H(c+2)\delta}ds\lesssim \ep_1^3\<t\>^{2H(a)\delta}.
    \end{align*}

    \emph{Case 3: Prove the following bound for $|a|\geq N_1-1,3\leq|c|\leq |a|-1$ or $|a|\leq N_1-2,|c|\leq |a|-1$,}
    \begin{equation}         \label{ud2phi}
    \int_0^t\int_{\R^3} \sum_{|n|\in\{0,N(a)\}} \d^n\d_s\phi^{(a)}\d^n\big( u^{(b)}\cdot\nab\d_s\phi^{(c)} \big)dx\lesssim \ep_1^3\<t\>^{2H(a)\delta}.
    \end{equation}
    We decompose dyadically in frequency and rewrite the functions $\d_s\phi^{(a)}$ and $\d_s\phi^{(c)}$ in terms of the profiles $\Psi^{(a)}$ and $\Psi^{(c)}$, thus it suffices to estimate
    \begin{equation*}
    \mathfrak{Q}_{k,k_1,k_2}:=\int_1^t\int_{\R^6} e^{-is(|\xi|-|\eta|)}(1+\xi^{2n})\eta\overline{\widehat{P_k\Psi^{(a)}}}(\xi) \widehat{P_{k_1}u^{(b)}}(\xi-\eta) \widehat{P_{k_2}\Psi^{(c)}}(\eta) d\xi d\eta ds.
    \end{equation*}
    This bound has similarity with the analysis in \textbf{Step 2} in this Lemma. Hence, we may use the same argument and the corresponding properties of $u^{(b)}$ to obtain the bound (\ref{ud2phi}). This completes the treatment of \textbf{Step 3}.

    \textbf{Step 4. We prove the bound
    	\begin{equation*}
    	\int_0^t \bfR_4(s)ds\lesssim \ep_1^3\<t\>^{2H(a)\delta}.
    	\end{equation*}}

    We first consider the contribution when $|n|=0$. Notice that when $|b|\leq |c|$, by (\ref{dtrr-inf}) and (\ref{Main_Prop_Ass1}) we have
    \begin{equation*}
    \lV\d_su^{(b)}\cdot\nab\phi^{(c)}\rV_{L^2}\lesssim \lV\d_su^{(b)}\rV_{L^{\infty}}\lV \nab\phi^{(c)}\rV_{L^2}\lesssim \ep_1^2\<s\>^{-1-l(b)/4+H(c)\delta},
    \end{equation*}
    when $|b|>|c|$, by (\ref{dtu-L2}) and (\ref{Dec_Phi}) we have
    \begin{equation*}
    \lV\d_su^{(b)}\cdot\nab\phi^{(c)}\rV_{L^2}\lesssim \lV\d_su^{(b)}\rV_{L^2}\lV \nab\phi^{(c)}\rV_{L^{\infty}}\lesssim (\lV(\nab\rr^{(b)},\nab u^{(b)})\rV_{H^2}+\ep_1^2\<s\>^{-1/2}) \ep_1\<s\>^{-1+\delta+H(c+2)\delta}.
    \end{equation*}
    From these estimates and (\ref{Main_Prop_Ass1}) we get
    \begin{align*}
    &\int_0^t\int_{\R^3} \sum_{b+c=a}C_a^b (1+\rr)\d_s\phi^{(a)}\big( \d_su^{(b)}\cdot\nab\phi^{(c)} \big)dxds\\
    \lesssim &\int_0^t \lV 1+\rr\rV_{L^{\infty}}\lV\d_s\phi^{(a)}\rV_{L^2}\lV \d_su^{(b)}\cdot\nab\phi^{(c)}\rV_{L^2}ds\\
    \lesssim &\int_0^t (1+\ep_1)\ep_1\<s\>^{H(a)\delta}\big( \ep_1\<s\>^{-1+\delta+H(c+2)\delta}\lV(\nab\rr^{(b)},\nab u^{(b)})\rV_{H^2}+\ep_1^2\<s\>^{-1-l(b)/4+H(c)\delta} \big)ds\\
    \lesssim &  \ep_1^3.
    \end{align*}

    Next we prove the following bound
    	\begin{align}         \label{case4(b)}
    	\int_0^t\int_{\R^3} \sum_{|n|=N(a),b+c=a} (1+\rr)\d^n\d_s\phi^{(a)}\d^n\big( \d_su^{(b)}\cdot\nab\phi^{(c)} \big)dxds\lesssim \ep_1^3\<t\>^{2H(a)\delta}.
    	\end{align}.

    \noindent We divide the left-hand side into three terms
    \begin{align*}
    {\rm LHS}(\ref{case4(b)})=& \int_0^t\int_{\R^3} \sum_{|n|=N(a)} (1+\rr)\d^n\d_s\phi^{(a)}\big( \d^n\d_su^{(a)}\cdot\nab\phi \big)dxds\\
    &+\int_0^t\int_{\R^3} \sum_{|n|=N(a)} (1+\rr)\d^n\d_s\phi^{(a)}\big(\sum_{n_1+n_2=n,|n_2|\geq 1}\d^{n_1} \d_su^{(a)}\cdot\d^{n_2}\nab\phi \big)dxds\\
    &+\int_0^t\int_{\R^3} \sum_{|n|=N(a),b+c=a,|b|<|a|} (1+\rr)\d^n\d_s\phi^{(a)}\d^n\big( \d_su^{(b)}\cdot\nab\phi^{(c)} \big)dxds\\
    =:& \mathfrak{I}_1+\mathfrak{I}_2+\mathfrak{I}_3
    \end{align*}

    \emph{Case 1: The contribution of $\mathfrak{I}_1$.}

    \noindent By integration by parts in time, $\mathfrak{I}_1$ can be rewritten as
    \begin{align*}
    \mathfrak{I}_1=&\int_{\R^3} \sum_{|n|=N(a)} (1+\rr)\d^n\d_s\phi^{(a)}\big( \d^nu^{(a)}\cdot\nab\phi \big)dx\big|_0^t
    -\int_0^t\int_{\R^3} \sum_{|n|=N(a)} \d_s\rr\d^n\d_s\phi^{(a)}\big( \d^nu^{(a)}\cdot\nab\phi \big)dxds\\
    &-\int_0^t\int_{\R^3} \sum_{|n|=N(a)} (1+\rr)\d^n\d_s\phi^{(a)}\big( \d^nu^{(a)}\cdot\nab \d_s\phi \big)dxds\\
    &-\int_0^t\int_{\R^3} \sum_{|n|=N(a)} (1+\rr)\d^n\d_s^2\phi^{(a)}\big( \d^nu^{(a)}\cdot\nab\phi \big)dxds\\
    =:& \mathfrak{I}_{11}+\mathfrak{I}_{12}+\mathfrak{I}_{13}+\mathfrak{I}_{14}.
    \end{align*}
    By (\ref{Dec_Phi}), (\ref{dtrr-L2}) and (\ref{Main_Prop_Ass1}) we can bound the first three terms $\mathfrak{I}_{11},\mathfrak{I}_{12}$ and $\mathfrak{I}_{13}$ by
    \begin{align*}
    \mathfrak{I}_{11}+\mathfrak{I}_{12}+\mathfrak{I}_{13}\lesssim &\sup_{s\in[0,t]}\lV\d_s\phi^{(a)}\rV_{H^{N(a)}}\lV  u^{(a)}\rV_{H^{N(a)}}\lV\nab\phi\rV_{L^{\infty}}\\
    &+\int_0^t \lV\d_s\rr\rV_{L^{\infty}} \lV\d_s\phi^{(a)}\rV_{H^{N(a)}}\lV \nab u^{(a)}\rV_{H^{N(a)}}\lV\nab\phi\rV_{L^{\infty}}ds\\
    &+\int_0^t  \lV\d_s\phi^{(a)}\rV_{H^{N(a)}}\lV \nab u^{(a)}\rV_{H^{N(a)}}\lV\nab\d_s\phi\rV_{L^{\infty}}ds\\
    \lesssim & \sup_{s\in[0,t]}\ep_1^3\<s\>^{H(a)\delta-1+2\delta}+\int_0^t\ep_1^2 \<s\>^{H(a)\delta-1+2\delta}\lV\nab u^{(a)}\rV_{H^{N(a)}}ds\lesssim \ep_1^3.
    \end{align*}
    For $\mathfrak{I}_{14}$, let $e\in\{(1,0,0),(0,1,0),(0,0,1)\}$, from integration by parts in space and $\phi^{(a)}$-equation in (\ref{Main_Sys_VecFie}) we have
    \begin{align*}
    \mathfrak{I}_{14}=&\int_0^t\int_{\R^3}\sum_{|n|=N(a)}\d^{n-e}\d_s^2\phi^{(a)} [\d^e\rr\d^n u^{(a)}\cdot\nab\phi+(1+\rr)\d^e(\d^nu^{(a)}\cdot\nab\phi)]dxds\\
    \lesssim & \int_0^t \lV \d_s^2\phi^{(a)}\rV_{H^{N(a)-1}}(1+\lV \nab\rr\rV_{L^{\infty}})\lV\nab u^{(a)}\rV_{H^{N(a)}}\lV\nab\phi\rV_{W^{1,\infty}}ds\\
    \lesssim & \int_0^t\ep_1\<s\>^{H(a)\delta}(1+\ep_1)\ep_1\<s\>^{-1+2\delta}\lV\nab u^{(a)}\rV_{H^{N(a)}}ds\lesssim \ep_1^3.
    \end{align*}

    \emph{Case 2: The contribution of $\mathfrak{I}_2$.}

    \noindent When $|a|\geq N_1-1$, by (\ref{Main_Prop_Ass1}), (\ref{dt_v_2norm_Lem}) and (\ref{Dec_Phi}) we may bound this contribution by
    \begin{align*}
    \mathfrak{I}_{2}\lesssim &\int_0^t \lV1+\rr\rV_{L^{\infty}}\lV\d_s\phi^{(a)}\rV_{H^{N(a)}}\lV\d_s u^{(a)}\rV_{H^{N(a)-1}}\lV\nab\phi\rV_{W^{N(a),\infty}}ds\\
    \lesssim &\int_0^t (1+\ep_1)\ep_1\<s\>^{H(a)\delta}(\lV\nab u^{(a)}\rV_{H^{N(a)}}+\lV\nab\rr^{(a)}\rV_{H^{N(a)-1}}+\ep_1^2\<s\>^{-1/2})\ep_1\<s\>^{-1+2\delta}ds\lesssim \ep_1^3.
    \end{align*}
    When $|a|\leq N_1-2$, by (\ref{Main_Prop_Ass1}), (\ref{dtu-L2}), (\ref{Dec_Phi}) and (\ref{dtrr-inf}) we have
    \begin{align*}
    \mathfrak{I}_2\lesssim & \int_0^t \lV1+\rr\rV_{L^{\infty}}\lV\d_s\phi^{(a)}\rV_{H^{N(a)}}\big[\lV\d_s u^{(a)}\rV_{H^{N(a)-1}}\lV\nab\phi\rV_{W^{N(a)/2,\infty}}+\lV\d_s u^{(a)}\rV_{W^{N(a)/2,\infty}}\lV\nab\phi\rV_{H^{N(a)}}\big]ds\\
    \lesssim & \int_0^t (1+\ep_1)\ep_1\<s\>^{H(a)\delta}\big[(\lV\nab u^{(a)}\rV_{H^{N(a)}}+\lV\nab\rr^{(a)}\rV_{H^{N(a)-1}}+\ep_1^2\<s\>^{-1/2})\ep_1\<s\>^{-1+2\delta}\\
    &+\ep_1^2\<s\>^{-1-l(a)/4+H(a)\delta}   \big]ds\\
    \lesssim &\ep_1^3.
    \end{align*}

    \emph{Case 3: The contribution of $\mathfrak{I}_3$.}

    \noindent This case follows as easily as \emph{Case 4(b).2} above. Before estimate $\mathfrak{I}_3$, we control the term $\d^n (\d_s u^{(b)}\cdot\nab\phi^{(c)})$ for $|n|=N(a),|b|<|a|$. When $|a|\geq N_1-1,|b|\leq |c|$, we have
    \begin{align*}
    \lV \d^n(\d_s u^{(b)}\cdot\nab\phi^{(c)})\rV_{L^2}\lesssim \lV \d_s u^{(b)}\rV_{W^{N(a),\infty}}\lV\nab\phi^{(c)}\rV_{H^{N(a)}}\lesssim \ep_1^2\<s\>^{-1-l(b)/4+H(c)\delta};
    \end{align*}
    when $|a|\geq N_1-1,|b|>|c|$, by (\ref{dt_v_2norm_Lem}) and (\ref{Dec_Phi}) we have
    \begin{align*}
    \lV \d^n(\d_s u^{(b)}\cdot\nab\phi^{(c)})\rV_{L^2}\lesssim& \lV \d_s u^{(b)}\rV_{H^{N(a)}}\lV\nab\phi^{(c)}\rV_{W^{N(a),\infty}}\\
    \lesssim &(\lV (\nab u^{(b)},\nab\rr^{(b)})\rV_{H^{N(a)}}+\ep_1^2\<s\>^{-1/2})\ep_1^2\<s\>^{-1+\delta+H(c+2)\delta};
    \end{align*}
    when $|a|\leq N_1-2$, we have
    \begin{align*}
    \lV \d^n(\d_s u^{(b)}\cdot\nab\phi^{(c)})\rV_{L^2}\lesssim& \lV \d_s u^{(b)}\rV_{H^{N(a)}}\lV\nab\phi^{(c)}\rV_{L^{\infty}}+\lV \d_s u^{(b)}\rV_{L^{\infty}}\lV\nab\phi^{(c)}\rV_{H^{N(a)}}\\
    \lesssim &(\lV (\nab u^{(b)},\nab\rr^{(b)})\rV_{H^{N(a)}}+\ep_1^2\<s\>^{-1/2})\ep_1^2\<s\>^{-1+\delta+H(c+2)\delta}\\
    &+\ep_1^2\<s\>^{-1-l(b)/4+H(c)\delta}.
    \end{align*}
    Combining these estimates we may bound this contribution by
    \begin{align*}
    \mathfrak{I}_3\lesssim \int_0^t \lV1+\rr\rV_{L^{\infty}}\lV\d_s\phi^{(a)}\rV_{H^{N(a)}}\lV\d^{N(a)}(\d_s u^{(b)}\cdot\nab\phi^{(c)})\rV_{L^2}ds \lesssim \ep_1^3.
    \end{align*}
    This concludes the bound (\ref{case4(b)}), and thus of \textbf{Step 4}.

     \textbf{Step 5. We prove the bound
    	\begin{equation*}
    	\int_0^t \bfR_5(s)ds\lesssim \ep_1^3\<t\>^{2H(a)\delta}.
    	\end{equation*}}

    \emph{Case 1: The contribution when $e=a,|n|=N(a)$.}

    \noindent We may rewrite this contribution by
    \begin{align*}
    &\int_0^t\int_{\R^3} \sum_{|n|=N(a)}(1+\rr)\d^n\d_s\phi^{(a)}\d^n\big( u\cdot\nab(u\cdot\nab\phi^{(a)}) \big)dxds\\
    =& \int_0^t\int_{\R^3} \sum_{|n|=N(a)}(1+\rr)\d^n\d_s\phi^{(a)} u\cdot\nab(u\cdot\nab\d^n\phi^{(a)})dxds\\
    &+\int_0^t\int_{\R^3} \sum_{|n|=N(a)}(1+\rr)\d^n\d_s\phi^{(a)}\sum_{\substack{n_1+n_2+n_3=n,\\|n_3|<|n|}} \d^{n_1}u\cdot\nab(\d^{n_2}u\cdot\nab\d^{n_3}\phi^{(a)}) dxds\\
    =:&\mathfrak{L}_1+\mathfrak{L}_2.
    \end{align*}
    By integration by parts, we can estimate the first term $\mathfrak{L}_1$ by
    \begin{align*}
    \mathfrak{L}_1=&\int_0^t\int_{\R^3} \sum_{|n|=N(a)}\big[-(1+\rr)u\cdot\nab\d^n\d_s\phi^{(a)} -(u\cdot\rr+(1+\rr)\nab\cdot u)\d^n\d_s\phi^{(a)}\big](u\cdot\nab\d^n\phi^{(a)})dxds\\
    =& \int_{\R^3}\sum_{|n|=N(a)}-\frac{1}{2}(1+\rr)|u\cdot\nab\d^n\phi^{(a)}|^2 dx\Big|_0^t+\int_0^t\int_{\R^3}\sum_{|n|=N(a)}\frac{1}{2}\d_s\rr|u\cdot\nab\d^n\phi^{(a)}|^2 dxds\\
    &+\int_0^t\int_{\R^3} \sum_{|n|=N(a)}\big[-(1+\rr)\d_su\cdot\nab\d^n\phi^{(a)} -(u\cdot\rr+(1+\rr)\nab\cdot u)\d^n\d_s\phi^{(a)}\big](u\cdot\nab\d^n\phi^{(a)})dxds.
    \end{align*}
    Applying (\ref{Main_Prop_Ass1}), (\ref{dtrr-L2}) and (\ref{dtu-L2}) we can bound this by
    \begin{align*}
    \lesssim & \sup_{s\in[0,t]}\lV  u\rV_{H^2}^2\lV \nab\phi^{(a)}\rV_{H^{N(a)}}^2+\int_0^t \lV\d_s\rr\rV_{H^2}\lV \nab u\rV_{H^2}^2\lV \nab\phi^{(a)}\rV_{H^{N(a)}}^2ds\\
    &+\int_0^t (\lV\d_s u\rV_{H^2}\lV\nab\phi^{(a)}\rV_{H^{N(a)}}+\lV(\nab\rr,\nab u)\rV_{H^2}\lV\d_s\phi^{(a)}\rV_{H^{N(a)}})\lV\nab u\rV_{H^2}\lV\nab\phi^{(a)}\rV_{H^{N(a)}}ds\\
    \lesssim & \ep_1^4\<t\>^{2H(a)\delta}+\int_0^t \ep_1^3\<s\>^{2H(a)\delta}\lV\nab u\rV_{H^2}^2ds\\
    &+\int_0^t  (\lV (\nab\rr,\nab u)\rV_{H^4}+\ep_1^2\<s\>^{-1/2})\ep_1\<s\>^{H(a)\delta}  \lV\nab u\rV_{H^2} \ep_1\<s\>^{H(a)\delta}ds\\
    \lesssim & \ep_1^4\<t\>^{2H(a)\delta}.
    \end{align*}
    For second term $\mathfrak{L}_2$, by H\"{o}lder and (\ref{Main_Prop_Ass1}) we have
    \begin{align*}
    \mathfrak{L}_2\lesssim \int_0^t \lV\d_s\phi^{(a)}\rV_{H^{N(a)}}\lV\nab u\rV_{H^{N(a)}}^2\lV\nab\phi^{(a)}\rV_{H^{N(a)}}ds\lesssim \ep_1^4\<t\>^{2H(a)\delta}.
    \end{align*}

    \emph{Case 2: The contribution when $e=a,|n|=0$ and $|e|<|a|$.}

    \noindent By H\"{o}lder and (\ref{Main_Prop_Ass1}) we control this by
    \begin{align*}
    &\int_0^t\int_{\R^3}(1+\rr)\d_s\phi^{(a)} u\cdot\nab(u\cdot\nab\phi^{(a)})\\
    &\ \ \ + \sum_{\substack{|n|\in\{0,N(a)\},\\b+c+e=a,|e|<|a|}}C_a^{b,c} (1+\rr)\d^n\d_s\phi^{(a)}\d^n\big( u^{(b)}\cdot\nab(u^{(c)}\cdot\nab\phi^{(e)}) \big)dxds\\
    \lesssim &\int_0^t (1+\lV\rr\rV_{H^2}) \lV\d_s\phi^{(a)}\rV_{L^2}\lV \nab u\rV_{H^2}^2\lV\nab\phi^{(a)}\rV_{H^2}+\\
    &+\sum_{b+c+e=a,|e|<|a|}(1+\lV\rr\rV_{H^2}) \lV\d_s\phi^{(a)}\rV_{H^{N(a)}}\lV\nab u^{(b)}\rV_{H^{N(a)}}\lV\nab u^{(c)}\rV_{H^{N(a)}}\lV\nab\phi^{(a)}\rV_{H^{N(a)+1}}ds\\
    \lesssim &\ep_1^4\<t\>^{2H(a)\delta}.
    \end{align*}
    This completes the proof of \textbf{Step 5}.

    \textbf{Step 6. We prove the bound
    	\begin{equation*}
    	\int_0^t \bfR_6(s)ds\lesssim \ep_1^3\<t\>^{2H(a)\delta}.
    	\end{equation*}}

    \noindent In order to deal with this contribution, it suffices to show that
    \begin{align*}
    \int_0^t\int_{\R^3} \sum_{|n|\in\{0,N(a),b+c+e=a\}} (1+\rr)\d^n\d_s\phi^{(a)}\d^n \big(\phi^{(b)}\nab\phi^{(c)}\nab\phi^{(e)}\big) dxds\lesssim \ep_1^4.
    \end{align*}
    When $|a|\geq N_1-1$, we may assume that $|b+c|\leq |e|$, then we bound the left-hand side by
    \begin{align*}
    & \int_0^t \lV\d_s\phi^{(a)}\rV_{H^{N(a)}}\lV\phi^{(b)}\rV_{W^{N(a),\infty}}\lV\nab\phi^{(c)}\rV_{W^{N(a),\infty}}\lV \nab\phi^{(e)}\rV_{H^{N(a)}}ds\\
    \lesssim &\int_0^t \ep_1^4\<s\>^{H(a)\delta-1/2+2\delta-1+\delta+H(c)\delta+H(e)\delta}ds\lesssim \ep_1^4.
    \end{align*}
    The case $|b+c|>|e|$ can be estimated similarly. When $|a|\leq N_1-2$, by H\"{o}lder we can bound this contribution by
    \begin{align*}
    &\int_0^t \lV\d_s\phi^{(a)}\rV_{H^{N(a)}}\big(\lV\phi^{(b)}\rV_{L^{\infty}}\lV\nab\phi^{(c)}\rV_{L^{\infty}}\lV\nab\phi^{(e)}\rV_{H^{N(a)}}+\lV\nab\phi^{(b)}\rV_{H^{N(a)}}\lV\nab\phi^{(c)}\rV_{L^{\infty}}\lV\nab\phi^{(e)}\rV_{L^{\infty}}\big)ds\\
    \lesssim & \int_0^t \ep_1\<s\>^{H(a)\delta}(\ep_1^3\<s\>^{-1/2+2\delta-1+\delta+H(c+2)\delta+H(e)\delta}+\ep_1^3\<s\>^{H(b)\delta-2+2\delta+H(c+2)\delta+H(e+2)\delta})ds\lesssim \ep_1^4.
    \end{align*}
    This concludes the treatment of \textbf{Step 6}. Therefore, this completes the proof of the lemma.

\end{proof}

\section{Bounds on the profile: Weighted $L^2$ norm}

In this section we prove (\ref{Main_Prop_result2}), namely
\begin{proposition}      \label{Prop_Psia}
	With the hypothesis in Proposition \ref{Main_Prop}, for any $t\in[0,T]$, $0\leq |a|\leq N_1-1$, we have
	\begin{equation*}
	\lV\mathcal{F}^{-1}(|\xi|\d_{\xi}\widehat{\Psi^{(a)}})\rV_{H^{N(|a|+1)}}\lesssim \ep_0\<t\>^{H(a+1)\delta}.
	\end{equation*}
\end{proposition}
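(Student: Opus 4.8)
The plan is to use Duhamel's formula for $\Phi^{(a)}$ relative to the free wave evolution $e^{it|\nab|}$, differentiate in $\xi$, and estimate each resulting term in $H^{N(|a|+1)}$. Recall from \eqref{xidxif} that it suffices to bound $\lV S\Psi^{(a)}\rV_{H^{N(a+1)}}$, $\lV\Om\Psi^{(a)}\rV_{H^{N(a+1)}}$, $\lV\Psi^{(a)}\rV_{H^{N(a+1)}}$ and $\lV t\d_t\Psi^{(a)}\rV_{H^{N(a+1)}}$. The first two are weighted energies already controlled through the vector-field apparatus: $S\Psi^{(a)}$ and $\Om\Psi^{(a)}$ involve $Z^{a'}\phi$ with $|a'|\leq|a|+1\leq N_1$, so by Proposition \ref{Prop_Ene_Weiphi} applied at order $|a|+1$ together with the relation $\Phi^{(b)}=(\d_t+i|\nab|)\phi^{(b)}$ and the identity $\Psi^{(b)}=e^{-it|\nab|}\Phi^{(b)}$ (which is an $L^2$-isometry), these are $\lesssim\ep_1\<t\>^{H(a+1)\delta}$. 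The term $\lV\Psi^{(a)}\rV_{H^{N(a+1)}}\lesssim\lV\Phi^{(a)}\rV_{H^{N(a)}}\lesssim\ep_1\<t\>^{H(a)\delta}$ is immediate from \eqref{Main_Prop_Ass1}. Thus the only genuinely new ingredient is the term $\lV t\d_t\Psi^{(a)}\rV_{H^{N(a+1)}}$, which forces us to extract one extra power of decay from $\d_t\Psi^{(a)}$, i.e. $\lV\d_t\Psi^{(a)}\rV_{H^{N(a+1)}}\lesssim\ep_1\<t\>^{-1+H(a+1)\delta}$ or better.

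\textbf{Reduction to $\d_t\Psi^{(a)}$ estimates.} From the $\phi^{(a)}$-equation in \eqref{Main_Sys_VecFie} we have $\d_t^2\phi^{(a)}-\Delta\phi^{(a)}=F_3^{(a)}+\tfrac{\rr}{1+\rr}\Delta\phi^{(a)}$, hence $\d_t\Phi^{(a)}=i|\nab|\Phi^{(a)}+[F_3^{(a)}+\tfrac{\rr}{1+\rr}\Delta\phi^{(a)}]$ and therefore $\d_t\Psi^{(a)}=e^{-it|\nab|}[F_3^{(a)}+\tfrac{\rr}{1+\rr}\Delta\phi^{(a)}]$. So $\lV t\,\d_t\Psi^{(a)}\rV_{H^{N(a+1)}}=\lV t[F_3^{(a)}+\tfrac{\rr}{1+\rr}\Delta\phi^{(a)}]\rV_{H^{N(a+1)}}$. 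We already have the $L^2$-decay of $\d_t\Psi^{(a)}$ from Lemma \ref{dt_v_2norm_Lem}: \eqref{dtPsi_L2_dec} gives $\lV\d_t\Psi^{(a)}\rV_{H^{N(a)-3}}\lesssim\ep_1^2\<t\>^{-3/4+(2+|a|)\delta}$, but this is at too low a Sobolev regularity and with insufficient decay for the needed bound. I would instead redo the estimate of $\d_t\Psi^{(a)}=e^{-it|\nab|}[F_3^{(a)}+\tfrac{\rr}{1+\rr}\Delta\phi^{(a)}]$ at the higher regularity $N(a+1)$, term by term. The dangerous pieces are $\rr^{(b)}\Delta\phi^{(c)}$ (from $\tfrac{\rr}{1+\rr}\Delta\phi^{(a)}$ and the first sum in $F_3^{(a)}$) and $\d_tu^{(b)}\cdot\nab\phi^{(c)}$, $u^{(b)}\cdot\nab\d_t\phi^{(c)}$ (from $F_3^{(a)}$); the remaining terms $u^{(b)}\cdot\nab(u^{(c)}\cdot\nab\phi^{(e)})$ and $Z^aErr3$ are cubic/higher and decay faster. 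For $\rr^{(b)}\Delta\phi^{(c)}$ with, say, $|c|\leq|b|$ (the low-high branch), I use Hardy-type bounds \eqref{Hardy_2} to put $\rr^{(b)}$ in $L^\infty$ (via \eqref{drr,v,w_inf}--\eqref{drra,v,w_a_inf_5}, with enough derivatives) and $\Delta\phi^{(c)}$ in $H^{N(a+1)}$; when $|c|>|b|$ one instead places $\Delta\phi^{(c)}$ in $L^\infty$ using \eqref{Dec_Phi}. In every case the product of the two decay rates beats $\<t\>^{-1}$, at worst up to $\<t\>^{H(a+1)\delta}$ slow growth when one factor is a top-order weighted quantity. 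The terms with $\d_tu^{(b)}$ are handled with \eqref{dtu-L2} and \eqref{dtrr-inf} exactly as in the proof of \eqref{dtPsi_L2_dec}, but at regularity $N(a+1)$, which is admissible because $N(a+1)+$(a few)$\leq N(a)$ by the choice $h=6$.

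\textbf{Assembling the Duhamel terms and the main obstacle.} With the estimate $\lV t[F_3^{(a)}+\tfrac{\rr}{1+\rr}\Delta\phi^{(a)}]\rV_{H^{N(a+1)}}\lesssim\ep_1\<t\>^{H(a+1)\delta}$ in hand, and the weighted-energy bounds for $S\Psi^{(a)},\Om\Psi^{(a)}$ from Proposition \ref{Prop_Ene_Weiphi} (applied at order $\leq|a|+1$) together with \eqref{xidxif}, we conclude
\begin{equation*}
\lV\FF^{-1}(|\xi|\d_\xi\widehat{\Psi^{(a)}})\rV_{H^{N(a+1)}}\lesssim\lV S\Psi^{(a)}\rV_{H^{N(a+1)}}+\lV\Om\Psi^{(a)}\rV_{H^{N(a+1)}}+\lV\Psi^{(a)}\rV_{H^{N(a+1)}}+\lV t\d_t\Psi^{(a)}\rV_{H^{N(a+1)}}\lesssim\ep_1\<t\>^{H(a+1)\delta},
\end{equation*}
and since $\ep_1=\ep_0^{2/3}$, this is $\lesssim\ep_0\<t\>^{H(a+1)\delta}$ for $\ep_0$ small, as claimed. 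The subtle point — and the main obstacle — is bookkeeping the Sobolev and weight budgets: $S\Psi^{(a)}$ and $\Om\Psi^{(a)}$ involve $Z$-derivatives of total order $|a|+1$, so one must verify that Proposition \ref{Prop_Ene_Weiphi} is being invoked with index $a'$ satisfying $|a'|\leq N_1$ and at Sobolev level $N(a')=N(|a|+1)=N(a+1)$, which is exactly what the definitions give; and for the $t\d_t\Psi^{(a)}$ term one must check that the decay gains from \eqref{drr,v,w_inf}--\eqref{drra,v,w_a_inf_5} and \eqref{Dec_Phi} are available at the regularity $N(a+1)$ plus a constant number of extra derivatives, which holds because those decay estimates are stated in $W^{N(b+2)-2,\infty}$ and $N(b+2)-2\geq N(|a|+1)$ is ensured by $h=6$ and $|b|\leq|a|$. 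Once these index inequalities are verified, each term falls to a routine application of Hölder, \eqref{Hardy_2}, the bilinear bounds of Lemma \ref{biL-lem}, and the decay lemmas of Section 3.
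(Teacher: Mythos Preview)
Your reduction via \eqref{xidxif} to bounding $S\Psi^{(a)}$, $\Om\Psi^{(a)}$, $\Psi^{(a)}$, and $t\d_t\Psi^{(a)}$ matches the paper exactly, and the handling of the first three terms is correct. The gap is in your treatment of $t\d_t\Psi^{(a)}$: the direct product estimates you propose for the quadratic pieces $\rr^{(b)}\Delta\phi^{(c)}$ and $u^{(b)}\cdot\nab\d_t\phi^{(c)}$ do \emph{not} give the required $\<t\>^{-1+H(|a|+1)\delta}$ decay in $H^{N(|a|+1)}$. Concretely, take $a=b=c=0$: the target is $\ep_1^2\<t\>^{-1+\delta}$. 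If you put $\rr$ in $L^\infty$ you only have $\lV\rr\rV_{L^\infty}\lesssim\ep_1\<t\>^{-3/4+\delta}$ from \eqref{rr,v,w_inf}, and $\Delta\phi$ carries no decay in $H^{N(1)}$, so the product is $\ep_1^2\<t\>^{-3/4+\delta}$. If instead you put $\Delta\phi$ in $L^\infty$ via \eqref{Dec_Phi}, you get at best $\ep_1\<t\>^{-1+\delta+H(2)\delta}=\ep_1\<t\>^{-1+2\delta}$, and $\rr$ in $H^{N(1)}$ is merely bounded; the product is $\ep_1^2\<t\>^{-1+2\delta}$, off by one power of $\<t\>^\delta$. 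The same obstruction appears whenever $1+H(|c|+2)>H(|a|+1)$, which covers almost all $(b,c)$; the only case where the direct approach closes is $|a|=|b|=N_1-1$, $c=0$ (and the paper does use it there).

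What the paper does for all remaining $(b,c)$ with $|b|\leq N_1-2$ is a genuinely different mechanism: write the bilinear term on the Fourier side with the oscillatory factor $e^{it|\eta|}$ and integrate by parts in $\eta$, using $e^{it|\eta|}=\tfrac{\eta}{it|\eta|}\cdot\nab_\eta e^{it|\eta|}$. This produces the full $t^{-1}$ factor directly, at the cost of either $\nab_\eta\widehat{P_{k_1}\rr^{(b)}}(\xi-\eta)$ or $\nab_\eta\widehat{P_{k_2}\Psi^{(c)}}(\eta)$. The first is controlled in the high--low regime via $\lV\<r\>P_k\rr^{(b)}\rV_{L^\infty}$ using the weighted Sobolev inequality \eqref{rv} (which requires $|b|\leq N_1-2$ so that two extra $\tilde\Om$'s are available), and in the low--high/high--high regimes it costs nothing after Young's inequality. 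The second is exactly the bootstrap quantity $\lV\FF^{-1}(|\xi|\d_\xi\widehat{\Psi^{(c)}})\rV_{H^{N(|a|+1)}}$, bounded by \eqref{Main_Prop_Ass2} since $|c|\leq|a|\leq N_1-1$. Without this oscillatory integration by parts the bootstrap for $\d_t\Psi^{(a)}$ does not close.
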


\begin{proof}
	By the bound (\ref{xidxif}) and $S=t\d_t+x\cdot\d_x$, we get
	\begin{equation}                \label{keyIneq}
	\lV\mathcal{F}^{-1}(|\xi|\nab_{\xi}\widehat{\Psi^{(a)}}(\xi))\rV_{H^N}\lesssim \lV S\Psi^{(a)}\rV_{H^N}+\lV\Om \Psi^{(a)}\rV_{H^N}+\lV \Psi^{(a)}\rV_{H^N}+\lV t\d_t \Psi^{(a)}\rV_{H^N},
	\end{equation}
	where
	\begin{equation*}
	S\Psi^{(a)}=e^{-it|\nab|}S\Phi^{(a)},\ \Om \Psi^{(a)}=e^{-it|\nab|}\Om\Phi^{(a)},
	\end{equation*}
	it suffices from (\ref{Main_Prop_result1}) to prove that
	\begin{equation}   \label{dtPsi_ep0}
	\lV \d_t \Psi^{(a)}(t)\rV_{H^{N(|a|+1)}}\lesssim \ep_0\<t\>^{-1+H(|a|+1)\delta}.
	\end{equation}
	We only prove the case when $|t|\geq 1$, from $\phi^{(a)}$-equation in (\ref{Main_Sys_VecFie}), it suffices to prove
	\begin{gather}       \label{dtPsi_Term1}
	\lV \d_t u^{(b)}\cdot\nab\phi^{(c)}\rV_{H^{N(|a|+1)}}\lesssim \ep_1^2\<t\>^{-1},\\\label{dtPsi_Term0}
	\lV \rr^{(b)}\Delta\phi^{(c)}\rV_{H^{N(|a|+1)}}\lesssim \ep_1^2\<t\>^{-1+H(|a|+1)\delta} ,\\ \label{dtPsi_Term2}
	\lV u^{(b)}\cdot\nab\d_t\phi^{(c)}\rV_{H^{N(|a|+1)}}\lesssim \ep_1^2\<t\>^{-1+H(|a|+1)\delta},
	\end{gather}
	for $b+c=a$, $k\in\Z$, and
	\begin{gather} \label{dtPsi_Term3}
	\lV u^{(b)}\cdot\nab(u^{(c)}\cdot\nab\phi^{(e)})\rV_{H^{N(|a|+1)}}\lesssim \ep_1^3\<t\>^{-1},    \\     \label{dtPsi_Term4}
	\lV \frac{\Phi^{(b)}}{|\nab|}\Phi^{(c)}\Phi^{(e)}\rV_{H^{N(|a|+1)}}\lesssim \ep_1^3\<t\>^{-1},
	\end{gather}
	for $b+c+e=a$, $k\in\Z$.
	
	\textbf{Step 1: Proof of (\ref{dtPsi_Term1}).}
	
	When $|a|= N_1-1$, $|b|\geq |c|$, by (\ref{dtu-L2}), (\ref{drra,v,w_L2dec}) and (\ref{Dec_Phi}), we have
	\begin{align*}
	\lV \d_t u^{(b)}\cdot\nab\phi^{(c)}\rV_{H^{N(|a|+1)}}\lesssim  \lV \d_t u^{(b)}\rV_{H^{N(|a|+1)}}\sum_k 2^{N(|a|+1)k^+}\lV P_k \nab\phi^{(c)}\rV_{L^{\infty}}
	\lesssim  \ep_1^2 \<t\>^{-3/2+(6+|b|)\delta}.
	\end{align*}
	When $|a|= N_1-1$, $|b|<|c|$, using (\ref{dtrr-inf}), it follows that
	\begin{equation*}
	\lV \d_t u^{(b)}\cdot\nab\phi^{(c)}\rV_{H^{N(|a|+1)}}\lesssim  (\sum_{k\geq 0}2^{N(|a|+1)k}\lV P_k \d_t u^{(b)}\rV_{L^{\infty}}+\lV \d_t u^{(b)}\rV_{L^{\infty}}) \lV\nab \phi^{(c)}\rV_{H^{N(|a|+1)}}\lesssim \ep_1^2 \<t\>^{-1-l(b)/4+H(c)\delta},
	\end{equation*}
	When $|a|\leq N_1-2$, we obtain from (\ref{dtu-L2}), (\ref{drra,v,w_L2dec}), (\ref{Dec_Phi}) and (\ref{dtrr-inf})
	\begin{align*}
	\lV \d_t u^{(b)}\cdot\nab\phi^{(c)}\rV_{H^{N(|a|+1)}}\lesssim \lV\d_t u^{(b)}\rV_{H^{N(|a|+1)}}\lV \nab \phi^{(c)}\rV_{L^{\infty}}+\lV\d_t u^{(b)}\rV_{L^{\infty}}\lV \nab \phi^{(c)}\rV_{H^{N(|a|+1)}}\lesssim \ep_1^2 \<t\>^{-1-l(b)/4+H(c)\delta},
	\end{align*}
	Hence, the bound (\ref{dtPsi_Term1}) follows.

	\textbf{Step 2: Proof of (\ref{dtPsi_Term0}).}
	
	When $|a|=|b|=N_1-1,c=0$, by (\ref{Dec_Phi}) we have
	\begin{align*}
	\lV \rr^{(a)}\Delta\phi\rV_{H^{N(|a|+1)}}\lesssim  \lV u^{(a)}\rV_{H^{N(|a|+1)}} \sum_{k}2^{N(|a|+1)k^++k}\lV P_k\nab\phi\rV_{L^{\infty}}\lesssim \ep_1^2\<t\>^{-1+H(|a|+1)\delta}.
	\end{align*}
	Then it suffices to estimate (\ref{dtPsi_Term0}) when $|b|\leq N_1-2$.
	
	Decomposing dyadically in frequency, we have
	\begin{align*}
	\lV \rr^{(b)}\Delta\phi^{(c)}\rV_{H^{N(|a|+1)}}\lesssim (\sum_k 2^{2N(|a|+1)k^+}\mathfrak{X}_k^2)^{1/2},
	\end{align*}
	where
	\begin{equation*}
	\mathfrak{X}_k:=\lV\sum_{k_1,k_2}\varphi_k(\xi)\int_{\R^3} e^{it|\eta|}|\eta| \widehat{P_{k_1}\rr^{(b)}}(\xi-\eta)\widehat{P_{k_2}\Psi^{(c)}}(\eta)d\eta\rV_{L^2}.
	\end{equation*}
	We further divided $\mathfrak{X}_k$ into high-low, low-high, and high-high case,
	\begin{equation*}
	\mathfrak{X}_k\lesssim \mathfrak{X}^{hl}_k+\mathfrak{X}^{lh}_k+\mathfrak{X}^{hh}_k,
	\end{equation*}
	where
	\begin{gather*}
	\mathfrak{X}^{hl}_k:=\sum_{k_2\leq k-5}\lV\varphi_k(\xi)\int_{\R^3} e^{it|\eta|}|\eta| \widehat{P_{[k-3,k+3]}\rr^{(b)}}(\xi-\eta)\widehat{P_{k_2}\Psi^{(c)}}(\eta)d\eta\rV_{L^2},\\
	\mathfrak{X}^{lh}_k:=\lV\varphi_k(\xi)\int_{\R^3} e^{it|\eta|}|\eta| \widehat{P_{<k+6}u\rr^{(b)}}(\xi-\eta)\widehat{P_{[k-5,k+5]}\Psi^{(c)}}(\eta)d\eta\rV_{L^2},\\
	\mathfrak{X}^{hh}_k:=\sum_{k_1,k_2;k_2\geq k+5}\lV\varphi_k(\xi)\int_{\R^3} e^{it|\eta|}|\eta| \widehat{P_{[k_2-3,k_2+3]}\rr^{(b)}}(\xi-\eta)\widehat{P_{k_2}\Psi^{(c)}}(\eta)d\eta\rV_{L^2}.
	\end{gather*}
	
	{\bf High-low interaction: The contribution of $\mathfrak{X}^{hl}_k$.}
	
	\noindent Integration by parts in $\eta$ yields
	\begin{align*}
	\mathfrak{X}^{hl}_k\lesssim &  t^{-1}\sum_{k_2<k-5}\Big[\lV\int_{\R^3} \nab_{\eta}(\eta)\widehat{P_{[k-3,k+3]} \rr^{(b)}}(\xi-\eta) \widehat{P_{k_2}\Phi^{(c)}}(\eta) d\eta\rV_{L^2}\\
	&+\lV \int_{\R^3} \eta \nab_{\eta} \widehat{P_{[k-3,k+3]} \rr^{(b)}}(\xi-\eta) \widehat{P_{k_2}\Phi^{(c)}}(\eta) d\eta\rV_{L^2}\\
	&+\lV \int_{\R^3} \eta  \widehat{P_{[k-3,k+3]} \rr^{(b)}}(\xi-\eta) e^{it|\eta|}\nab_{\eta}\widehat{P_{k_2}\Psi^{(c)}}(\eta) d\eta\rV_{L^2}\Big].
	\end{align*}
	When $|a|=|b|\leq N_1-2, c=0$, by (\ref{Symbol_Pre}) we have
	\begin{align*}
	\big[\sum_k 2^{2N(|a|+1)k^+}(\mathfrak{X}^{hl}_k)^2\big]^{1/2}
	\lesssim & t^{-1}  [\lV \rr^{(b)}\rV_{H^{N(|a|+1)}}\lV \Phi^{(c)}\rV_{H^2}+\lV \FF^{-1}( |\xi|\d_{\xi}\widehat{\rr^{(b)}})\rV_{H^{N(|a|+1)}}\sum_{k_2}\lV \Phi^{(c)}\rV_{L^{\infty}}\\
	&+\lV \rr^{(b)}\rV_{H^{N(|a|+1)}}\lV\FF^{-1}(|\xi|\d_{\xi}\widehat{\Psi^{(c)}})\rV_{H^{N(|a|+1)}}]
	\end{align*}
	Since from (\ref{keyIneq}), (\ref{Main_Prop_Ass1}), (\ref{dtrr-L2}), and (\ref{drra,v,w_L2dec}) we obtain
	\begin{align*}
	&\lV |\xi|\d_{\xi}\widehat{\rr^{(b)}}(\xi)\rV_{H^{N(|a|+1)}}\lesssim \ep_1+t\lV\d_t\rr^{(b)}\rV_{H^{N(|a|+1)}}\\
	\lesssim &\ep_1+\sum_{|\tilde{b}|\leq |b|}\lV (\nab\rr^{(\tilde{b})},\nab u^{(\tilde{b})})\rV_{H^{N(|a|+1)}}\lesssim  \ep_1+\ep_1\<t\>^{1/2+(2+|b|)\delta}.
	\end{align*}
	Hence, combining this and (\ref{Main_Prop_Ass1}), (\ref{Main_Prop_Ass2}) we may bound this contribution by
	\begin{align*}
	\big[\sum_k 2^{2N(|a|+1)k^+}(\mathfrak{X}^{hl}_k)^2\big]^{1/2}\lesssim & t^{-1}(\ep_1^2\<t\>^{H(c)\delta}+\ep_1^2\<t\>^{-1/3}+\ep_1^2\<t\>^{H(|c|+1)\delta})\\
	\lesssim & \ep_1^2\<t\>^{-1+H(|c|+1)\delta},
	\end{align*}
	When $|b|\leq N_1-2,|c|\geq 1$,	it follows from (\ref{Main_Prop_Ass1}), (\ref{Main_Prop_Ass2}) and (\ref{rv}) that
	\begin{align*}
	\big[\sum_k 2^{2N(|a|+1)k^+}(\mathfrak{X}^{hl}_k)^2\big]^{1/2}
	\lesssim &t^{-1}[\lV \rr^{(b)}\rV_{H^{N(|a|+1)}}\lV \Phi^{(c)}\rV_{H^2}+(\sum_k 2^{2N(|a|+1)k^+}\lV rP_{[k-3,k+3]}\rr^{(b)}\rV_{L^{\infty}}^2)^{1/2}\lV \Phi^{(c)}\rV_{H^2}\\
	&+\lV \rr^{(b)}\rV_{H^{N(|a|+1)}}\lV\FF^{-1}(|\xi|\d_{\xi}\Psi^{(c)})\rV_{H^{N(|a|+1)}}]\\
	\lesssim & t^{-1}(\ep_1^2\<t\>^{H(c)\delta}+\sum_{\alpha_1\leq 1,\alpha_2\leq 2}\lV \d_r\tilde{\Om}^{\alpha_1}\rr^{(b)}\rV_{H^{N(|a|+1)}}^{1/2}\lV \tilde{\Om}^{\alpha_2}\rr^{(b)}\rV_{H^{N(|a|+1)}}^{1/2}\ep_1\<t\>^{H(c)\delta}\\
	&+\ep_1^2\<t\>^{H(|c|+1)\delta})\\
	\lesssim &\ep_1^2 \<t\>^{-1+H(|a|+1)\delta},
	\end{align*}
	Therefore, we have
	\begin{equation}\label{hl-Term0}
	\big[\sum_k 2^{2N(|a|+1)k^+}(\mathfrak{X}^{hl}_k)^2\big]^{1/2}\lesssim \ep_1^2\<t\>^{-1+H(|a|+1)\delta}.
	\end{equation}
	
	{\bf Low-high and high-high interaction: The contribution of $\mathfrak{X}^{lh}_k$ and $\mathfrak{X}^{hh}_k$.}
	
	\noindent We only estimate $\mathfrak{X}^{lh}_k$ in detail, the term $\mathfrak{X}^{hh}_k$ can be estimated similarly. Integration by parts in $\eta$, it follows that
	\begin{align*}
	\mathfrak{X}^{lh}_k\lesssim &  t^{-1}\Big[\lV\int_{\R^3} \nab_{\eta}(\eta)\widehat{P_{<k+6} \rr^{(b)}}(\xi-\eta) \widehat{P_{[k-5,k+5]}\Phi^{(c)}}(\eta) d\eta\rV_{L^2}\\
	&+\lV \int_{\R^3} \eta \nab_{\eta} \widehat{P_{<k+6} \rr^{(b)}}(\xi-\eta) \widehat{P_{[k-5,k+5]}\Phi^{(c)}}(\eta) d\eta\rV_{L^2}\\
	&+\lV \int_{\R^3} \eta  \widehat{P_{<k+6} u^{(b)}}(\xi-\eta) e^{it|\eta|}\nab_{\eta}\widehat{P_{[k-5,k+5]}\Psi^{(c)}}(\eta) d\eta\rV_{L^2}\Big]
	\end{align*}
	Then by Young's inequality, (\ref{Main_Prop_Ass1}), (\ref{Main_Prop_Ass2}) and (\ref{rv}), we have
	\begin{equation}\label{lhhh-Term0}
	\begin{aligned}
	\big[\sum_k 2^{2N(|a|+1)k^+}(\mathfrak{X}^{lh}_k)^2\big]^{1/2}
	\lesssim & t^{-1}  [\lV \rr^{(b)}\rV_{H^2}\lV \Phi^{(c)}\rV_{H^{N(|a|+1)}}+\lV r\rr^{(b)}\rV_{L^{\infty}}\lV \Phi^{(c)}\rV_{H^{N(|a|+1)+1}}\\
	&+\lV \rr^{(b)}\rV_{H^2}\lV\FF^{-1}(|\xi|\d_{\xi}\widehat{\Psi^{(c)}})\rV_{H^{N(|a|+1)}}]\\
	\lesssim & t^{-1}(\ep_1^2 \<t\>^{H(c)\delta}+\ep_1^2\<t\>^{H(|c|+1)\delta})\\
	\lesssim & \ep_1^2 \<t\>^{-1+H(|a|+1)\delta}.
	\end{aligned}
	\end{equation}
	
	From (\ref{hl-Term0}) and (\ref{lhhh-Term0}), the bound (\ref{dtPsi_Term0}) for $|b|\leq N_1-2, |c|\geq 0$ is obtained immediately. This completes the proof of (\ref{dtPsi_Term0}).
	
	\textbf{Step 3: Proof of (\ref{dtPsi_Term2}).}
	
	If $|a|=|b|=N_1-1,c=0$. (\ref{Dec_Phi}) implies
	\begin{equation}             \label{dtPsi_Term2_case1}
	\lV u^{(b)}\cdot \nab\d_t\phi\rV_{H^{N(|a|+1)}}\lesssim  \lV u^{(|b|)}\rV_{H^{N(|a|+1)}} \sum_{k}2^{N(|a|+1)k^++k}\lV \d_t\phi\rV_{L^{\infty}}\lesssim \ep_1^2\<t\>^{-1+H(|a|+1)\delta}.
	\end{equation}
	Now it suffices to consider the case $|b|\leq N_1-2, |c|\geq 0$.
	
	Decomposing dyadically in frequency, we have
	\begin{align*}
	\lV u^{(b)}\cdot\nab\d_t\phi^{(c)}\rV_{H^{N(|a|+1)}}\lesssim (\sum_k 2^{2N(|a|+1)k^+}I_k^2)^{1/2},
	\end{align*}
	where
	\begin{equation*}
	I_k:=\lV\sum_{k_1,k_2}\varphi_k(\xi)\int_{\R^3} e^{it|\eta|}\eta \widehat{P_{k_1}u^{(b)}}(\xi-\eta)\widehat{P_{k_2}\Psi^{(c)}}(\eta)d\eta\rV_{L^2}.
	\end{equation*}
	We further divided $I_k$ into high-low, low-high, high-high case,
	\begin{equation*}
	I_k\lesssim I^{hl}_k+I^{lh}_k+I^{hh}_k,
	\end{equation*}
	where
	\begin{gather*}
	I^{hl}_k:=\sum_{k_2\leq k-5}\lV\varphi_k(\xi)\int_{\R^3} e^{it|\eta|}\eta \widehat{P_{[k-3,k+3]}u^{(b)}}(\xi-\eta)\widehat{P_{k_2}\Psi^{(c)}}(\eta)d\eta\rV_{L^2},\\
	I^{lh}_k:=\lV\varphi_k(\xi)\int_{\R^3} e^{it|\eta|}\eta \widehat{P_{<k+6}u^{(b)}}(\xi-\eta)\widehat{P_{[k-5,k+5]}\Psi^{(c)}}(\eta)d\eta\rV_{L^2},\\
	I^{hh}_k:=\sum_{k_1,k_2;k_2\geq k+5}\lV\varphi_k(\xi)\int_{\R^3} e^{it|\eta|}\eta \widehat{P_{[k_2-3,k_2+3]}u^{(b)}}(\xi-\eta)\widehat{P_{k_2}\Psi^{(c)}}(\eta)d\eta\rV_{L^2}.
	\end{gather*}
	
	{\bf High-low interaction: The contribution of $I^{hl}_k$.}
	
	\noindent Integration by parts in $\eta$ yields
	\begin{align*}
	I^{hl}_k\lesssim &  t^{-1}\sum_{k_2<k-5}\Big[\lV\int_{\R^3} \d_{\eta_j}\big(\frac{\eta_j}{|\eta|}\eta\big)\widehat{P_{[k-3,k+3]} u^{(b)}}(\xi-\eta) \widehat{P_{k_2}\Phi^{(c)}}(\eta) d\eta\rV_{L^2}\\
	&+\lV \int_{\R^3} \frac{\eta_j}{|\eta|}\eta \d_{\eta_j} \widehat{P_{[k-3,k+3]} u^{(b)}}(\xi-\eta) \widehat{P_{k_2}\Phi^{(c)}}(\eta) d\eta\rV_{L^2}\\
	&+\lV \int_{\R^3} \frac{\eta_j}{|\eta|}\eta  \widehat{P_{[k-3,k+3]} u^{(b)}}(\xi-\eta) e^{it|\eta|}\d_{\eta_j}\widehat{P_{k_2}\Psi^{(c)}}(\eta) d\eta\rV_{L^2}\Big].
	\end{align*}
	When $|a|=|b|\leq N_1-2, c=0$, by (\ref{keyIneq}), (\ref{dtu-L2}) and (\ref{drra,v,w_L2dec}), one obtains
	\begin{align*}
	\lV |\xi|\d_{\xi}\widehat{u^{(b)}}(\xi)\rV_{H^{N(|a|+1)}}\lesssim \ep_1+t\lV\d_t u^{(b)}\rV_{H^{N(|a|+1)}}\lesssim \ep_1\<t\>^{1/2+(2+|b|)\delta}.
	\end{align*}
	Then by (\ref{Symbol_Pre}), (\ref{Main_Prop_Ass1}), (\ref{Main_Prop_Ass2}), (\ref{Dec_Phi}) we may bound this contribution by
	\begin{align*}
	\big[\sum_k 2^{2N(|a|+1)k^+}(I^{hl}_k)^2\big]^{1/2}
	\lesssim & t^{-1}  [\lV u^{(b)}\rV_{H^{N(|a|+1)}}\lV \Phi^{(c)}\rV_{H^2}+\lV \FF^{-1}(|\xi|\d_{\xi}\widehat{u^{(b)}})\rV_{H^{N(|a|+1)}}\sum_{k_2}\lV \Phi^{(c)}\rV_{L^{\infty}}\\
	&+\lV u^{(b)}\rV_{H^{N(|a|+1)}}\lV\FF^{-1}(|\xi|\d_{\xi}\widehat{\Psi^{(c)}})\rV_{H^{N(|a|+1)}}]\\
	\lesssim & t^{-1}(\ep_1^2\<t\>^{H(c)\delta}+\ep_1^2\<t\>^{-1/3}+\ep_1^2\<t\>^{H(|c|+1)\delta})\\
	\lesssim & \ep_1^2\<t\>^{-1+H(|c|+1)\delta},
	\end{align*}
	When $|b|\leq N_1-2,|c|\geq 1$,	it follows from (\ref{Main_Prop_Ass1}), (\ref{Main_Prop_Ass2}) and (\ref{rv}) that
	\begin{align*}       \label{dtPsi_Term2_case2}
	&\big[\sum_k 2^{2N(|a|+1)k^+}(I^{hl}_k)^2\big]^{1/2}\\
	\lesssim &t^{-1}[\lV u^{(b)}\rV_{H^{N(|a|+1)}}\lV \Phi^{(c)}\rV_{H^2}+(\sum_k2^{2N(|a|+1)k^+}\lV rP_{[k-3,k+3]}u^{(b)}\rV_{L^{\infty}}^2)^{1/2}\lV \Phi^{(c)}\rV_{H^2}\\
	&+\lV u^{(b)}\rV_{H^{N(|a|+1)}}\lV|\xi|\d_{\xi}\Psi^{(c)}\rV_{H^{N(|a|+1)}}]\\
	\lesssim & t^{-1}(\ep_1^2\<t\>^{H(c)\delta}+\sum_{\alpha_1\leq 1,\alpha_2\leq 2}\lV \d_r\tilde{\Om}^{\alpha_1}v^{(b)}\rV_{H^{N(|a|+1)}}^{1/2}\lV \tilde{\Om}^{\alpha_2}v^{(b)}\rV_{H^{N(|a|+1)}}^{1/2}\ep_1\<t\>^{H(c)\delta}\\
	&+\ep_1^2\<t\>^{H(|c|+1)\delta})\\
	\lesssim &\ep_1^2 \<t\>^{-1+H(|a|+1)\delta},
	\end{align*}
	Therefore, we have
	\begin{equation}\label{hl}
	\big[\sum_k 2^{2N(|a|+1)k^+}(I^{hl}_k)^2\big]^{1/2}\lesssim \ep_1^2\<t\>^{-1+H(|a|+1)\delta}.
	\end{equation}
	
	{\bf Low-high and high-high interaction: The contribution of $I^{lh}_k$ and $I^{hh}_k$.}
	
	\noindent We only bound $I^{lh}_k$ in detail, the term $I^{hh}_k$ can be estimated similarly. Integration by parts in $\eta$, it follows that
	\begin{align*}
	I^{lh}_k\lesssim &  t^{-1}\Big[\lV\int_{\R^3} \d_{\eta_j}\big(\frac{\eta_j}{|\eta|}\eta\big)\widehat{P_{<k+6} u^{(b)}}(\xi-\eta) \widehat{P_{[k-5,k+5]}\Phi^{(c)}}(\eta) d\eta\rV_{L^2}\\
	&+\lV \int_{\R^3} \frac{\eta_j}{|\eta|}\eta \d_{\eta_j} \widehat{P_{<k+6} u^{(b)}}(\xi-\eta) \widehat{P_{[k-5,k+5]}\Phi^{(c)}}(\eta) d\eta\rV_{L^2}\\
	&+\lV \int_{\R^3} \frac{\eta_j}{|\eta|}\eta  \widehat{P_{<k+6} u^{(b)}}(\xi-\eta) e^{it|\eta|}\d_{\eta_j}\widehat{P_{[k-5,k+5]}\Psi^{(c)}}(\eta) d\eta\rV_{L^2}\Big]
	\end{align*}
	Then by (\ref{Symbol_Pre}), (\ref{Main_Prop_Ass1}), (\ref{Main_Prop_Ass2}) and (\ref{rv}), we have
	\begin{equation}\label{lhhh}
	\begin{aligned}
	\big[\sum_k 2^{2N(|a|+1)k^+}(I^{lh}_k)^2\big]^{1/2}
	\lesssim & t^{-1}  [\lV u^{(b)}\rV_{H^{N(|a|+1)}}\lV \Phi^{(c)}\rV_{H^2}+\lV ru^{(b)}\rV_{L^{\infty}}\lV \Phi^{(c)}\rV_{H^{N(|a|+1)+1}}\\
	&+\lV u^{(b)}\rV_{H^{N(|a|+1)}}\lV\FF^{-1}(|\xi|\d_{\xi}\widehat{\Psi^{(c)}})\rV_{H^{N(|a|+1)}}]\\
	\lesssim & t^{-1}(\ep_1^2 \<t\>^{H(c)\delta}+\ep_1^2\<t\>^{H(|c|+1)\delta})\\
	\lesssim & \ep_1^2 \<t\>^{-1+H(|a|+1)\delta}.
	\end{aligned}
	\end{equation}
	
	From (\ref{hl}) and (\ref{lhhh}), the bound (\ref{dtPsi_Term2}) for $|b|\leq N_1-2, |c|\geq 0$ is obtained immediately. This completes the proof of (\ref{dtPsi_Term2}).

	\textbf{Step 4: Proof of (\ref{dtPsi_Term3}).}
	
	When $|e|\leq N_1-2$, using (\ref{drra,v,w_a_inf}) and (\ref{Dec_Phi}), we get
	\begin{align*}
	\lV u^{(b)}\cdot\nab(u^{(c)}\cdot\nab\phi^{(e)})\rV_{H^{N(|a|+1)}}
	\lesssim &\lV u^{(b)}\rV_{H^{N(|a|+1)}}(\lV\nab u^{(c)}\rV_{L^{\infty}}\lV \nab\phi^{(e)}\rV_{L^{\infty}}+\lV u^{(c)}\rV_{L^{\infty}}\lV \nab^2\phi^{(e)}\rV_{L^{\infty}})\\
	&+\lV u^{(b)}\rV_{L^{\infty}}(\lV u^{(c)}\rV_{H^{N(|a|+1)+1}}\lV \nab\phi^{(e)}\rV_{L^{\infty}}+\lV u^{(c)}\rV_{L^{\infty}}\lV \nab\phi^{(e)}\rV_{H^{N(|a|+1)+1}})\\
	\lesssim & \ep_1^3\<t\>^{-5/4},
	\end{align*}
	When $|e|\geq N_1-1$, by (\ref{drr,v,w_inf}), we have
	\begin{align*}
	\lV u\cdot\nab(u\cdot\nab\phi^{(e)})\rV_{H^{N(|a|+1)}}
	\lesssim & (\sum_{k\geq 0}2^{N(|e|+1)k+k}\lV P_k u\rV_{L^{\infty}}+\lV P_{<0}u\rV_{L^{\infty}})^2 \lV \nab\phi^{(e)}\rV_{H^{N(|e|+1)+1}}\lesssim \ep_1^3\<t\>^{-5/4},
	\end{align*}
	Hence, the bound (\ref{dtPsi_Term3}) follows.
	
	Finally, the bound (\ref{dtPsi_Term4}) is an consequence of (\ref{Dec_Phi}) and (\ref{Dec_Phi/nab}).
	This completes the proof of the proposition.
\end{proof}

{\bf Acknowledgement} L. Zhao was partially supported by the NSFC Grant of China  (No. 11771415).

\end{document}